\numberwithin{equation}{section}
\theoremstyle{definition}
\newtheorem{definition}{Definition}[section]
\newtheorem{example}{Example}[section]
\newtheorem*{acknowledgments}{Acknowledgments}
\theoremstyle{plain}
\newtheorem{theorem}{Theorem}[section]
\newtheorem{cor}[theorem]{Corollary}
\newtheorem{lemma}[theorem]{Lemma}
\newtheorem{prop}[theorem]{Proposition}
\theoremstyle{remark}
\newtheorem{remark}[theorem]{Remark}
\newcommand{\newname}{$bb$}
\title[Microlocal Methods for Transversely Isotropic Travel Time Tomography]{Microlocal Methods for The Elastic Travel Time Tomography Problem for Transversely Isotropic Media}
\author{Yuzhou Zou}
\address{Mathematics Department\\
University of California, Santa Cruz\\
1156 High St, Santa Cruz, CA 95064, U.S.A.}
\email{yzou34@ucsc.edu}
\date{\today}
\begin{document}
\maketitle
\sloppy

\begin{abstract}
We study the travel time tomography problem for transversely isotropic media using the artificial boundary method used in works in X-ray tomography and boundary rigidity problems. The analysis involved requires a modification of the scattering pseudodifferential calculus in order to provide parametrices for operators which are degenerately elliptic; such a calculus is constructed in this paper and used to solve the tomography problem of interest.
\end{abstract}

\tableofcontents

\section{Introduction}
\label{intro-sec}
In this paper we consider the travel time tomography problem for transversely isotropic media, using methods from microlocal analysis. The goal is to adapt the arguments used in works in X-ray tomography \cite{convex, tensor} and boundary rigidity \cite{br1,br2} to this problem; such works made use of an ``artificial boundary'' argument as well as extensive use of the scattering pseudodifferential calculus. The works \cite{isoe,ti} provide examples where such arguments were applied to travel time tomography problems in elasticity. In particular, in the latter work, the authors obtained recovery for some of the parameters involved, as well as a description of the obstruction of the analysis to extend the argument in recovering the remaining parameters: namely that some of the operators involved were not elliptic considered as scattering pseudodifferential operators, and hence were not known to have parametrices, i.e. approximate inverses in a suitable sense. The main goal for this work is to construct an operator calculus which provides parametrices to the operators considered in \cite{ti}, thus allowing  the analogous arguments to be made in this context.

The setting for this work is a combination of the settings in the previous works, as well as the author's previous work \cite{zou-global} which did not use the artificial boundary argument of the above works. The essential aspects of this setting are described below.

In elasticity, the linear elastic wave equation gives a linearized model for the motion of elastic objects. The equation is given by
\[u_{tt} = Eu,\quad (Eu)_i = \sum_{jkl}{\frac{1}{\rho(x)}\partial_j{c_{ijkl}(x)\partial_lu_k}},\]
where $u:\mathbb{R}_t\times\mathbb{R}^3\to\mathbb{R}^3$ represents the displacement of a material from a rest frame, $\rho(x)>0$ is the density of the material, and $c_{ijkl}(x)$ are components of a tensor known as the elasticity tensor. The principal symbol of the operator $u \mapsto u_{tt}-Eu$ is given by $-\tau^2\text{Id} + \sigma(-E)(x,\xi)$, where 
the matrix $\sigma(-E)(x,\xi)$ is always symmetric and positive definite for all $x$ and all $\xi\ne 0$, and hence in describing the microlocal wavefront set $WF(u)\subset T^*(\mathbb{R}_t\times\mathbb{R}^3_x)\backslash o$ of a solution $u$, we have
\begin{align*} (t,x,\tau,\xi)\in WF(u)&\implies -\tau^2\text{Id}+\sigma(-E)(x,\xi)\text{ is not invertible}\\
&\iff \tau^2 = G(x,\xi)\text{ where }G(x,\xi)\text{ is an eigenvalue of }\sigma(-E)(x,\xi).
\end{align*}
Under the further assumption that the multiplicity of the eigenvalues of $\sigma(-E)$ remain constant in $(x,\xi)$, so that the eigenvalues can be written as $G_j(x,\xi)$ for smooth functions $G_j$ on $T^*\mathbb{R}^3$, it follows that $WF(u)\cap\{\tau^2 = G_j\}$ is invariant under the Hamilton flow of $\tau^2-G_j$. We are thus interested in the dynamics of the so-called ``elastic waves'', i.e. the Hamiltonian dynamics of the eigenvalues $G_j$, the so-called ``squared wave speeds'' of the material.

In the situation of transversely isotropic elasticity, the elasticity tensor can be described in terms of an axis of isotropy, which we can represent naturally as a line subbundle $\Sigma$ of the cotangent bundle $T^*\mathbb{R}^3$, along with five material parameters. Following the conventions of previous works, we denote these parameters as $a_{11}$, $a_{33}$, $a_{55}$, $a_{66}$, and $E^2$. There are three kinds of waves, denoted $qP$, $qSH$, and $qSV$, along with their squared wave speeds $G_{qP}$, $G_{qSH}$, and $G_{qSV}$, and with respect to the parameters above, the squared wave speeds can be written explicitly as
\[G_{qSH} = a_{66}|\xi'|^2+a_{55}\xi_3^2\]
where $|\xi'|^2 = \xi_1^2+\xi_2^2$, and
\begin{align*}
G_{\pm} &= (a_{11}+a_{55})|\xi'|^2+(a_{33}+a_{55})\xi_3^2 \\
&\pm\sqrt{((a_{11}-a_{55})|\xi'|^2+(a_{33}-a_{55})\xi_3^2)^2-4E^2|\xi'|^2\xi_3^2}
\end{align*}
where $+$ refers to the $qP$ wave speed and $-$ refers to the $qSV$ wave speed. Here, we fix a point $x$ and choose coordinates $(x_1,x_2,x_3,\xi_1,\xi_2,\xi_3)$ such that $dx_3$ aligns with the axis of isotropy (i.e. $\text{span }(dx_3)_x = \Sigma_x$), and $(\xi_1,\xi_2,\xi_3)$ is chosen with respect to $(x_1,x_2,x_3)$ so that overall the coordinates form canonical coordinates on $T^*\mathbb{R}^3$, i.e. $(\xi_1,\xi_2,\xi_3)$ corresponds to the covector $\sum_{i=1}^3{\xi_i(dx_i)_x}$.

The inverse problem we wish to investigate is the travel time tomography problem. Consider a domain $\Omega\subset\mathbb{R}^3$, and suppose for each pair of points on the boundary $\partial\Omega$, for each squared wave speed $G$ there exists a unique Hamiltonian trajectory of $G$ (up to scaling) whose projection to the physical space $\mathbb{R}^3_x$ connects the pair of points. (Later on, it will suffice for this condition to hold for all sufficiently close pair of points on the boundary). Suppose, for each pair of points, we know the travel time of the trajectory associated to each wave speed. Does that information suffice to determine the elasticity tensor of the material?

We can formulate the main results of this paper which aim to answer the above question. We assume that $\partial\Omega$ is convex with respect to all Hamiltonian dynamics considered.
\begin{theorem}
[Local recovery of one parameter]
\label{local-recov-thm}
Let $p\in\partial\Omega$. Suppose that $\{\tilde{a}_{\nu}\}$ is another collection of elastic parameters, such that $\tilde{a}_{\nu} = a_{\nu}$ for all parameters except one, and that there exists a neighborhood $U$ of $p$ such that the two collections of elastic parameters produces the same travel time data between points on $U\cap\partial\Omega$. Then there is a (possibly smaller) neighborhood $U'$ such that $\tilde{a}_{\nu} = a_{\nu}$ on $U'$.
\end{theorem}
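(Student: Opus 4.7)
The plan is to follow the Stefanov-Uhlmann-Vasy artificial boundary scheme, adapted to the elastic setting. Since only a single material parameter $a_\nu$ differs between the two collections, I first select a wave speed $G$ from $\{G_{qP}, G_{qSH}, G_{qSV}\}$ whose dependence on $a_\nu$ is nontrivial (e.g.\ $G_{qSH}$ if $a_\nu\in\{a_{55},a_{66}\}$). Using a pseudolinearization identity in the style of Stefanov-Uhlmann, the equality of travel times between points of $U\cap\partial\Omega$ produces an integral identity: an X-ray-type transform of a symbol built from $a_\nu-\tilde{a}_\nu$, integrated along Hamilton trajectories of an intermediate Hamiltonian, vanishes for all admissible pairs of boundary points. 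This reduces the nonlinear recovery problem to a linear injectivity question for a restricted X-ray type transform.

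Next, I set up the artificial boundary foliation near $p$. Choose a boundary defining function $x_n$ for $\partial\Omega$ near $p$ and, for small $c>0$, consider the region $\Omega_c=\{x_n>-c\}\cap U$ with artificial boundary $\{x_n=-c\}$. By the convexity assumption on $\partial\Omega$ with respect to the Hamilton flow of $G$, for $c$ small the trajectories of $G$ joining nearby points of $\{x_n=-c\}$ remain close to $p$. Let $L_c$ denote the localized X-ray transform whose data is determined by the travel times, form the normal operator $N_c = L_c^* L_c$, and, following \cite{convex, tensor, br1, br2}, conjugate by an exponential weight of the form $e^{-F/x_n}$ to obtain an operator $A_c$ which lies in the modified scattering calculus constructed in this paper.

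The final step invokes the main technical contribution of the paper: $A_c$ is elliptic in the (degenerately elliptic) sense of the modified calculus, and hence admits a parametrix $B_c$ with $B_c A_c = \operatorname{Id} + R_c$, where $R_c$ has small operator norm on an appropriate scale of function spaces provided $c$ is chosen small. Together with the standard boundary determination of jets of $a_\nu$ at $\partial\Omega$ near $p$, Neumann-series inversion then forces $a_\nu=\tilde a_\nu$ on $\Omega_c$, giving the desired neighborhood $U' \subset \Omega_c$.

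The main obstacle is verifying that $A_c$ actually falls within the class of degenerately elliptic operators covered by the new calculus. This is precisely the gap identified in \cite{ti}: viewed as a standard scattering pseudodifferential operator, the relevant normal operator fails to be elliptic for some of the parameters, which is why the standard parametrix construction breaks down. The key calculation is to compute the scattering principal symbol of $A_c$ explicitly using the Hamilton flow of the chosen $G$, identify the precise form of the degeneracy of its symbol at fiber infinity over the artificial boundary, and verify that this degeneracy matches the symbol class for which the new calculus provides parametrices. The selection of $G$, and of which component of the pseudolinearized symbol to extract, must be tailored to the specific parameter $a_\nu$ to be recovered.
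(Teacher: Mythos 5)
Your overall scheme is the same as the paper's: pseudolinearization, artificial boundary $\{x=0\}$ with $x=\tilde x+c$, exponential conjugation by $e^{\digamma/x}$, ellipticity and a parametrix in the new degenerate calculus, and a smallness argument as $c\to 0$. You also correctly identify the crux as verifying that the conjugated normal operator lies in, and is elliptic for, the new calculus.

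There is, however, a genuine gap in your final inversion step. The pseudolinearization identity is not $N[r_\nu]=0$ for a single operator acting on $r_\nu$; it is $0=N[\partial_x r_\nu,\partial_{y_1}r_\nu,\partial_{y_2}r_\nu]+\tilde N[r_\nu]$, i.e.\ the operator that is elliptic in the new calculus acts on the \emph{gradient} of the difference, and there is a second operator $\tilde N$ acting on $r_\nu$ itself. Applying the parametrix $Q$ of $N_\digamma$ therefore does not yield $f+R_cf=0$; writing $r_\nu=e^{\digamma/x}f$, it yields a first-order system $0=(\partial_xf-\digamma x^{-2}f,\partial_{y_1}f,\partial_{y_2}f)+E[f]$ with $E\in\Psi^{0,0,1,1}$. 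One must then combine these three scalar equations (whose scattering symbols are $i(\xi+i\digamma)$, $i\eta_1$, $i\eta_2$ --- the conjugation is precisely what makes the $\xi$-equation elliptic) into an elliptic element of $\Psi^{1,0}_{sc}$, invert that, and only then run the smallness argument, which comes not from the parametrix error being small but from the gain of a factor $x^{1/2}$ in the weight combined with $\mathrm{supp}\,f\subset\{x<c\}$, giving $\|Gf\|_{L^2}\le Cc^{1/2}\|f\|_{L^2}$. As written, your "Neumann-series inversion" skips this step and would not close; relatedly, the boundary determination of jets you invoke is not needed (the support condition on $r_\nu$ plays that role), while the handling of the zeroth-order term $\tilde N$ and of the gradient structure is essential and missing.
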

In other words, assuming just the convexity of $\partial\Omega$, we can uniquely determine the elastic parameters in a neighborhood of $\partial\Omega$ using the travel time data on $\partial\Omega$.

We will particularly be interested in the situation where our domain of interest admits a \emph{convex foliation}, defined below:
\begin{definition}
\label{convex-foliation-def}
A manifold $\Omega$ with boundary $\partial\Omega$ admits a \emph{convex foliation} (with respect to a certain system of dynamics involved, such as geodesic dynamics or more generally Hamiltonian dynamics associated to some Hamiltonian) if there is a function $\rho:\overline{\Omega}\to[0,T]$ for some $T>0$ such that $\rho^{-1}(0) = \partial\Omega$, $\rho^{-1}(t)$ is strictly convex for $0\le t<T$, and $\overline{\Omega}\backslash\cup_{t\in[0,T)}{\rho^{-1}(t)}$ has measure zero. The level sets $\rho^{-1}(t)$ for $0\le t<T$ are called the \emph{leaves} of the convex foliation.
\end{definition}
By ``strictly convex,'' we mean that if $(X(s),\Xi(s))$ is a trajectory of the relevant Hamiltonian dynamics (with $\frac{d}{ds}(X(s),\Xi(s))\ne 0$ everywhere), with $\rho(X(s_0)) = t$ and $\frac{d}{ds}|_{s=s_0}(\rho(X(s))) = 0$ for some $0\le t<T$, then $\frac{d^2}{ds^2}|_{s=s_0}(-\rho(X(s)))>0$.

As mentioned above, the idea of a convex foliation was used in \cite{convex,br1,br2,isoe,ti}, etc., to upgrade local recovery results to global recovery results for domains admitting a convex foliation. See \cite{psuz} for a more detailed description on how to interpret the convex foliation condition, as well as cases when convex foliations must exist.

In the case where we have a convex foliation, we can extend our local recovery result to a global recovery result.
\begin{theorem}[Global recovery of one parameter]
\label{global-recov-scat-thm}
Suppose $\Omega$ admits a convex foliation. Suppose that $\{\tilde{a}_{\nu}\}$ is another collection of elastic parameters, such that $\tilde{a}_{\nu} = a_{\nu}$ for all parameters except one, and the two collections of elastic parameters produces the same travel time data between every pair of points on $\partial\Omega$. Then we must have $\tilde{a}_{\nu} = a_{\nu}$ everywhere in $\Omega$.
\end{theorem}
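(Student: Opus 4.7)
The plan is a layer-stripping continuity argument along the convex foliation, where each leaf $\rho^{-1}(t_0)$ plays the role of an ``artificial boundary'' on which Theorem~\ref{local-recov-thm} can be reapplied. Let
\[S = \{t\in[0,T) : \tilde{a}_\nu = a_\nu \text{ on } \rho^{-1}([0,t])\}.\]
The goal is to show $\sup S = T$; combined with the measure-zero condition in Definition~\ref{convex-foliation-def} and continuity of the parameters, this forces $\tilde{a}_\nu = a_\nu$ on all of $\overline{\Omega}$.

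First, $0 \in S$, and in fact $[0,\epsilon) \subset S$ for some $\epsilon>0$. Indeed, for each $p\in\partial\Omega$ Theorem~\ref{local-recov-thm} supplies a neighborhood $U'_p$ on which $\tilde{a}_\nu = a_\nu$, and compactness of $\partial\Omega$ yields such an $\epsilon$. Closedness of $S$ in $[0,T)$ follows from continuity of the parameters.

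The main step, and the one I expect to be the principal obstacle, is openness of $S$. Fix $t_0 = \sup S < T$ (for contradiction) and view the inner region $\Omega_{t_0} := \rho^{-1}([t_0,T])$ as a new domain with boundary $\rho^{-1}(t_0)$. The definition of convex foliation guarantees that this new boundary is strictly convex with respect to all relevant Hamiltonian dynamics, so Theorem~\ref{local-recov-thm} will apply at each point of $\rho^{-1}(t_0)$ once we have the appropriate travel time data on it. The crux of the artificial boundary method is that although we are not directly given travel times between points of $\rho^{-1}(t_0)$, these can be manufactured from the data on $\partial\Omega$ together with the already-recovered values of $a_\nu$ on $\rho^{-1}([0,t_0])$: for nearby $q_1,q_2\in\rho^{-1}(t_0)$ and a short Hamiltonian trajectory $\gamma$ connecting them inside $\Omega_{t_0}$, extend $\gamma$ through the outer layer in both directions using the known parameters until it meets $\partial\Omega$; the travel times of the two extensions are computable from known data and agree for both parameter systems. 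Subtracting from the (equal) total travel times between the two resulting boundary points gives equal travel times along $\gamma$ for both systems, as needed.

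Applying Theorem~\ref{local-recov-thm} at each point of $\rho^{-1}(t_0)$ and using compactness, we obtain $\delta>0$ with $t_0+\delta\in S$, contradicting the choice of $t_0$. The delicate points will be (i) ensuring that short trajectories connecting two nearby points on $\rho^{-1}(t_0)$ from the $\Omega_{t_0}$ side actually extend to trajectories meeting $\partial\Omega$ transversally in exactly one point on each side (so that the decomposition of the travel time into known and unknown pieces is well-defined), and (ii) ensuring that Theorem~\ref{local-recov-thm}, which was stated at points of the original physical boundary, transfers without change to points of an artificial boundary of the same convexity type. Point (i) is handled by choosing the neighborhood of $q_1,q_2$ small enough using strict convexity and smoothness of the flow, while (ii) should follow because the proof of Theorem~\ref{local-recov-thm} uses only the microlocal/pseudodifferential setup near a strictly convex boundary point, a property shared by every leaf of the foliation.
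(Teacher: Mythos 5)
Your layer-stripping argument is exactly the mechanism the paper has in mind: the paper itself gives no separate proof of Theorem \ref{global-recov-scat-thm}, stating only that once the local result is established, ``the process of obtaining global recovery is exactly the same as in the previous papers'' (the foliation arguments of \cite{convex,br1,br2,isoe,ti}). Your continuity set $S$, the compactness step on each leaf, and the use of the measure-zero condition at the end all match that standard argument. Your concern (ii) is in fact a non-issue by construction: the proof of Theorem \ref{local-recov-thm} never uses the physical boundary analytically --- it works from the start with an artificial boundary $\{x=0\}=\{\tilde{x}=-c\}$ strictly inside the region, with the physical boundary serving only to constrain the support of $r_\nu$, so the local argument applies verbatim at any strictly convex hypersurface bounding the support of $r_\nu$.

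The one place where you take a genuinely different (and more laborious) route is the reduction step at an interior leaf. You propose to \emph{manufacture} travel-time data between nearby points of $\rho^{-1}(t_0)$ by extending short trajectories through the already-recovered outer layer to $\partial\Omega$ and subtracting known pieces; this forces you to worry about your point (i), i.e.\ well-posedness of those extensions and transversality at $\partial\Omega$. The intended argument avoids this entirely: the Stefanov--Uhlmann pseudolinearization identity $0=\sum I^{\nu}(\nabla r_{\nu})+\sum\tilde{I}^{\nu}(r_{\nu})$ holds once and for all along every trajectory with endpoints on the \emph{original} boundary $\partial\Omega$, and after recovery on $\rho^{-1}([0,t_0])$ one knows $\operatorname{supp} r_\nu\subset\{\rho\ge t_0\}$. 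The local argument at the leaf only needs this integral identity together with that support condition (the integrals automatically localize to the inner region), so no new travel-time data on the leaf is ever constructed. Both routes lead to the conclusion, but the support-based one removes your delicate point (i) and is the one the cited works use.
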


We expect the same recovery result to hold for the recovery of multiple parameters (cf. the results in \cite{ti} and \cite{zou-global}); the computations are a bit more complicated to write down, but the theory required should be the same as in the one parameter recovery problem.

The main strategy is to take advantage of the Stefanov-Uhlmann pseudolinearization argument, which offers a method to construct an operator $I$ such that $I[\{r_{\nu}\}]=0$, where $r_{\nu}$ are the differences of the parameters. The task is then to show the invertibility of the constructed operator so as to show that $r_{\nu}=0$.

Explicitly, using the notation from \cite{zou-global}, it was shown that if two sets of parameters gave the same travel time data, then the difference $r_{\nu} = \tilde{a}_{\nu}-a_{\nu}$ of the parameters must satisfy $0 = \sum{I^{\nu}(\nabla r_{\nu})} + \sum{\tilde{I}^{\nu}(r_{\nu})}$, where
\begin{equation}
I^{\nu}[f_1,f_2,f_3](x,\xi) = -\int_{\mathbb{R}}{E^{\nu}(X(t),\Xi(t))\frac{\partial\tilde\Xi}{\partial\xi}(\tau(X(t),\Xi(t)),(X(t),\Xi(t)))\cdot\begin{pmatrix} f_1 \\ f_2 \\ f_3 \end{pmatrix}(X(t))\,dt}\label{inu}
\end{equation}
and
\begin{equation}
\tilde{I}^{\nu}[f](x,\xi) = \int_{\mathbb{R}}{\left(-\partial_xE^{\nu}\frac{\partial\tilde\Xi}{\partial\xi}(\tau(\cdot),\cdot) + \partial_{\xi}E^{\nu}\frac{\partial\tilde\Xi}{\partial x}(\tau(\cdot),\cdot)\right)(X(t),\Xi(t))f(X(t))\,dt}.\label{itildenu}
\end{equation}
Here, $(x,\xi)$ are canonical coordinates on $T^*\mathbb{R}^3$, and $(X(t),\Xi(t)) = (X(t,x,\xi),\Xi(t,x,\xi))$ denotes the Hamilton flow with respect to the wave speed $G(x,\xi)$. The function $E^{\nu}$ is defined in \cite{zou-global} in terms of the two collection of parameters $\{a_{\nu}\}$ and $\{\tilde{a}_{\nu}\}$; when $r_{\nu}$ is small one can think of $E^{\nu}$ as approximately equal to $\frac{\partial G}{\partial\nu}$, i.e. the derivative of the squared wave speed with respect to the parameter $\nu$ viewing $G$ as a function of the parameters.

In \cite{ti}, the authors obtained local recovery for the parameter $a_{11}$, which combined with the existence of a convex foliation upgrades to a global recovery result. The argument for the local recovery is a modification of the local invertibility of the X-ray transform first studied in \cite{convex}, where the authors studied the invertibility of the geodesic X-ray transform (i.e. showing that $u\equiv 0$ if $I[u]\equiv 0$, where $I$ is the geodesic X-ray transform which takes a geodesic and a function and integrates the function on the geodesic). The argument there follows as thus: assuming the convexity assumptions, the authors showed that there exists a smooth function $\tilde{x}$ such that $\tilde{x}(p) = 0$, $d\tilde{x}\ne 0$ near $p$, $\tilde{x}<0$ for $x\in\overline{\Omega}\backslash p$ near $p$, $\tilde{x}$ is strictly convex with respect to all relevant dynamics, and for sufficiently small $c>0$ we have $\overline{\Omega_c}:=\{\tilde x>-c\}\cap\overline{\Omega}$ is compact. (Indeed, if $\rho$ is a boundary defining function of $\partial{\Omega}$ near $p$, with $\rho>0$ in the interior, so that $-\rho$ is strictly convex with respect to all dynamics considered near $p$, then we can take $\tilde{x}$ to be a slightly less convex modification of $-\rho$.) 
Near $p$, we view $\overline{\Omega}$ as a closed subset of a manifold without boundary. We can then fix $c>0$ sufficiently small, and let $x = \tilde{x}+c$, so that $X = \{x\ge 0\}$ corresponds to $\{\tilde{x}\ge -c\}$. Note then that $\{x=0\}$, equivalently $\{\tilde{x} = -c\}$, serves as the boundary for the newly constructed space $X$; this boundary is what we refer to as an ``artificial boundary'' (which itself depends on $c$). The situation is illustrated in Figure \ref{domainsketch}.
\begin{figure}[h]
\label{domainsketch}
\begin{center}
\includegraphics[scale=0.9,trim={1in, 7.7in, 1in, 1.25in},clip]{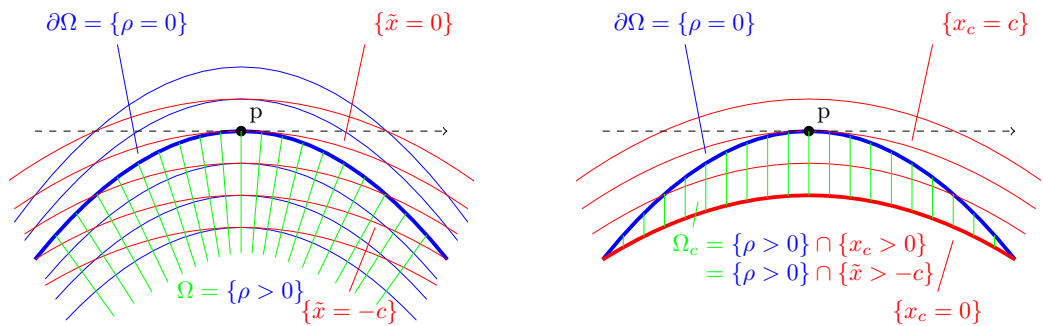}
\end{center}
\caption{Sketches of the domains. In the left figure, the blue curves are the level sets of $\rho$, a boundary-defining function near $p$. The thick blue curve denotes $\partial\Omega=\{\rho=0\}$. The red curves are the level sets of $\tilde{x}$, a ``less convex'' modification of $-\rho$ (but still convex with respect to the relevant dynamics). On the right, the thick red curve denotes $\{\tilde{x}+c=0\} = \{x_c=0\}$ for some choice of $c>0$; this set denotes the ``artificial boundary,'' with the original boundary (the thick blue curve) constraining the support of the difference of parameters (but otherwise not playing the role of a ``boundary'' in an analytic sense).}
\end{figure}

We can then choose two more smooth functions $y_1,y_2$ so that $(x,y_1,y_2)$ give coordinates in a neighborhood of $\overline{\Omega_c}$. We then let
\[Lv(z) = x^{-2}(z)\int_{S_zX}{\chi(\lambda(\nu)/x(z))v(\gamma_{\nu,z})\,d\nu}\]
where $S_zX$ is the sphere bundle of $X$, and for a unit tangent vector $\nu$ we let $\lambda(\nu)$ be the coefficient $\lambda$ appearing in writing $\nu = \lambda\partial_x+\omega\cdot\partial_y$, and $\gamma_{\nu,z}$ is a trajectory passing through $z$ with tangent vector $\nu$, and $\chi\in C_c^{\infty}(\mathbb{R})$. Note that by convexity assumptions we can arrange $\chi$ to be supported sufficiently close to $0$ such that trajectories $\nu$ with $\lambda/x\in\text{supp }\chi$ remain in $X$ for all times. Furthermore, note the definition of $L$ above depends on the choice of $c>0$. The authors then showed that for $\digamma>0$ and
\[N_{\digamma} := e^{-\digamma/x}LIe^{\digamma/x}\]
we have that $N_{\digamma}$ is a \emph{scattering pseudodifferential operator} on $X$, and furthermore that it is elliptic. (See \cite{sslaes} and \cite{grenoble} for an introduction to the scattering pseudodifferential calculus and its properties.) In particular, there exists a parametrix $G$ such that $GN_{\digamma} = \text{Id} + E$, where $E\in\Psi^{-\infty,-\infty}$ is a ``very nice error.'' The idea is that if $I[u]=0$, and we write $u = e^{\digamma/x}\tilde{u}$, then $0 = N_{\digamma}[\tilde{u}]$, and hence applying a parametrix yields $\tilde{u} = E[\tilde{u}]$; if $E$ is sufficiently nice (e.g. small enough in operator norm) then this allows us to conclude $\tilde{u} = 0$.

To formalize this, we now fix $\digamma>0$, but we allow $c$ to vary, and in particular we consider what happens as $c\to 0$. We now write $N_c$ to denote the operator $N_{\digamma}$ constructed for the given value of $c$, and we let $E_c$ be the corresponding error. Then $N_c$ is a continuous (w.r.t. the $\Psi^{-1,0}$ topology) family of scattering $\Psi$DOs, and $E_c$ is a family of scattering $\Psi$DO continuous w.r.t. any $\Psi^{-N,-N}$ topology. We also let $\tilde{u}_c = e^{-\digamma/(\tilde{x}+c)}u$, where we interpret $\tilde{u}_c$ as an element of $L^2(\{\tilde{x}>-c\},\frac{d\tilde{x}\,dy}{(\tilde{x}+c)^{n+1}})$; then $\tilde{u}_c$ is continuous in $c$ after identifying $\{\tilde{x}>-c\}$ with  $[0,\infty)\times\mathbb{R}^2$ via the map $(\tilde{x},y_1,y_2)\mapsto(\tilde{x}+c,y_1,y_2)$. We then have $\tilde{u}_c = E_c[\tilde{u}_c]$. We now let $\phi_c$ satisfy $0\le\phi_c\le 1$, $\phi_c\equiv 1$ on $\{x<c\}$, and $\text{supp }\phi_c\subset\{x<2c\}$. Then $\epsilon(c):=\|\phi_cE_c\phi_c\|_{L^2\to L^2}\to 0$, and since $\phi_c\tilde{u}_c = \tilde{u}_c$, we have
\[0 = \tilde{u}_c + \phi_cE_c\phi_c\tilde{u}_c\implies\|\tilde{u}_c\|_{L^2}\le\epsilon(c)\|\tilde{u}_c\|_{L^2},\]
and hence $\tilde{u}_c\equiv 0$ for all sufficiently small $c>0$.

For the elastic tomography problem (and also boundary rigidity) problem, we perform a similar argument, except that there are two operators of interest $0 = N[\nabla u] + \tilde{N}[u]$ (where $\nabla u=(\partial_xu,\partial_yu)$ are the ``usual'' derivatives with respect to the coordinates $(x,y)$, i.e. not the scattering derivatives which have additional factors of $x^2$ and $x$). Similar analysis would then show that $N$ admits an elliptic parametrix, with the remaining term controlled via a Poincar\'e inequality argument.

In \cite{ti}, the authors argued that one can recover $a_{11}$ from the travel time data, assuming the other parameters are known, in large part because the associated operator is elliptic. For the other parameters $a_{33}$ and $E^2$, the associated operators were not elliptic.

In \cite{zou-global}, an analogous problem was studied globally (i.e. without the ``artificial boundary'' method of reducing to a local result). In that paper, the analogous operators for $a_{33}$ and $E^2$ were not elliptic but did resemble parabolic operators in an appropriate sense, and hence they admit a parametrix in a calculus first studied by Boutet de Monvel \cite{bdm}.

We thus follow the geometric setup of \cite{convex, ti}, etc., i.e. we will use the ``artificial boundary'' method to reduce the problem to a local recovery argument. Thus, we fix a point $p$, let $\tilde{x}$ satisfy the properties described above, and work in the regime $c\to 0$. We analyze the operators involved for fixed but sufficiently small $c>0$ (in particular we may take it small enough to satisfy dynamical assumptions). We show that for fixed $c$ the operators involved admit parametrices. Then in the inversion argument we take $c\to 0$ to obtain a local recovery argument. Once the local recovery argument is established, the process of obtaining global recovery is exactly the same as in the previous papers.

The particular operators of interest are $N = L\circ I$, where we take our formal adjoint $L$ to have the form
\begin{equation}
\label{scat-formal-adj}
Lv(x,y) = x^{-2}\int_{\mathbb{S}^2}{\chi(x^{\epsilon}\hat\lambda)\exp\left(-\frac{\digamma^2\hat\lambda^2}{2\alpha}\right)v(\gamma_{\lambda,\omega})\,d\mathbb{S}^2(\lambda,\omega)}.
\end{equation}
where $0<\epsilon<1/2$ and $I = I^{\nu}$ as defined in \eqref{inu} for some choice of material parameter $\nu$. Here, $\gamma_{\lambda,\omega}$ is the trajectory (passing through $(x,y)$) with tangent vector $\lambda\partial_x+\omega\cdot\partial_y$, $\hat\lambda = \lambda/x$, and $\alpha$ satisfies
\[\gamma_{\lambda,\omega}-x = \lambda t + \alpha t^2 + O(t^3).\]
Note that if we let $\hat{t} = t/x$, we can rewrite the above equation as
\[\frac{\gamma_{\lambda,\omega}-x}{x^2} = \hat\lambda\hat{t} + \alpha\hat{t}^2 + O(x\hat{t}^3).\]
We note that our choice of formal adjoint is slightly different than those in previous works: namely, the previous works used a compactly supported cutoff which is sufficiently close to a particular Gaussian; here we use a cutoff which at the boundary is \emph{equal} to the Gaussian of reference, in order to simplify computations at the boundary. As before, let $N_{\digamma} = e^{-\digamma/x}Ne^{\digamma/x}$ for some fixed $\digamma>0$.

To show that $N_{\digamma}$ admits a parametrix, we will need to study the inversion of scattering pseudodifferential operators whose symbols are not quite elliptic, but degenerate in a certain way. We study the geometry associated to the symbols' degeneracy, as follows:

Let $M = (0,\infty)_x\times\mathbb{R}_y^2$, and consider the \emph{scattering cotangent bundle} $^{sc}T^*M$. We can view this as the dual bundle of the scattering tangent bundle, whose sections form the space of scattering vector fields $\mathcal{V}_{sc} = x\mathcal{V}_b = \text{span}_{C^{\infty}(M)}(x^2\partial_x,\partial_{y_1},x\partial_{y_2})$; alternatively we can view the bundle as spanned by the sections $\frac{dx}{x^2}$ and $\frac{dy_i}{x}$, $i=1,2$; see \cite{sslaes} and \cite{grenoble} for more details. Let $\mathscr{X} = \overline{^{sc}T^*M}$ be the compactification of $^{sc}T^*M$; this is the natural space on which symbols of scattering pseudodifferential operators live. Note that $\mathscr{X}$ is a manifold with corners, as it has two boundary hypersurfaces, namely a ``fiber infinity'' obtained by radially compactifying each fiber, and a ``base infinity\footnote{The term ``base infinity'' corresponds to the identification of $M$ as a subset of the upper half-sphere $\mathbb{S}_+^3$ and subsequently identifying its interior with $\mathbb{R}^3$ via radial compactification; such an identification sends a neighborhood of $\{x=0\}$ to a ``neighborhood of infinity'' in $\mathbb{R}^3$.}'' corresponding to fibers over the boundary $\{x=0\}$. Let $S\subset\mathscr{X}$ be the fiberwise span of the covector field $\overline{\xi}$, and let $\Sigma$ be the intersection of $S$ with fiber infinity. If we choose (local) coordinates $(x,y_1,y_2)$ such that $x$ is a coordinate associated to the convex foliation and $y_1$ is a coordinate associated to foliation generated by the one-form $\overline{\xi}$, and let $(x,y,\xi,\eta)$ be the corresponding coordinates on the scattering cotangent bundle (so that $(x,y,\xi,\eta)$ is associated to the element $\left(\xi\frac{dx}{x^2}+\eta\cdot\frac{dy}{x}\right)|_{x,y}$), then
\[S = \text{span }\frac{dy_1}{x} = \{\xi=0,\eta_2=0\}\]
and
\[\Sigma = \{\xi=0,\eta_2 = 0, \rho = 0\}\quad\text{where}\quad\rho = \langle(\xi,\eta)\rangle^{-1}.\]
If we let $\tau = (\rho\xi,\rho\eta_2)$, then $(x,y,\rho,\tau)$ give local coordinates on $\mathscr{X}$ near $\Sigma$, in which case we can write $S = \{\tau = 0\}$ and $\Sigma = \{\rho = 0,\tau = 0\}$.

In \cite{ti}, it was shown that $N_{\digamma}\in\Psi^{-1}_{sc}$, and that the principal symbol $\sigma(N_{\digamma})$ is elliptic at fiber infinity except on $\Sigma$, and that it is elliptic at all finite points as well. In \cite{zou-global}, it was shown that $\sigma(N_{\digamma})$ has a subprincipal term which is purely imaginary and is non-degenerate away from the boundary (i.e. in the region $\{x>0\}$), but that it degenerates as a power of $x$ relative to the principal symbol as $x\to 0$. This suggests that near fiber infinity (i.e. near $\rho = 0$) we can write
\begin{equation}
\label{symbasym1}
\sigma(N_{\digamma}) = a_{-1}\rho + ixa_{-2}\rho^2 + a_{-3}\rho^3
\end{equation}
where $a_{-1}$ and $a_{-2}$ are homogeneous of degree $0$ in the fiber coordinates, $a_{-1}$ is positive except on $\Sigma$ where it degenerates quadratically, $a_{-2}$ is nonzero near $\Sigma$, and $a_{-3}\in S^{0,0}$ is positive near $\Sigma\cap\{x=0\}$. Note that we can write
\[a_{-1}(x,y,\tau) = \langle q(x,y,\tau)\tau,\tau\rangle\]
for some matrix $q$ which is positive definite near $\tau = 0$.

Note that away from $x=0$, the symbol is non-elliptic on $\Sigma$ but has a non-degenerate subprincipal part which is purely imaginary (relative to the real principal symbol), and hence its inversion theory can be covered by a calculus first studied by Boutet de Monvel \cite{bdm}; see \cite{zou-global} for more details. One way to understand this symbol calculus is to start with the fiber-compactified cotangent bundle, blow up $\Sigma$ parabolically (thus creating an additonal face), and consider symbols which are conormal to this blown-up space. Explicitly, we let
\[d_{\Sigma} = (|\tau|^2+\rho)^{1/2},\]
and we blow up $\Sigma$ parabolically, so that $|\tau|^2$ and $\rho$ have the same ``order'' in the blowup, to obtain $\mathscr{X}_1 = [\mathscr{X}:\Sigma]$, with $d_{\Sigma}$ the boundary defining function of the new front face. Let $\beta_1:\mathscr{X}_1\to\mathscr{X}$ be the blow-down map.

\begin{figure}[h]
\begin{center}
\includegraphics[trim={1.25in, 8in, 1.25in, 1.25in},clip]{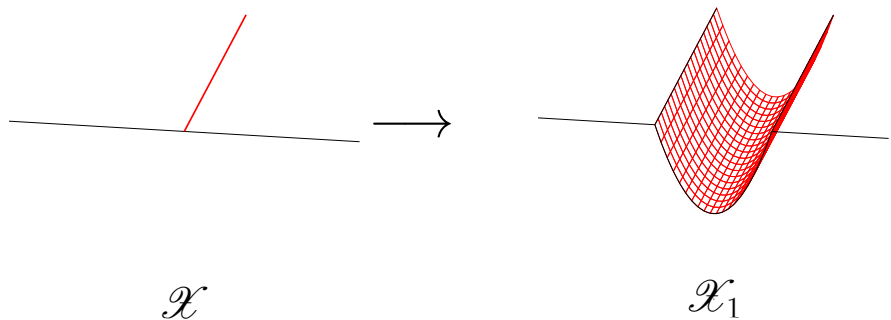}
\end{center}
\caption{The first blow-up.}
\end{figure}
If we let $\tilde\tau = \tau/d_{\Sigma}$ (note that this is a smooth function on the blow-up), then $\rho = d_{\Sigma}^2(1-|\tilde\tau|^2)$, and \eqref{symbasym1} becomes
\begin{equation}
\label{symbasym2}
a = \rho d_{\Sigma}^2\left(\langle q(x,y,\tau)\tilde{\tau},\tilde{\tau}\rangle + ixa_{-2}(1-|\tilde\tau|^2) + a_{-3}d_{\Sigma}^2(1-|\tilde\tau|^2)^2\right).
\end{equation}
Note that, away from $x=0$, and near the new front face $\{d_{\Sigma}=0\}$, the postfactor after $\rho d_{\Sigma}^2$ is uniformly nonzero. Indeed, it would suffice to show the sum of the first two terms is uniformly nonzero, but this follows since the first term is purely real, while the second term is purely imaginary, and neither term vanishes simultaneously. It follows that the symbol is now elliptic with respect to the blown-up space, and one can apply the calculus in \cite{bdm}.

Near $x=0$, the subprincipal symbol degenerates, and thus the postfactor in \eqref{symbasym2} vanishes at 
\[\Gamma := \{x=0,d_{\Sigma}=0,\tilde\tau=0\}.\]
Notice that $\Gamma$ is the lift of $S$ intersected with $x=0$ and $d_{\Sigma}=0$. In particular, it is a sum of terms which vanish as $O(|\tilde\tau|^2)$, $O(x)$, and $O(d_{\Sigma}^2)$. As such, we perform a second blow-up by defining
\[d_{\Gamma} = (x+d_{\Sigma}^2+|\tilde\tau|^2)^{1/2}\]
and letting $\mathscr{X}_2 = [\mathscr{X}_1:\Gamma]$ be the corresponding blow-up where $x^{1/2}$, $d_{\Sigma}$, and $\tilde\tau$ have the same degree of homogeneity, with $d_{\Gamma}$ the new boundary-defining function. Let $\beta_2:\mathscr{X}_2\to\mathscr{X}_1$ be the corresponding blow-down map, and let $\beta:=\beta_1\circ\beta_2:\mathscr{X}_2\to\mathscr{X}$ be the overall blow-down map from $\mathscr{X}_2$ to $\mathscr{X}$.

\begin{figure}[h]
\begin{center}

\includegraphics[trim={1.25in, 8in, 1.25in, 1.25in},clip]{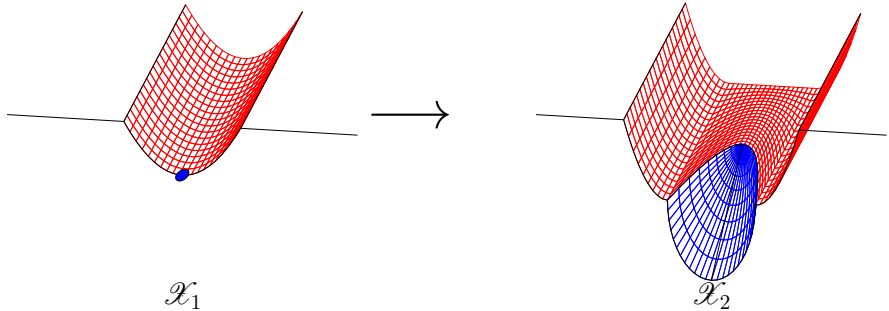}

\end{center}
\caption{The second blow-up.}
\end{figure}

Having defined these blow-ups, we can consider a symbol calculus of symbols conormal to the boundary of $\mathscr{X}_2$, and consider an operator algebra consisting of quantizations of such symbols. This class of operators turns out to be the right class of operators in which one can find a parametrix for $N_{\digamma}$, allowing one to apply the arguments above to prove Theorem \ref{local-recov-thm}, and hence Theorem \ref{global-recov-scat-thm}.

In Section \ref{geo-sec}, we study the geometry of the blow-ups, and in particular for each space we study the space of smooth vector fields tangent to its boundary. In Section \ref{symb-sec}, we study properties of the calculus of symbols conormal to $\partial\mathscr{X}_2$. In Section \ref{op-sec}, we study the corresponding operator algebra, proving that the symbol calculus and operator algebra shares many desirable properties present in the standard scattering calculus. In Section \ref{op-analysis}, we analyze the symbol of $N_{\digamma}$ to show that it is elliptic considered as an element of the operator algebra defined in Section \ref{op-sec}; in particular we show that $N_{\digamma}$ is still a scattering pseudodifferential operator even though we adjusted our formal adjoint from a compactly supported weight to a Gaussian weight. Finally, we apply the machinery constructed to solve the inverse problem in Section \ref{local-recovery-sec}.

\begin{acknowledgments}
The author would like to thank Andr\'as Vasy for helpful comments in this work. The author gratefully acknowledges partial support from the National Science Foundation under grant number DMS-1664683. Much of this article was written as part of the author's Ph.D.\ thesis at Stanford University.
\end{acknowledgments}

\section{Geometry of the blow-ups}
\label{geo-sec}

Recall that, given a smooth map $f:X_1\to X_2$ between two manifolds $X_1$ and $X_2$, and vector fields $V_i$ on $X_i$, that $V_1$ and $V_2$ are \emph{$f$-related} if $f_*\circ V_1 = V_2\circ f$, i.e. for all $p\in X_1$ we have
\[Df_p(V_1)_p = (V_2)_{f(p)}.\]
If $f$ is surjective and $V_1$ and $V_2$ are $f$-related, we will say that $V_2$ \emph{lifts} via $f$ to $V_1$, and we will write $V_2 = f_*V_1$. We will want to study under what conditions a vector field on $\mathscr{X}$ or $\mathscr{X}_1$ lifts, via $\beta_1$ or $\beta_2$ or $\beta$, to a smooth vector field on $\mathscr{X}_1$ or $\mathscr{X}_2$. Note that for $f = \beta_1$, $\beta_2$, or $\beta$, that $f$ is a diffeomorphism of the interiors of its domain and range, and hence if a continuous vector field lifts to a continuous vector field, then such a lift is determined on a dense subset and hence must be unique; as such the base vector field can in some sense be identified with its lift.

To analyze vector fields tangent to the boundary of some manifold (for example, obtained through a blow-up), we use the following observation, formalized as a lemma:
\begin{lemma}
\label{vectestlemma}
Suppose $X$ is a manifold with corners, and let $X^{\circ}$ denote its interior. Suppose $\{y_1,\dots,y_N\}$ is a collection of functions in $C^{\infty}(X)$ such that, for any $p\in\partial X$, there is a neighborhood $U\ni p$ and a subset of functions $\{y_{i_1},\dots,y_{i_n}\}$ such that $(U,(y_{i_1},\dots,y_{i_n}))$ forms a coordinate chart of $X$ near $p$. Suppose $V$ is a $C^{\infty}(X^{\circ})$ vector field on $X^{\circ}$ such that for every $1\le i\le N$, we have that $V(y_i)$, initially defined as a function on $X^{\circ}$, extends smoothly to a $C^{\infty}(X)$ function on all of $X$. Then $V$ extends to a $C^{\infty}(X)$ vector field on all of $X$.
\end{lemma}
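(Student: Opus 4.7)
The plan is to reduce the claim to a local statement in charts around points of $\partial X$, since at interior points $V$ is already smooth by hypothesis. Fixing $p \in \partial X$, I would apply the assumption to obtain a neighborhood $U$ together with a subcollection $(y_{i_1}, \dots, y_{i_n})$ of the given functions forming a chart on $U$ in the manifold-with-corners sense.

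On $U \cap X^\circ$ the standard coordinate expansion of a smooth vector field gives
\[V = \sum_{k=1}^n (V y_{i_k})\,\partial_{y_{i_k}}.\]
By hypothesis each coefficient $V y_{i_k}$ extends to a $C^\infty$ function on all of $X$, hence in particular on $U$, while each coordinate field $\partial_{y_{i_k}}$ is smooth on the entire chart $U$ (including its portion of $\partial X$) because $(y_{i_1},\dots,y_{i_n})$ are coordinates there. Summing, the right-hand side defines a smooth vector field on $U$ extending $V$.

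To produce a global extension I would patch: any two local extensions of $V$ to overlapping neighborhoods must coincide on their overlap because they both agree with $V$ on the dense set $X^\circ$ and are continuous. This yields a well-defined $C^\infty(X)$ vector field restricting to $V$ on $X^\circ$.

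I do not expect a genuine obstacle. The core content is the elementary fact that the components of a vector field in a coordinate basis are recovered by applying the vector field to the coordinate functions; the hypotheses are tailored precisely so that this recovery procedure works at every boundary point, reducing smoothness of $V$ to smoothness of the finitely many scalar functions $V y_i$ that are already assumed to extend.
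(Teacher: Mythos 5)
Your proof is correct and follows essentially the same route as the paper's: localize near a boundary point using the hypothesized chart, write $V = \sum_k (Vy_{i_k})\partial_{y_{i_k}}$, and observe that the coefficients extend smoothly by assumption. The additional patching remark is fine but not needed beyond noting that local smoothness near every boundary point suffices.
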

\begin{proof}
It suffices to show, for every $p\in\partial X$, that $V$ is a $C^{\infty}(U)$ vector field for some neighborhood $U$ of $p$. Choosing $(U,(y_{i_1},\dots,y_{i_n}))$ to be a coordinate chart, as is possible from the hypothesis, and letting $\{\partial_{y_{i_1}},\dots,\partial_{y_{i_n}}\}$ be the corresponding partial derivative vector fields on $U$, we have that
\[V = \sum_{j=1}^n{V(y_{i_j})\partial_{y_{i_j}}}\quad\text{in }U.\]
Since $V(y_{i_j})\in C^{\infty}(X)$ for each $j$, it follows that the coefficients of $V$ are in $C^{\infty}(U)$, i.e. $V$ is a $C^{\infty}(U)$ vector field, as desired.
\end{proof}

We recall that $\mathscr{X} = \overline{^{sc}T^*M}$ with $M = (0,\infty)_x\times\mathbb{R}^{n-1}_y$ (in practice we take $n=3$, but we can work with a general $n\ge 3$ as well), $\mathscr{X}_1$ is obtained from $\mathscr{X}$ by blowing up $\Sigma = \{\rho = 0, \tau = 0\}$ parabolically, and $\mathscr{X}_2$ is obtained from $\mathscr{X}_1$ by blowing up $\Gamma = \{x=0,d_{\Sigma} = 0,\tilde\tau=0\}$ in a way such that $x^{1/2}$, $d_{\Sigma}$, and $\tilde\tau$ have the same degree of homogeneity.

We first study $\mathscr{X}_1$, and in particular the relationship between vector fields on $\mathscr{X}_1$ tangent to its boundary with the corresponding vector fields on $\mathscr{X}$. Note that on $\mathscr{X}_1$ we have that $d_{\Sigma}$, $\tilde\tau_i = \tau_i/d_{\Sigma}$, and $\tilde{\rho} = \rho/d_{\Sigma}^2$ are smooth, with $|\tilde\tau|^2+\tilde\rho = 1$. Moreover, for any point in $\partial\mathscr{X}_1$ there exists a neighborhood where some subset of the following functions form local coordinates in the neighborhood:
\[\{x,y_1,\dots,y_n,d_{\Sigma},\tilde\tau_1,\dots,\tilde\tau_{n-1},\tilde\rho\}.\]
Indeed, from the equation $|\tilde\tau|^2+\tilde\rho = 1$ we can cover a neighborhood of $\partial\mathscr{X}_1$ by the union
\[\{|\tilde\tau|^2<2/3\}\cup\bigcup_{i=1}^{n-1}{\{\tilde\rho<1/2,|\tilde\tau_i|>\epsilon\}}\]
for a sufficiently small but fixed $\epsilon>0$ (we can in fact take any $\epsilon<\frac{1}{2(n-1)}$). In the first region, we have that $(x,y,d_{\Sigma},\tilde\tau)$ form local coordinates, with $x$ and $d_{\Sigma}$ providing boundary-defining functions for the two faces of $\partial\mathscr{X}_1$ in this region, while in the region $\{\tilde\rho<1/2,|\tilde\tau_i|>\epsilon\}$ local coordinates are given by $(x,y,d_{\Sigma},\tilde\rho,\tilde\tau_1,\dots,\widehat{\tilde\tau_i},\dots,\tilde\tau_{n-1})$, where $\widehat{\tilde\tau_i}$ denotes excluding $\tilde\tau_i$ from the list; moreover in this case $x$, $d_{\Sigma}$, and $\tilde\rho$ provide boundary defining functions of the three faces in this region.

We also recall that $S = \{\tau = 0\}\subset\mathscr{X}$. Let $S_1$ be the lift of $S$ to $\mathscr{X}_1$; explicitly this subset is $\{\tilde\tau = 0\}$.

We now have:
\begin{lemma}
Let $V$ be a $C^{\infty}(\mathscr{X})$-vector field on $\mathscr{X}$ which is tangent to $\partial \mathscr{X}$. Then
\begin{enumerate}
\item $d_{\Sigma}V$ lifts to a $C^{\infty}(\mathscr{X}_1)$-vector field on $\mathscr{X}_1$ which is tangent to $\partial \mathscr{X}_1$.
\item If in addition we have that $V$ is tangent to $S$, then $V$ itself lifts to a $C^{\infty}(\mathscr{X}_1)$-vector field on $\mathscr{X}_1$ tangent to $\partial \mathscr{X}_1$, and furthermore it is tangent to $S_1$.
\end{enumerate}
\end{lemma}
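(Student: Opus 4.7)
My plan is to apply Lemma \ref{vectestlemma} on $\mathscr{X}_1$ to the collection of test functions $\{x, y_1, \ldots, y_{n-1}, d_\Sigma, \tilde\tau_1, \ldots, \tilde\tau_{n-1}, \tilde\rho\}$, which by the preceding discussion contains local coordinates at every boundary point of $\mathscr{X}_1$. Since $\beta_1$ is a diffeomorphism away from $\beta_1^{-1}(\Sigma)$, both parts of the claim are trivial there, so I may work locally near $\Sigma$ using the coordinates $(x,y,\rho,\tau)$ on $\mathscr{X}$. There $V = a\partial_x + b\cdot\partial_y + c\partial_\rho + d\cdot\partial_\tau$ with smooth coefficients, and tangency of $V$ to $\partial\mathscr{X}$ forces $a = xa'$ and $c = \rho c'$ for smooth $a', c'$.

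Next I will compute $V$ acting on each test function, using $d_\Sigma^2 = |\tau|^2 + \rho$. The three nontrivial formulas read
\[V(d_\Sigma) = \frac{\tau\cdot d + \tfrac{1}{2}\rho c'}{d_\Sigma}, \qquad V(\tilde\tau_i) = \frac{d_i}{d_\Sigma} - \frac{\tilde\tau_i\, V(d_\Sigma)}{d_\Sigma}, \qquad V(\tilde\rho) = \tilde\rho c' - \frac{2\tilde\rho\, V(d_\Sigma)}{d_\Sigma}.\]
Substituting $\tau = d_\Sigma\tilde\tau$ and $\rho = d_\Sigma^2\tilde\rho$ shows $V(d_\Sigma)$ is already smooth on $\mathscr{X}_1$, but $V(\tilde\tau_i)$ and $V(\tilde\rho)$ retain an apparent $d_i/d_\Sigma$ singularity. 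For part (1), multiplying by $d_\Sigma$ clears these denominators, and a short rearrangement exhibits $d_\Sigma V(x) \in x\cdot C^\infty(\mathscr{X}_1)$, $d_\Sigma V(d_\Sigma) \in d_\Sigma\cdot C^\infty(\mathscr{X}_1)$, and $d_\Sigma V(\tilde\rho) \in \tilde\rho\cdot C^\infty(\mathscr{X}_1)$. Lemma \ref{vectestlemma} then delivers the smooth extension to $\mathscr{X}_1$, simultaneously with tangency to each boundary hypersurface.

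For part (2), the extra hypothesis that $V$ is tangent to $S = \{\tau = 0\}$ allows me to write $d_i = \sum_j \tau_j g_{ij}$ with smooth $g_{ij}$. This contributes an additional factor of $d_\Sigma$ to $\tau\cdot d$ and, more importantly, makes $d_i/d_\Sigma = \sum_j \tilde\tau_j g_{ij}$ smooth on $\mathscr{X}_1$ on its own. The apparent $1/d_\Sigma$ factors in $V(\tilde\tau_i)$ and $V(\tilde\rho)$ now cancel without the extra $d_\Sigma$ prefactor, so $V$ itself lifts smoothly, and the same rearrangement gives tangency to $\partial \mathscr{X}_1$. Tangency to $S_1 = \{\tilde\tau = 0\}$ then falls out from observing that each summand of $V(\tilde\tau_i)$ is visibly proportional to $\tilde\tau$. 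The main obstacle is purely algebraic bookkeeping: one must keep careful track of which factors of $d_\Sigma$ come from the change of variables and which from the tangency hypothesis, to confirm that the right power of $d_\Sigma$ (or $\tilde\rho$, or $x$) appears in each expression to yield both smoothness and the appropriate vanishing at each boundary face.
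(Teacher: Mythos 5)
Your proposal is correct and follows essentially the same route as the paper: both apply Lemma \ref{vectestlemma} to the test functions $\{x,y_i,d_{\Sigma},\tilde\tau_i,\tilde\rho\}$, reduce via tangency to $\partial\mathscr{X}$ (and to $S$) to coefficients of the form $xa'$, $\rho c'$, $d_i=\sum_j\tau_jg_{ij}$, and check that the resulting expressions are smooth multiples of the appropriate boundary-defining functions after substituting $\tau=d_{\Sigma}\tilde\tau$, $\rho=d_{\Sigma}^2\tilde\rho$. The only cosmetic difference is that the paper organizes the computation by evaluating the generating vector fields $\rho\partial_{\rho}$ and $d_{\Sigma}\partial_{\tau_j}$ against the test functions, while you carry the general coefficients through directly; the content is identical.
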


\begin{proof}
In light of Lemma \ref{vectestlemma}, it suffices to compute $d_{\Sigma}V$ (or $V$ itself if it is tangent to $S$) applied to the functions
\begin{equation}
\label{x1coords}
x, y_i, d_{\Sigma}, \tilde\tau_i,\text{ and }\tilde\rho\text{ for }1\le i\le n-1.
\end{equation}
Note that $V$ can be written as a $C^{\infty}(\mathscr{X})$ combination of the vector fields 
\begin{equation}
\label{xvf}
x\partial_x, \partial_{y_j}, \rho\partial_{\rho}, \text{ and }\partial_{\tau_j}\text{ for }1\le j\le n-1,
\end{equation}
while if it is furthermore tangent to $S$, then it can be written as a combination of
\begin{equation}
\label{xtanvf}
x\partial_x, \partial_{y_j}, \rho\partial_{\rho}\text{ and }\tau_i\partial_{\tau_j}\text{ for }1\le i,j\le n-1.
\end{equation}
We now note that $x\partial_x(x) = x$, $\partial_{y_i}(y_i)=1$, and all of the other preceding vector fields applied to $x$ (resp. $y_i$) yield $0$; in addition, $x\partial_x$ and $\partial_{y_i}$ applied to the remaining functions in \eqref{x1coords} also yield zero. It thus remains to compute the vector fields $\rho\partial_{\rho}$ and $d_{\Sigma}\partial_{\tau_j}$ against the functions $d_{\Sigma}$, $\tilde\tau_i$, and $\tilde\rho$. We have
\[\rho\partial_{\rho}(d_{\Sigma}) = \frac{\rho}{2d_{\Sigma}} = \frac{1}{2}d_{\Sigma}\tilde\rho,\quad d_{\Sigma}\partial_{\tau_j}(d_{\Sigma}) = \tau_j = d_{\Sigma}\tilde\tau_j\]
and
\[\rho\partial_{\rho}\left(\frac{\rho}{d_{\Sigma}^2}\right) = \frac{\rho}{d_{\Sigma}^2}-2\frac{\rho}{d_{\Sigma}^3}\frac{\rho}{2d_{\Sigma}} = \tilde{\rho} - \tilde{\rho}^2,\quad d_{\Sigma}\partial_{\tau_j}\left(\frac{\rho}{d_{\Sigma}^2}\right) = -2\frac{\rho}{d_{\Sigma}^3}\tau_j = -2\tilde{\rho}\tilde{\tau_j}\]
and
\[\rho\partial_{\rho}\left(\frac{\tau_i}{d_{\Sigma}}\right) = -\frac{\tau_i}{d_{\Sigma}^2}\frac{\rho}{2d_{\Sigma}} = -\frac{1}{2}\tilde\rho\tilde\tau_i,\quad d_{\Sigma}\partial_{\tau_j}\left(\frac{\tau_i}{d_{\Sigma}}\right) = \delta_{ij} - \frac{\tau_i}{d_{\Sigma}^2}\tau_j = \delta_{ij}-\tilde\tau_i\tilde\tau_j.\]
All of these functions are in $C^{\infty}(\mathscr{X}_1)$, and moreover $\rho\partial_{\rho}$ and $d_{\Sigma}\partial_{\tau_j}$ applied to $d_{\Sigma}$ and $\tilde\rho$ result in $C^{\infty}(\mathscr{X}_1)$ multiples of the respective boundary-defining functions, i.e. they are tangent to $\partial\mathscr{X}_1$. Moreover, since $\tau_i\partial_{\tau_j} = \tilde\tau_i d_{\Sigma}\partial_{\tau_j}$, it follows that if $V$ is tangent to $S$, then $V$ itself applied to the functions in \eqref{x1coords} are in $C^{\infty}(\mathscr{X}_1)$, with $V$ applied to the boundary-defining functions of $\partial\mathscr{X}_1$ resulting in multiples of the respective boundary-defining functions, and since $x\partial_x(\tilde\tau_k) = \partial_{y_j}(\tilde\tau_k) = 0$ and 
\[\rho\partial_{\rho}(\tilde\tau_k) = -\frac{1}{2}\tilde\rho\tilde\tau_k,\quad \tau_i\partial_{\tau_j}(\tilde\tau_k) = \tilde\tau_i d_{\Sigma}\partial_{\tau_j}(\tilde\tau_k) = \tilde\tau_i(\delta_{jk}-\tilde\tau_j\tilde\tau_k),\]
it follows that $V(\tilde\tau_k)$ vanishes at $\tilde\tau=0$. Hence, its $C^{\infty}(\mathscr{X}_1)$-lift is tangent to $S_1 = \{\tilde\tau=0\}$.
\end{proof}

We have a partial converse to the lifting result. Let $\mathcal{V}_1$ denote the space of $C^{\infty}(\mathscr{X}_1)$ vector fields on $\mathscr{X}_1$ which are tangent to $\partial\mathscr{X}_1$, and let $\mathcal{V}_{1,\beta_1}$ denote the vector fields $V\in\mathcal{V}_1$ which are lifts of $C^{\infty}(\mathscr{X})$ vector fields on $\mathscr{X}$. Note that if $V\in\mathcal{V}_{1,\beta_1}$, then $(\beta_1)_*V$ is automatically tangent to $\partial\mathscr{X}$, since the boundary-defining functions of $\partial\mathscr{X}$ are products of boundary-defining functions of $\partial\mathscr{X}_1$.
\begin{lemma}
\label{x1conv}
Any $V\in\mathcal{V}_1$ can be written as a finite sum
\[V = \left(\sum_i{a_iV_i}\right) + \left(\sum_j{b_jd_{\Sigma}W_j}\right)\]
where $a_i,b_j\in C^{\infty}(\mathscr{X}_1)$, $V_i\in\mathcal{V}_{1,\beta_1}$ with $(\beta_1)_*V_i$ tangent to $S$, and $W_j$ are lifts\footnote{More accurately, $d_{\Sigma}W_j$ are lifts in the interior of vector fields which are the product of $d_{\Sigma}$ and a vector field which extends smoothly to a $C^{\infty}(\mathscr{X})$ vector field on $\mathscr{X}$.} of $C^{\infty}(\mathscr{X})$ vector fields on $\mathscr{X}$ which are tangent to $\partial\mathscr{X}$ (but not necessarily to $S$). Moreover, if $V$ is tangent to $S_1$, then only the first sum is needed.
\end{lemma}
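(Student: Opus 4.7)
The strategy is to work in the two types of coordinate charts on $\mathscr{X}_1$ described before the statement, namely the first chart $\{|\tilde\tau|^2<2/3\}$ with coordinates $(x,y,d_\Sigma,\tilde\tau)$ and the second charts $\{\tilde\rho<1/2,|\tilde\tau_i|>\epsilon\}$ with coordinates $(x,y,d_\Sigma,\tilde\rho,(\tilde\tau_k)_{k\neq i})$, verify the decomposition locally, and patch via a partition of unity subordinate to the cover. In each chart one expands $V$ in the local coordinate basis of fields tangent to $\partial\mathscr{X}_1$, reducing the problem to expressing each basis vector field in the required form.

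In the first chart, the basis is $\{x\partial_x,\partial_{y_k},d_\Sigma\partial_{d_\Sigma},\partial_{\tilde\tau_j}\}$. The first two are already lifts of the $C^\infty(\mathscr{X})$-fields of the same name, which are tangent to $S$. For $d_\Sigma\partial_{d_\Sigma}$, the formulas in the preceding lemma directly give the identity
\[
d_\Sigma\partial_{d_\Sigma}=(\beta_1)^{*}\bigl(2\rho\partial_\rho+\textstyle\sum_j\tau_j\partial_{\tau_j}\bigr),
\]
a lift of a tangent-to-$S$ field. For $\partial_{\tilde\tau_j}$, the same formulas give $(\beta_1)^{*}(d_\Sigma\partial_{\tau_j})=d_\Sigma\tilde\tau_j\partial_{d_\Sigma}+\partial_{\tilde\tau_j}-\tilde\tau_j\sum_k\tilde\tau_k\partial_{\tilde\tau_k}$; solving for $\partial_{\tilde\tau_j}$ and eliminating the radial $\tilde\tau$-combination using the lifts of $\rho\partial_\rho$ and $\sum_j\tau_j\partial_{\tau_j}$ leads to
\[
\partial_{\tilde\tau_j}=(\beta_1)^{*}(d_\Sigma\partial_{\tau_j})-\frac{2\tilde\tau_j}{1-|\tilde\tau|^2}(\beta_1)^{*}(\rho\partial_\rho).
\]
The first term has the form $d_\Sigma W_j$ (in the sense of the lemma's footnote, with $W_j$ corresponding to the non-tangent-to-$S$ field $\partial_{\tau_j}$), and the second is a $C^\infty(\mathscr{X}_1)$-multiple of a tangent-to-$S$ lift, the coefficient being smooth in the chart since $1-|\tilde\tau|^2>1/3$.

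In a second chart, the basis is $\{x\partial_x,\partial_{y_k},d_\Sigma\partial_{d_\Sigma},\tilde\rho\partial_{\tilde\rho},\partial_{\tilde\tau_k}\}_{k\neq i}$, and the same identity for $d_\Sigma\partial_{d_\Sigma}$ holds. The remaining fields $\tilde\rho\partial_{\tilde\rho}$ and $\partial_{\tilde\tau_k}$ are handled by applying $\rho\partial_\rho$, $\sum_j\tau_j\partial_{\tau_j}$, and $\tau_i\partial_{\tau_j}$ (all tangent to $S$) to the second-chart coordinates and solving the resulting linear system. The crucial point is that this chart is disjoint from $S_1=\{\tilde\tau=0\}$, so $1/\tilde\tau_i$ is smooth on it; since $d_\Sigma\partial_{\tau_j}=\tilde\tau_i^{-1}(\tau_i\partial_{\tau_j})$ in the interior, any $d_\Sigma W_j$ contribution that arises rewrites as a smooth multiple of the tangent-to-$S$ lift $(\beta_1)^{*}(\tau_i\partial_{\tau_j})$, so the whole second-chart decomposition is in first-sum form. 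A partition of unity then assembles the local decompositions into a global one.

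For the tangent-to-$S_1$ refinement, in the first chart the tangency condition gives $e_j:=V(\tilde\tau_j)$ vanishing at $\tilde\tau=0$, so Taylor-expanding in the smooth variables $\tilde\tau_l$ produces $e_j=\sum_l\tilde\tau_l\,e_{jl}$ with $e_{jl}\in C^\infty(\mathscr{X}_1)$. Multiplying the identity above by $\tilde\tau_l$ yields
\[
\tilde\tau_l\partial_{\tilde\tau_j}=(\beta_1)^{*}(\tau_l\partial_{\tau_j})-\frac{2\tilde\tau_l\tilde\tau_j}{1-|\tilde\tau|^2}(\beta_1)^{*}(\rho\partial_\rho),
\]
which is a smooth combination of tangent-to-$S$ lifts (the first term is now smooth because $\tau_l\partial_{\tau_j}$ is itself tangent to $S$). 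Hence $\sum_j e_j\partial_{\tilde\tau_j}=\sum_{j,l}e_{jl}\,\tilde\tau_l\partial_{\tilde\tau_j}$ is absorbed into the first sum; together with the second-chart analysis, which was already of first-sum form, this proves the refinement. The main obstacle is the bookkeeping in the second chart, where the replacement of $\tilde\tau_i$ by $\tilde\rho$ makes the pullback computations less transparent and one must verify compatibility of the coefficient functions over the chart overlap; the chart-independent identity for $d_\Sigma\partial_{d_\Sigma}$ serves as a useful consistency check.
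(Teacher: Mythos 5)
Your argument is correct, and it runs in the opposite direction from the paper's. The paper pushes $V$ down to $\mathscr{X}$, writes $(\beta_1)_*V = (V\rho)\partial_\rho + (V\tau)\cdot\partial_\tau + (Vx)\partial_x + (Vy)\cdot\partial_y$, and then factors each coefficient (e.g.\ $V\tau = (Vd_\Sigma)\tilde\tau + d_\Sigma(V\tilde\tau)$, with $Vd_\Sigma \in d_\Sigma C^\infty(\mathscr{X}_1)$ by tangency) so that each piece is visibly of the required form; this is chart-free on $\mathscr{X}_1$ and needs no partition of unity. You instead expand $V$ in the upstairs chart frames and \emph{invert} the lifting formulas from the preceding lemma, obtaining explicit identities such as $d_\Sigma\partial_{d_\Sigma} = (\beta_1)^*\bigl(2\rho\partial_\rho + \sum_j\tau_j\partial_{\tau_j}\bigr)$ and $\partial_{\tilde\tau_j} = (\beta_1)^*(d_\Sigma\partial_{\tau_j}) - \tfrac{2\tilde\tau_j}{1-|\tilde\tau|^2}(\beta_1)^*(\rho\partial_\rho)$, both of which I have checked against the computations in the preceding lemma and which are correct (as is the smoothness of the coefficient on $\{|\tilde\tau|^2<2/3\}$). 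What your route buys is an explicit basis-level description of $\mathcal{V}_1$ in terms of lifts, and the clean observation that the $d_\Sigma W_j$ terms are only genuinely needed near $S_1$, since away from $S_1$ one has $d_\Sigma\partial_{\tau_j} = \tilde\tau_i^{-1}\tau_i\partial_{\tau_j}$; the cost is chart-by-chart bookkeeping and a partition of unity. Two small points: in the second chart you assert without verification that the linear system expressing $\tilde\rho\partial_{\tilde\rho}$ and $\partial_{\tilde\tau_k}$ in terms of the tangent-to-$S$ lifts is solvable over $C^\infty$ of the chart --- it is (one needs $1-\tilde\rho$ and $\tilde\tau_i$ bounded away from zero there), but a line of justification would be appropriate; and the closing remark about ``compatibility of the coefficient functions over the chart overlap'' is a red herring --- a partition of unity requires no such compatibility, one simply sums the local decompositions.
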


\begin{proof} Let $V\in\mathcal{V}_1$. We can write
\[(\beta_1)_*V = (V\rho)\partial_{\rho}+(V\tau)\cdot\partial_{\tau} + (Vx)\partial x + (Vy)\cdot\partial_y\]
where, to make sense of the RHS as a vector field on $\mathscr{X}$, we interpret $V\rho$ as $(\beta_1)^*V\rho$, etc.. Note that all coefficients are in $C^{\infty}(\mathscr{X}_1)$; moreover as $\partial_y$ is tangent to $\partial\mathscr{X}_1$ (and is a $C^{\infty}(\mathscr{X})$ vector field on $\mathscr{X}$), it follows that $(Vy)\cdot\partial_y\in C^{\infty}(\mathscr{X}_1)\otimes\mathcal{V}_{1,\beta_1}$. Furthermore, since $x$ is still a boundary-defining function for one of the faces in $\partial\mathscr{X}_1$, it follows (because $V$ is tangent to $\partial\mathscr{X}_1$) that $Vx = xa$ where $a\in C^{\infty}(\mathscr{X}_1)$, and $x\partial_x$ is tangent to $\mathscr{X}_1$, so $(Vx)\partial x = a\cdot x\partial_x\in C^{\infty}(\mathscr{X}_1)\otimes\mathcal{V}_{1,\beta_1}$. The same ends up being true for $\rho$, since $\{\rho=0\}$ is contained in the boundary of $\partial\mathscr{X}$ (more specifically, since $\rho = \tilde{\rho}d_{\Sigma}^2$ where $\tilde{\rho}\in C^{\infty}(\mathscr{X}_1)$, it follows that
\[V\rho = (V\tilde{\rho})d_{\Sigma}^2 + 2\tilde{\rho}(Vd_{\Sigma})d_{\Sigma};\]
noting that $\tilde{\rho}$ and $d_{\Sigma}$ are boundary-defining functions of two of the faces in $\partial\mathscr{X}$, it follows that $V\tilde{\rho}$ and $Vd_{\Sigma}$ are $C^{\infty}(\mathscr{X}_1)$ multiples of $\tilde{\rho}$ and $d_{\Sigma}$, respectively, so that overall $V\rho$ is a $C^{\infty}(\mathscr{X}_1)$ multiple of $\tilde{\rho}d_{\Sigma}^2 = \rho$). Hence, $(V\rho)\partial_{\rho}$ is a $C^{\infty}(\mathscr{X}_1)$ multiple of $\rho\partial_{\rho}$.

It remains to analyze the term $(V\tau)\cdot\partial_{\tau}$. Since
\[\tau = d_{\Sigma}\tilde\tau\implies V\tau = (Vd_{\Sigma})\tilde\tau + d_{\Sigma}(V\tilde\tau),\]
it follows that $(V\tau)\cdot\partial_{\tau}$ is a sum of the terms $(Vd_{\Sigma})\tilde\tau\cdot\partial_{\tau}$, which is a $C^{\infty}(\mathscr{X}_1)$ multiple of $d_{\Sigma}\tilde\tau\cdot\partial_{\tau} = \tau\cdot\partial_{\tau}$ since $Vd_{\Sigma}$ is a smooth multiple of $d_{\Sigma}$, as well as $d_{\Sigma}(V\tilde\tau)\cdot\partial_{\tau}$, which is a sum of $C^{\infty}(\mathscr{X}_1)$ multiples of $d_{\Sigma}\partial_{\tau_i}$ (i.e. of the form $b_jd_{\Sigma}W_j$ in the lemma statement). 

Finally, we note that if $V$ is tangent to $S_1$, then $V\tilde\tau_i = \sum{a_j\tilde\tau_j}$ for some $a_j\in C^{\infty}(\mathscr{X}_1)$, in which case
\[d_{\Sigma}(V\tilde\tau_i)\partial_{\tau_i} = \left(\sum_j{a_jd_{\Sigma}\tilde\tau_j}\right)\partial_{\tau_i} = \sum_j\left(a_j\tau_j\partial_{\tau_i}\right)\]
and hence is a sum of $C^{\infty}(\mathscr{X}_1)$ multiples of $\tau_j\partial_{\tau_i}$.
\end{proof}
For the second blow-up $\beta_2:\mathscr{X}_2\to\mathscr{X}_1$, we have similar results. We note that on $\mathscr{X}_2$, the functions $d_{\Gamma}$, $\frac{d_{\Sigma}}{d_{\Gamma}}$, $\frac{\tilde\tau_i}{d_{\Gamma}}$, and $\frac{x}{d_{\Gamma}^2}$ are in $C^{\infty}(\mathscr{X}_2)$. Furthermore, the second blow-down $\beta_2$ is a diffeomorphism away from the second front face, i.e. away from $\Gamma = \{x=0,d_{\Sigma}=0,d_{\Gamma}=0\}$ in $\mathscr{X}_1$, it suffices to study the geometry near the second front face in $\mathscr{X}_2$ (i.e. near $\Gamma$ in $\mathscr{X}_1$), as the geometry is unchanged away from there. As such, we note that any point in $\partial\mathscr{X}_2$ near the second front face admits a neighborhood where a subset of the following functions form local coordinates:
\[\left\{\frac{x}{d_{\Gamma}^2},y_1,\dots,y_{n-1},d_{\Gamma},\frac{d_{\Sigma}}{d_{\Gamma}}, \frac{\tilde\tau_1}{d_{\Gamma}},\dots,\frac{\tilde\tau_{n-1}}{d_{\Gamma}}\right\}.\]
Moreover, the boundary-defining functions for $\partial\mathscr{X}_2$ near the second front face are given by $d_{\Gamma}$, $\frac{x}{d_{\Gamma}^2}$, and $\frac{d_{\Sigma}}{d_{\Gamma}}$. The reasoning is similar to the reasoning for the case of $\mathscr{X}_1$.

\begin{lemma}
Let $V\in\mathcal{V}_1$. Then
\begin{enumerate}
\item $d_{\Gamma}V$ lifts to a smooth vector field on $\mathscr{X}_2$ which is tangent to $\partial \mathscr{X}_2$.
\item If in addition we have that $V$ is tangent to $S_1$, then $V$ itself lifts to a smooth vector field on $\mathscr{X}_2$ tangent to $\partial \mathscr{X}_2$.
\end{enumerate}
\end{lemma}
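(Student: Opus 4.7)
The plan is to mimic the proof of the analogous lemma for the first blow-up, applying Lemma~\ref{vectestlemma} with the test functions that coordinatize $\partial\mathscr{X}_2$. Since $\beta_2$ is a diffeomorphism away from the second front face, the claim is automatic there, so I may restrict attention to a neighborhood of the new front face, where the candidate coordinate functions are $x/d_\Gamma^2$, the $y_i$, $d_\Gamma$, $d_\Sigma/d_\Gamma$, and the $\tilde\tau_j/d_\Gamma$. Writing $X=x/d_\Gamma^2$, $D=d_\Sigma/d_\Gamma$, and $T_j=\tilde\tau_j/d_\Gamma$, tangency of $V$ to $\partial\mathscr{X}_1$ gives $Vx=xa$ and $Vd_\Sigma=d_\Sigma b$ for some $a,b\in C^\infty(\mathscr{X}_1)$, while $V\tilde\tau_j$ is merely smooth on $\mathscr{X}_1$ (vanishing at $\tilde\tau=0$ precisely when $V$ is tangent to $S_1$).

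The heart of the argument is the identity
\[
V d_\Gamma \;=\; \frac{Vx + 2 d_\Sigma\,V d_\Sigma + 2\tilde\tau\cdot V\tilde\tau}{2 d_\Gamma} \;=\; \frac{d_\Gamma}{2}\bigl(Xa + 2D^2 b\bigr) + T\cdot V\tilde\tau,
\]
obtained by differentiating $d_\Gamma=\sqrt{x+d_\Sigma^2+|\tilde\tau|^2}$ and substituting $x=d_\Gamma^2X$, $d_\Sigma=d_\Gamma D$, $\tilde\tau=d_\Gamma T$. The right-hand side is manifestly smooth on $\mathscr{X}_2$, but $T\cdot V\tilde\tau$ need not carry a factor of $d_\Gamma$ in general, which is exactly the obstruction to $V$ (rather than $d_\Gamma V$) being tangent to $\{d_\Gamma=0\}$.

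For part (1), multiplying by an extra $d_\Gamma$ converts $T\cdot V\tilde\tau$ into $\tilde\tau\cdot V\tilde\tau$, which is smooth on $\mathscr{X}_1$, and analogous quotient-rule computations show that $d_\Gamma V$ applied to $X$, $D$, and each $T_j$ yields a smooth function on $\mathscr{X}_2$; tangency is then visible directly, e.g.\ $d_\Gamma V(X)=d_\Gamma Xa-2X\,Vd_\Gamma$ is manifestly a $C^\infty(\mathscr{X}_2)$-multiple of $X$, and similarly for $D$ (no constraint is required on $d_\Gamma V(T_j)$, since $T_j$ is not a boundary-defining function). For part (2), tangency of $V$ to $S_1$ gives $V\tilde\tau_j=\sum_k c_{jk}\tilde\tau_k = d_\Gamma \sum_k c_{jk}T_k$, so that $T\cdot V\tilde\tau$ already contains the missing factor of $d_\Gamma$; hence $Vd_\Gamma$ is itself a smooth multiple of $d_\Gamma$, and each derivative of $X$, $D$, $T_j$ becomes smooth on $\mathscr{X}_2$ without the extra $d_\Gamma$.

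The main obstacle is purely the careful bookkeeping of $d_\Gamma$ powers arising from three competing sources -- the denominators in the test functions $X,D,T_j$, the $d_\Gamma$ produced by the quotient rule on $d_\Gamma=\sqrt{\cdots}$, and the cancelling factors from the identifications $x=d_\Gamma^2 X$, $d_\Sigma=d_\Gamma D$, $\tilde\tau=d_\Gamma T$. Once these are tracked, both statements follow from Lemma~\ref{vectestlemma}.
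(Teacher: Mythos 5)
Your proof is correct and follows essentially the same route as the paper: restrict to a neighborhood of the second front face, test against the coordinate functions $x/d_\Gamma^2$, $d_\Gamma$, $d_\Sigma/d_\Gamma$, $\tilde\tau_j/d_\Gamma$, and invoke Lemma~\ref{vectestlemma}. The only difference is organizational --- the paper first decomposes $V$ into the generators $x\partial_x$, $\partial_{y_j}$, $d_\Sigma\partial_{d_\Sigma}$, $\partial_{\tilde\tau_j}$ (resp.\ $\tilde\tau_i\partial_{\tilde\tau_j}$) and tabulates each pairing, whereas you work with a general $V$ via the single identity for $Vd_\Gamma$ and the quotient rule, which isolates the same obstruction.
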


\begin{proof}
Since the second blow-down $\beta_2$ is a diffeomorphism away from $\Gamma$ in $\mathscr{X}_1$, it suffices to analyze $V$ in a neighborhood of $\Gamma$. In particular, $(x,y,d_{\Sigma},\tilde\tau)$ forms local coordinates of $\mathscr{X}_1$ near $\Gamma$, so vector fields in $\mathcal{V}_1$ can be written as $C^{\infty}(\mathscr{X}_1)$ combinations of the vector fields $x\partial_x$, $\partial_{y_j}, d_{\Sigma}\partial_{d_{\Sigma}}$, and $\partial_{\tilde\tau_j}$ for $1\le j\le n-1$ near $\Gamma$; furthermore, vector fields which are also tangent to $S_1$ can be written as a combination of $x\partial_x$, $\partial_{y_j}$, $d_{\Sigma}\partial_{d_{\Sigma}}$, and $\tilde\tau_i\partial_{\tilde\tau_j}$ for $1\le i,j\le n-1$.

We test vector fields like $d_{\Sigma}\partial_{d_{\Sigma}}$, $d_{\Gamma}\partial_{\tilde\tau_j}$, and $x\partial_x$ against the functions
\[d_{\Gamma},\frac{d_{\Sigma}}{d_{\Gamma}},\frac{\tilde\tau_i}{d_{\Gamma}},\frac{x}{d_{\Gamma}^2},\quad 1\le i\le n-1.\]
(Note that we can ignore the vector fields $\partial_{y_j}$ and the coordinates functions $y_i$ for similar reasons as before.)

Thus, we first apply the vector fields to $d_{\Gamma}$, to get
\begin{align*}
d_{\Sigma}\partial_{d_{\Sigma}}(d_{\Gamma}) &= d_{\Sigma}\frac{d_{\Sigma}}{d_{\Gamma}} = d_{\Gamma}\left(\frac{d_{\Sigma}}{d_{\Gamma}}\right)^2, \\
d_{\Gamma}\partial_{\tilde\tau_j}(d_{\Gamma}) &= d_{\Gamma}\frac{\tilde\tau_j}{d_{\Gamma}}, \\
x\partial_x(d_{\Gamma}) &= \frac{x}{2d_{\Gamma}} = d_{\Gamma}\left(\frac{x}{2d_{\Gamma}^2}\right).
\end{align*}
Note that all of the functions on the right-hand sides are smooth on $\mathscr{X}_2$ and vanish at $\{d_{\Gamma} = 0\}$.

We now check
\begin{align*}
d_{\Sigma}\partial_{d_{\Sigma}}\left(\frac{d_{\Sigma}}{d_{\Gamma}}\right) &= d_{\Sigma}\left(\frac{1}{d_{\Gamma}} - \frac{d_{\Sigma}}{d_{\Gamma}^2}\frac{d_{\Sigma}}{d_{\Gamma}}\right) = \frac{d_{\Sigma}}{d_{\Gamma}}\left(1-\left(\frac{d_{\Sigma}}{d_{\Gamma}}\right)^2\right),\\
d_{\Gamma}\partial_{\tilde\tau_j}\left(\frac{d_{\Sigma}}{d_{\Gamma}}\right) &= -d_{\Gamma}\frac{d_{\Sigma}}{d_{\Gamma}^2}\frac{\tilde\tau_j}{d_{\Gamma}} = -\frac{d_{\Sigma}}{d_{\Gamma}}\left(\frac{\tilde\tau_j}{d_{\Gamma}}\right),\\
x\partial_x\left(\frac{d_{\Sigma}}{d_{\Gamma}}\right) &= -x\frac{d_{\Sigma}}{d_{\Gamma}^2}\frac{1}{2d_{\Gamma}} = -\frac{d_{\Sigma}}{d_{\Gamma}}\left(\frac{x}{2d_{\Gamma}^2}\right).
\end{align*}
These functions are smooth on $\mathscr{X}_2$ and vanish at $\{\frac{d_{\Sigma}}{d_{\Gamma}} = 0\}$.

We now check
\begin{align*}
d_{\Sigma}\partial_{d_{\Sigma}}\left(\frac{x}{d_{\Gamma}^2}\right) &= -2d_{\Sigma}\frac{x}{d_{\Gamma}^3}\frac{d_{\Sigma}}{d_{\Gamma}} = -2\frac{x}{d_{\Gamma}^2}\left(\frac{d_{\Sigma}}{d_{\Gamma}}\right)^2,\\
d_{\Gamma}\partial_{\tilde\tau_j}\left(\frac{x}{d_{\Gamma}^2}\right) &= -2d_{\Gamma}\frac{x}{d_{\Gamma}^3}\frac{\tilde\tau_j}{d_{\Gamma}} = -2\frac{x}{d_{\Gamma}^2}\left(\frac{\tilde\tau_j}{d_{\Gamma}}\right),\\
x\partial_x\left(\frac{x}{d_{\Gamma}^2}\right) &= \frac{x}{d_{\Gamma}^2} - 2x\frac{x}{d_{\Gamma}^3}\frac{1}{2d_{\Gamma}} = \frac{x}{d_{\Gamma}^2}\left(1-\frac{x}{d_{\Gamma}^2}\right).
\end{align*}
These functions are smooth on $\mathscr{X}_2$ and vanish at $\{\frac{x}{d_{\Gamma}^2} = 0\}$.

Finally, we check
\begin{align*}
d_{\Sigma}\partial_{d_{\Sigma}}\left(\frac{\tilde\tau_i}{d_{\Gamma}}\right) &= -d_{\Sigma}\frac{\tilde\tau_i}{d_{\Gamma}^2}\frac{d_{\Sigma}}{d_{\Gamma}} = -\left(\frac{d_{\Sigma}}{d_{\Gamma}}\right)^2\frac{\tilde\tau_i}{d_{\Gamma}},\\
d_{\Gamma}\partial_{\tilde\tau_j}\left(\frac{\tilde\tau_i}{d_{\Gamma}}\right) &= -d_{\Gamma}\frac{\tilde\tau_i}{d_{\Gamma}^2}\frac{\tilde\tau_j}{d_{\Gamma}} + d_{\Gamma}\frac{\delta_{ij}}{d_{\Gamma}} = -\frac{\tilde\tau_j}{d_{\Gamma}}\frac{\tilde\tau_i}{d_{\Gamma}}+\delta_{ij},\\
x\partial_x\left(\frac{\tilde\tau_i}{d_{\Gamma}}\right) &= -x\frac{\tilde\tau_i}{d_{\Gamma}^2}\frac{1}{2d_{\Gamma}} = -\frac{\tilde\tau_i}{d_{\Gamma}}\left(\frac{x}{2d_{\Gamma}^2}\right).
\end{align*}
These functions are smooth on $\mathscr{X}_2$. This implies that $d_{\Gamma}V$ lifts to a $C^{\infty}(\mathscr{X}_2)$ vector field on $\mathscr{X}_2$.

Furthermore, since $\tilde\tau_i\partial_{\tilde\tau_j} = \frac{\tilde\tau_i}{d_{\Gamma}}\cdot d_{\Gamma}\partial_{\tilde\tau_j}$, it follows that if $V$ is tangent to $S_1$, then $V$ itself lifts to a $C^{\infty}(\mathscr{X}_2)$ vector field on $\mathscr{X}_2$.
\end{proof}
\begin{remark}
If $V$ is tangent to $S_1$, then similar arguments as before show that the lift of $V$ will be tangent to $\left\{\frac{\tilde\tau}{d_{\Gamma}} = 0\right\}$. However, this fact will not be used, so we do not emphasize this additional fact.
\end{remark}

As a corollary of the above two lemmas, we have the following result regarding the overall blow-down $\beta:\mathscr{X}_2\to\mathscr{X}$:
\begin{cor}
\label{homvf}
Suppose $V$ is a vector field on $\mathscr{X}$ which is tangent to $\partial\mathscr{X}$. Then $d_{\Gamma}d_{\Sigma}V$ lifts to a smooth vector field on $\mathscr{\mathscr{X}}_2$ which is tangent to $\partial \mathscr{\mathscr{X}}_2$. Furthermore, if $V$ is tangent to $S$, then $V$ itself lifts to a smooth vector field on $\mathscr{\mathscr{X}}_2$ which is tangent to $\partial \mathscr{\mathscr{X}}_2$. 
\end{cor}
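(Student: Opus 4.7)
The statement is a direct consequence of iterating the two preceding lemmas, so the proof plan is essentially to chain them together and track what must be carried along at each stage.

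First, I would handle the general case. Given a $C^\infty(\mathscr{X})$ vector field $V$ tangent to $\partial\mathscr{X}$, the first lemma tells me that $d_\Sigma V$ lifts to a $C^\infty(\mathscr{X}_1)$ vector field on $\mathscr{X}_1$ which is tangent to $\partial\mathscr{X}_1$, i.e.\ it lies in $\mathcal{V}_1$. The second lemma then applies to $d_\Sigma V$ as an element of $\mathcal{V}_1$: multiplying by $d_\Gamma$, one obtains that $d_\Gamma (d_\Sigma V) = d_\Gamma d_\Sigma V$ lifts to a smooth vector field on $\mathscr{X}_2$ which is tangent to $\partial\mathscr{X}_2$. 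Since the composition of the two lift operations is exactly the lift via $\beta = \beta_1 \circ \beta_2$ (on the common dense interior, where everything is a diffeomorphism, the two lifts agree, and by uniqueness of smooth extensions they agree globally), this gives the first assertion.

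For the second assertion, suppose additionally that $V$ is tangent to $S$. The strengthened form of the first lemma gives that $V$ itself (not just $d_\Sigma V$) lifts to a $C^\infty(\mathscr{X}_1)$ vector field in $\mathcal{V}_1$, and this lift is moreover tangent to $S_1$. I then feed this lift into the strengthened form of the second lemma: since $V$ (viewed on $\mathscr{X}_1$) is tangent to $S_1$, the second lemma produces a lift of $V$ to a smooth vector field on $\mathscr{X}_2$ tangent to $\partial\mathscr{X}_2$, without the need to pre-multiply by $d_\Gamma$. Again, uniqueness of lifts on the dense interior identifies this with the $\beta$-lift.

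There is no real obstacle here since all the work was done in proving the two lemmas; the only thing to be careful about is making sure that the output of the first lemma matches the hypothesis of the second, which it does precisely because the strengthened conclusion of the first lemma (tangency to $S_1$) is exactly the hypothesis needed to invoke the strengthened form of the second lemma. The proof can therefore be written in a few lines as a direct composition.
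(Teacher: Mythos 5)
Your proposal is correct and matches the paper's intended argument exactly: the paper presents this as an immediate corollary of the two preceding lemmas, obtained precisely by the chaining you describe (first lemma yields $d_{\Sigma}V\in\mathcal{V}_1$, second lemma then yields the lift of $d_{\Gamma}d_{\Sigma}V$; in the tangent-to-$S$ case the tangency to $S_1$ from the first lemma feeds the hypothesis of the second). Nothing further is needed.
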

Examples of vector fields on $\mathscr{X}$ tangent to $\partial\mathscr{X}$ include vector fields which are homogeneous of degree $0$ in the fiber variables $(\xi,\eta)$--such vector fields are commonly used in testing symbolic regularity.

The above corollary shows that certain vector fields tangent to $\partial\mathscr{X}$ (namely those also tangent to $S$) can be lifted to a smooth vector field on $\mathscr{X}_2$ tangent to $\partial\mathscr{X}_2$, and all vector fields tangent to $\partial\mathscr{X}$ can be modified to lift to a smooth vector field on $\mathscr{X}_2$ tangent to $\partial\mathscr{X}_2$ by multiplying by $d_{\Gamma}d_{\Sigma}$.

Let $\mathcal{V}_2$ denote the space of $C^{\infty}(\mathscr{X}_2)$ vector fields on $\mathscr{X}_2$ which are tangent to $\partial\mathscr{X}_2$, and let $\mathcal{V}_{2,\beta_2}$ denote the vector fields $V\in\mathcal{V}_2$ which are $\beta_2$-lifts of $C^{\infty}(\mathscr{X}_1)$ vector fields on $\mathscr{X}_1$. Similar calculations give the corresponding result to Lemma \ref{x1conv} considering blowing down $\beta_2:\mathscr{X}_2\to\mathscr{X}_1$:
\begin{lemma}
\label{x2conv}
Any $V\in\mathcal{V}_2$ can be written as a finite sum
\[V = \left(\sum_i{a_iV_i}\right) + \left(\sum_j{b_jd_{\Gamma}W_j}\right)\]
where $a_i,b_j\in C^{\infty}(\mathscr{X}_2)$, $V_i\in\mathcal{V}_{2,\beta_2}$ with $(\beta_2)_*V$ tangent to $S_1$, and $W_j$ are lifts of $C^{\infty}(\mathscr{X}_1)$ vector fields on $\mathscr{X}_1$ which are tangent to $\partial\mathscr{X}_1$ (but not necessarily to $S_1$).
\end{lemma}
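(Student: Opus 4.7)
The plan is to mirror the proof of Lemma \ref{x1conv}, now pushing down via $\beta_2$ rather than $\beta_1$, and exploiting that $\beta_2$ is a diffeomorphism away from the second front face. Since away from the preimage of $\Gamma$ the map $\beta_2$ identifies $\mathcal{V}_2$ with the space of $C^\infty(\mathscr{X}_1)$ vector fields tangent to $\partial\mathscr{X}_1$, in that region any $V\in\mathcal{V}_2$ is already a $C^\infty(\mathscr{X}_2)$ combination of elements of $\mathcal{V}_{2,\beta_2}$ whose pushforwards one can further decompose using Lemma \ref{x1conv}. Thus the real work is local near $\Gamma$, and we can patch the local decompositions together with a partition of unity subordinate to a cover of $\mathscr{X}_2$ by a neighborhood of the second front face and its complement.

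Near $\Gamma$, use the coordinates $(x,y,d_\Sigma,\tilde\tau)$ on $\mathscr{X}_1$, and write
\[(\beta_2)_*V = (Vx)\partial_x + (Vy)\cdot\partial_y + (Vd_\Sigma)\partial_{d_\Sigma} + (V\tilde\tau)\cdot\partial_{\tilde\tau},\]
where the coefficients are interpreted as pulled back to $\mathscr{X}_2$ and hence lie in $C^\infty(\mathscr{X}_2)$. The boundary-defining functions of $\partial\mathscr{X}_2$ near the second front face are $d_\Gamma$, $x/d_\Gamma^2$, and $d_\Sigma/d_\Gamma$, so $Vd_\Gamma$, $V(x/d_\Gamma^2)$, $V(d_\Sigma/d_\Gamma)$ are each smooth multiples of the respective boundary-defining function. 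Since $x = d_\Gamma^2\cdot(x/d_\Gamma^2)$ and $d_\Sigma = d_\Gamma\cdot(d_\Sigma/d_\Gamma)$ are products of boundary-defining functions, the Leibniz rule yields $Vx = x\cdot a$ and $Vd_\Sigma = d_\Sigma\cdot b$ for some $a,b\in C^\infty(\mathscr{X}_2)$, so the corresponding terms become $a\cdot x\partial_x$ and $b\cdot d_\Sigma\partial_{d_\Sigma}$. By the previous lemma the vector fields $x\partial_x$, $\partial_{y_i}$, and $d_\Sigma\partial_{d_\Sigma}$ all lift from $\mathscr{X}_1$ to $C^\infty(\mathscr{X}_2)$ vector fields tangent to $\partial\mathscr{X}_2$, and their pushforwards to $\mathscr{X}_1$ are manifestly tangent to $S_1=\{\tilde\tau=0\}$; hence these three families contribute to the first sum in the claimed decomposition.

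For the remaining term, write $\tilde\tau_i = d_\Gamma\cdot(\tilde\tau_i/d_\Gamma)$, so
\[V\tilde\tau_i = (Vd_\Gamma)\frac{\tilde\tau_i}{d_\Gamma} + d_\Gamma\, V\!\left(\frac{\tilde\tau_i}{d_\Gamma}\right) = c\,\tilde\tau_i + d_\Gamma\, h_i,\]
where $c,h_i\in C^\infty(\mathscr{X}_2)$ (using that $Vd_\Gamma = d_\Gamma\cdot c$). Consequently
\[(V\tilde\tau_i)\partial_{\tilde\tau_i} = c\,\tilde\tau_i\partial_{\tilde\tau_i} + h_i\,d_\Gamma\partial_{\tilde\tau_i}.\]
The first piece is a $C^\infty(\mathscr{X}_2)$ multiple of $\tilde\tau_i\partial_{\tilde\tau_i}$, which by the preceding lemma lifts to an element of $\mathcal{V}_{2,\beta_2}$ whose pushforward is tangent to $S_1$, and so contributes to the first sum. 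The second piece is a $C^\infty(\mathscr{X}_2)$ multiple of $d_\Gamma\partial_{\tilde\tau_i}$, and $\partial_{\tilde\tau_i}$ is a $C^\infty(\mathscr{X}_1)$ vector field on $\mathscr{X}_1$ tangent to $\partial\mathscr{X}_1$ (it annihilates both $x$ and $d_\Sigma$) but is \emph{not} tangent to $S_1$; this is precisely a term of the form $b_jd_\Gamma W_j$ in the statement.

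The only mildly subtle point is the divisibility of $Vx$ by $x$ (and $Vd_\Sigma$ by $d_\Sigma$) when $x$ and $d_\Sigma$ are not themselves boundary-defining on $\mathscr{X}_2$ but rather products of two such functions; the main obstacle is therefore keeping careful track of which face each factor defines. Once the local decomposition near $\Gamma$ is in hand, gluing with a partition of unity to the region away from $\Gamma$ (where the claim is essentially a tautology through $\beta_2$, possibly composed with Lemma \ref{x1conv}) yields the global statement.
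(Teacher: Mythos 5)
Your proof is correct and follows essentially the same route as the paper's: localize near the second front face, expand $(\beta_2)_*V$ in the coordinates $(x,y,d_\Sigma,\tilde\tau)$, use tangency to $\partial\mathscr{X}_2$ to divide $Vx$, $Vd_\Sigma$, $Vd_\Gamma$ by the corresponding (products of) boundary-defining functions, and split $(V\tilde\tau)\cdot\partial_{\tilde\tau}$ via $\tilde\tau_i = d_\Gamma\cdot(\tilde\tau_i/d_\Gamma)$ into a $\tilde\tau_i\partial_{\tilde\tau_i}$ piece and a $d_\Gamma\partial_{\tilde\tau_i}$ piece. The only cosmetic difference is that you make the partition-of-unity gluing and the Leibniz argument for products of boundary-defining functions explicit, which the paper leaves implicit.
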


\begin{proof}
We proceed similarly to the Lemma \ref{x1conv}. Since the blowdown $\beta_2$ is a diffeomorphism away from the second front face (corresponding to $\Gamma = \partial S_1$ in $\mathscr{X}_1$), it suffices to analyze $(\beta_2)_*V$ for $V\in\mathcal{V}_2$ in a neighborhood of $S_1$. Valid coordinates in that neighborhood are given by $d_{\Sigma}$, $\tilde\tau_i$, $x$, and $y$. For $V\in\mathcal{V}_2$, we have
\[(\beta_2)_*V = (Vd_{\Sigma})\partial_{d_{\Sigma}} + (V\tilde\tau)\cdot\partial_{\tilde\tau} + (Vx)\partial_x + (Vy)\cdot\partial_y.\]
Since $\partial_y$ is still tangent to $\partial\mathscr{X}_1$, it follows that $(Vy)\cdot\partial_y$ is of the form $a_iV_i$ above. Furthermore, since $\{d_{\Sigma} = 0\}$ and $\{x=0\}$ are still contained in $\partial\mathscr{X}_2$, it follows similar from arguments above that $Vd_{\Sigma}$ and $Vx$ are $C^{\infty}(\mathscr{X}_2)$ multiples of $d_{\Sigma}$ and $x$, respectively, so $(Vd_{\Sigma})\partial_{d_{\Sigma}} + (Vx)\partial_x$ is of the form $\sum{a_iV_i}$ above. Finally, we have
\[(V\tilde\tau)\cdot\partial_{\tilde\tau} = d_{\Gamma}V\left(\frac{\tilde\tau}{d_{\Gamma}}\right)\cdot\partial_{\tilde\tau} + (Vd_{\Gamma})\frac{\tilde\tau}{d_{\Gamma}}\cdot\partial_{\tilde\tau};\]
the first term is of the form $b_jd_{\Gamma}W_j$ while the second term is of the form $a_iV_i$, since $Vd_{\Gamma}$ is a $C^{\infty}(\mathscr{X}_2)$ multiple of $d_{\Gamma}$, i.e. $\frac{Vd_{\Gamma}}{d_{\Gamma}}\in C^{\infty}(\mathscr{X}_2)$. 
\end{proof}
Let $\mathcal{V}_{2,\beta}$ denote vector fields in $\mathcal{V}_2$ which are $\beta$-lifts of vector fields in $\mathcal{V}$. Combining these two lemmas thus gives:
\begin{cor}
Any $V\in\mathcal{V}_2$ can be written as a finite sum
\[V = \left(\sum_i{a_iV_i}\right) + \left(\sum_k{b_jd_{\Gamma}d_{\Sigma}W_j}\right)\]
where $a_i,b_j\in C^{\infty}(\mathscr{X}_2)$, $V_i\in\mathcal{V}_{2,\beta}$ with $\beta_*V_i$ tangent to $S$, and $W_j$ are lifts of $C^{\infty}(\mathscr{X})$ vector fields which are tangent to $\partial\mathscr{X}$ (but not necessarily to $S$).
\end{cor}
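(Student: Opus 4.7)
The plan is to combine Lemma \ref{x2conv} with Lemma \ref{x1conv}, using the two decompositions one after the other. Starting from $V\in\mathcal{V}_2$, apply Lemma \ref{x2conv} first to obtain
\[V = \sum_i a_i V_i^{(2)} + \sum_j b_j\,d_{\Gamma}\,\widetilde{W}_j,\]
where each $V_i^{(2)}\in\mathcal{V}_{2,\beta_2}$ has $(\beta_2)_*V_i^{(2)}$ tangent to $S_1$, and each $\widetilde{W}_j$ is the $\beta_2$-lift of a vector field $W_j^{(1)}\in\mathcal{V}_1$ (not necessarily tangent to $S_1$).

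Next, I would feed each of the two kinds of pieces into Lemma \ref{x1conv}. For the $V_i^{(2)}$ terms, since $(\beta_2)_*V_i^{(2)}$ is tangent to $S_1$, the ``moreover'' clause of Lemma \ref{x1conv} gives $(\beta_2)_*V_i^{(2)} = \sum_k a_{i,k}V_{i,k}^{(1)}$ with $V_{i,k}^{(1)}\in\mathcal{V}_{1,\beta_1}$ and $(\beta_1)_*V_{i,k}^{(1)}$ tangent to $S$. Because $V_{i,k}^{(1)}$ is thus a $\beta_1$-lift of an $S$-tangent vector field, part (2) of the second Lemma (lifting through $\beta_2$) shows it lifts to a vector field in $\mathcal{V}_2$; that lift lies in $\mathcal{V}_{2,\beta}$ and its $\beta$-pushforward is tangent to $S$. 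These contribute to the first sum. For the $\widetilde{W}_j$ terms, apply Lemma \ref{x1conv} to the underlying $W_j^{(1)}\in\mathcal{V}_1$ to write
\[W_j^{(1)} = \sum_\ell c_{j,\ell}V_{j,\ell}^{(1)} + \sum_m e_{j,m}\,d_{\Sigma}\,W_{j,m},\]
with $V_{j,\ell}^{(1)}\in\mathcal{V}_{1,\beta_1}$ tangent to $S_1$ and $W_{j,m}$ the $\beta_1$-lift of a $C^\infty(\mathscr{X})$ vector field tangent to $\partial\mathscr{X}$. After multiplication by $d_{\Gamma}$ and taking $\beta_2$-lifts, the first summand is a $C^\infty(\mathscr{X}_2)$ multiple (the extra $d_{\Gamma}$ factor is smooth on $\mathscr{X}_2$) of a vector field in $\mathcal{V}_{2,\beta}$ whose $\beta$-pushforward is $S$-tangent, hence joins the first sum in the corollary; the second summand takes the form $(b\,d_{\Gamma}\,d_{\Sigma})\cdot(\text{lift of }W_{j,m})$, which is exactly the shape of the second sum in the corollary (invoking part (1) of the second Lemma to ensure the $d_{\Gamma}$-multiplied vector field does lift smoothly to $\mathscr{X}_2$).

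Regrouping yields the claimed decomposition. The only step requiring care is verifying that each intermediate object actually lifts smoothly through $\beta_2$: vector fields tangent to $S_1$ lift on their own by part (2) of the preceding Lemma, while those in $\mathcal{V}_1$ not tangent to $S_1$ only lift after multiplication by $d_{\Gamma}$. This matches up exactly with the two-term split in Lemma \ref{x2conv}, so no extra factors are generated. The main potential pitfall is a bookkeeping one—ensuring that the ``tangent to $S$'' property is preserved under both blow-ups and does not get lost when one multiplies by $d_{\Gamma}$ or $d_{\Sigma}$—but since $d_{\Gamma},d_{\Sigma}\in C^\infty(\mathscr{X}_2)$ this is simply a matter of absorbing them into the smooth coefficients $a_i$ and $b_j$.
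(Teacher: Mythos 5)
Your proposal is correct and follows essentially the same route as the paper: apply Lemma \ref{x2conv} first, then feed both kinds of pieces into Lemma \ref{x1conv}, and regroup, with the $d_{\Gamma}V'$ terms absorbed into the first sum because $S$-tangent lifts pass through $\beta_2$ on their own and $d_{\Gamma}\in C^{\infty}(\mathscr{X}_2)$. The bookkeeping you flag (which pieces need the extra $d_{\Gamma}$ or $d_{\Sigma}$ factor to lift) is exactly the point the paper's proof also rests on, so there is nothing to add.
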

\begin{proof}
From Lemma \ref{x2conv}, we have that $V\in\mathcal{V}_2$ can be written as $V = \left(\sum_i{a_i\tilde{V}_i}\right) + \left(\sum_j{b_jd_{\Gamma}\tilde{W}_j}\right)$, with $\tilde{V}_i$ a $\beta_2$-lift of a vector field in $\mathcal{V}_{1}$ which is also tangent to $S_1$, and $d_{\Gamma}\tilde{W}_j$ a $\beta_2$-lift of $d_{\Gamma}$ times a vector field in $\mathcal{V}_1$. From Lemma \ref{x1conv}, we know that $\beta_2^*\tilde{V}_i$ is itself a $\beta_1$-lift of a vector field in $\mathcal{V}$ tangent to $S$, while $\beta_2^*\tilde{W}_j = \sum{\tilde{a}_kV'_{j,k}}+\sum{\tilde{b}_ld_{\Sigma}W'_{j,l}}$, with $V'_{j,k}\in\mathcal{V}_{1,\beta_1}$ and $d_{\Sigma}W'_{j,l}$ a $\beta_1$-lift of $d_{\Sigma}$ times a vector field in $\mathcal{V}$ not necessarily tangent to $S$. It follows that
\[V = \left(\sum_i{a_i\tilde{V}_i} + \sum_{j,k}{b_j\tilde{a}_k d_{\Gamma} V'_{j,k}}\right) + \sum_{j,l}{b_j\tilde{b}_ld_{\Gamma}d_{\Sigma}W'_{j,l}}.\]
The terms in the first parentheses are all in $C^{\infty}(\mathscr{X}_2)\otimes\mathcal{V}_{2,\beta}$, giving the form claimed in the Corollary.
\end{proof}
\begin{cor}
\label{x2genvf}
We have that $\mathcal{V}_2$ is generated over $C^{\infty}(\mathscr{X}_2)$ by the vector fields
\[x\partial_x,\partial_{y_j},\rho\partial_{\rho},d_{\Gamma}d_{\Sigma}\partial_{\tau_j},\quad 1\le j\le n-1.\]
\end{cor}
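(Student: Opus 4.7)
The plan is to combine the preceding corollary (the decomposition of $V\in\mathcal{V}_2$ via lifts from $\mathscr{X}$) with an explicit enumeration of generators of the relevant spaces of vector fields on $\mathscr{X}$, and then reduce everything on $\mathscr{X}_2$ to the four claimed generators by invoking the smoothness of $d_\Sigma/d_\Gamma$, $\tilde\tau_i/d_\Gamma$, and $x/d_\Gamma^2$.

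First I would verify that each of $x\partial_x$, $\partial_{y_j}$, $\rho\partial_\rho$, and $d_\Gamma d_\Sigma\partial_{\tau_j}$ actually lies in $\mathcal{V}_2$. The first three are vector fields on $\mathscr{X}$ tangent to $\partial\mathscr{X}$ and moreover tangent to $S=\{\tau=0\}$, since they annihilate $\tau$ (in the coordinates $(x,y,\rho,\tau)$ valid near $\Sigma$). By Corollary \ref{homvf}, they lift to vector fields in $\mathcal{V}_2$. For the last generator, $\partial_{\tau_j}$ is tangent to $\partial\mathscr{X}$ near $\Sigma$ (it annihilates $x$ and $\rho$), but is not tangent to $S$; multiplying by $d_\Gamma d_\Sigma$ and again appealing to Corollary \ref{homvf} then yields an element of $\mathcal{V}_2$.

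Next, for the reverse containment, I would apply the previous corollary to write a general $V\in\mathcal{V}_2$ as $V=\sum_i a_iV_i+\sum_j b_j\,d_\Gamma d_\Sigma W_j$, with $a_i,b_j\in C^\infty(\mathscr{X}_2)$, $V_i$ the $\beta$-lift of a vector field in $\mathcal{V}$ tangent to $S$, and $W_j$ the $\beta$-lift of a vector field in $\mathcal{V}$ (not necessarily tangent to $S$). Recall from the proof of the first lifting lemma that $\mathcal{V}$ is generated over $C^\infty(\mathscr{X})$ by $x\partial_x$, $\partial_{y_j}$, $\rho\partial_\rho$, and $\partial_{\tau_j}$, while its subclass tangent to $S$ is generated by $x\partial_x$, $\partial_{y_j}$, $\rho\partial_\rho$, and $\tau_i\partial_{\tau_j}$. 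Thus each $V_i$ is a $C^\infty(\mathscr{X}_2)$-combination of (lifts of) $x\partial_x$, $\partial_{y_j}$, $\rho\partial_\rho$, and $\tau_i\partial_{\tau_j}$, and each $d_\Gamma d_\Sigma W_j$ a $C^\infty(\mathscr{X}_2)$-combination of $d_\Gamma d_\Sigma\,x\partial_x$, $d_\Gamma d_\Sigma\,\partial_{y_j}$, $d_\Gamma d_\Sigma\,\rho\partial_\rho$, and $d_\Gamma d_\Sigma\,\partial_{\tau_j}$.

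The key algebraic reduction is then the identity
\[
\tau_i\partial_{\tau_j} \;=\; d_\Sigma\tilde\tau_i\,\partial_{\tau_j} \;=\; \frac{\tilde\tau_i}{d_\Gamma}\,\bigl(d_\Gamma d_\Sigma\,\partial_{\tau_j}\bigr),
\]
where $\tilde\tau_i/d_\Gamma\in C^\infty(\mathscr{X}_2)$ by construction of the second blow-up; hence $\tau_i\partial_{\tau_j}$ is a smooth $C^\infty(\mathscr{X}_2)$-multiple of $d_\Gamma d_\Sigma\,\partial_{\tau_j}$. Similarly $d_\Gamma d_\Sigma$ is a smooth function on $\mathscr{X}_2$, so that $d_\Gamma d_\Sigma\, x\partial_x$, $d_\Gamma d_\Sigma\,\partial_{y_j}$, and $d_\Gamma d_\Sigma\,\rho\partial_\rho$ are $C^\infty(\mathscr{X}_2)$-multiples of $x\partial_x$, $\partial_{y_j}$, and $\rho\partial_\rho$, respectively. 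Assembling these observations expresses $V$ as a finite $C^\infty(\mathscr{X}_2)$-combination of the four listed generators, completing the proof.

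The main obstacle, such as it is, is purely bookkeeping: one must work near the front faces where the coordinates $(x,y,\rho,\tau)$ (and hence $\partial_{\tau_j}$) are defined, and patch with a smooth cutoff outside a neighborhood of $\Sigma$, where the blow-ups are diffeomorphisms and the standard scattering generators $x\partial_x,\partial_{y_j},\rho\partial_\rho,\partial_{\tau_j}$ already lie in $\mathcal{V}_2$ directly (no multiplication by $d_\Gamma d_\Sigma$ needed, and in particular they are covered by the claimed generating set there as well since $d_\Gamma d_\Sigma$ is bounded away from zero). There is no analytic difficulty; the content is entirely the identity $\tau_i=d_\Sigma d_\Gamma(\tilde\tau_i/d_\Gamma)$ provided by the two blow-ups.
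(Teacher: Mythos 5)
Your proof is correct and follows exactly the route the paper intends: the corollary is stated there without proof as an immediate consequence of the preceding decomposition $V=\sum a_iV_i+\sum b_jd_{\Gamma}d_{\Sigma}W_j$ together with the generator lists \eqref{xvf} and \eqref{xtanvf}, and your reduction via $\tau_i\partial_{\tau_j}=\frac{\tilde\tau_i}{d_{\Gamma}}\bigl(d_{\Gamma}d_{\Sigma}\partial_{\tau_j}\bigr)$ and the smoothness of $d_{\Gamma}d_{\Sigma}$ is precisely the missing bookkeeping. The remarks on membership of the four generators in $\mathcal{V}_2$ and on patching away from $\Sigma$ are also accurate.
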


Note as well that we are usually interested in the behavior of functions on $\mathscr{X}_2$ near the boundary $\partial\mathscr{X}_2$, and hence we can take the boundary-defining functions $\rho$, $x$, $d_{\Sigma}$, and $d_{\Gamma}$ to be bounded from above away from their front faces (and in particular away from the boundary). This is a convention we will adopt from now on.

\section{The \newname-symbol class}
\label{symb-sec}
Much like scattering symbols are, up to factors of $\rho$ and $x$, functions in the interior of $\mathscr{X}$ which extend to distributions conormal to the boundary $\partial \mathscr{X}$, we can consider a symbol class based on distributions conormal to $\partial \mathscr{X}_2$. We recall that the interior of $\mathscr{X}$ is identified with $^{sc}T^*M$ with $M = (0,\infty)\times\mathbb{R}^{n-1}$, and we will identify the interior of $\partial\mathscr{X}_2$ with $^{sc}T^*M$ as well via the blow-down $\beta$.
\begin{definition}
Define $S^{0,0,0,0}(M)$ to consist of functions $a:{^{sc}T^*M}\to\mathbb{C}$ which are smooth on the interior $\mathscr{X}_2^{\circ} = {^{sc}T^*M}$ and are $L^{\infty}$-conormal to the boundary $\partial \mathscr{\mathscr{X}}_2$, i.e. satisfy the estimates
\[|V^{\alpha}a|\le C_{\alpha}\]
on any subset of $^{sc}T^*M$ which is pre-compact\footnote{That is, the estimate should hold with a uniform constant on any set whose projection to the base is pre-compact; in particular on sets of the form $\{x\le C\}$ when using the $(x,y)$ spatial variables. Notably the uniform estimate should hold on the entire fiber, including at fiber infinity.  In most cases we will be concerned with distributions which are compactly supported, in which case we may consider without loss of generality symbols whose spatial support is compact, in which case the estimates are uniform and not just locally uniform.} (when considering $^{sc}T^*M$ as a subset of$\overline{^{sc}T^*M}$) whenever $V^{\alpha}$ is a product of vector fields in $\mathcal{V}_2$, i.e. tangent to $\partial \mathscr{X}_2$. Define 
\[S^{m,l,k,j}(M) := \rho^{-m}x^{-l}d_{\Sigma}^{-k}d_{\Gamma}^{-j}S^{0,0,0,0}(M).\]
We will often write $S^{m,l,k,j}$ in place of $S^{m,l,k,j}(M)$ if the ambient space is clear or not important.
\end{definition}
We will call the collection of symbols belonging in some $S^{m,l,k,j}$ the class of \newname-scattering symbols (or \newname-symbols for short). The notation can be interpreted two ways: first, this is the symbol class associated to making two blowups to the radially compactified cotangent bundle (hence the two $b$'s), and secondly this is in some sense a combination of the \emph{B}outet de Monvel calculus studied in \cite{bdm} in the presence of an additional \emph{b}oundary.

\begin{example}
\begin{enumerate}
\item The space $C^{\infty}(\mathscr{X}_2)$ is contained in $S^{0,0,0,0}$, since functions in $C^{\infty}(\mathscr{X}_2)$ satisfy estimates upon application of \emph{any} vector fields with $C^{\infty}(\mathscr{X}_2)$ coefficients, and not just those which are tangent to $\partial\mathscr{X}_2$.
\item The boundary-defining functions $\rho$ and $x$ belong to $S^{-1,0,0,0}$ and $S^{0,-1,0,0}$, respectively. On the other hand, we can also write
\[\rho = d_{\Sigma}^2\tilde{\rho} = d_{\Sigma}d_{\Gamma}\frac{d_{\Sigma}}{d_{\Gamma}}\tilde{\rho} = d_{\Gamma}^2\left(\frac{d_{\Sigma}}{d_{\Gamma}}\right)^2\tilde\rho\quad\text{where }\tilde\rho = \frac{\rho}{d_{\Sigma}^2},\]
and since $\tilde\rho$ and $\frac{d_{\Sigma}}{d_{\Gamma}}$ both belong to $C^{\infty}(\mathscr{X}_2)\subset S^{0,0,0,0}$, we can also write
\[\rho\in S^{0,0,-2,0}\cap S^{0,0,-1,-1}\cap S^{0,0,0,-2}.\]
Similarly, we have
\[x = d_{\Gamma}^2\tilde{x}\quad\text{where }\tilde{x} = \frac{x}{d_{\Gamma}^2},\]
so $x\in S^{0,0,0,-2}$. Later on we will see that $S^{-1,0,0,0}\subset S^{0,0,-2,0}\subset S^{0,0,-1,-1}\subset S^{0,0,0,-2}$ and $S^{0,-1,0,0}\subset S^{0,0,0,-2}$, so these observations are not contradictory.
\item For any $1\le i\le n-1$, we have
\[\tau_i = d_{\Sigma}\tilde\tau_i = d_{\Sigma}d_{\Gamma}\frac{\tilde\tau_i}{d_{\Gamma}}.\]
Hence, we have 
\begin{equation}
\label{taui00-1-1}
\tau_i\in S^{0,0,-1,-1}.
\end{equation}
\end{enumerate}
\end{example}
The expected algebra property holds:
\begin{prop}
Let $a\in S^{m,l,k,j}$ and $b\in S^{m',l',k',j'}$. Then we have $ab\in S^{m+m',l+l',k+k',j+j'}$.
\end{prop}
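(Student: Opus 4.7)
The plan is to reduce the statement to the special case $m=l=k=j=m'=l'=k'=j'=0$ and then invoke the Leibniz rule for the derivations in $\mathcal{V}_2$. Since the weights $\rho^{-m}x^{-l}d_\Sigma^{-k}d_\Gamma^{-j}$ and $\rho^{-m'}x^{-l'}d_\Sigma^{-k'}d_\Gamma^{-j'}$ multiply to $\rho^{-(m+m')}x^{-(l+l')}d_\Sigma^{-(k+k')}d_\Gamma^{-(j+j')}$, writing $a = \rho^{-m}x^{-l}d_\Sigma^{-k}d_\Gamma^{-j}a_0$ and $b = \rho^{-m'}x^{-l'}d_\Sigma^{-k'}d_\Gamma^{-j'}b_0$ with $a_0,b_0 \in S^{0,0,0,0}$ reduces the claim to showing $a_0 b_0 \in S^{0,0,0,0}$.

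To prove this residual statement, I would fix an arbitrary product $V^\alpha = V_1\cdots V_N$ of vector fields in $\mathcal{V}_2$ and expand $V^\alpha(a_0 b_0)$ using the Leibniz rule. Since each $V_i$ is a derivation, iterating yields a finite sum of terms of the form $(V_{i_1}\cdots V_{i_p} a_0)(V_{j_1}\cdots V_{j_q} b_0)$ where $\{i_1,\dots,i_p\}\sqcup\{j_1,\dots,j_q\}$ partitions $\{1,\dots,N\}$ in order. Each factor is bounded on any set with precompact spatial projection by the assumption $a_0,b_0\in S^{0,0,0,0}$, and the sum of finitely many bounded functions is bounded, giving $|V^\alpha(a_0b_0)|\le C_\alpha$ on such sets.

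Finally, smoothness of $a_0b_0$ on $\mathscr{X}_2^\circ = {}^{sc}T^*M$ is immediate from the smoothness of $a_0$ and $b_0$ there, so $a_0b_0\in S^{0,0,0,0}$ as required. Combining this with the weight calculation gives $ab \in S^{m+m',l+l',k+k',j+j'}$.

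There is no real obstacle here: the statement is the standard stability of a conormal-symbol class under pointwise multiplication, and the only mild point to verify is that $\mathcal{V}_2$ is closed under the operations used (products and the Leibniz rule), which is automatic since $\mathcal{V}_2$ is a $C^\infty(\mathscr{X}_2)$-module of vector fields and derivations compose by Leibniz. The expected subtlety — whether any of the weights interact with the derivations in a nontrivial way — does not arise precisely because we isolated the weights first and reduced to the weightless case.
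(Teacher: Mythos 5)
Your proof is correct and follows essentially the same route as the paper's: reduce to the $(0,0,0,0)$ case by factoring out the weights, then expand $V^\alpha(ab)$ via the Leibniz rule into a finite sum of products $(V^\beta a)(V^\gamma b)$, each of which is uniformly bounded. No issues.
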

\begin{proof}
It suffices to consider the case $(m,l,k,j)=(m',l',k',j')=(0,0,0,0)$. Noting that $V(ab) = (Va)b+a(Vb)$, it follows by induction that for a product $V^{\alpha}$ of vector fields in $\mathcal{V}_2$, we have that $V^{\alpha}(ab)$ is a sum of products of the form $(V^{\beta}a)(V^{\gamma}b)$ for some products $V^{\beta}$, $V^{\gamma}$ of vector fields in $\mathcal{V}_2$. All of these terms satisfy the uniform estimates, so $V^{\alpha}(ab)$ does as well.
\end{proof}
\begin{prop}
Suppose that $a\in S^{m,l,k,j}$ and $V^{\alpha}$ is a product of vector fields in $\mathcal{V}_2$. Then $V^{\alpha}a\in S^{m,l,k,j}$ as well.
\end{prop}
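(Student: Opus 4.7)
The plan is to reduce to the case of a single vector field $V \in \mathcal{V}_2$ by induction on the number of factors in $V^{\alpha}$, and within that step to separate the contribution of the weight $\rho^{-m}x^{-l}d_{\Sigma}^{-k}d_{\Gamma}^{-j}$ from the unweighted factor $\tilde{a} \in S^{0,0,0,0}$. Writing $a = \rho^{-m}x^{-l}d_{\Sigma}^{-k}d_{\Gamma}^{-j}\tilde{a}$ and applying the Leibniz rule gives
\[
Va = \rho^{-m}x^{-l}d_{\Sigma}^{-k}d_{\Gamma}^{-j}\left(-m\frac{V\rho}{\rho} - l\frac{Vx}{x} - k\frac{Vd_{\Sigma}}{d_{\Sigma}} - j\frac{Vd_{\Gamma}}{d_{\Gamma}}\right)\tilde{a} + \rho^{-m}x^{-l}d_{\Sigma}^{-k}d_{\Gamma}^{-j}V\tilde{a}.
\]
The second term lies in $S^{m,l,k,j}$ as soon as $V\tilde{a} \in S^{0,0,0,0}$, which is tautological from the definition: for any further product $V^{\beta}$ of vector fields in $\mathcal{V}_2$, the composition $V^{\beta}V$ is again a product of vector fields in $\mathcal{V}_2$, and the required uniform estimate on $V^{\beta}(V\tilde{a}) = (V^{\beta}V)\tilde{a}$ is one of the defining estimates for $\tilde{a}$. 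The first term then also lies in $S^{m,l,k,j}$ once we establish the key auxiliary claim that $V\phi/\phi \in S^{0,0,0,0}$ for each $\phi \in \{\rho, x, d_{\Sigma}, d_{\Gamma}\}$.

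For $\phi \in \{x, d_{\Sigma}, d_{\Gamma}\}$, this is immediate from tangency: on the charts described at the start of Section \ref{geo-sec}, each of these three functions is a boundary-defining function of some face of $\mathscr{X}_2$, and since $V \in \mathcal{V}_2$ is tangent to $\partial\mathscr{X}_2$ we have $V\phi = \phi \cdot g$ for some $g \in C^{\infty}(\mathscr{X}_2) \subset S^{0,0,0,0}$. For $\phi = \rho$, I use the factorization $\rho = d_{\Sigma}^2\tilde{\rho}$, which gives
\[
\frac{V\rho}{\rho} = 2\frac{Vd_{\Sigma}}{d_{\Sigma}} + \frac{V\tilde{\rho}}{\tilde{\rho}}.
\]
The first summand is smooth by the previous case. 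For the second, $\tilde{\rho}$ is smooth on $\mathscr{X}_2$ and satisfies $\tilde{\rho} = 1 - |\tilde{\tau}|^2$, which equals $1$ on $\Gamma$; hence in a neighborhood of the second front face $\tilde{\rho}$ is bounded away from zero and $V\tilde{\rho}/\tilde{\rho}$ is automatically smooth. Away from the second front face $\beta_2$ is a diffeomorphism onto its image, and on the chart where $\tilde{\rho}$ can vanish it serves as a boundary-defining function of a face of $\mathscr{X}_2$, so tangency of $V$ again yields $V\tilde{\rho}/\tilde{\rho} \in C^{\infty}(\mathscr{X}_2)$.

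With the single-derivative case in hand, the general case follows by induction on the length of $V^{\alpha}$: if $V^{\alpha}a \in S^{m,l,k,j}$ and $W \in \mathcal{V}_2$, then the single-derivative case applied to $V^{\alpha}a \in S^{m,l,k,j}$ gives $W(V^{\alpha}a) \in S^{m,l,k,j}$. The only real content is the auxiliary claim above; the main (quite mild) obstacle is the treatment of $\rho$, which is not itself a boundary-defining function on $\mathscr{X}_2$ but factors as a boundary-defining function times a smooth factor that is locally either nonvanishing or itself boundary-defining, so the logarithmic-derivative identity for $V\rho/\rho$ still produces an element of $S^{0,0,0,0}$.
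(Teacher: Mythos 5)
Your argument is correct and matches the paper's proof: both apply the Leibniz rule to $a=\rho^{-m}x^{-l}d_{\Sigma}^{-k}d_{\Gamma}^{-j}\tilde a$, observe that a vector field in $\mathcal{V}_2$ applied to the weight returns the weight times a smooth function on $\mathscr{X}_2$, note that the $(0,0,0,0)$ case is definitional, and induct on $|\alpha|$. One small imprecision in your auxiliary claim: near the second front face $x$ and $d_{\Sigma}$ are not themselves boundary-defining functions of faces of $\mathscr{X}_2$ (there the boundary-defining functions are $d_{\Gamma}$, $x/d_{\Gamma}^2$, and $d_{\Sigma}/d_{\Gamma}$, so $x=d_{\Gamma}^2\cdot(x/d_{\Gamma}^2)$ and $d_{\Sigma}=d_{\Gamma}\cdot(d_{\Sigma}/d_{\Gamma})$), but each is a \emph{product} of boundary-defining functions, which is all the logarithmic-derivative computation needs, so the conclusion $V\phi/\phi\in C^{\infty}(\mathscr{X}_2)$ still holds and the proof goes through.
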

\begin{proof}
If $(m,l,k,j) = (0,0,0,0)$, then this follows easily from the definition. Otherwise, take $a = \rho^{-m}x^{-l}d_{\Sigma}^{-k}d_{\Gamma}^{-j}\tilde{a}$, with $\tilde{a}\in S^{0,0,0,0}$. It suffices to prove the statement for $|\alpha|=1$ as the general case then follows by induction. Since $\rho$, $x$, $d_{\Sigma}$, and $d_{\Gamma}$ are all products of boundary-defining functions of $\mathcal{X}_2$, it follows that for $V\in\mathcal{V}_2$ we have
\[V(\rho^{-m}x^{-l}d_{\Sigma}^{-k}d_{\Gamma}^{-j})\in \rho^{-m}x^{-l}d_{\Sigma}^{-k}d_{\Gamma}^{-j}C^{\infty}(\mathscr{X}_2)\subset S^{m,l,k,j}.\]
Hence
\[Va = V(\rho^{-m}x^{-l}d_{\Sigma}^{-k}d_{\Gamma}^{-j})\tilde{a}+\rho^{-m}x^{-l}d_{\Sigma}^{-k}d_{\Gamma}^{-j}V\tilde{a}\in S^{m,l,k,j}S^{0,0,0,0}+\rho^{-m}x^{-l}d_{\Sigma}^{-k}d_{\Gamma}^{-j}S^{0,0,0,0} = S^{m,l,k,j},\]
as desired.
\end{proof}
We can thus reformulate the definition of $S^{m,l,k,j}$ as follows:
\begin{cor}
A function $a$ is in $S^{m,l,k,j}$ if and only if it satisfies estimates of the form
\[|V^{\alpha}a|\le C_{\alpha}\rho^{-m}x^{-l}d_{\Sigma}^{-k}d_{\Gamma}^{-j}\]
on any pre-compact subset of $^{sc}T^*M$ whenever $V^{\alpha}$ is a product of vector fields in $\mathcal{V}_2$.
\end{cor}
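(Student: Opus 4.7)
The plan is to establish the biconditional by combining the two preceding propositions with the Leibniz rule. Both directions reduce to bookkeeping once I note that the weight $w := \rho^{-m}x^{-l}d_{\Sigma}^{-k}d_{\Gamma}^{-j}$ is preserved, up to a $C^{\infty}(\mathscr{X}_2)$ factor, under iterated application of vector fields in $\mathcal{V}_2$.

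For the forward direction, suppose $a \in S^{m,l,k,j}$. The preceding proposition gives $V^{\alpha}a \in S^{m,l,k,j}$ for any product $V^{\alpha}$ of vector fields in $\mathcal{V}_2$. By the definition of $S^{m,l,k,j}$, I can write $V^{\alpha}a = w \cdot b_{\alpha}$ for some $b_{\alpha} \in S^{0,0,0,0}$. The $|\beta|=0$ case of the defining estimate for $S^{0,0,0,0}$ then yields $|b_{\alpha}| \le C_{\alpha}$ uniformly on pre-compact subsets, which is exactly the claimed weighted estimate.

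For the reverse direction, suppose $a$ satisfies the weighted estimates, and set $\tilde{a} := w^{-1}a = \rho^{m}x^{l}d_{\Sigma}^{k}d_{\Gamma}^{j}a$; my goal is to show $\tilde{a} \in S^{0,0,0,0}$, i.e.\ $|V^{\alpha}\tilde{a}| \le C_{\alpha}$ uniformly on pre-compact subsets. The key observation, already exploited in the preceding proposition, is that for any boundary-defining function $b$ of $\mathscr{X}_2$ and any $V \in \mathcal{V}_2$ the quotient $Vb/b$ lies in $C^{\infty}(\mathscr{X}_2)$ (by tangency of $V$ to $\partial \mathscr{X}_2$), so $V(b^{s}) = s(Vb/b)b^{s}$ for any real exponent $s$; iterating and applying this to each of $\rho, x, d_{\Sigma}, d_{\Gamma}$ shows that $V^{\alpha_1}(w^{-1})$ is a $C^{\infty}(\mathscr{X}_2)$ multiple of $w^{-1}$. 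Expanding $V^{\alpha}\tilde{a}$ by the Leibniz rule then produces a finite sum of terms of the form $c_{\alpha_1}\,w^{-1}\,V^{\alpha_2}a$ with $c_{\alpha_1} \in C^{\infty}(\mathscr{X}_2)$ and $|\alpha_1|+|\alpha_2| = |\alpha|$. The hypothesized bound $|V^{\alpha_2}a| \le C_{\alpha_2}w$ cancels the weight, leaving $|c_{\alpha_1}|\,C_{\alpha_2}$, which is uniformly bounded on pre-compact subsets since $c_{\alpha_1}$ is continuous on the compact closure in $\mathscr{X}_2$. Summing gives the desired estimate on $V^{\alpha}\tilde{a}$.

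I do not expect a serious obstacle: the corollary is essentially a repackaging of the definition of $S^{m,l,k,j}$ together with the preceding derivative-invariance proposition. The only point requiring a bit of care is the Leibniz expansion for non-commuting vector fields, where one must observe that each term of the expansion distributes the factors $V_{i}$ between $w^{-1}$ and $a$ while preserving their ordering; this mechanical bookkeeping, together with the fact that the total order $|\alpha_1|+|\alpha_2|$ stays equal to $|\alpha|$, is all that is needed to conclude.
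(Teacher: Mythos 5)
Your proof is correct and follows essentially the same route as the paper: the forward direction quotes the derivative-invariance proposition, and the reverse direction is exactly the paper's Leibniz expansion $V^{\alpha}(\rho^{m}x^{l}d_{\Sigma}^{k}d_{\Gamma}^{j}a)=\sum_{\beta\le\alpha}\rho^{m}x^{l}d_{\Sigma}^{k}d_{\Gamma}^{j}(V^{\beta}a)b_{\beta}$ with $b_{\beta}\in C^{\infty}(\mathscr{X}_2)$, justified by the fact that $\mathcal{V}_2$-derivatives of the weight are smooth multiples of the weight. The only cosmetic remark is that $\rho,x,d_{\Sigma},d_{\Gamma}$ are products of boundary-defining functions of $\mathscr{X}_2$ rather than boundary-defining functions themselves, but your key identity $V(b^{s})=s(Vb/b)b^{s}$ applies verbatim to such products.
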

\begin{proof}
From the above proposition, if $a\in S^{m,l,k,j}$, then $V^{\alpha}a\in S^{m,l,k,j}$, and hence it equals $\rho^{-m}x^{-l}d_{\Sigma}^{-k}d_{\Gamma}^{-j}$ times a function in $S^{0,0,0,0}$, which satisfies uniform estimates, so overall $V^{\alpha}a$ wil satisfy the desired estimate above. Conversely, if $a$ satisfies the above estimates, we'd like to show that $\rho^mx^ld_{\Sigma}^kd_{\Gamma}^ja$ is in $S^{0,0,0,0}$. Similar arguments from the above paragraphs show that $V^{\alpha}(\rho^mx^ld_{\Sigma}^kd_{\Gamma}^ja)$ has the form
\[V^{\alpha}(\rho^mx^ld_{\Sigma}^kd_{\Gamma}^ja) = \sum_{\beta\le\alpha}{\rho^mx^ld_{\Sigma}^kd_{\Gamma}^j(V^{\beta}a)b_{\beta}}\]
for some functions $b_{\beta}\in C^{\infty}(\mathscr{X}_2)$. Then by hypothesis we have that $\rho^mx^ld_{\Sigma}^kd_{\Gamma}^j(V^{\beta}a)$ satisfies zeroth-order uniform estimates (i.e. the symbol estimates for $S^{0,0,0,0}$ symbols), while the $b_{\beta}$ also satisfy zeroth-order estimates, and hence overall $V^{\alpha}(\rho^mx^ld_{\Sigma}^kd_{\Gamma}^ja)$ satisfies zeroth-order estimates, i.e. $\rho^mx^ld_{\Sigma}^kd_{\Gamma}^ja\in S^{0,0,0,0}$, as desired.
\end{proof}
Such symbols also satisfy the following estimate:
\begin{prop}
\label{scvfest}
Suppose $W^{\alpha}$ and $V^{\beta}$ are products vector fields on $\mathscr{X}$ which are homogeneous of degree $0$, with the latter a product of vector fields also tangent to $S$. Then for $a\in S^{m,l,k,j}$, we have $W^{\alpha}V^{\beta}a\in S^{m,l,k+|\alpha|,j+|\alpha|}$, and in particular
\[|W^{\alpha}V^{\beta}a|\le C_{\alpha,\beta}\rho^{-m}x^{-l}d_{\Sigma}^{-k-|\alpha|}d_{\Gamma}^{-j-|\alpha|}.\]
\end{prop}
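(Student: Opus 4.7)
My plan is to reduce the statement to the previous proposition (that vector fields in $\mathcal{V}_2$ preserve each class $S^{m,l,k,j}$) via Corollary \ref{homvf}, and separately track the order shift contributed by each $W$-factor. The bound at the end of the statement follows immediately from the membership claim and the defining estimate of $S^{m,l,k+|\alpha|,j+|\alpha|}$ applied with no test vector fields, so the content is really the symbol-class membership.

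First I would handle the $V^\beta$ factor. Write $V^\beta = V_1 V_2 \cdots V_{|\beta|}$. Each $V_i$ is homogeneous of degree $0$ in the fiber variables and tangent to $S$, so by the second part of Corollary \ref{homvf} it lifts to a smooth vector field on $\mathscr{X}_2$ tangent to $\partial\mathscr{X}_2$, i.e.\ to an element of $\mathcal{V}_2$. By the immediately preceding proposition, applying any product of vector fields in $\mathcal{V}_2$ to $a\in S^{m,l,k,j}$ keeps one inside $S^{m,l,k,j}$. Iterating gives $V^\beta a\in S^{m,l,k,j}$.

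Next I would handle the $W^\alpha$ factor by induction on $|\alpha|$. The base case $|\alpha|=0$ is the previous step. For the inductive step write $W^\alpha = W\,W^{\alpha'}$ with $|\alpha'|=|\alpha|-1$, and assume $b:=W^{\alpha'}V^\beta a\in S^{m,l,k+|\alpha|-1,j+|\alpha|-1}$. The vector field $W$ is homogeneous of degree $0$ on $\mathscr{X}$ and tangent to $\partial\mathscr{X}$, but not necessarily to $S$, so only the first part of Corollary \ref{homvf} applies: $\tilde{W}:=d_\Gamma d_\Sigma W$ lifts to a smooth vector field on $\mathscr{X}_2$ tangent to $\partial\mathscr{X}_2$, i.e.\ $\tilde{W}\in\mathcal{V}_2$. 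In the common interior one has $W = d_\Gamma^{-1}d_\Sigma^{-1}\tilde{W}$, so
\[W b = d_\Gamma^{-1}d_\Sigma^{-1}\,\tilde{W} b.\]
By the preceding proposition $\tilde{W} b\in S^{m,l,k+|\alpha|-1,j+|\alpha|-1}$, and multiplication by $d_\Gamma^{-1}d_\Sigma^{-1}$ shifts the $d_\Sigma$- and $d_\Gamma$-orders each by one (this is immediate from $S^{m,l,k,j}=\rho^{-m}x^{-l}d_\Sigma^{-k}d_\Gamma^{-j}S^{0,0,0,0}$). Hence $Wb\in S^{m,l,k+|\alpha|,j+|\alpha|}$, completing the induction.

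The only subtlety I expect to have to address is the factorization $W = d_\Gamma^{-1}d_\Sigma^{-1}\tilde{W}$: the identity is only a priori valid in $\mathscr{X}_2^\circ$, but since $b$ and $\tilde{W}b$ are smooth in the interior and $d_\Gamma,d_\Sigma$ are positive boundary-defining functions there, the identity applied pointwise in the interior is exactly what is needed to deduce membership in the conormal class $S^{m,l,k+|\alpha|,j+|\alpha|}$, whose definition only requires the uniform bounds on the interior. Everything else is bookkeeping.
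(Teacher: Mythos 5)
Your proof is correct and follows essentially the same route as the paper, which deduces the proposition directly from Corollary \ref{homvf} together with the fact that $\mathcal{V}_2$ preserves each class $S^{m,l,k,j}$: the $S$-tangent factors lift to $\mathcal{V}_2$ outright, while each remaining factor is $d_\Sigma^{-1}d_\Gamma^{-1}$ times an element of $\mathcal{V}_2$ and so costs one order in each of $d_\Sigma$ and $d_\Gamma$. Your write-up merely makes the induction and the interior factorization $W=d_\Gamma^{-1}d_\Sigma^{-1}\tilde{W}$ explicit, which the paper leaves implicit.
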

This follows almost immediately from Corollary \ref{homvf}, noting that if $V,W\in\mathcal{V}$ with $V$ tangent to $S$, then $V$ and $d_{\Sigma}d_{\Gamma}W$ are\footnote{More accurately, they lift to vector fields in $\mathcal{V}_2$ on $\mathscr{X}_2$.} also tangent to $\partial\mathscr{X}_2$.

We have the following inclusions:
\begin{prop}
\label{inclusions}
\begin{enumerate}
\item If $m\le m'$, $l\le l'$, $k\le k'$, and $j\le j'$, then $S^{m,l,k,j}\subset S^{m',l',k',j'}$.
\item \label{kexch} If $k\ge k'$, then $S^{m,l,k,j}\subset S^{m+(k-k')/2,l,k',j}$.
\item \label{jexch} If $j\ge j'$, then $S^{m,l,k,j}\subset S^{m,l+(j-j')/2,k,j'}\cap S^{m,l,k+(j-j'),j'}$.
\item We have 
\[S^{m,l,k,j}\subset S^{m+(k-k'+s(j-j')_+)_+/2,l+t(j-j')_+/2,k',j'}\]
for any $m,l,k,j,k',j'$ and any $s,t$ with $s+t = 1$, where $x_+ = \max(0,x)$. In particular, we have $S^{m,l,k,j}\subset S^{m+(k+sj_+)_+/2,l+tj_+/2,0,0}$ for all $s+t=1$, and $S^{m-1,l-1,k-1,j-1}\subset S^{m-1/2,l-1/2,k,j}$.
\end{enumerate}
\end{prop}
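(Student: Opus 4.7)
The plan is to reduce each inclusion to one routine observation: if $f \in C^{\infty}(\mathscr{X}_2)$ is a non-negative function which factors as a product of boundary-defining functions of $\mathscr{X}_2$, then $f^{\alpha} \in S^{0,0,0,0}$ for every $\alpha \ge 0$. Indeed, for any $V \in \mathcal{V}_2$, the tangency of $V$ to each boundary face means $Vg/g \in C^{\infty}(\mathscr{X}_2)$ for every boundary-defining factor $g$ of $f$, so $Vf/f \in C^{\infty}(\mathscr{X}_2)$. Hence $V(f^{\alpha}) = \alpha f^{\alpha}(Vf/f)$, and by induction $V^{\beta}(f^{\alpha}) = f^{\alpha} c_{\beta}$ with $c_{\beta} \in C^{\infty}(\mathscr{X}_2)$; the $S^{0,0,0,0}$ estimates follow from boundedness of $f$ and $c_{\beta}$ on pre-compact regions. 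Applied to $\rho$, $x$, $d_{\Sigma}$, $d_{\Gamma}$ (whose $\mathscr{X}_2$-smooth factorizations into boundary-defining functions appear in the preceding examples) and to the smooth ratios
\[\tilde{\rho} = \rho/d_{\Sigma}^2, \qquad \tilde{x} = x/d_{\Gamma}^2, \qquad d_{\Sigma}/d_{\Gamma},\]
this yields that every non-negative power of each of these functions lies in $S^{0,0,0,0}$.

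Item (1) then follows by factoring
\[\rho^{-m} x^{-l} d_{\Sigma}^{-k} d_{\Gamma}^{-j} = \bigl(\rho^{m'-m} x^{l'-l} d_{\Sigma}^{k'-k} d_{\Gamma}^{j'-j}\bigr)\, \rho^{-m'} x^{-l'} d_{\Sigma}^{-k'} d_{\Gamma}^{-j'},\]
with the prefactor in $S^{0,0,0,0}$ by the observation. For (2), I would use the identity $d_{\Sigma}^{-(k-k')} = \tilde{\rho}^{(k-k')/2} \rho^{-(k-k')/2}$ to rewrite a representative $a = \rho^{-m} x^{-l} d_{\Sigma}^{-k} d_{\Gamma}^{-j} \tilde{a}$ of $S^{m,l,k,j}$ as $\rho^{-m-(k-k')/2} x^{-l} d_{\Sigma}^{-k'} d_{\Gamma}^{-j} \bigl(\tilde{\rho}^{(k-k')/2} \tilde{a}\bigr)$; since $\tilde{\rho}^{(k-k')/2} \tilde{a} \in S^{0,0,0,0}$ by the algebra property, this exhibits $a \in S^{m+(k-k')/2, l, k', j}$. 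Item (3) is the same argument, with $\tilde{x}^{(j-j')/2}$ and with $(d_{\Sigma}/d_{\Gamma})^{j-j'}$ playing the role of $\tilde{\rho}^{(k-k')/2}$.

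For (4), I would handle the case $j \ge j'$ by interpolating the two inclusions of (3): for $s, t \ge 0$ with $s + t = 1$, the identity
\[d_{\Gamma}^{-(j-j')} = (d_{\Sigma}/d_{\Gamma})^{s(j-j')} \tilde{x}^{t(j-j')/2} \cdot d_{\Sigma}^{-s(j-j')} x^{-t(j-j')/2}\]
yields $S^{m,l,k,j} \subset S^{m,\, l + t(j-j')/2,\, k + s(j-j'),\, j'}$. Applying (2) when $k + s(j-j') \ge k'$ and (1) otherwise reduces the $d_{\Sigma}$ exponent to $k'$, landing in $S^{m + ((k-k')+s(j-j'))_+/2,\, l+t(j-j')/2,\, k',\, j'}$. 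The case $j < j'$ has $(j-j')_+ = 0$ throughout and is handled by (1) followed by (2) or a second application of (1). The two ``in particular'' statements at the end are the $k' = j' = 0$ specialization of the general bound and a direct application of (1), respectively. I do not foresee any serious obstacle: the argument consists entirely of factoring against the three ratio identities above and invoking the algebra property of $S^{0,0,0,0}$, with only the interpolation parameter split in (4) requiring any real bookkeeping.
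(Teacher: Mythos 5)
Your argument is correct and is essentially the paper's: the factorizations $d_{\Sigma}^{-(k-k')} = \tilde{\rho}^{(k-k')/2}\rho^{-(k-k')/2}$ and $d_{\Gamma}^{-(j-j')} = \tilde{x}^{(j-j')/2}x^{-(j-j')/2} = (d_{\Sigma}/d_{\Gamma})^{j-j'}d_{\Sigma}^{-(j-j')}$ are just the multiplicative form of the pointwise comparisons $d_{\Sigma}\ge\rho^{1/2}$, $d_{\Gamma}\ge x^{1/2}$, $d_{\Gamma}\ge d_{\Sigma}$ that the paper applies directly to the weighted symbol estimates, and your chain for part (4) --- interpolating via part (3) and then reducing the $d_{\Sigma}$ order via part (2) --- is exactly the paper's. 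The only cosmetic difference is that you route everything through non-negative powers of boundary-defining functions as $S^{0,0,0,0}$ prefactors together with the algebra property, rather than through the corollary characterizing $S^{m,l,k,j}$ by the estimates $|V^{\alpha}a|\le C_{\alpha}\rho^{-m}x^{-l}d_{\Sigma}^{-k}d_{\Gamma}^{-j}$; both routes (and the statement itself, in the form used later) implicitly take $s,t\ge 0$ in part (4).
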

Part \ref{kexch} roughly says that we can arbitrarily improve our $k$ order (i.e. order with respect to $d_{\Sigma}$) at the cost of $1/2$ order in $\rho$, while parts \ref{jexch} and 4 says that we can improve our $j$ order (with respect to $d_{\Gamma}$) at the cost of $1/2$ order in either $x$ or $\rho$.
\begin{proof}
\begin{enumerate}
\item We recall that we may assume without loss of generality that $\rho$, $x$, $d_{\Sigma}$, and $d_{\Gamma}$ are bounded away from their associated front faces. Hence, we have
\[\rho^{-m}x^{-l}d_{\Sigma}^{-k}d_{\Gamma}^{-j} = (\rho^{m'-m}x^{l'-l}d_{\Sigma}^{k'-k}d_{\Gamma}^{j'-j})\rho^{-m'}x^{-l'}d_{\Sigma}^{-k'}d_{\Gamma}^{-j'}\le C\rho^{-m'}x^{-l'}d_{\Sigma}^{-k'}d_{\Gamma}^{-j'}\]
since $m'-m,l'-l,k'-k,j'-j\ge 0$, and hence $a\in S^{m,l,k,j}$ implies
\begin{align*} |V^{\alpha}a|\le C_{\alpha}\rho^{-m}x^{-l}d_{\Sigma}^{-k}d_{\Gamma}^{-j}\le C_{\alpha}C\rho^{-m'}x^{-l'}d_{\Sigma}^{-k'}d_{\Gamma}^{-j'},
\end{align*}
i.e. $a\in S^{m',l',k',j'}$.
\item We have $d_{\Sigma}\ge \rho^{1/2}$, so for $s\le 0$ we have $d_{\Sigma}^s\le \rho^{s/2}$. Thus for $k'-k\le 0$ we have $d_{\Sigma}^{k'-k}\le \rho^{-(k-k')/2}$, and hence
\[\rho^{-m}d_{\Sigma}^{-k} = \rho^{-m}d_{\Sigma}^{k'-k}d_{\Sigma}^{-k'}\le \rho^{-(m+(k-k')/2)}d_{\Sigma}^{-k'}.\]
\item Similar reasoning as above, noting that $d_{\Gamma}\ge x^{1/2}$ and $d_{\Gamma}\ge d_{\Sigma}$.
\item We have
\begin{align*}
S^{m,l,k,j}&\subset S^{m,l+t(j-j')_+/2,k,j'+s(j-j')}\\
&\subset S^{m,l+t(j-j')_+/2,k+s(j-j')_+,j'}\\
&\subset S^{m+(k-k'+s(j-j')_+)_+/2,l+t(j-j')_+/2,k',j'},
\end{align*}
using parts \ref{jexch}, \ref{jexch}, and \ref{kexch}, respectively.
\end{enumerate}
\end{proof}
We also have the following ellipticity result:
\begin{prop}
Suppose $a\in S^{m,l,k,j}$ satisfies an ``ellipticity'' estimate
\[|a|\ge c\rho^{-m}x^{-l}d_{\Sigma}^{-k}d_{\Gamma}^{-j}\]
near $\partial\mathscr{X}_2$. Let $r \in\mathbb{Z}$, and $b\in C^{\infty}(^{sc}T^*M)$ agrees with $a^{r}$ near $\partial\mathscr{X}_2$. Then $b\in S^{r m,r l,r k,r j}$. If $a$ takes values in $\mathbb{R}_+$ in a neighborhood of $\partial\mathscr{X}_2$, then the same conclusion holds for any $r\in\mathbb{R}$.
\end{prop}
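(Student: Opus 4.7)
The plan is to reduce the problem to showing that a bounded, bounded-below function in $S^{0,0,0,0}$ stays in $S^{0,0,0,0}$ after composition with a smooth function of one variable, and then to verify this by a Faà di Bruno expansion.

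First I would localize. On any pre-compact subset $K \subset {^{sc}T^*M}$ (viewed inside $\mathscr{X}_2$), the portion of $K$ bounded away from $\partial\mathscr{X}_2$ has compact closure contained in the interior $\mathscr{X}_2^\circ$, on which $b$ is smooth and all of $\rho, x, d_\Sigma, d_\Gamma$ are uniformly bounded above and below. Hence $|V^\alpha b|$ is uniformly bounded there for every product $V^\alpha$ of vector fields in $\mathcal{V}_2$, and this bound absorbs into the weight $\rho^{-rm}x^{-rl}d_\Sigma^{-rk}d_\Gamma^{-rj}$, which is also bounded below on such a set. So the required symbol estimates hold trivially away from $\partial\mathscr{X}_2$, and it suffices to prove $a^r \in S^{rm,rl,rk,rj}$ on some neighborhood $U$ of $\partial\mathscr{X}_2$.

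Next I would reduce to order zero. Set $\tilde a := \rho^m x^l d_\Sigma^k d_\Gamma^j a$; by the algebra property, $\tilde a \in S^{0,0,0,0}$, so in particular $|\tilde a|$ is uniformly bounded above, while ellipticity gives $|\tilde a| \ge c > 0$ on $U$. Thus the image $\tilde a(U)$ lies in a compact subset of $\mathbb{C} \setminus \{0\}$ (or of $(0,\infty)$ in the positive case), and the function $F(z) = z^r$ is smooth on a neighborhood of this compact set provided $r \in \mathbb{Z}$ in the complex case or $r \in \mathbb{R}$ in the positive case. Since $a^r = \rho^{-rm}x^{-rl}d_\Sigma^{-rk}d_\Gamma^{-rj} \, F(\tilde a)$ on $U$, it suffices to show $F(\tilde a) \in S^{0,0,0,0}(U)$.

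The final step is a Faà di Bruno calculation. For any product $V^\alpha$ of vector fields in $\mathcal{V}_2$, induction on $|\alpha|$ using the Leibniz and chain rules expresses $V^\alpha(F(\tilde a))$ as a finite sum of terms of the form $F^{(n)}(\tilde a) \, (V^{\beta_1}\tilde a)\cdots(V^{\beta_n}\tilde a)$ with $|\beta_1| + \cdots + |\beta_n| = |\alpha|$ and $n \le |\alpha|$. Each factor $V^{\beta_i}\tilde a$ is uniformly bounded since $\tilde a \in S^{0,0,0,0}$, and each factor $F^{(n)}(\tilde a)$ is uniformly bounded because $F$ is smooth on a neighborhood of the compact range of $\tilde a$. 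This gives $|V^\alpha(F(\tilde a))| \le C_\alpha$ on $U$, hence $F(\tilde a) \in S^{0,0,0,0}$ as desired. The main obstacle is purely organizational --- verifying that the chain-rule expansion behaves well when the differentiations are performed by non-commuting vector fields in $\mathcal{V}_2$ --- but this is routine once one notes that $\mathcal{V}_2$-derivatives of products obey the Leibniz rule and the class $S^{0,0,0,0}$ is stable under application of any product of vector fields in $\mathcal{V}_2$.
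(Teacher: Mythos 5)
Your proof is correct and follows essentially the same route as the paper: both arguments come down to the iterated chain rule (your Faà di Bruno expansion is exactly the paper's inductive application of $V(a^{\beta})=\beta a^{\beta-1}Va$), with the upper bounds on $V^{\beta_i}\tilde a$ coming from symbolicity and the bound on $F^{(n)}(\tilde a)$ coming from the ellipticity lower bound. Your preliminary normalization to an order-zero symbol and the explicit localization away from $\partial\mathscr{X}_2$ are harmless repackagings of the same computation.
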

In particular, if $a\in S^{m,l,k,j}$ satisfies the ellipticity estimate, then any smooth function equaling $a^{-1}$ near $\partial\mathscr{X}_2$ is a symbol in $S^{-m,-l,-k,-j}$.

The properties follow from standard arguments in microlocal analysis, but we repeat them here for convenience:
\begin{proof}
From the generalized power rule
\[V\left(a^{\beta}\right) = \beta a^{\beta-1}Va,\]
it follows by induction that
\[V^{\alpha}\left(a^{r}\right)\in \sum_{i=1}^{|\alpha|}a^{r-i}\prod_{i'=1}^iS^{m,l,k,j},\]
i.e. it is a sum of terms of the form $a^{\beta-i}$ times products of $i$ symbols in $S^{m,l,k,j}$. It follows, from the lower bound on $a$, that
\[\left|V^{\alpha}\left(a^{r}\right)\right|\le C\sum_{i=1}^{|\alpha|}\frac{(\rho^{-m}x^{-l}d_{\Sigma}^{-k}d_{\Gamma}^{-j})^i}{(\rho^{-m}x^{-l}d_{\Sigma}^{-k}d_{\Gamma}^{-j})^{i-r}} = C'\rho^{-r m}x^{-r l}d_{\Sigma}^{-r k}d_{\Gamma}^{-r j},\]
 as desired.
\end{proof}

Note that the above proposition is an ``ellipticity'' statement showing that symbols which are ``elliptic'' in this symbol class admit an inverse which also belongs to this symbol class. To make use of this ellipticity result, we give an example of a class of operators which turn out to be elliptic in this new symbol class. Our operators of interest in the local inverse problem turn out to have symbols of this form, making the following lemma quite applicable:


\begin{lemma}
\label{new-symbol-ell}
Suppose $a$ is a scattering symbol satisfying the asymptotic expansion (in the fiber variables)
\[a\sim\sum_{j\ge 1}{a_{-j}(x,y,\tau)\rho^j}\]
where $a_{-j}\in S^{0,0}$, and the following properties hold regarding $a_{-1}$, $a_{-2}$, and $a_{-3}$:
\begin{itemize}
\item $a_{-1}$ is nonnegative, and it vanishes only on $\Sigma$, where it vanishes non-degenerately quadratically.
\item $a_{-2}=ix\tilde{a}$, where $\tilde{a}\in S^{0,0}$ is real-valued and non-vanishing on $\Sigma$.
\item $a_{-3}$ is positive on $\Sigma\cap\{x=0\}$.
\end{itemize}
Then $a$ is a symbol in $S^{-1,0,-2,-2}$ which is elliptic near fiber infinity, i.e. it satisfies the lower bound $|a|\ge c\rho d_{\Sigma}^2d_{\Gamma}^2$ in a neighborhood of $\rho=0$.
\end{lemma}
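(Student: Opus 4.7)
The plan is to decompose $a$ via its classical asymptotic expansion into four pieces, $a = a_{-1}\rho + a_{-2}\rho^2 + a_{-3}\rho^3 + r$ with remainder $r \in S^{-4}_{sc}$, show each summand lies in $S^{-1,0,-2,-2}$ using the algebra and inclusion results already established, and separately verify the ellipticity lower bound $|a| \geq c\rho d_{\Sigma}^2 d_{\Gamma}^2$ near $\{\rho = 0\}$ by a case analysis.

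The key ingredients for membership are: $\rho \in S^{-1,0,0,0}$; $x = d_{\Gamma}^2 \tilde{x} \in S^{0,0,0,-2}$ since $\tilde{x} \in C^{\infty}(\mathscr{X}_2)$; $\tau_i \in S^{0,0,-1,-1}$ from Example \eqref{taui00-1-1}; and the coefficients $q_{ij}$, $\tilde{a}$, $a_{-3}$ pull back from $C^{\infty}(\mathscr{X})$ to lie in $S^{0,0,0,0}$. The representation $a_{-1} = q_{ij}\tau_i\tau_j$ near $\Sigma$ combined with the algebra property gives $a_{-1}\rho \in S^{-1,0,-2,-2}$ there; away from $\Sigma$ both $d_{\Sigma}$ and $d_{\Gamma}$ are bounded below, so the localized inclusion $S^{-1,0,0,0} \subset S^{-1,0,-2,-2}$ suffices. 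For $a_{-2}\rho^2 = ix\tilde{a}\rho^2 \in S^{-2,0,0,-2}$, Proposition \ref{inclusions}(\ref{kexch}) (trading one unit of $\rho$-order for two units of $d_{\Sigma}$-order) yields $S^{-2,0,0,-2}\subset S^{-1,0,-2,-2}$. For $a_{-3}\rho^3 \in S^{-3,0,0,0}$, Proposition \ref{inclusions}(4) (with $s=1,t=0$) provides the corresponding inclusion. The tail $r \in S^{-4,0,0,0}$ embeds into $S^{-1,0,-2,-2}$ analogously, using $\rho \lesssim d_{\Sigma}^2 \leq d_{\Gamma}^2$.

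For ellipticity, after factoring out $\rho d_{\Sigma}^2 d_{\Gamma}^2 = d_{\Sigma}^4 d_{\Gamma}^2 \tilde\rho$ one gets
\[
\frac{a}{\rho d_{\Sigma}^2 d_{\Gamma}^2} = \langle q(\tilde\tau/d_{\Gamma}),(\tilde\tau/d_{\Gamma})\rangle + i\tilde{x}\tilde{a}\tilde\rho + a_{-3}(d_{\Sigma}/d_{\Gamma})^2 \tilde\rho^2 + (\text{higher-order terms}),
\]
and we split $\{\rho < \epsilon\}$ into three regions. On $\{|\tau| \geq \delta\}$, $a_{-1}$ is bounded below so $|a| \gtrsim \rho \gtrsim \rho d_{\Sigma}^2 d_{\Gamma}^2$. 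Near $\Sigma$ but away from the second front face (so $d_{\Gamma} \geq \delta'$): if $|\tilde\tau| \geq \delta'$ then positive definiteness of $q$ gives $a_{-1} \gtrsim d_{\Sigma}^2$ and the real part dominates; if $|\tilde\tau|$ is small then $\tilde\rho \geq 1/2$ forces $d_{\Sigma}^2 \lesssim \rho$, and with $x$ bounded below one has $|\mathrm{Im}\,a| \gtrsim \rho^2 \gtrsim \rho d_{\Sigma}^2 d_{\Gamma}^2$. Near the second front face, the geometric identity $\tilde{x} + (d_{\Sigma}/d_{\Gamma})^2 + |\tilde\tau/d_{\Gamma}|^2 = 1$ combined with continuity of the non-degeneracy hypotheses ($q$ positive definite, $\tilde{a}\neq 0$, $a_{-3} > 0$ in a neighborhood of $\Sigma \cap \{x = 0\}$) gives $|\mathrm{Re}\,a|/(\rho d_{\Sigma}^2 d_{\Gamma}^2) \gtrsim |\tilde\tau/d_{\Gamma}|^2 + (d_{\Sigma}/d_{\Gamma})^2$ and $|\mathrm{Im}\,a|/(\rho d_{\Sigma}^2 d_{\Gamma}^2) \gtrsim \tilde{x}$; Cauchy--Schwarz then forces $|a|^2/(\rho d_{\Sigma}^2 d_{\Gamma}^2)^2 \gtrsim \tfrac{1}{2}(\tilde{x} + (d_{\Sigma}/d_{\Gamma})^2 + |\tilde\tau/d_{\Gamma}|^2)^2 = \tfrac{1}{2}$.

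The main obstacle is the ellipticity estimate near the second front face, where all three leading terms must combine coherently into a uniform lower bound. It is essential that $a_{-2}$ is purely imaginary (so its contribution cannot cancel against the real parts from $a_{-1}$ and $a_{-3}$), and that the sum-to-one identity on $\{d_{\Gamma} = 0\}$ forces at least one of the three non-negative pieces into dominance. Were the subprincipal term not purely imaginary, ellipticity would fail as $\tilde{x}$ approaches $1$, i.e. as we approach base infinity from the interior of the front face. The membership arguments, while technical, reduce to bookkeeping once the right generators and inclusion rules are in place.
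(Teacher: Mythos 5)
Your proposal is essentially the paper's own argument: the same real/imaginary splitting (exploiting that the $a_{-2}$ term is purely imaginary), the same lower bounds $a_{-1}\gtrsim|\tau|^2$, $|a_{-2}|\gtrsim x$, $a_{-3}\gtrsim 1$ near $\Sigma\cap\{x=0\}$, the same factoring out of $\rho d_{\Sigma}^2d_{\Gamma}^2$, and the same use of the identity $\tilde{x}+(d_{\Sigma}/d_{\Gamma})^2+|\tilde\tau/d_{\Gamma}|^2=1$; the membership bookkeeping via the inclusion lemmas is also fine. One intermediate claim in your third region is stated too strongly: the correct bounds are $|\mathrm{Re}\,a|/(\rho d_{\Sigma}^2d_{\Gamma}^2)\gtrsim|\tilde\tau/d_{\Gamma}|^2+(d_{\Sigma}/d_{\Gamma})^2\tilde\rho^2$ and $|\mathrm{Im}\,a|/(\rho d_{\Sigma}^2d_{\Gamma}^2)\gtrsim\tilde{x}\tilde\rho$ (your versions drop the $\tilde\rho$ factors and the second is false as $\tilde\rho\to 0$), so the sum-to-one identity alone does not close the estimate; you need the dichotomy $\tilde\rho\ge 1/2$ versus $|\tilde\tau|^2\ge 1/2$ — which you already invoke in your second region — to finish, exactly as the paper does. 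You should also verify that the lumped ``higher-order terms'' (the $\tau$-dependence of $a_{-2}$, the $(x,\tau)$-dependence of $a_{-3}$, and the $O(\rho^4)$ tail) are either purely imaginary or genuinely of the form $\rho\cdot O(\rho d_{\Sigma}^2d_{\Gamma}^2)$, since for instance $i\rho^2\tau\cdot\tilde{a}_{-2}$ is of full principal order and is only harmless because it is imaginary.
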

Note that the hypotheses of this lemma later on end up being the results of Lemma \ref{new-symbol-prop}.

\begin{proof} The assumptions give uniform bounds $a_{-1}\ge C|\tau|^2$ everywhere, as well as bounds $|a_2|\ge Cx$ near $\Sigma$, and $a_{-3}>C$ in a neighborhood of $\Sigma\cap\{x=0\}$. We first analyze
\[a^0 := a_{-1}(x,y,\tau)\rho + a_{-2}(x,y,0)\rho^2 + a_{-3}(0,y,0)\rho^3.\]
Since $a_{-1}$ and $a_{-3}$ are real (in fact nonnegative), while $a_{-2}$ is purely imaginary, we have
\begin{align*}|a^0|\ge C(|a_{-1}\rho + a_{-3}\rho^3| + |a_{-2}\rho^2|)
&\ge C\rho(|\tau|^2+\rho^2+x\rho) \\
&= C\rho d_{\Sigma}^2\left(|\tilde\tau|^2+d_{\Sigma}^2\left(\frac{\rho}{d_{\Sigma}^2}\right)^2 + x\frac{\rho}{d_{\Sigma}^2}\right) \\
&= C\rho d_{\Sigma}^2d_{\Gamma}^2\left(\frac{|\tilde\tau|^2}{d_{\Gamma}^2} + \frac{d_{\Sigma}^2}{d_{\Gamma}^2}\left(\frac{\rho}{d_{\Sigma}^2}\right)^2 + \frac{x}{d_{\Gamma}^2}\frac{\rho}{d_{\Sigma}^2}\right).
\end{align*}
On the region $\frac{\rho}{d_{\Sigma}^2}\ge 1/2$, the factor in the parentheses on the RHS is at least
\[\frac{|\tilde\tau|^2}{d_{\Gamma}^2} + \frac{d_{\Sigma}^2}{d_{\Gamma}^2}\left(\frac{1}{2}\right)^2 + \frac{x}{d_{\Gamma}^2}\cdot\frac{1}{2}\ge\frac{1}{4}\left(\frac{|\tilde\tau|^2}{d_{\Gamma}^2} + \frac{d_{\Sigma}^2}{d_{\Gamma}^2} + \frac{x}{d_{\Gamma}^2}\right) = \frac{1}{4}.\] Otherwise, since $\frac{\rho}{d_{\Sigma}^2}+|\tilde\tau|^2 = 1$, we have $|\tilde\tau|^2\ge 1/2$, and we can always w.l.o.g. bound $d_{\Gamma}$ from above, so the term $|\tilde\tau|^2/d_{\Gamma}^2$ would be bounded from below. In any case, we have the desired ellipticity bound.

We now analyze how $a$ differs from $a^0$. Note that
\[a_{-2}(x,y,\tau)-a_{-2}(x,y,0) = i\tau\cdot\tilde{a}_{-2}(x,y,\tau)\]
where $\tilde{a}_{-2}$ is a real-(vector-)valued symbol in $S^{0,0}$, and
\[a_{-3}(x,y,\tau)-a_{-3}(0,y,0) = O(x) + O(\tau)\]
and
\[a-\sum_{j=1}^3{a_{-j}\rho^j} = O(\rho^4).\]
Here, the ``big-O'' notation should be interpreted as saying that $O(f)$ means $f$ times a symbol in $S^{0,0}$, or in the case of $O(\tau)$, meaning a $S^{0,0}$ combination of the functions $\tau_i$. Thus overall we have
\[a-a^0 = i\rho^2\tau\cdot\tilde{a}_{-2} + O(x\rho^3) + O(\tau\rho^3) + O(\rho^4).\]
We first show $a^0 + i\rho^2\tau\cdot\tilde{a}_{-2}$ still satisfies the ellipticity bound; note that the additional term is still in $S^{-1,0,-2,-2}$ since $\rho^2\tau_i\in S^{-2,0,-1,-1}\subset S^{-1,0,-2,-2}$). Note that the additional term $i\rho^2\tau\cdot\tilde{a}_{-2}$ is purely imaginary, which helps in that it doesn't ``compete'' with the real-valued terms in $\tilde{a}$. We can argue as follows:
\begin{itemize}
\item Suppose we are in the region where $|\tau\cdot\tilde{a}_{-2}|\le |a_{-2}|/2$. Then the additional term $i\rho^2\tau\cdot\tilde{a}_{-2}$ can be absorbed into the $a_{-2}\rho^2$ term.
\item Otherwise, we have $|\tau\cdot\tilde{a}_{-2}|\ge |a_{-2}|/2$. From the bound $|a_{-2}|\ge Cx$, in this region we necessarily have $|\tau|\ge Cx/2$ for $c=C/2$. This implies $x\le 2C^{-1}d_{\Sigma}|\tilde\tau|\le C^{-1}(d_{\Sigma}^2+|\tilde\tau|^2)$, and thus
\[d_{\Gamma}^2=x+d_{\Sigma}^2+|\tilde\tau|^2\le (1+C^{-1})(d_{\Sigma}^2+|\tilde\tau|^2),\]
and hence $d_{\Sigma}^2+|\tilde\tau|^2\ge (1+C^{-1})^{-1}d_{\Gamma}^2$ in this region. Then
\begin{align*}
|a^0 + i\rho^2\tau\cdot\tilde{a}_{-2}|&\ge|\text{Re}(\tilde{a} + i\rho^2\tau\cdot\tilde{a}_{-2})| \\
&=a_{-1}\rho + a_{-3}\rho^3\\
&\ge C'\rho(|\tau|^2+\rho^2) \\
&= C'\rho d_{\Sigma}^2(|\tilde\tau|^2+d_{\Sigma}^2(1-|\tilde\tau|^2)^2).
\end{align*}
The term in the parentheses is bounded from below by $d_{\Gamma}^2$: indeed, in the region where $|\tilde\tau|\ge 1/2$ we can take $d_{\Gamma}$ to be bounded from above, while in the region where $|\tilde\tau|\le 1/2$ we can use the above estimate to see that in this region we have
\[|\tilde\tau|^2+d_{\Sigma}^2(1-|\tilde\tau|^2)^2\ge |\tilde\tau|^2+\frac{9}{16}d_{\Sigma}^2\ge cd_{\Gamma}^2\]
for $c = \frac{9}{16(1+C^{-1})}$.
\end{itemize}
Thus, $a^0 + i\rho^2\tau\cdot\tilde{a}_{-2}$ satisfies elliptic estimates, regardless of whatever $\tilde{a}_{-2}$ happens to be\footnote{It turns out to vanish at $\{x=0,\tau=0\}$ which would have made computations easier, but this fact is not needed for this argument}.

We now show the remaining terms can be absorbed into the elliptic estimate, at least near fiber infinity $\rho = 0$. Indeed, note that $x\le d_{\Gamma}^2$, $|\tau_j| = d_{\Sigma}|\tilde\tau_j|\le d_{\Sigma}d_{\Gamma}\le d_{\Gamma}^2$, and $\rho \le d_{\Sigma}^2$, and hence
\[x\rho^3\le \rho^2 d_{\Sigma}^2d_{\Gamma}^2, \quad \tau_j\rho^3\le \rho^2d_{\Sigma}^2d_{\Gamma}^2,\quad \rho^4\le \rho^2d_{\Sigma}^4\le \rho^2d_{\Sigma}^2d_{\Gamma}^2.\]
Hence the remaining terms are $\rho O(\rho d_{\Sigma}^2d_{\Gamma}^2)$, and hence the lower bound on the main terms allow the remaining terms to be absorbed into the elliptic estimate, at least near $\rho = 0$. This shows that $a$ satisfies the elliptic estimate near fiber infinity.
\end{proof}
We note that this lemma does not establish ellipticity behavior at finite points; such estimates would need to be proven separately.

We now establish a criterion for a class of symbols to belong to the \newname-symbol class that will be useful for proving certain inclusions:
\begin{lemma}
\label{inc-crit-lemma}
Suppose $S$ is a subspace of functions which are smooth on the interior ${^{sc}T^*M}$ which satisfy the following properties:
\begin{enumerate}
\item Every function $a\in S$ satisfies a bound $|a|\le C_a\rho^{-m}x^{-l}d_{\Sigma}^{-k}d_{\Gamma}^{-j}$ for some constant $C_a$.
\item $S$ is closed under the action of $\mathcal{V}_2$, that is we have $Va\in S$ whenever $a\in S$ and $V\in\mathcal{V}_2$.
\end{enumerate}
Then $S\subset S^{m,l,k,j}$.
\end{lemma}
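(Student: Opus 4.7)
The plan is to reduce to the equivalent characterization of $S^{m,l,k,j}$ given in the Corollary following Proposition on differentiation: namely, $a \in S^{m,l,k,j}$ if and only if $|V^\alpha a| \le C_\alpha \rho^{-m}x^{-l}d_\Sigma^{-k}d_\Gamma^{-j}$ on pre-compact subsets for every finite product $V^\alpha$ of vector fields in $\mathcal{V}_2$. So the task is just to verify these estimates for an arbitrary $a \in S$.

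First, fix $a \in S$ and an arbitrary product $V^\alpha = V_1 \cdots V_N$ with $V_i \in \mathcal{V}_2$. Applying hypothesis (2) iteratively, starting with $V_N a \in S$, then $V_{N-1}(V_N a) \in S$, and so on, we obtain by induction on $|\alpha|$ that $V^\alpha a \in S$. Next, hypothesis (1) applied to the element $V^\alpha a \in S$ yields a pointwise bound
\[
|V^\alpha a| \;\le\; C_{V^\alpha a}\, \rho^{-m} x^{-l} d_\Sigma^{-k} d_\Gamma^{-j},
\]
with the constant depending only on $\alpha$ (and $a$), which is precisely of the form required in the Corollary's characterization. Setting $C_\alpha := C_{V^\alpha a}$ and invoking the Corollary gives $a \in S^{m,l,k,j}$, proving the inclusion $S \subset S^{m,l,k,j}$.

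The only subtlety worth flagging is the uniformity convention: the definition of $S^{m,l,k,j}$ requires estimates that are uniform on sets whose projection to the base is pre-compact, so hypothesis (1) should be interpreted in the same sense (i.e. the constant $C_a$ may depend on $a$ and on the choice of pre-compact set). Under this reading the above argument is unchanged, as hypothesis (2) ensures each iterated derivative lies in $S$, and hypothesis (1) then supplies the locally uniform bound at that order. There is no real obstacle here; the lemma is essentially a tautology once one has the derivative-characterization of $S^{m,l,k,j}$, but it is a convenient packaging for later inclusion arguments where one wants to verify membership in the $bb$-symbol class by checking a pointwise estimate on a $\mathcal{V}_2$-module of functions.
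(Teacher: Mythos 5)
Your proof is correct and is essentially the paper's own argument: iterate hypothesis (2) to conclude $V^{\alpha}a\in S$ for any finite product of vector fields in $\mathcal{V}_2$, then apply hypothesis (1) to that element and invoke the derivative characterization of $S^{m,l,k,j}$. The remark about the locally uniform interpretation of the constants is a reasonable clarification but does not change the argument.
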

\begin{proof}
For any product $V^{\alpha}$ of vector fields in $\mathcal{V}_2$, we have that $V^{\alpha}a\in S$ for any $a\in S$ by repeated applications of the second property in the hypothesis. The first property in the hypothesis then gives $|V^{\alpha}a|\le C_{\alpha}\rho^{-m}x^{-l}d_{\Sigma}^{-k}d_{\Gamma}^{-j}$, as desired.
\end{proof}
\begin{cor}
\label{inc-crit-cor}
Suppose $S$ is a subspace of functions which are smooth on the interior ${^{sc}T^*M}$ which satisfy the following properties:
\begin{enumerate}
\item Every function $a\in S$ satisfies a bound $|a|\le C_a\rho^{-m}x^{-l}d_{\Sigma}^{-k}d_{\Gamma}^{-j}$ for some constant $C_a$.
\item For any $a\in S$ and any $V\in\mathcal{V}_2$, we have $Va\in S^{0,0,0,0}\otimes S$.
\end{enumerate}
Then $S\subset S^{m,l,k,j}$. 
\end{cor}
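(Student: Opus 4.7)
The approach is to reduce to Lemma \ref{inc-crit-lemma} by enlarging $S$ to the space $\tilde S := S^{0,0,0,0}\cdot S$, consisting of finite sums of products $ba$ with $b\in S^{0,0,0,0}$ and $a\in S$. Observe that $S\subset\tilde S$ since $1\in S^{0,0,0,0}$, so it suffices to show $\tilde S\subset S^{m,l,k,j}$, which I will do by verifying the two hypotheses of Lemma \ref{inc-crit-lemma} for $\tilde S$.

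First, I check the pointwise bound. For $b\in S^{0,0,0,0}$, we have $|b|\le C_b$ uniformly on pre-compact subsets, and by hypothesis $|a|\le C_a\rho^{-m}x^{-l}d_{\Sigma}^{-k}d_{\Gamma}^{-j}$ for $a\in S$. Hence products $ba$, and by linearity finite sums thereof, satisfy a bound of the required form.

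Next, I verify closure of $\tilde S$ under $\mathcal{V}_2$. Given $V\in\mathcal{V}_2$ and a single product $ba\in\tilde S$, the Leibniz rule gives
\[V(ba) = (Vb)a + b(Va).\]
Since $S^{0,0,0,0}$ is itself closed under $\mathcal{V}_2$ (a proposition proved earlier), $Vb\in S^{0,0,0,0}$, so $(Vb)a\in S^{0,0,0,0}\cdot S\subset\tilde S$. By hypothesis $Va\in S^{0,0,0,0}\cdot S=\tilde S$, so $b(Va)$ is a sum of products of the form $b\cdot b'\cdot a'$ with $b'\in S^{0,0,0,0}$ and $a'\in S$; since $S^{0,0,0,0}$ is closed under multiplication (another earlier proposition), $bb'\in S^{0,0,0,0}$, and hence $b(Va)\in\tilde S$ as well. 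Extending by linearity to finite sums gives closure of $\tilde S$ under $\mathcal{V}_2$.

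With both hypotheses verified, Lemma \ref{inc-crit-lemma} yields $\tilde S\subset S^{m,l,k,j}$, and in particular $S\subset S^{m,l,k,j}$, as desired. There is no serious obstacle here; the only substantive point is to recognize that the product-structure of $\tilde S$ is preserved by $\mathcal{V}_2$ thanks to the algebra and derivation properties of $S^{0,0,0,0}$ that were established in the preceding propositions.
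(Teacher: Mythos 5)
Your proof is correct and follows essentially the same route as the paper: both define $\tilde S = S^{0,0,0,0}\cdot S$, verify the hypotheses of Lemma \ref{inc-crit-lemma} for $\tilde S$, and conclude via the inclusion $S\subset\tilde S$. You simply spell out the Leibniz-rule verification that the paper leaves implicit.
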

\begin{proof}
Let $\tilde{S} = S^{0,0,0,0}\otimes S$. Then $\tilde{S}$ satisfies the hypothesis in Lemma \ref{inc-crit-lemma}, so $\tilde{S}\subset S^{m,l,k,j}$. Since $S\subset\tilde{S}$ as well since $1\in S^{0,0,0,0}$, it follows that $S\subset S^{m,l,k,j}$, as desired.
\end{proof}

Having introduced the class of symbols we wish to consider, we now consider a slightly modified version of the class of vector fields used to test the regularity of these symbols. We will consider $\mathcal{V}_{2,c} := S^{0,0,0,0}\otimes\mathcal{V}_2$, the space of vector fields whose coefficients are in $S^{0,0,0,0}$ (i.e. functions which are not necessarily smooth up to $\partial\mathscr{X}_2$, but merely conormal to $\partial\mathscr{X}_2$).
\begin{lemma}
For any product $V^{\alpha}$ of vector fields in $\mathcal{V}_{2,c}$, we have $V^{\alpha}a\in S^{m,l,k,j}$ for $a\in S^{m,l,k,j}$.
\end{lemma}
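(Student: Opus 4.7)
The plan is to induct on $|\alpha|$, leveraging two previously established facts: the algebra property ($S^{0,0,0,0}\cdot S^{m,l,k,j}\subset S^{m,l,k,j}$) and the stability result that if $W\in\mathcal{V}_2$ and $a\in S^{m,l,k,j}$ then $Wa\in S^{m,l,k,j}$. The key observation is that applying a single $\mathcal{V}_{2,c}$-vector field amounts to applying a $\mathcal{V}_2$-vector field and then multiplying by an $S^{0,0,0,0}$ coefficient, both of which preserve the symbol class.

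For the base case $|\alpha|=0$ there is nothing to prove. For $|\alpha|=1$, write $V=bW$ with $b\in S^{0,0,0,0}$ and $W\in\mathcal{V}_2$; then $Va=b(Wa)$, and the earlier proposition gives $Wa\in S^{m,l,k,j}$, while the algebra property gives $b(Wa)\in S^{m,l,k,j}$.

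For the inductive step, suppose the claim holds for all products of length less than $|\alpha|$. Write $V^{\alpha}=V_1\,V^{\alpha'}$ with $V_1=b_1W_1\in\mathcal{V}_{2,c}$ and $V^{\alpha'}$ a product of $|\alpha|-1$ vector fields in $\mathcal{V}_{2,c}$. By the inductive hypothesis, $V^{\alpha'}a\in S^{m,l,k,j}$. Then
\[
V^{\alpha}a=V_1(V^{\alpha'}a)=b_1\bigl(W_1(V^{\alpha'}a)\bigr),
\]
and applying the $\mathcal{V}_2$-stability to $W_1$ followed by the algebra property for multiplication by $b_1\in S^{0,0,0,0}$ yields $V^{\alpha}a\in S^{m,l,k,j}$, as desired.

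There is no substantive obstacle here; the only point to watch is that we cannot commute the $\mathcal{V}_{2,c}$ vector fields freely, so the induction must be carried out by peeling off one factor at a time (from the left), using that each single application preserves $S^{m,l,k,j}$. No Leibniz expansion of the coefficients is needed, because the coefficient $b_1$ is simply multiplied through after $W_1$ has acted.
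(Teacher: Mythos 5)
Your argument is correct and follows essentially the same route as the paper: reduce to a single vector field by induction, write it as an $S^{0,0,0,0}$-combination of $\mathcal{V}_2$ fields, and apply the $\mathcal{V}_2$-stability result together with the algebra property. The only cosmetic difference is that a general element of $\mathcal{V}_{2,c}$ is a finite sum $\sum_i b_iW_i$ rather than a single product $bW$, but the argument is linear in $V$ so nothing changes.
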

\begin{proof}
It suffices by induction to show this when $|\alpha|=1$. In that case, we can write $V = \sum{b_iV_i}$ with $b_i\in S^{0,0,0,0}$ and $V_i\in\mathcal{V}_2$, in which case $b_iV_ia \in S^{m,l,k,j}$ as $V_ia\in S^{m,l,k,j}$.
\end{proof}
We have a version of Corollary \ref{x2genvf}:
\begin{lemma}
\label{v2cgen}
$\mathcal{V}_{2,c}$ is generated over $S^{0,0,0,0}$ by the vector fields
\begin{equation}
x\partial_x,\partial_{y_j},\rho\partial_{\rho},\tau_i\partial_{\tau_j},\rho\partial_{\tau_j},x^{1/2}\rho^{1/2}\partial_{\tau_j}, 1\le i,j\le n-1.
\end{equation}
\end{lemma}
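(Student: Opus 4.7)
The plan is: by Corollary \ref{x2genvf}, $\mathcal{V}_2$ is generated over $C^\infty(\mathscr{X}_2)$ by $\{x\partial_x, \partial_{y_j}, \rho\partial_\rho, d_\Sigma d_\Gamma \partial_{\tau_j}\}$, so $\mathcal{V}_{2,c} = S^{0,0,0,0}\otimes \mathcal{V}_2$ is generated over $S^{0,0,0,0}$ by the same vector fields. Since the first three coincide with elements of the proposed list, the claim reduces to showing that the $S^{0,0,0,0}$-module generated by $d_\Sigma d_\Gamma$ coincides with the one generated by $\{\tau_i, \rho, x^{1/2}\rho^{1/2}\}$ inside the functions on $\mathscr{X}_2^\circ$.

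One inclusion is immediate from the ratios
\[\frac{\tau_i}{d_\Sigma d_\Gamma} = \frac{\tilde\tau_i}{d_\Gamma}\in C^\infty(\mathscr{X}_2),\quad \frac{\rho}{d_\Sigma d_\Gamma} = \tilde\rho\cdot\frac{d_\Sigma}{d_\Gamma}\in C^\infty(\mathscr{X}_2),\quad \frac{x^{1/2}\rho^{1/2}}{d_\Sigma d_\Gamma} = \tilde{x}^{1/2}\tilde\rho^{1/2},\]
with the third being conormal to $\partial\mathscr{X}_2$ as a product of half-integer powers of boundary-defining functions.

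For the reverse inclusion, the plan is to build $a_i, b, c \in S^{0,0,0,0}$ satisfying $d_\Sigma d_\Gamma = \sum_i a_i\tau_i + b\rho + c\, x^{1/2}\rho^{1/2}$ via a partition of unity on a neighborhood of $\partial\mathscr{X}_2$. The geometric input is the identity $\tilde{x} + (d_\Sigma/d_\Gamma)^2 + |\tilde\tau/d_\Gamma|^2 = 1$ (the lift of $d_\Gamma^2 = x + d_\Sigma^2 + |\tilde\tau|^2$), which by pigeonhole ensures at least one of $\tilde{x}$, $(d_\Sigma/d_\Gamma)^2$, $|\tilde\tau_i/d_\Gamma|^2$ is bounded below by $1/(n+1)$ at each point. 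In the three corresponding regions the reciprocals $d_\Sigma d_\Gamma/\tau_i = 1/(\tilde\tau_i/d_\Gamma)$, $d_\Sigma d_\Gamma/\rho = 1/(\tilde\rho\cdot d_\Sigma/d_\Gamma)$, and $d_\Sigma d_\Gamma/(x^{1/2}\rho^{1/2}) = 1/(\tilde{x}^{1/2}\tilde\rho^{1/2})$ are respectively bounded, hence (on the support of the associated bump function) smooth or conormal on $\mathscr{X}_2$.

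The main obstacle is the interplay with the fiber infinity face $\{\tilde\rho = 0\}$, where the $\rho$ and $x^{1/2}\rho^{1/2}$ decompositions break down (as they require $\tilde\rho$ bounded below). The key observation is that near fiber infinity, $|\tilde\tau|$ is close to $1$; working with the convention at the end of Section \ref{geo-sec} that $d_\Gamma$ is bounded above, $|\tilde\tau_i/d_\Gamma|$ is automatically bounded below for some $i$, so the $\tau_i$-dominant decomposition applies there. Conversely, near the second front face $|\tilde\tau|$ is small, so $\tilde\rho = 1 - |\tilde\tau|^2$ is close to $1$ and hence bounded below, legitimizing the other two decompositions. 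A partition of unity subordinate to smooth cutoffs in the $\mathscr{X}_2$-coordinates $\tilde{x}, d_\Sigma/d_\Gamma, \tilde\tau_i/d_\Gamma, \tilde\rho$ then assembles these local decompositions into a global $S^{0,0,0,0}$-combination, completing the proof.
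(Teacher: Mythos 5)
Your proof is correct, but the key step is carried out by a genuinely different route from the paper's. Both arguments reduce the lemma, via Corollary \ref{x2genvf}, to exchanging the generator $d_\Sigma d_\Gamma\partial_{\tau_j}$ for $\{\tau_i\partial_{\tau_j},\rho\partial_{\tau_j},x^{1/2}\rho^{1/2}\partial_{\tau_j}\}$, and the easy inclusion via the ratios $\tilde\tau_i/d_\Gamma$, $\tilde\rho\, d_\Sigma/d_\Gamma$, $\tilde x^{1/2}\tilde\rho^{1/2}$ is the same in both. For the reverse inclusion, the paper produces a single \emph{global} identity by rationalization: $d_\Sigma-\rho^{1/2}=|\tau|^2/(d_\Sigma+\rho^{1/2})$ and the analogous expansion of $d_\Gamma-x^{1/2}$ yield $d_\Sigma d_\Gamma = x^{1/2}\rho^{1/2}+b\rho+\sum_i(c_i\rho^{1/2}+d_\Gamma a_i)\tau_i$ with coefficients whose membership in $S^{0,0,0,0}$ is checked using the ellipticity of $d_\Sigma+\rho^{1/2}$ and $d_\Gamma+x^{1/2}$; this gives explicit coefficients with some extra vanishing (e.g.\ $a_i\in S^{0,0,0,-1}$). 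You instead run a pigeonhole/partition-of-unity argument on $\tilde x+(d_\Sigma/d_\Gamma)^2+|\tilde\tau/d_\Gamma|^2=1$, inverting whichever of $\tau_i$, $\rho$, $x^{1/2}\rho^{1/2}$ dominates locally. This is more geometric and arguably more robust, and you correctly identify and handle the one genuine subtlety — that the $\rho$- and $x^{1/2}\rho^{1/2}$-based inversions also need $\tilde\rho$ bounded below, which fails at fiber infinity — by observing that near $\{\tilde\rho=0\}$ one has $|\tilde\tau|$ near $1$ and hence (using the convention that $d_\Gamma$ is bounded above) some $|\tilde\tau_i/d_\Gamma|$ bounded below, so the $\tau_i$-decomposition covers that regime; the cutoffs in the smooth $\mathscr{X}_2$-coordinates then lie in $C^\infty(\mathscr{X}_2)\subset S^{0,0,0,0}$ and the local reciprocals are conormal on their supports. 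The trade-off is that your decomposition is only local with bounded (conormal) coefficients rather than a closed-form global formula, but that is all the lemma requires.
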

\begin{proof}
We first show that $\rho\partial_{\tau_j}$ and $x^{1/2}\rho^{1/2}\partial_{\tau_j}$ do belong to $\mathcal{V}_{2,c}$. Indeed, since
$\rho\in S^{-1,0,0,0}\subset S^{0,0,-1,-1}$ and $x^{1/2}\rho^{1/2}\in S^{-1/2,-1/2,0,0}\subset S^{0,0,-1,-1}$, it follows that both $\rho$ and $x^{1/2}\rho^{1/2}$ are $S^{0,0,0,0}$ multiples of $d_{\Sigma}d_{\Gamma}$, and hence these vector fields are a $S^{0,0,0,0}$ multiple of $d_{\Sigma}d_{\Gamma}\partial_{\tau_j}$, which are in $\mathcal{V}_2$.

From Corollary \ref{x2genvf}, it now suffices to show that $d_{\Gamma}d_{\Sigma}\partial_{\tau_j}$ can be obtained as a $S^{0,0,0,0}$ combination of the vector fields $\rho\partial_{\tau_j}$, $x^{1/2}\rho^{1/2}\partial_{\tau_j}$, and $\tau_i\partial_{\tau_j}$; in particular it suffices to show that $d_{\Sigma}d_{\Gamma}$ can be written as a $S^{0,0,0,0}$ combination of $\rho$, $\rho^{1/2}x^{1/2}$, and $\tau_i$. Note
\[d_{\Sigma}-\rho^{1/2} = \frac{d_{\Sigma}^2-\rho}{d_{\Sigma}+\rho^{1/2}} = \frac{|\tau|^2}{d_{\Sigma}+\rho^{1/2}} = \sum_{i=1}^{n-1}{\frac{\tau_i}{d_{\Sigma}+\rho^{1/2}}\tau_i}.\]
We note that $\rho^{1/2}\in S^{-1/2,0,0,0}\subset S^{0,0,-1,0}$, and hence $d_{\Sigma}+\rho^{1/2}\in S^{0,0,-1,0}$, and is elliptic as an element of $S^{0,0,-1,0}$ since $|d_{\Sigma}+\rho^{1/2}|\ge d_{\Sigma}$; hence we have $\frac{1}{d_{\Sigma}+\rho^{1/2}}\in S^{0,0,1,0}$. Moreover, from \eqref{taui00-1-1}, we have $\tau_i\in S^{0,0,-1,-1}$. Thus, we have
\[d_{\Sigma} = \rho^{1/2} + \sum_{i=1}^{n-1}{a_i\tau_i},\quad a_i = \frac{\tau_i}{d_{\Sigma}+\rho^{1/2}} \in S^{0,0,0,-1}.\]
Similarly, we have
\begin{align*}
d_{\Gamma} = x^{1/2} + \frac{d_{\Gamma}^2-x}{d_{\Gamma}+x^{1/2}} &= x^{1/2} + \frac{d_{\Sigma}^2+|\tilde\tau|^2}{d_{\Gamma}+x^{1/2}} \\
&= x^{1/2} + \frac{\rho + (d_{\Sigma}^2+1)|\tilde\tau|^2}{d_{\Gamma}+x^{1/2}} \\
&= x^{1/2} + \frac{\rho^{1/2}}{d_{\Gamma}+x^{1/2}}\rho^{1/2} + \sum_{i=1}^{n-1}{\frac{(1+d_{\Sigma}^2)\tilde\tau_i}{d_{\Gamma}+x^{1/2}}\tilde\tau_i} \\
&= x^{1/2} + \frac{\rho^{1/2}}{d_{\Gamma}+x^{1/2}}\rho^{1/2} + \sum_{i=1}^{n-1}{(1+d_{\Sigma}^2)\frac{d_{\Gamma}}{d_{\Gamma}+x^{1/2}}\frac{\tilde\tau_i}{d_{\Gamma}}\frac{1}{d_{\Sigma}}\tau_i}.
\end{align*}
As before, we have that $d_{\Gamma}+x^{1/2}$ is elliptic as an element of $S^{0,0,0,-1}$, and hence $\frac{1}{d_{\Gamma}+x^{1/2}}\in S^{0,0,0,1}$. Hence,
\[\rho^{1/2}\in S^{0,0,-1,0}\implies \frac{\rho^{1/2}}{d_{\Gamma}+x^{1/2}}\in S^{0,0,-1,1}\subset S^{0,0,0,0}\]
and
\[(1+d_{\Sigma}^2)\frac{d_{\Gamma}}{d_{\Gamma}+x^{1/2}}\frac{\tilde\tau_i}{d_{\Gamma}}\frac{1}{d_{\Sigma}}\in S^{0,0,1,0}.\]
Hence, we have
\[d_{\Gamma}=x^{1/2} + b\rho^{1/2} + \sum_{i=1}^{n-1}c_i\tau_i,\quad b = \frac{\rho^{1/2}}{d_{\Gamma}+x^{1/2}}\in S^{0,0,0,0}, c_i = (1+d_{\Sigma}^2)\frac{d_{\Gamma}}{d_{\Gamma}+x^{1/2}}\frac{\tilde\tau_i}{d_{\Gamma}}\frac{1}{d_{\Sigma}}\in S^{0,0,1,0}.\]
Thus we have
\begin{align*} d_{\Gamma}d_{\Sigma} &= d_{\Gamma}\rho^{1/2} + \sum_{i=1}^{n-1}{d_{\Gamma}a_i\tau_i} \\
&=\left(x^{1/2} + b\rho^{1/2} + \sum_{i=1}^{n-1}c_i\tau_i\right)\rho^{1/2} + \sum_{i=1}^{n-1}{d_{\Gamma}a_i\tau_i} \\
&= x^{1/2}\rho^{1/2} + b\rho + \sum_{i=1}^{n-1}{(c_i\rho^{1/2}+d_{\Gamma}a_i)\tau_i}.
\end{align*}
Noting that $a_i\in S^{0,0,0,-1}\implies d_{\Gamma}a_i\in S^{0,0,0,-2}\subset S^{0,0,0,0}$ and $\rho^{1/2}\in S^{0,0,-1,0}, c_i\in S^{0,0,1,0}\implies \rho^{1/2}c_i\in S^{0,0,0,0}$, we see that $d_{\Gamma}d_{\Sigma}$ can indeed be written as a $S^{0,0,0,0}$ combination of $x^{1/2}\rho^{1/2}$, $\rho$, and $\tau_i$.
\end{proof}
\begin{cor}
\label{bdmvftest}
A function $a:{^{sc}T^*M}\to\mathbb{C}$ which is smooth on the interior belongs to $S^{m,l,k,j}$ if and only if
\begin{equation}
\label{bdmvftesteq}
|W^{\alpha}V^{\beta}a|\le C_{\alpha,\beta}\min(\rho^{-m-|\alpha|}x^{-l}d_{\Sigma}^{-k}d_{\Gamma}^{-j},\rho^{-m-|\alpha|/2}x^{-l-|\alpha|/2}d_{\Sigma}^{-k}d_{\Gamma}^{-j})
\end{equation}
whenever $W^{\alpha}$ and $V^{\beta}$ are products vector fields on $\mathscr{X}$ which are homogeneous of degree $0$, with the latter a product of vector fields also tangent to $S$.
\end{cor}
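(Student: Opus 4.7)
The plan is to leverage Lemma \ref{v2cgen}, which presents $\mathcal{V}_{2,c}$ as the $S^{0,0,0,0}$-span of six generators: four tangent-to-$S$ homogeneous degree zero vector fields ($x\partial_x$, $\partial_{y_j}$, $\rho\partial_\rho$, $\tau_i\partial_{\tau_j}$), which I will call $V$-type, and two multiples of the homogeneous degree zero vector field $\partial_{\tau_j}$ by the scalar coefficients $\rho$ and $x^{1/2}\rho^{1/2}$, which I will call $W$-type.

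For the forward direction, suppose $a\in S^{m,l,k,j}$. By Corollary \ref{homvf}, any homogeneous degree zero vector field $W$ on $\mathscr{X}$ tangent to $\partial\mathscr{X}$ satisfies $d_\Sigma d_\Gamma W\in\mathcal{V}_2$. Since both $\rho$ and $x^{1/2}\rho^{1/2}$ belong to $S^{0,0,-1,-1}$ (the former is explicit in the example following the definition of $S^{m,l,k,j}$; the latter follows from Proposition \ref{inclusions} using $x\le d_\Gamma^2$ and $\rho\le d_\Sigma^2$), both $\rho W$ and $x^{1/2}\rho^{1/2}W$ are $S^{0,0,0,0}$-multiples of $d_\Sigma d_\Gamma W$ and hence lie in $\mathcal{V}_{2,c}$. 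As $\mathcal{V}_{2,c}$ preserves the class $S^{m,l,k,j}$, iteration yields $W^\alpha a\in\rho^{-|\alpha|}S^{m,l,k,j}\cap x^{-|\alpha|/2}\rho^{-|\alpha|/2}S^{m,l,k,j}$. Since each tangent-to-$S$ homogeneous degree zero vector field lifts directly to $\mathcal{V}_2$ by Corollary \ref{homvf}, $V^\beta$ preserves $S^{m,l,k,j}$ with no loss, so applying $V^\beta$ first and then $W^\alpha$ yields both bounds in \eqref{bdmvftesteq}.

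For the backward direction, assume $a$ satisfies \eqref{bdmvftesteq}; it suffices to show $|U^\gamma a|\le C\rho^{-m}x^{-l}d_\Sigma^{-k}d_\Gamma^{-j}$ for any product $U^\gamma$ of the generators from Lemma \ref{v2cgen}. Commuting the scalar coefficients $\rho$ and $x^{1/2}\rho^{1/2}$ out past the other factors introduces only $S^{0,0,0,0}$ modifications (since $\rho\partial_\rho$ and $x\partial_x$ each map $\rho$ and $x^{1/2}\rho^{1/2}$ to scalar multiples of themselves, while $\partial_{y_j}$, $\tau_i\partial_{\tau_j}$, and $\partial_{\tau_j}$ annihilate both coefficients). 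This expresses $U^\gamma a$ as a sum of terms of the form $f\cdot\rho^{\alpha_1}(x^{1/2}\rho^{1/2})^{\alpha_2} W^\alpha V^\beta a$ with $f\in S^{0,0,0,0}$, $\alpha=\alpha_1+\alpha_2$, and $\alpha+\beta\le|\gamma|$, plus commutator terms with strictly fewer factors (handled by induction on $|\gamma|$). Interpolating the two estimates in \eqref{bdmvftesteq} via the pointwise identity $|F|=|F|^{1-s}|F|^s$ at $s=\alpha_2/\alpha$ gives
\[
|W^\alpha V^\beta a|\le C\rho^{-m-\alpha_1-\alpha_2/2}x^{-l-\alpha_2/2}d_\Sigma^{-k}d_\Gamma^{-j},
\]
which is exactly cancelled by the prefactor $\rho^{\alpha_1+\alpha_2/2}x^{\alpha_2/2}$ to produce the target bound $C\rho^{-m}x^{-l}d_\Sigma^{-k}d_\Gamma^{-j}$.

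The main obstacle is the combinatorial bookkeeping of the commutators together with finding the correct interpolation exponent; once one observes that the coefficients $\rho$ and $x^{1/2}\rho^{1/2}$ interact cleanly with each type of generator, and that the interpolation exponent $s=\alpha_2/\alpha$ precisely balances the two estimates against the mixed coefficient, the induction on $|\gamma|$ closes and both directions are established.
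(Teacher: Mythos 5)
Your proof is correct and follows the route the paper intends: the corollary is stated without an explicit proof, but it is meant to be read off from Proposition \ref{scvfest} together with Proposition \ref{inclusions} (forward direction) and from the generating set of Lemma \ref{v2cgen} (backward direction), which is exactly what you use. The one genuinely nontrivial detail you supply — interpolating the two bounds in \eqref{bdmvftesteq} with exponent $s=\alpha_2/\alpha$ so that the mixed prefactor $\rho^{\alpha_1}(x^{1/2}\rho^{1/2})^{\alpha_2}$ is cancelled exactly — is the right one, and your bookkeeping of the commutators (each commutator of a stripped $\partial_{\tau_j}$ with a tangent-to-$S$ generator being again a single $\partial_{\tau_k}$, and each generator mapping the coefficients $\rho$ and $x^{1/2}\rho^{1/2}$ to constant multiples of themselves) closes the induction.
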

We compare this result to the definition of the Boutet de Monvel calculus developed in \cite{bdm}, where the symbolic estimates are stated in the above manner of losing decay upon applying vector fields which are not tangent to $S$, as opposed to symbolic estimates formulated in terms of considering vector fields tangent to the boundary of a blow-up.

We now compare the \newname-class of symbols to the ``H\"ormander classes'' of symbols:
\begin{definition}
The symbol class $S_{\delta}^{m,l}$ where $\delta\in[0,1]$ consists of functions $a:{^{sc}T^*M}\to\mathbb{C}$ which are smooth on the interior and satisfy the estimates
\[|V^{\alpha}a|\le C_{\alpha}\rho^{-m-\delta|\alpha|}x^{-l-\delta|\alpha|}\]
whenever $V^{\alpha}$ is a product of vector fields tangent to $\partial \mathscr{X}$.
\end{definition}
Then we have the following relation between the H\"ormander classes of symbols and the \newname-class of symbols, using Corollary \ref{bdmvftest}:
\begin{prop}
\label{hormanderclass}
We have $S_0^{m,l}\subset S^{m,l,0,0}\subset S_{1/2}^{m,l}$. \end{prop}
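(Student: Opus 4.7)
My plan is to deduce both inclusions directly from the characterization of $S^{m,l,0,0}$ given in Corollary \ref{bdmvftest}, which says $a \in S^{m,l,0,0}$ if and only if
\[|W^{\alpha}V^{\beta}a|\le C_{\alpha,\beta}\min(\rho^{-m-|\alpha|}x^{-l},\rho^{-m-|\alpha|/2}x^{-l-|\alpha|/2})\]
for all products $W^{\alpha}$, $V^{\beta}$ of degree-zero vector fields on $\mathscr{X}$, where $V^{\beta}$ consists of vector fields additionally tangent to $S$. The key observation is that degree-zero vector fields on $\mathscr{X}$ tangent to $\partial\mathscr{X}$ coincide (up to $C^\infty(\mathscr{X})$ coefficients) with the vector fields used to define the H\"ormander classes $S_{\delta}^{m,l}$, so the testing families in both definitions match.

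For the first inclusion $S_0^{m,l}\subset S^{m,l,0,0}$: suppose $a\in S_0^{m,l}$. Then for any product $V^\alpha$ of vector fields tangent to $\partial\mathscr{X}$ we have the uniform bound $|V^\alpha a|\le C_\alpha\rho^{-m}x^{-l}$, with \emph{no} loss of decay. I will apply this with $V^\alpha$ replaced by $W^\alpha V^\beta$, which is still a product of vector fields tangent to $\partial\mathscr{X}$, to get $|W^\alpha V^\beta a|\le C_{\alpha,\beta}\rho^{-m}x^{-l}$. Since $\rho$ and $x$ can be taken bounded above by a constant $K$ (as remarked at the end of Section 2), we have $\rho^{-m}x^{-l}\le K^{|\alpha|}\rho^{-m-|\alpha|}x^{-l}$ and likewise $\rho^{-m}x^{-l}\le K^{|\alpha|}\rho^{-m-|\alpha|/2}x^{-l-|\alpha|/2}$, so both terms in the minimum on the right-hand side of Corollary \ref{bdmvftest} majorize $\rho^{-m}x^{-l}$ up to a constant. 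Thus $a$ satisfies the criterion of Corollary \ref{bdmvftest} and lies in $S^{m,l,0,0}$.

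For the second inclusion $S^{m,l,0,0}\subset S_{1/2}^{m,l}$: suppose $a\in S^{m,l,0,0}$. I will apply Corollary \ref{bdmvftest} specializing to $|\beta|=0$, that is, testing only against products $W^\alpha$ of degree-zero vector fields on $\mathscr{X}$ (with no tangency-to-$S$ requirement). The second term in the minimum then gives precisely
\[|W^\alpha a|\le C_\alpha\rho^{-m-|\alpha|/2}x^{-l-|\alpha|/2},\]
which matches the definition of $S_{1/2}^{m,l}$ applied to vector fields tangent to $\partial\mathscr{X}$. So $a\in S_{1/2}^{m,l}$.

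The main work is essentially already done in Corollary \ref{bdmvftest}; the only subtlety to watch is that the vector fields parameterizing the H\"ormander class estimates really are (up to smooth coefficients) the same as the degree-zero vector fields appearing in Proposition \ref{scvfest} and Corollary \ref{bdmvftest}. I expect no obstacle beyond unpacking these definitions carefully.
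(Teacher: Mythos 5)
Your proof is correct and follows essentially the same route as the paper: the first inclusion is obtained exactly as in the paper (the $\delta=0$ estimate is stronger than the criterion of Corollary \ref{bdmvftest}, using boundedness of $\rho$ and $x$), and for the second inclusion your direct appeal to the second term in the minimum of Corollary \ref{bdmvftest} is just a repackaging of the paper's use of Proposition \ref{scvfest} together with Part 4 of Proposition \ref{inclusions}.
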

\begin{proof} 
For the first inclusion\footnote{For the case $m = l = 0$, the statement is just that distributions conormal to $\partial \mathscr{X}$ are also distributions conormal to $\partial \mathscr{X}_2$ by identifying the interiors of $\mathscr{X}$ and $\mathscr{X}_2$. 
This is a standard fact using that the blow-down map $\beta:\mathscr{X}_2\to\mathscr{X}$ is a $b$-map, cf. Chapter 6 in \cite{daomwc}}, we note that any $a\in S_0^{m,l}$ certainly satisfies the estimate in \eqref{bdmvftesteq}, as it satisfies the stronger estimate $|W^{\alpha}V^{\beta}a|\le C\rho^{-m}x^{-l}$ that does not depend on whether the vector fields being applied are tangent to $S$ or not. Hence, by Corollary \ref{bdmvftest}, we have that $a\in S^{m,l,0,0}$.

For the second inclusion, we use Proposition \ref{scvfest} and Part 4 of Proposition \ref{inclusions} (with $s = 0$ and $t = 1$) to note that if $V^{\alpha}$ is a product of vector fields tangent to $\partial \mathscr{X}$, we have
\[ V^{\alpha}a\in S^{m,l,|\alpha|,|\alpha|}\subset S^{m+|\alpha|/2,l+|\alpha|/2,0,0}\]
and hence $|V^{\alpha}a|\le C_{\alpha}\rho^{-(m+|\alpha|/2)}x^{-(l+|\alpha|)}$. This shows $a\in S_{1/2}^{m,l}$.
\end{proof}

For scattering symbols, there is a well-defined notion of principal symbol identifying elements of $S^{m,l}$ with its equivalence class in $S^{m,l}/S^{m-1,l-1}$. It turns out for the \newname-class of symbols that the correct ``lower-order'' space for symbols $S^{m,l,k,j}$ is $S^{m-1,l-1,k+1,j+1}$. Since
\[S^{m-1,l-1,k+1,j+1}\subset S^{m-1/2,l-1/2,k,j},\]
we see that elements in $S^{m-1,l-1,k+1,j+1}$ are at least $1/2$ order better in both $x$ and $\rho$ as elements in $S^{m,l,k,j}$, so it is appropriate to say that those elements are of lower-order. We formalize the subsequent notion of principal symbol as follows:
\begin{definition}
For $a\in S^{m,l,k,j}$, its \emph{principal symbol} $\sigma(a)$ is the equivalence class of $a$ in $S^{m,l,k,j}/S^{m-1,l-1,k+1,j+1}$.
\end{definition}
In addition, we want to characterize the class of ``residual'' symbols which belong in spaces of ``arbitrary lower order,'' i.e. of spaces of the form $S^{m-i,l-i,k+i,j+i}$ for all $i\ge 0$. It turns out that this class coincides with the space of residual scattering symbols:
\begin{prop}
Let
\[S^{-\infty,-\infty} = \cap_{m,l}{S^{m,l}}\]
be the class of residual scattering symbols. Then, for any fixed $(m,l,k,j)$, we have
\[\cap_{i\ge 0}{S^{m-i,l-i,k+i,j+i}} = S^{-\infty,-\infty}.\]
\end{prop}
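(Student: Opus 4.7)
The plan is to prove both inclusions separately. The reverse inclusion is essentially a packaging of Proposition \ref{hormanderclass} with the containments of Proposition \ref{inclusions}, while the forward inclusion is the substantive content and requires relating $\mathcal{V}_2$ to scattering vector fields via Corollary \ref{homvf}.

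For $S^{-\infty,-\infty} \subset \bigcap_{i \ge 0} S^{m-i, l-i, k+i, j+i}$, I start with $a \in S^{-\infty, -\infty}$ and invoke Proposition \ref{hormanderclass} to get $a \in S^{m'', l'', 0, 0}$ for every choice of $(m'', l'')$. For each fixed $i \ge 0$ I then apply parts \ref{kexch} and \ref{jexch} of Proposition \ref{inclusions} in reverse (trading scattering orders for $d_\Sigma$ and $d_\Gamma$ orders): taking $(m'', l'') = (m - i + \min(0, k+i)/2,\; l - i + \min(0, j+i)/2)$ embeds $S^{m'', l'', 0, 0}$ into $S^{m - i, l - i, k + i, j + i}$. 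When $k + i$ and $j + i$ are already non-negative, Part 1 alone suffices with $(m'', l'') = (m-i, l-i)$.

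For $\bigcap_{i \ge 0} S^{m-i, l-i, k+i, j+i} \subset S^{-\infty, -\infty}$, let $a$ lie in the intersection and let $V^\alpha = V_1 \cdots V_{|\alpha|}$ be a product of scattering vector fields on $\mathscr{X}$ (tangent to $\partial\mathscr{X}$ but not necessarily to $S$). By Corollary \ref{homvf}, each $d_\Gamma d_\Sigma V_k$ lifts to an element of $\mathcal{V}_2$, and the ellipticity proposition applied to $d_\Sigma d_\Gamma \in S^{0,0,-1,-1}$ gives $(d_\Gamma d_\Sigma)^{-1} \in S^{0, 0, 1, 1}$. Writing
\[V_k b = (d_\Gamma d_\Sigma)^{-1} \bigl[(d_\Gamma d_\Sigma V_k) b\bigr]\]
and iterating---using that $\mathcal{V}_2$ preserves the \newname-symbol class while multiplication by $(d_\Gamma d_\Sigma)^{-1}$ shifts the last two orders by $+1$ each---I obtain
\[V^\alpha a \in S^{m - i,\; l - i,\; k + i + |\alpha|,\; j + i + |\alpha|} \quad \text{for every } i \ge 0.\]
For $i$ large enough that the last two orders are non-negative, Part 4 of Proposition \ref{inclusions} (with $s = 0$, $t = 1$) yields $V^\alpha a \in S^{m - i/2 + (k+|\alpha|)/2,\; l - i/2 + (j+|\alpha|)/2,\, 0, 0}$, hence the pointwise bound
\[|V^\alpha a| \le C_{\alpha,i}\, \rho^{-(m - i/2 + (k+|\alpha|)/2)}\, x^{-(l - i/2 + (j+|\alpha|)/2)}.\]
For any target orders $(M, L)$ and fixed $\alpha$, choosing $i = i(\alpha, M, L)$ large enough drives both exponents below $M$ and $L$, giving $|V^\alpha a| \le C_\alpha \rho^{-M} x^{-L}$ and hence $a \in S^{M, L}$ for every $M, L$.

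The main subtle point is the lifting step in the forward direction: scattering vector fields not tangent to $S$ become $\mathcal{V}_2$-vector fields only after multiplication by $d_\Gamma d_\Sigma$, so each application of such a vector field loses a unit in both the $d_\Sigma$ and $d_\Gamma$ orders, accumulating $|\alpha|$ losses after $|\alpha|$ applications. This is precisely the loss Part 4 of Proposition \ref{inclusions} is designed to absorb at the cost of half an order in $\rho$ and $x$; running the intersection over all $i \ge 0$ provides enough reserve in the scattering orders to offset these losses uniformly in $|\alpha|$.
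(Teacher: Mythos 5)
Your proposal is correct and takes essentially the same route as the paper: the forward inclusion is the same order-trading argument via Propositions \ref{hormanderclass} and \ref{inclusions}, and for the reverse inclusion you have simply inlined the mechanism behind the $S_{1/2}^{m,l}$ inclusion of Proposition \ref{hormanderclass} (the lifting of Corollary \ref{homvf}, i.e.\ Proposition \ref{scvfest}, followed by Part 4 of Proposition \ref{inclusions}) rather than citing it as a black box. Both arguments end with the same $\alpha$-dependent choice of $i$ and the same pointwise bound, so the proofs coincide in substance.
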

\begin{proof}
Since
\[S^{-N,-N}\subset S^{-N,-N,0,0}\subset S^{-N+(-k)_+/2,-N+(-j)_+/2,k,j}\]
for any $N> 0$, it follows that we have $S^{-N,-N}\subset S^{m,l,k,j}$ for $N$ sufficiently large. Any element in $S^{-\infty,-\infty}$ belongs to $S^{-N,-N}$ for any $N>0$, so it follows that any element in $S^{-\infty,-\infty}$ belongs in $S^{m,l,k,j}$. The same logic applies to $S^{m-i,l-i,k+i,j+i}$ for any $i\ge 0$, so we have the inclusion $S^{-\infty,-\infty}\subset S^{m-i,l-i,k+i,j+i}$.

Conversely, if $a\in\cap_{i\ge 0}{S^{m-i,l-i,k+i,j+i}}$, we'd like to show that $a\in S^{-N,-N}$ for any fixed $N$. We thus need to show that for any product $V^{\alpha}$ of vector fields tangent to $\partial\mathscr{X}$, we have
\[|V^{\alpha}a|\le C\rho^Nx^N.\]
Using Propositions \ref{hormanderclass} and \ref{inclusions}, we have
\[S^{m-i,l-i,k+i,j+i}\subset S^{m-i+(k+i)_+,l-i+(j+i)_+,0,0}\subset S_{1/2}^{m-i+(k+i)_+,l-i+(j+i)_+},\]
so it follows that by taking $i$ sufficiently large we have
\[a\in S_{1/2}^{-(N+|\alpha|/2),-(N+|\alpha|/2)}.\]
But then this would give
\[|V^{\alpha}a|\le C\rho^{(N+|\alpha|/2)-|\alpha|/2}x^{(N+|\alpha|/2)-|\alpha|/2} = C\rho^Nx^N,\]
as desired.
\end{proof}

To establish the theory of the quantization of these symbols, it will be necessary to consider symbols depending on more variables. We give the formal definition here:

\begin{definition}
\label{fullsymboldef}
Define $S^{0,0,0,0}_{full}$ to consist of functions $a(x,y,X,Y,\xi,\eta): (^{sc}T^*M)_{x,y,\xi,\eta}\times\mathbb{R}^n_{X,Y}\to\mathbb{C}$ which satisfy the estimates $|V^{\alpha}a|\le C$ whenever $V^{\alpha}$ is a product of the following vector fields:
\begin{itemize}
\item Vector fields with no component in the $(X,Y)$ direction which are also tangent to $\partial\mathscr{X}_2\times\mathbb{R}^n_{X,Y}$,
\item The vector fields $\frac{1}{x}\partial_X$ and $\frac{1}{x}\partial_{Y_j}$.
\end{itemize}
Define $S^{m,l,k,j}_{full} := \rho^{-m}x^{-l}d_{\Sigma}^{-k}d_{\Gamma}^{-j}S^{0,0,0,0}_{full}$.
\end{definition}
It follows via the arguments above that we can rephrase the estimates as requiring
\[|V^{\alpha}D^{\beta}_{X,Y}a|\le C_{\alpha,\beta}\rho^{-m}x^{-l+|\beta|}d_{\Sigma}^{-k}d_{\Gamma}^{-j}\]
whenever $V^{\alpha}$ is a product of vector fields on $\mathscr{X}$ which are tangent to $\partial\mathscr{X}_2$ (interpreted as vector fields on $^{sc}T^*M\times\mathbb{R}^n_{X,Y}$ with no $(X,Y)$ components).

\section{Operators quantized by the \newname-symbol class}
\label{op-sec}
\subsection{Oscillatory integral distributions and left reduction}

We want to consider operators whose Schwartz kernels are of the form
\begin{equation}
\label{scat-osc-int}
(2\pi)^{-n}\left(\int_{\mathbb{R}^n}{e^{i(\xi X+\eta\cdot Y)}a(x,y,\xi,\eta)\,d\xi\,d\eta}\right)\,dX\,dY
\end{equation}
with $a\in S^{m,l,k,j}$ where $X = \frac{x'-x}{x^2}$ and $Y = \frac{y'-y}{x}$, and we interpret $(x,y,\xi,\eta)$ as the element $\xi\frac{dx}{x^2}+\eta\cdot\frac{dy}{x}\in ^{sc}T^*M$. (The above expression for the right density should be interpreted as saying that the corresponding operator is given by
\[Au(x,y)=(2\pi)^{-n}\int_{\mathbb{R}^n_{\xi,\eta}\times\mathbb{R}^n_{X,Y}}{e^{i(\xi X+\eta\cdot Y)}a(x,y,\xi,\eta)u(x+x^2X,y+xY)\,d\xi\,d\eta\,dX\,dY};\]
note that $x' = x+x^2X$ and $y' = y+xY$.)

To study the properties of such operators, we first consider the oscillatory integral behavior of the Schwartz kernel viewed as a distribution in the variables $X$ and $Y$, before assigning the interpretation of $X$ and $Y$ as coordinates on the scattering double space. As such, consider oscillatory integral distributions of the form
\begin{equation}
\label{osc-int-dist}
K_a(x,y,X,Y) = (2\pi)^{-n}\int_{\mathbb{R}^n}{e^{i(\xi X+\eta\cdot Y)}a(x,y,\xi,\eta)\,d\xi\,d\eta},\quad a\in S^{m,l,k,j}.
\end{equation}
Here, we consider $X$ and $Y$ just as variables; later we will include the interpretation $X=\frac{x'-x}{x^2}$, and $Y=\frac{y'-y}{x}$. We view this as a family of distributions on $\mathbb{R}^n_{X,Y}$ parametrized by $(x,y)$. We can thus write
\[K_a(x,y,X,Y) = \mathcal{F}^{-1}_{(\xi,\eta)\to(X,Y)}(a(x,y,\cdot,\cdot))\]
where $\mathcal{F}^{-1}_{(\xi,\eta)\to(X,Y)}$ is the inverse Fourier transform in the $(\xi,\eta)$ variables.

It will be convenient to consider more general oscillatory integral distributions of the form
\begin{equation}
\label{osc-int-dist-full}
K_{\tilde{a}}(x,y,X,Y) = (2\pi)^{-n}\int_{\mathbb{R}^n}{e^{i(\xi X+\eta\cdot Y)}\tilde{a}(x,y,X,Y\xi,\eta)\,d\xi\,d\eta},\quad \tilde{a}\in S^{m,l,k,j}_{full}.
\end{equation}
We show that this more general class in fact coincides with our first definition. Formally, we write:

\begin{prop}
\label{left-red}
Let $\tilde{a}\in S^{m,l,k,j}_{full}$ and let $K_{\tilde{a}}$ be defined as in \eqref{osc-int-dist-full}. Then there is a symbol $a\in S^{m,l,k,j}$ such that $K_a = K_{\tilde{a}}$, where $K_a$ is defined as in \eqref{osc-int-dist}. Moreover, $a$ satisfies an asymptotic series formula
\[a(x,y,\xi,\eta)\sim\sum_{\alpha}{\frac{i^{|\alpha|}}{\alpha!}\partial^{\alpha}_{(\xi,\eta)}\partial^{\alpha}_{(X,Y)}\tilde{a}(x,y,0,0,\xi,\eta)},\]
in that $a-\sum_{|\alpha|<N}{\frac{i^{|\alpha|}}{\alpha!}\partial_{(\xi,\eta)}^{\alpha}\partial_{(X,Y)}^{\alpha}\tilde{a}|_{(X,Y)=(0,0)}}\in S^{m-N,l-N,k+N,j+N}$ for every $N$. In particular, $\sigma(a) = \sigma(\tilde{a}|_{(X,Y)=(0,0)})$.
\end{prop}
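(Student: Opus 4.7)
The approach is standard left-reduction from pseudodifferential calculus: Taylor-expand $\tilde{a}$ in $(X,Y)$ about the origin and use the identity $(X,Y)^{\alpha}e^{i(\xi X+\eta\cdot Y)} = i^{|\alpha|}\partial_{(\xi,\eta)}^{\alpha}e^{i(\xi X+\eta\cdot Y)}$ together with integration by parts to convert $(X,Y)$-monomials appearing under the oscillatory integral into $(\xi,\eta)$-derivatives of $\tilde{a}$. The claimed asymptotic series drops out immediately, and the required symbol-class containments follow from Definition \ref{fullsymboldef}, Proposition \ref{scvfest}, and Proposition \ref{inclusions}.

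Concretely, Taylor-expand
\[\tilde{a}(x,y,X,Y,\xi,\eta) = \sum_{|\alpha|<N}\frac{(X,Y)^{\alpha}}{\alpha!}\tilde{a}_{\alpha}(x,y,\xi,\eta) + R_N(x,y,X,Y,\xi,\eta),\]
with $\tilde{a}_{\alpha} = \partial_{(X,Y)}^{\alpha}\tilde{a}\bigr|_{(X,Y)=0}$ and $R_N$ the integral Taylor remainder. Since each $(X,Y)$-derivative gains a power of $x$ in Definition \ref{fullsymboldef}, we have $\tilde{a}_{\alpha} \in S^{m,l-|\alpha|,k,j}$ and $R_N \in S^{m,l-N,k,j}_{full}$. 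Each $\partial_{\xi}$ or $\partial_{\eta}$ decomposes as $\rho$ times a smooth vector field on $\mathscr{X}$ that is homogeneous of degree zero and tangent to $\partial\mathscr{X}$; applying Proposition \ref{scvfest} (with $|\alpha|=1$) together with $\rho \in S^{-1,0,0,0}$ yields the shift $S^{m,l,k,j} \to S^{m-1,l,k+1,j+1}$ under each such derivative. Iterating, $\partial_{(\xi,\eta)}^{\alpha}\tilde{a}_{\alpha} \in S^{m-|\alpha|,l-|\alpha|,k+|\alpha|,j+|\alpha|}$, placing each term of the asserted series in exactly the predicted class. An asymptotic Borel summation then produces $a_0 \in S^{m,l,k,j}$ whose partial sums realize the asymptotic expansion, and the $\alpha = 0$ term gives $\sigma(a_0) = \sigma(\tilde{a}|_{(X,Y)=(0,0)})$.

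The step I expect to be the chief obstacle is promoting the asymptotic equality to the exact identity $K_a = K_{\tilde{a}}$. The cleanest route is to define $a$ directly as the partial Fourier transform $a = \mathcal{F}_{(X,Y)\to(\xi,\eta)} K_{\tilde{a}}$, so that $K_a = K_{\tilde{a}}$ holds automatically; the remaining task is to verify that this $a$ lies in $S^{m,l,k,j}$ and differs from $a_0$ by an element of $S^{-\infty,-\infty}$. Equivalently, one can show that $K_{a_0} - K_{\tilde{a}}$ has a full symbol in $S^{-\infty,-\infty}$ by iterating the Taylor-plus-IBP argument on $R_N$: at each stage, integration by parts in $(\xi,\eta)$ converts $(X,Y)^{\alpha}R_{N,\alpha}$ into an oscillatory integral of the form \eqref{osc-int-dist-full} whose full symbol lies in a progressively smaller class, and the resulting smoothing discrepancy is absorbed into $a_0$ to produce the desired element of $S^{m,l,k,j}$. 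The delicate technical point throughout is tracking how the Taylor remainder's $(X,Y)$-dependence interacts with integration by parts so that each reduced symbol stays in $S^{m-N,l-N,k+N,j+N}_{full}$ for every $N$; once this is in place, the remaining assertions of the proposition are immediate.
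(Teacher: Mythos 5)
Your overall strategy coincides with the paper's: define $a := \mathcal{F}_{(X,Y)\to(\xi,\eta)}K_{\tilde{a}}$ so that $K_a = K_{\tilde{a}}$ is automatic, Taylor-expand $\tilde{a}$ in $(X,Y)$ at the origin, integrate by parts to produce the series $\frac{i^{|\alpha|}}{\alpha!}\partial^{\alpha}_{(\xi,\eta)}\partial^{\alpha}_{(X,Y)}\tilde{a}|_{(X,Y)=0}$, and track the remainder. Your bookkeeping for the expansion terms ($\partial_{(X,Y)}$ gaining a power of $x$, each $\partial_{(\xi,\eta)}$ shifting $S^{m,l,k,j}\to S^{m-1,l-1,k+1,j+1}$) is exactly the content of the paper's Lemma \ref{left-red-asymp} and is correct.

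However, there is a genuine gap, and you have located the difficulty in the wrong place. The Taylor-remainder bookkeeping you flag as ``the delicate technical point'' is in fact routine: after $N'$ steps the remainder has full symbol in $S^{m-(N'+1),l-(N'+1),k+(N'+1),j+(N'+1)}_{full}\subset S^{-N,-N,0,0}_{full}$ for $N'$ large, with no subtlety. What your argument does not supply is the reason that the \emph{left reduction} of such a residual full symbol --- i.e.\ $\mathcal{F}_{(X,Y)\to(\xi,\eta)}K_{\tilde{a}_N}$ for $\tilde{a}_N\in S^{-N,-N,0,0}_{full}$ --- is itself a \newname-symbol. Membership in $S^{m,l,k,j}$ requires uniform estimates under \emph{arbitrary} products of vector fields tangent to $\partial\mathscr{X}_2$, and these include generators such as $x^{1/2}\rho^{1/2}\partial_{\tau_j}$ (equivalently $x^{1/2}\langle(\xi,\eta)\rangle^{1/2}\partial_{\xi,\eta}$) which do not commute cleanly with the partial Fourier transform; knowing that the kernel's full symbol lies in a ``progressively smaller class'' does not by itself control these derivatives of the reduced symbol. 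This is the content of the paper's Lemma \ref{full-symb-weak-est}: one writes the generators of $\mathcal{V}_{2,c}$ via Lemma \ref{v2cgen}, pushes the well-behaved ones ($x\partial_x$, $\partial_y$, $\xi\partial_\xi$, $\eta_i\partial_{\eta_j}$, etc.) through $\mathcal{F}_{(X,Y)\to(\xi,\eta)}$ using the commutation relations (picking up harmless $D_{X,Y}\tilde{a}$ terms from the $(X,Y)$-dependence), and handles the half-order generators crudely by factoring out $x^{1/2}\langle(\xi,\eta)\rangle^{1/2}$, which produces the estimate $|V^{\beta}a|\le C\rho^{N-(|\beta|/2+n+1)}x^N$ with a loss growing in $|\beta|$. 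The proof of the proposition then closes only because, for each fixed number of vector fields $|\beta|$ and each target order, one may take $N$ large enough to beat the loss. Without this lemma (or a substitute for it), your deduction that $a\in S^{m,l,k,j}$ and that the remainders lie in $S^{m-N,l-N,k+N,j+N}$ is not established; the rest of your outline is sound once it is in place.
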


Note that $a$ can be recovered from $K_a$ by taking the Fourier transform $a(x,y,\xi,\eta) = \mathcal{F}_{(X,Y)\to(\xi,\eta)}(K_a(x,y,\cdot,\cdot))$. Hence, to prove Proposition \ref{left-red}, it suffices to show that ${\mathcal{F}_{(X,Y)\to(\xi,\eta)}(K_{\tilde{a}}(x,y,\cdot,\cdot))}$ belongs in $S^{m,l,k,j}$. The proposition follows from the following two lemmas:

\begin{lemma}
\label{left-red-asymp}
Let $\tilde{a}\in S^{m,l,k,j}_{full}$ and $K_{\tilde{a}}$ be defined as in \eqref{osc-int-dist-full}, and let $a(x,y,\xi,\eta) = \mathcal{F}_{(X,Y)\to(\xi,\eta)}(K_{\tilde{a}}(x,y,\cdot,\cdot))$. Then for every $N>0$ there exists $a_N\in S^{m,l,k,j}$ and $\tilde{a}_N\in S^{-N,-N,0,0}_{full}$ such that
\[K_{\tilde{a}} = K_{a_N} + K_{\tilde{a}_N}.\]
Moreover, the $a_N$ can be chosen to satisfy
\[a_N =  \sum_{|\alpha|\le N'}{\frac{i^{|\alpha|}}{\alpha!}\partial^{\alpha}_{(\xi,\eta)}\partial^{\alpha}_{(X,Y)}\tilde{a}(x,y,0,0,\xi,\eta)}\]
for sufficiently large $N'$.
\end{lemma}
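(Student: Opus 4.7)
The plan is to Taylor expand $\tilde{a}$ in the $(X,Y)$ variables at the origin and then convert each factor of $(X,Y)^{\alpha}$ inside the oscillatory integral into $i^{|\alpha|}\partial^{\alpha}_{(\xi,\eta)}$ on the amplitude via integration by parts. Concretely, for $N'$ to be chosen later, Taylor's theorem with integral remainder gives
\begin{equation*}
\tilde{a}(x,y,X,Y,\xi,\eta) = \sum_{|\alpha|<N'}\frac{(X,Y)^{\alpha}}{\alpha!}\partial^{\alpha}_{(X,Y)}\tilde{a}(x,y,0,0,\xi,\eta) + r_{N'},
\end{equation*}
where
\begin{equation*}
r_{N'} = \sum_{|\alpha|=N'}\frac{N'\,(X,Y)^{\alpha}}{\alpha!}\int_0^1(1-t)^{N'-1}[\partial^{\alpha}_{(X,Y)}\tilde{a}](x,y,tX,tY,\xi,\eta)\,dt.
\end{equation*}
Substituting into the oscillatory integral defining $K_{\tilde{a}}$, using the identity $(X,Y)^{\alpha}e^{i(\xi X+\eta\cdot Y)}=(-i\partial_{(\xi,\eta)})^{\alpha}e^{i(\xi X+\eta\cdot Y)}$, and integrating by parts in $(\xi,\eta)$ transfers each $(X,Y)^{\alpha}$ onto the amplitude as $i^{|\alpha|}\partial^{\alpha}_{(\xi,\eta)}$. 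The polynomial part yields $K_{a_N}$ with $a_N$ the truncated sum stated in the lemma, and the remainder yields $K_{\tilde{a}_N}$ with
\begin{equation*}
\tilde{a}_N(x,y,X,Y,\xi,\eta)=\sum_{|\alpha|=N'}\frac{i^{|\alpha|}N'}{\alpha!}\int_0^1(1-t)^{N'-1}[\partial^{\alpha}_{(\xi,\eta)}\partial^{\alpha}_{(X,Y)}\tilde{a}](x,y,tX,tY,\xi,\eta)\,dt.
\end{equation*}

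Since the amplitude need not be absolutely integrable, the integration by parts should be justified rigorously by the standard cutoff device: insert $\chi(\epsilon(\xi,\eta))$ with $\chi\in C_c^{\infty}$ and $\chi(0)=1$, carry out all identities for each $\epsilon>0$ where the integrals converge absolutely, and pass to $\epsilon\to 0$ in the sense of oscillatory integrals (i.e.\ as tempered distributions in $(X,Y)$ depending smoothly on $(x,y)$). After enough integrations by parts the resulting amplitudes decay rapidly and dominated convergence applies.

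The remaining content is to verify the symbolic orders. For $a_N$, each summand $\partial^{\alpha}_{(\xi,\eta)}\partial^{\alpha}_{(X,Y)}\tilde{a}|_{(X,Y)=(0,0)}$ lies in $S^{m-|\alpha|,l-|\alpha|,k,j}\subset S^{m,l,k,j}$: the derivatives $\partial^{\alpha}_{(X,Y)}$ pull out $|\alpha|$ factors of $x$ because Definition~\ref{fullsymboldef} declares $x^{-1}\partial_X$ and $x^{-1}\partial_{Y_j}$ admissible, while each $\partial_{\xi}$ or $\partial_{\eta}$ is a $\rho$-multiple of a vector field in $\mathcal{V}_{2,c}$ (using the generators in Lemma~\ref{v2cgen}) and thus lowers the $\rho$-order by one, as in the standard scattering calculus. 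The same observations applied to the remainder show $\partial^{\alpha}_{(\xi,\eta)}\partial^{\alpha}_{(X,Y)}\tilde{a}\in S^{m-N',l-N',k,j}_{full}$ with estimates uniform in $t\in[0,1]$, because the rescaling $(X,Y)\mapsto(tX,tY)$ for $|t|\le 1$ preserves the full symbol class with uniform constants (the admissible $x^{-1}\partial_X$ picks up a harmless factor of $t$), and averaging in $t$ preserves membership. Finally, Proposition~\ref{inclusions}(4) lets us trade the $(k,j)$-orders for additional decay in $\rho$ and $x$ at a $1/2$-cost, yielding $\tilde{a}_N\in S^{-N,-N,0,0}_{full}$ as soon as $N'$ is taken larger than roughly $\max(m,l)+N+\tfrac{1}{2}(k_{+}+j_{+})$.

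The main obstacle I anticipate is making the integration by parts rigorous when $m$ is not already very negative, but this is handled by the standard cutoff-regularization argument outlined above. A secondary piece of bookkeeping is the precise choice of $N'$ in terms of $N,m,l,k,j$; since Proposition~\ref{inclusions}(4) loses only $1/2$ order per unit of $(k,j)$-improvement, a finite $N'$ always suffices for any prescribed $N$.
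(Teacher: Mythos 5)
Your proposal follows essentially the same route as the paper: Taylor expansion of $\tilde{a}$ in $(X,Y)$ at the origin, transfer of the factors $(X,Y)^{\alpha}$ onto the amplitude as $i^{|\alpha|}\partial^{\alpha}_{(\xi,\eta)}$ by integration by parts, and then order bookkeeping plus the trade-off inclusions of Proposition \ref{inclusions} to absorb the remainder into $S^{-N,-N,0,0}_{full}$ for $N'$ large. One intermediate claim is incorrect, however: it is not true that each $\partial_{\xi}$ or $\partial_{\eta_j}$ is $\rho$ times a vector field in $\mathcal{V}_{2,c}$ and hence lowers the $\rho$-order by one with the $d_{\Sigma}$- and $d_{\Gamma}$-orders unchanged. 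That holds for $\partial_{\eta_1}$ (which is tangent to $S$), but $\partial_{\xi}$ and $\partial_{\eta_2}$ are transverse to $S=\{\tau=0\}$: writing $\langle(\xi,\eta)\rangle\partial_{\xi}$ in terms of the generators of Lemma \ref{v2cgen} forces a term proportional to $\eta_1\partial_{\xi}\approx(\rho\eta_1)\partial_{\tau_1}$ with $\rho\eta_1$ of unit size near $\Sigma$, and $\partial_{\tau_1}$ is not in $\mathcal{V}_{2,c}$. The correct statement (the one the paper uses) is $\partial_{\xi},\partial_{\eta_j}:S^{m,l,k,j}\to S^{m-1,l,k+1,j+1}$, since these derivatives are only $\rho/(d_{\Sigma}d_{\Gamma})$ times admissible vector fields. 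This does not damage your argument: $S^{m-i,l-i,k+i,j+i}\subset S^{m-i/2,l-i/2,k,j}\subset S^{m,l,k,j}$ still places each Taylor coefficient in $S^{m,l,k,j}$, and the remainder class $S^{m-N',l-N',k+N',j+N'}_{full}$ is still contained in $S^{-N,-N,0,0}_{full}$ for $N'$ sufficiently large (with a somewhat larger threshold for $N'$ than the one you estimated). Your treatment of the $t$-rescaling in the integral remainder and the cutoff regularization of the integration by parts are both fine.
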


\begin{lemma}
\label{full-symb-weak-est}
Suppose $\tilde{a}\in S^{-N,-N,0,0}_{full}$, and let $a = \mathcal{F}_{(X,Y)\to(\xi,\eta)}{K_{\tilde{a}}(x,y,\cdot,\cdot)}$. Then $a$ satisfies the estimate
\[|V^{\beta}a|\le C\rho^{N-(|\beta|/2+n+1)}x^N\]
whenever $V^{\beta}$ is a product of vector fields tangent to $\partial\mathscr{X}_2$.
\end{lemma}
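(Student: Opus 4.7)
The plan is to exploit the oscillatory integral representation of $a$: since $\tilde a$ decays as $\rho'^N x^N$ (where $\rho' = \langle\xi',\eta'\rangle^{-1}$ is for the $(\xi',\eta')$ integration variable inside $K_{\tilde a}$), integration by parts will convert this decay into $\rho$-decay of $a$, with $\rho = \langle\xi,\eta\rangle^{-1}$ the output variable.

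For the base case $|\beta|=0$, write
\[a(x,y,\xi,\eta) = (2\pi)^{-n}\iint e^{i((\xi'-\xi)\cdot X + (\eta'-\eta)\cdot Y)}\tilde a(x,y,X,Y,\xi',\eta')\,d\xi'\,d\eta'\,dX\,dY,\]
and apply the identity $\rho^{2L}(1-\Delta_{X,Y})^L e^{-i(\xi\cdot X + \eta\cdot Y)} = e^{-i(\xi\cdot X + \eta\cdot Y)}$. Integration by parts in $(X,Y)$ transfers $(1-\Delta_{X,Y})^L$ onto the amplitude, producing via the product rule a sum of terms $(\xi')^{\mu_1}(\eta')^{\mu_2}\partial^{\mu_3}_{X,Y}\tilde a$ with $|\mu_1|+|\mu_2|+|\mu_3|\le 2L$, each bounded uniformly by $C\rho'^{N-2L}x^N$. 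A second integration by parts in $(\xi',\eta')$ pairs $\langle X,Y\rangle^{2M}$-growth with $(1-\Delta_{\xi',\eta'})^M$ on the amplitude (which only improves the $\rho'$-decay). Choosing $2M > n$ to guarantee $(X,Y)$-integrability and $2L$ as close as possible to $N-n$ (essentially $2L = N-n-1$) to guarantee $(\xi',\eta')$-integrability yields $|a|\le C\rho^{N-n-1}x^N$.

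For $|\beta|\ge 1$, induct using the generators of $\mathcal{V}_{2,c}$ listed in Lemma \ref{v2cgen}. The spatial generators $x\partial_x$ and $\partial_{y_j}$ commute through the integral and simply replace $\tilde a$ by $V\tilde a\in S^{-N,-N,0,0}_{full}$: no cost in $\rho$. The degree-zero fiber generators $\rho\partial_\rho$ and $\tau_i\partial_{\tau_j}$ bring down polynomial factors like $\xi_i X_j$ from the phase; the identity $\xi_i e^{-i\xi\cdot X} = i\partial_{X_i}e^{-i\xi\cdot X}$ allows one to integrate by parts back and absorb these into $X_j\partial_{X_i}K_{\tilde a}$, which via $X_j K_{\tilde a} = K_{i\partial_{\xi'_j}\tilde a}$ gives an oscillatory integral with amplitude in the same class: still no cost. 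The vector field $\rho\partial_{\tau_j}$ equals $\partial_{\xi_j}$ in $(\xi,\eta)$-coordinates near $\Sigma$; it brings down $-iX_j$, giving an amplitude $\partial_{\xi'_j}\tilde a\in S^{-N-1,-N,0,0}_{full}$ that is actually an improvement. Finally, $x^{1/2}\rho^{1/2}\partial_{\tau_j} = x^{1/2}\rho^{-1/2}\partial_{\xi_j}$ carries the multiplicative prefactor $x^{1/2}\rho^{-1/2}$ that is responsible for the $\rho^{-|\beta|/2}$ loss in the lemma.

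Iterating, $V^\beta a$ is a finite sum of terms of the form $x^{a/2}\rho^{-b/2}\int p(X,Y)e^{-i(\xi\cdot X+\eta\cdot Y)}K_{\tilde b}\,dX\,dY$, with $p$ polynomial, $\tilde b$ in a full symbol class of orders no worse than $\tilde a$'s in $(\rho',x)$, and $0\le b\le|\beta|$, $a\ge b$. Applying the base-case IBP argument to each such term (with $M$ enlarged to absorb $p$) and using $x\le 1$ then yields
\[|V^\beta a|\le C\rho^{-|\beta|/2}x^{|\beta|/2}\rho^{N-n-1}x^N\le C\rho^{N-|\beta|/2-n-1}x^N,\]
as claimed. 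The main obstacle is the careful bookkeeping of the polynomial factors $p(X,Y)$ introduced by the iterated IBPs: each step can raise the degree of $p$ by up to $1$, so $M$ must grow with $|\beta|$ for the $(X,Y)$-integral to converge, and this requires $N$ to be sufficiently large relative to $|\beta|+n$ (consistent with the lemma's estimate being non-trivial only when $N > n+|\beta|/2+1$).
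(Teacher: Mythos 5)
Your proof is correct and follows essentially the same route as the paper's: identify the generators of $\mathcal{V}_{2,c}$ from Lemma \ref{v2cgen}, show via the Fourier/integration-by-parts identities that all of them except $x^{1/2}\rho^{-1/2}\partial_{\xi}$, $x^{1/2}\rho^{-1/2}\partial_{\eta_i}$ preserve the full-symbol class, attribute the $\rho^{-|\beta|/2}$ loss to those prefactors, and conclude by absolute integrability after pairing with $(\xi,\eta)^{\alpha}$, $|\alpha|<N-n$. Your explicit $(1-\Delta_{\xi',\eta'})^M$ step to secure $(X,Y)$-integrability is a slightly more careful rendering of a point the paper treats informally, but it is the same argument.
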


Lemma \ref{full-symb-weak-est} is essentially a weak version of Proposition \ref{left-red}, since the requirements for symbolicity is that it satisfies the estimate there for an arbitrary number of vector fields. However, the main content of the Proposition is in Lemma \ref{left-red-asymp}, which essentially gives a finite number of terms in an asymptotic expansion with a very weak remainder; the purpose of Lemma \ref{full-symb-weak-est} is to give an estimate on this remainder.

\begin{proof}[Proof of Lemma \ref{left-red-asymp}]
The proof is the same as the classical case: if we write $Z = (X,Y)$ and suppress all other variables, then we use Taylor's theorem with remainder to write
\[\tilde{a}(Z) = \tilde{a}(0) + \sum_{|\alpha|\le N'}{Z^{\alpha}\frac{\partial_Z^{\alpha}\tilde{a}(0)}{\alpha!}} + \sum_{|\beta| = N'+1}{Z^{\beta}R_{\beta}\tilde{a}(Z)}\]
where
\[R_{\beta}\tilde{a}(Z) = \frac{|\beta|}{\beta!}\int_0^1{(1-t)^{|\beta|-1}\partial_Z^{\beta}\tilde{a}(tX)\,dt}.\]
Note that if $\tilde{a}\in S^{m,l,k,j}_{full}$, then $R_{\beta}\tilde{a}\in S^{m,l-|\beta|,k,j}_{full}$ for any $\beta$. It follows that
\begin{align*}
\int{e^{i(\xi X+\eta\cdot Y)}\tilde{a}(Z)\,d\xi\,d\eta} &= \sum_{|\alpha|\le N'}{\frac{1}{\alpha!}\int{Z^{\alpha}e^{i(\xi,\eta)\cdot Z}\partial_Z^{\alpha}\tilde{a}(0)\,d\xi\,d\eta}} + \sum_{|\beta| = N'+1}{\int{Z^{\beta}e^{i(\xi,\eta)\cdot Z}R_{\beta}\tilde{a}(Z)\,d\xi\,d\eta}} \\
&=\int{e^{i(\xi,\eta)\cdot Z}\left(\sum_{|\alpha|\le N'}{\frac{i^{|\alpha|}}{\alpha!}\partial^{\alpha}_{(\xi,\eta)}\partial^{\alpha}_Z\tilde{a}(0)} + \sum_{|\beta| = N'+1}{i^{|\beta|}\partial^{\beta}_{(\xi,\eta)}R_{\beta}(Z)}\right)\,d\xi\,d\eta}.
\end{align*}
Notice that $\xi$ and $\eta$ derivatives are $\rho$ times a vector field tangent to $\partial\mathscr{X}$, so $\partial^{\alpha}_{(\xi,\eta)}\partial^{\alpha}_Z\tilde{a}(0)\in S^{m-|\alpha|,l-|\alpha|,k+|\alpha|,j+|\alpha|}$, and $\partial^{\beta}_{(\xi,\eta)}R_{\beta}(Z)\in S^{m-|\beta|,l-|\beta|,k+|\beta|,j+|\beta|}_{full} = S^{m-(N'+1),l-(N'+1),k+(N'+1),j+(N'+1)}_{full}$ when $|\beta|=N'+1$. We note that, for a fixed $N>0$, we have
\[S^{m-(N'+1),l-(N'+1),k+(N'+1),j+(N'+1)}_{full}\subset S^{-N,-N,0,0}_{full}\]
for sufficiently large $N'$. Thus, for $N'$ sufficiently large, if we let
\[a_N =  \sum_{|\alpha|\le N'}{\frac{i^{|\alpha|}}{\alpha!}\partial^{\alpha}_{(\xi,\eta)}\partial^{\alpha}_Z\tilde{a}(0)}\quad \text{and}\quad \tilde{a}_N = \sum_{|\beta| = N'+1}{i^{|\beta|}\partial^{\beta}_{(\xi,\eta)}R_{\beta}(Z)}\]
then $a_N\in S^{m,l,k,j}$, $\tilde{a}_N\in S^{-N,-N,0,0}_{full}$, and $K_{\tilde{a}} = K_{a_N}+K_{\tilde{a}_N}$, as desired.
\end{proof}
\begin{proof}[Proof of Lemma \ref{full-symb-weak-est}]
We recall from Lemma \ref{v2cgen} that $\mathcal{V}_{2,c}$ is generated over $S^{0,0,0,0}$ by the vector fields
\[x\partial_x, \partial_{y_j},\rho\partial_{\rho},\tau_i\partial_{\tau_j},\rho\partial_{\tau_j},x^{1/2}\rho^{1/2}\partial_{\tau_j}.\]
Rewriting these vector fields in the $(\xi,\eta)$ coordinates, we see that $\mathcal{V}_{2,c}$ is generated over $S^{0,0,0,0}$ by the vector fields
\begin{equation}
\label{vf-gen}
x\partial_x,\partial_{y_j},\partial_{\xi},\partial_{\eta_j},\xi\partial_{\xi},\xi\partial_{\eta_j},\eta_i\partial_{\xi},\eta_i\partial_{\eta_j},\eta_1\partial_{\eta_1},x^{1/2}\langle(\xi,\eta)\rangle^{1/2}\partial_{\xi},x^{1/2}\langle(\xi,\eta)\rangle^{1/2}\partial_{\eta_i}.
\end{equation}
It suffices to prove the lemma when $V^{\beta}$ is a product of the above vector fields. We have the Fourier transform commutation relations
\[\xi\mathcal{F}_{(X,Y)\to(\xi,\eta)}K = \mathcal{F}_{(X,Y)\to(\xi,\eta)}{D_XK}, \eta_i\mathcal{F}_{(X,Y)\to(\xi,\eta)}K = \mathcal{F}_{(X,Y)\to(\xi,\eta)}{D_{Y_i}K}\]
and
\[D_{\xi}\mathcal{F}_{(X,Y)\to(\xi,\eta)}K = \mathcal{F}_{(X,Y)\to(\xi,\eta)}{(-XK)}, D_{\eta_i}\mathcal{F}_{(X,Y)\to(\xi,\eta)}K = \mathcal{F}_{(X,Y)\to(\xi,\eta)}{(-Y_iK)}.\]
We have similar relations in applying differentiation/multiplication in $(X,Y)$ to \eqref{osc-int-dist-full}; the only difference is that the symbol also depends on $(X,Y)$, so applying a derivative in $(X,Y)$ results in another term. Nonetheless, we have
\[XK_{\tilde{a}} = K_{-D_{\xi}\tilde{a}}, Y_iK_{\tilde{a}}  = K_{-D_{\eta_i}\tilde{a}}\]
and
\begin{equation}
\label{XYK}
D_XK_{\tilde{a}} = K_{\xi\tilde{a} + D_X\tilde{a}}, D_{Y_i}K_{\tilde{a}} = K_{\eta_i\tilde{a}+D_{Y_i}\tilde{a}}.
\end{equation}
Thus, we have
\begin{align*}
\eta_i\partial_{\eta_j}(\mathcal{F}_{(X,Y)\to(\xi,\eta)}K_{\tilde{a}}) &= \eta_i\mathcal{F}_{(X,Y)\to(\xi,\eta)}(-Y_jK_{\tilde{a}}) \\
&=\eta_i\mathcal{F}_{(X,Y)\to(\xi,\eta)}{K_{\partial_{\eta_j}\tilde{a}}} \\
&=\mathcal{F}_{(X,Y)\to(\xi,\eta)}{D_{Y_i}K_{\partial_{\eta_j}\tilde{a}}} \\
&=\mathcal{F}_{(X,Y)\to(\xi,\eta)}{K_{\eta_i\partial_{\eta_j}\tilde{a}+D_{Y_i}\partial_{\eta_j}\tilde{a}}},
\end{align*}
i.e. for $V = \eta_i\partial_{\eta_j}$ we have
\[V(\mathcal{F}_{(X,Y)\to(\xi,\eta)}K_{\tilde{a}}) = \mathcal{F}_{(X,Y)\to(\xi,\eta)}K_{V\tilde{a}+\tilde{a}'}\]
where $\tilde{a}'\in S^{-N,-N-1,0,0}_{full}$. It follows that $\tilde{a}'' = V\tilde{a}+\tilde{a}'\in S^{-N,-N,0,0}_{full}$, and hence $V(\mathcal{F}_{(X,Y)\to(\xi,\eta)}K_{\tilde{a}}) = \mathcal{F}_{(X,Y)\to(\xi,\eta)}K_{\tilde{a}''}$ with $\tilde{a}''\in S^{-N,-N,0,0}_{full}$. More generally, the same logic applies if $V$ is one of the following vector fields:
\begin{equation}
\label{vf-gen-nice}
x\partial_x,\partial_{y_j},\partial_{\xi},\partial_{\eta_j},\xi\partial_{\xi},\xi\partial_{\eta_j},\eta_i\partial_{\xi},\eta_i\partial_{\eta_j}, \eta_1\partial_{\eta_1}.
\end{equation}
It follows that if $V^{\beta}$ is a product of the above vector fields (i.e. a product of any of the vector fields in \eqref{vf-gen} except $x^{1/2}\langle(\xi,\eta)\rangle^{1/2}\partial_{\xi}$ and $x^{1/2}\langle(\xi,\eta)\rangle^{1/2}\partial_{\eta_i}$), then
\[V^{\beta}(\mathcal{F}_{(X,Y)\to(\xi,\eta)}K_{\tilde{a}}) = (\mathcal{F}_{(X,Y)\to(\xi,\eta)}K_{\tilde{a}'})\text{ for some }\tilde{a}'\in S^{-N,-N,0,0}_{full}\]
In addition, we have
\[\eta_j(\mathcal{F}_{(X,Y)\to(\xi,\eta)}K_{\tilde{a}}) = \mathcal{F}_{(X,Y)\to(\xi,\eta)}{K_{\eta_j\tilde{a}+D_{Y_j}\tilde{a}}}\]
with $\eta_j\tilde{a}+D_{Y_j}\tilde{a}\in S^{-N+1,-N,0,0}_{full}+S^{-N,-N-1,0,0}_{full}\in S^{-N+1,-N,0,0}_{full}$. Thus in general we have
\[(\xi,\eta)^{\alpha}V^{\beta}(\mathcal{F}_{(X,Y)\to(\xi,\eta)}K_{\tilde{a}}) = (\mathcal{F}_{(X,Y)\to(\xi,\eta)}K_{\tilde{a}'})\text{ for some }\tilde{a}'\in S^{-N+|\alpha|,-N,0,0}_{full}.\]
If $\tilde{a}\in S^{m,l,0,0}_{full}$ with $m<-n$, then the integrand in the Fourier transform is absolutely integrable, and hence we can crudely bound $|(\mathcal{F}_{(X,Y)\to(\xi,\eta)}K_{\tilde{a}'})|\le Cx^{-l}$, and hence
\[|(\xi,\eta)^{\alpha}V^{\beta}(\mathcal{F}_{(X,Y)\to(\xi,\eta)}K_{\tilde{a}})|\le Cx^{-l}\text{ as long as }-N+|\alpha|<-n, i.e. |\alpha|<N-n.\]
Since this holds for all $\alpha$ with $|\alpha|<N-n$, then we have
\begin{equation}
\label{vf-nice-est}
|V^{\beta}(\mathcal{F}_{(X,Y)\to(\xi,\eta)}K_{\tilde{a}})|\le C\langle(\xi,\eta)\rangle^{-(N-n-1)}x^N.
\end{equation}
Finally, for $V = x^{1/2}\langle(\xi,\eta)\rangle^{1/2}\partial_{\xi}$ and $x^{1/2}\langle(\xi,\eta)\rangle^{1/2}\partial_{\eta_i}$, we crudely rewrite these as $(x^{1/2}\langle(\xi,\eta)\rangle^{1/2})(\partial_{\xi})$ and $(x^{1/2}\langle(\xi,\eta)\rangle^{1/2})(\partial_{\eta_i})$. Hence, if we consider a product $V^{\beta}(\mathcal{F}_{(X,Y)\to(\xi,\eta)}K_{\tilde{a}})$ where $V^{\beta}$ may contain products of $x^{1/2}\langle(\xi,\eta)\rangle^{1/2}\partial_{\xi}$ or $x^{1/2}\langle(\xi,\eta)\rangle^{1/2}\partial_{\eta_i}$, then if there are $k$ of such products, we can rewrite $V^{\beta}(\mathcal{F}_{(X,Y)\to(\xi,\eta)}K_{\tilde{a}})$ as a $C^{\infty}(\mathscr{X}_2)$-combination of terms of the form $x^{j/2}\langle(\xi,\eta)\rangle^{j/2}\tilde{V}^{\beta}(\mathcal{F}_{(X,Y)\to(\xi,\eta)}K_{\tilde{a}})$ for $0\le j\le k$, where now $\tilde{V}^{\beta}$ is a product of vector fields of the form \eqref{vf-gen-nice} (by moving factors $x^{1/2}\langle(\xi,\eta)\rangle^{1/2}$ all the way to the left since commutators behave nicely). For each term we can bound
\begin{align*} |x^{j/2}\langle(\xi,\eta)\rangle^{j/2}\tilde{V}^{\beta}(\mathcal{F}_{(X,Y)\to(\xi,\eta)}K_{\tilde{a}})|&\le C\langle(\xi,\eta)\rangle^{j/2}|\tilde{V}^{\beta}(\mathcal{F}_{(X,Y)\to(\xi,\eta)}K_{\tilde{a}})|\\
&\le C\langle(\xi,\eta)\rangle^{|\beta|/2-(N-n-1)}x^N
\end{align*}
using \eqref{vf-nice-est} (note that $j\le k\le |\beta|$). Hence, overall we have
\[|V^{\beta}a|\le C\langle(\xi,\eta)\rangle^{|\beta|/2+n+1-N}x^N \le C\rho^{N-(|\beta|/2+n+1)}x^N,\]
as claimed.
\end{proof}
\begin{proof}[Proof of Proposition \ref{left-red} from Lemma \ref{left-red-asymp} and \ref{full-symb-weak-est}]
For $\tilde{a}\in S^{m,l,k,j}_{full}$, let $a = \mathcal{F}_{(X,Y)\to(\xi,\eta)}(K_{\tilde{a}})$. We want to show that $a\in S^{m,l,k,j}$ and $a-\sum_{|\alpha|<N}{\frac{i^{|\alpha|}}{\alpha!}\partial_{(\xi,\eta)}^{\alpha}\tilde{a}|_{(X,Y)=(0,0)}}\in S^{m-N,l-N,k+N,j+N}$ for any $N$. The former involves showing that, for any product $V^{\beta}$ of vector fields in $\mathcal{V}_2$, we have $|V^{\beta}a|\le C\rho^{-m}x^{-l}d_{\Sigma}^{-k}d_{\Gamma}^{-j}$.

By Lemma \ref{left-red-asymp}, for any $N>0$ we can write $K_{\tilde{a}} = K_{a_N}+ K_{\tilde{a}_N}$ where 
\[a_N =  \sum_{|\alpha|\le N'}{\frac{i^{|\alpha|}}{\alpha!}\partial^{\alpha}_{(\xi,\eta)}\partial^{\alpha}_Z\tilde{a}(0)}\in S^{m,l,k,j}\]
for $N'$ sufficiently large and $\tilde{a}_N\in S^{-N,-N,0,0}$; we'll consider $N$ very large. In particular, we'll take $N$ large enough such that
\[\rho^{N-(|\beta|/2+n+1)}x^N\le C\rho^{-m}x^{-l}d_{\Sigma}^{-k}d_{\Gamma}^{-j}.\]
Then we have
\begin{align*}
a = \mathcal{F}_{(X,Y)\to(\xi,\eta)}(K_{\tilde{a}}) &= \mathcal{F}_{(X,Y)\to(\xi,\eta)}(K_{a_N}) + \mathcal{F}_{(X,Y)\to(\xi,\eta)}(K_{\tilde{a}_N}) \\
&= a_N + \mathcal{F}_{(X,Y)\to(\xi,\eta)}(K_{\tilde{a}_N}).
\end{align*}
Hence, we have
\[|V^{\beta}a|\le |V^{\beta}a_N| + |V^{\beta}\left(\mathcal{F}_{(X,Y)\to(\xi,\eta)}(K_{\tilde{a}_N})\right)|.\]
The term $|V^{\beta}a_N|$ is bounded by $C\rho^{-m}x^{-l}d_{\Sigma}^{-k}d_{\Gamma}^{-j}$ since $a_N\in S^{m,l,k,j}$. By Lemma \ref{full-symb-weak-est}, the second term $|V^{\beta}\left(\mathcal{F}_{(X,Y)\to(\xi,\eta)}(K_{\tilde{a}_N})\right)|$ is bounded by $C\rho^{N-(|\beta|/2+n+1)}x^N$, which in turn is bounded by $C\rho^{-m}x^{-l}d_{\Sigma}^{-k}d_{\Gamma}^{-j}$ since we assume $N$ is sufficiently large. Thus overall we have the bound $|V^{\beta}a|\le C\rho^{-m}x^{-l}d_{\Sigma}^{-k}d_{\Gamma}^{-j}$, as desired. This shows that $a\in S^{m,l,k,j}$.

Moreover, the same argument shows the asymptotic series statement: indeed, for a fixed $N_0$, for sufficiently large $N$ we have $S^{-N,-N,0,0}\subset S^{m-N_0,l-N_0,k+N_0,j+N_0}$, in which case
\[a = a_N + \mathcal{F}_{(X,Y)\to(\xi,\eta)}(K_{\tilde{a}_N})\]
where $\tilde{a}_N\in S_{full}^{-N,-N,0,0}$, and hence from the above result we now know that $\mathcal{F}_{(X,Y)\to(\xi,\eta)}(K_{\tilde{a}_N})\in S^{-N,-N,0,0}\subset S^{m-N_0,l-N_0,k+N_0,j+N_0}$. Since we can arrange for
\[a_N = \sum_{|\alpha|\le N'}{\frac{i^{|\alpha|}}{\alpha!}\partial^{\alpha}_{(\xi,\eta)}\partial^{\alpha}_Z\tilde{a}(0)}\]
for $N'$ sufficiently large (in particular larger than $N_0$), it follows that ${a-\sum_{|\alpha|<N}{\frac{i^{|\alpha|}}{\alpha!}\partial_{(\xi,\eta)}^{\alpha}\partial_{(X,Y)}^{\alpha}\tilde{a}|_{(X,Y)=(0,0)}}}\in S^{m-N_0,l-N_0,k+N_0,j+N_0}$, as desired.
\end{proof}

As an immediate corollary, we note that if $\chi\in C_c^{\infty}(\mathbb{R})$ with $\chi\equiv 1$ in a neighborhood of the origin, then the distributions $\chi(xX,xY)K_a = K_{a(x,y,\xi,\eta)\chi(xX,xY)}$ equals $K_{a'}$ for some $a'\in S^{m,l,k,j}$; moreover $a-a'\in S^{-\infty,-\infty}$. So philosophically we can multiply by a cutoff of the form $\chi(xX,xY)$ without ``changing the symbol $a$ by too much''. This is convenient once we introduce the interpretation of $X$ and $Y$ as the variables on the scattering double space, i.e. $X = \frac{x'-x}{x^2}$ and $Y = \frac{y'-y}{x}$. In that case, we need the cutoff $\chi(xX,xY)$ to guarantee that the distribution $K_a$ vanishes (to infinite order) at the left and right faces ($\{x'>0,x=0\}$ and $\{x'=0,x>0\}$) and the $b$-front face away from the scattering front face (i.e. $\{x+x'=0,\frac{x'-x}{x+x'}\ne 0\}$).

We now explore some basic properties of distributions of the form \eqref{osc-int-dist}.

\begin{lemma}
Let $K = K_a$ as in \eqref{osc-int-dist} for some $a\in S^{m,l,k,j}$. Then:
\begin{itemize}
\item $K$ is smooth on $\{x>0,|(X,Y)|\ne 0\}$.
\item If we let $X_b = xX$ and $Y_b = xY$, then in the limit where $x\to 0$ but $(X_b,Y_b)$ is a fixed nonzero value, we have that $K$ is smooth and vanishes to infinite order as $x\to 0$.
\end{itemize}
\end{lemma}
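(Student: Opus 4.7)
Both parts follow the standard integration-by-parts scheme for oscillatory integrals, with the non-classical symbol class handled via the H\"ormander embedding $S^{m,l,k,j}\subset S_{1/2}^{m',l'}$ from Proposition \ref{hormanderclass} (for some $m',l'$ depending on $m,l,k,j$). The key device is the operator
\[L = \frac{X\partial_{\xi}+Y\cdot\partial_{\eta}}{i|(X,Y)|^2},\]
which satisfies $Le^{i(X\xi+Y\cdot\eta)}=e^{i(X\xi+Y\cdot\eta)}$, so iterating integration by parts yields
\[K_a(x,y,X,Y) = (2\pi)^{-n}|(X,Y)|^{-2N}\int e^{i(X\xi+Y\cdot\eta)}(L^T)^N a\,d\xi\,d\eta\]
for any $N\ge 0$. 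The vector fields $\xi\partial_\xi$, $\eta_j\partial_\xi$, $\xi\partial_{\eta_j}$, $\eta_i\partial_{\eta_j}$ are homogeneous of degree zero and tangent to $\partial\mathscr{X}$, so writing $\partial_\xi = \xi^{-1}(\xi\partial_\xi)$ on the region $|\xi|\gtrsim|(\xi,\eta)|$ and $\partial_\xi=\eta_j^{-1}(\eta_j\partial_\xi)$ on its complement, the $S_{1/2}$ bound gives
\[|\partial^N_{(\xi,\eta)}a|\le C_N\,\rho^{-m'+N/2}\,x^{-l'-N/2},\]
which becomes integrable over $(\xi,\eta)$ once $N>2(m'+n)$.

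For the first part, fix a compact set $K\subset\{x>0,|(X,Y)|\ne 0\}$ and a multi-index $\gamma$ of derivatives in $(x,y,X,Y)$. Derivatives in $X,Y$ bring down polynomial factors in $(\xi,\eta)$, producing a symbol whose pointwise order in $\rho$ is increased by at most $|\gamma|$; derivatives in $x$ and $y$ can be expressed via $x\partial_x$ and $\partial_{y_j}$, which act tangentially on the symbol (modulo an $x^{-|\gamma|}$ loss from unweighted $\partial_x$). After these preliminary computations, apply the IBP identity with $N$ chosen sufficiently large (depending on $\gamma$, $m'$, $l'$, and $\text{dist}(K,\{|(X,Y)|=0\})$) to obtain an absolutely convergent integral, uniformly bounded on $K$. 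Since $\gamma$ is arbitrary, $K_a\in C^\infty(K)$; as $K$ was arbitrary, $K_a\in C^\infty$ on the full open set.

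For the second part, the key observation is that under the rescaling $X_b=xX$, $Y_b=xY$, we have $|(X,Y)|^{-2N}=x^{2N}|(X_b,Y_b)|^{-2N}$, so every IBP generates a factor of $x^2$. Combined with the worst-case $x$-growth of $\partial^N_{(\xi,\eta)}a$ (namely $x^{-l'-N/2}$, noting also that $d_\Gamma^{-j}\le x^{-j/2}$ has already been absorbed into $l'$), we obtain
\[|K_a|\le C\,\frac{x^{3N/2-l'}}{|(X_b,Y_b)|^{2N}}\]
uniformly for $(X_b,Y_b)$ in a bounded subset of $\{(X_b,Y_b)\ne 0\}$. Since $N$ is arbitrary, this gives infinite-order vanishing of $K_a$ as $x\to 0$. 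Smoothness in $(x,y,X_b,Y_b)$ in this limit follows by running the same IBP scheme on derivatives of $K_a$: derivatives in $X_b,Y_b$ equal $x^{-1}$ times the corresponding $(X,Y)$-derivatives, and $\partial_x$ at fixed $(X_b,Y_b)$ equals $\partial_x|_{X,Y} - x^{-1}(X_b\partial_{X_b}+Y_b\cdot\partial_{Y_b})$, so each derivative introduces only finitely many $x^{-1}$ factors together with polynomial factors in $(\xi,\eta)$, both of which are absorbed by taking $N$ larger in the subsequent IBP. The main technical difficulty is simply the bookkeeping: because our non-classical symbols lose half the $\rho$-decay per $\xi$-derivative compared to classical scattering symbols, one needs roughly twice as many integrations by parts to force absolute convergence, but once the indices are tracked through the $S_{1/2}$ embedding the argument closes cleanly.
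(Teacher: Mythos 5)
Your argument is correct and, at bottom, runs on the same mechanism as the paper's proof: integrating by parts in $(\xi,\eta)$ is the same as multiplying the kernel by coordinate monomials and differentiating the symbol, and each fiber derivative buys $\rho^{1/2}$ at the cost of $x^{1/2}$, which the rescaling $X_b = xX$, $Y_b = xY$ more than repays. The packaging is genuinely different, though. The paper never leaves the four-index calculus: it uses the single identity $(xX)^N K_a = K_{x^N(-D_\xi)^N a}$ together with the membership $x^N(-D_\xi)^N a\in S^{m-N,l-N,k+N,j+N}$, then invokes the inclusion of that space into $S^{-n-1,0,0,0}$ (for continuity and smoothness, coordinate by coordinate rather than via a radial weight) or into $S^{-n-1,-N_0,0,0}$ (for the $O(x^{N_0})$ bound), handling $(X,Y)$-derivatives by $D^\alpha_{X,Y}K_a = K_{\tilde a}$ with $\tilde a\in S^{m+|\alpha|,l,k,j}$. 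You instead exit to the exotic class $S^{m',l'}_{1/2}$ via Proposition \ref{hormanderclass} and run the textbook non-stationary-phase argument with the radial operator $L$; both routes pay exactly half an order per step, but the paper's version is shorter because the factor $x^N$ you extract from $|(X,Y)|^{-N} = x^N|(X_b,Y_b)|^{-N}$ is already built into the symbol membership, and it retains the $d_\Sigma$, $d_\Gamma$ orders that the $S_{1/2}$ embedding discards (harmless for this lemma). One bookkeeping slip to fix: your displayed identity double-counts the denominator, since $(L^T)^N$ already carries a factor $|(X,Y)|^{-2}$ per step, so the extra prefactor $|(X,Y)|^{-2N}$ should not appear; the net gain is $|(X,Y)|^{-N}$ from the numerators $X$, $Y$, and the estimate in the second bullet should read $x^{N/2-l'}|(X_b,Y_b)|^{-N}$ rather than $x^{3N/2-l'}|(X_b,Y_b)|^{-2N}$. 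Since $N$ is arbitrary this still gives infinite-order vanishing, so nothing in the argument breaks.
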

Once we assign the interpretation of $X = \frac{x'-x}{x^2}$ and $Y = \frac{y'-y}{x}$ on the double space (so that $X_b = \frac{x'-x}{x}$ and $Y_b = y'-y$), then the last condition can be rephrased as saying that $K$ vanishes to infinite order on the b-face of the scattering double space (see e.g. \cite{grieser}).

\begin{proof}
\begin{itemize}
\item On $\{x>0,|(X,Y)|\ne 0\}$, at least one of $xX$ or $xY_j$ ($1\le j\le n-1$) is nonzero; in fact any point admits a neighborhood on which one of these functions is bounded from below. If $xX$ is bounded from below in this neighborhood, we have
\begin{equation}
\label{xXK}
(xX)^NK_a = K_{x^N(-D_{\xi})^Na},\quad\text{with }x^N(-D_{\xi})^Na\in S^{m-N,l-N,k+N,j+N}.
\end{equation}
Taking $N$ sufficiently large so that $S^{m-N,l-N,k+N,j+N}\subset S^{-n-1,0,0,0}$, we see that $(xX)^NK_a$ is continuous, since the integral defining the inverse Fourier transform is integrable. A similar argument applies for the remaining coordinate functions\footnote{Note that for $Y_1$ the situation is in fact even better since $x^N(-D_{\eta_1})^Na\in S^{m-N,l-N,k,j}$.} to show that $K_a$ is continuous in $\{x>0,(X,Y)\ne(0,0)\}$.

Furthermore, from \eqref{XYK}, we see that
\[D_{X,Y}^{\alpha}K_a = K_{\tilde{a}},\quad \tilde{a}\in S^{m+|\alpha|,l,k,j}.\]
From the above result, we have that $D_{X,Y}^{\alpha}$ is also continuous in $\{x>0,|(X,Y)|\ne 0\}$. This shows that $K_a$ is smooth in the region $\{x>0,|(X,Y)|\ne 0\}$.
\item Suppose we are in the regime where $x\to 0$ but $X_b$ is bounded away from zero. Then \eqref{xXK} gives
\[X_b^NK_a = K_{\tilde{a}},\quad \tilde{a}\in S^{m-N,l-N,k+N,j+N}.\]
For any fixed $N_0$, we can take $N$ sufficiently large so that $S^{m-N,l-N,k+N,j+N}\subset S^{-n-1,-N_0,0,0}$, in which case we have
\[|X_b^NK_a|\le Cx^{N_0}.\]
Thus, in the regime where $X_b$ is bounded away from zero, we have that $|K_a|\le Cx^{N_0}$ for some constant $C$ for all $N_0$, i.e. it vanishes to infinite order in $x$. A similar argument holds if $Y_b$ is bounded away from zero.
\end{itemize}
\end{proof}

As such, we have that $K_a$ vanishes to infinite order on the b-face for any $a\in S^{m,l,k,j}$. However, there is no guarantee that it vanishes on the left and right faces $\{x'=0\}\cap\{x>0\}$ and $\{x=0\}\cap\{x'>0\}$, which the Schwartz kernels of (ordinary) scattering $\Psi$DO must satisfy (cf. \cite{sslaes}). Hence, in defining our operator class, we build the vanishing on the left and right faces into the definition, as follows:

\begin{definition}
The class of \newname-pseudodifferential operators $\Psi^{m,l,k,j}$ is defined as the operators whose Schwartz kernels $K(x,y,X,Y)$ satisfy the following properties:
\begin{itemize}
\item $K = K_a$ as defined in \eqref{osc-int-dist} for some $a\in S^{m,l,k,j}$, and
\item $K$ vanishes to infinite order at the left and right faces $\{x'=0\}\cap\{x>0\}$ and $\{x=0\}\cap\{x'>0\}$.
\end{itemize}
\end{definition}

A consequence of Proposition \ref{left-red} is the following: if we start with \emph{any} symbol $a\in S^{m,l,k,j}$, then the oscillatory integral $K_a$ defined in \eqref{osc-int-dist} may not satisfy the property that it vanishes to infinite order at the left and right faces $\{x'=0\}\cap\{x>0\}$ and $\{x=0\}\cap\{x'>0\}$ when we introduce $x'=x+x^2X$ and $y'=y+xY$; however, by multiplying $K_a$ by a cutoff $\chi(xX,xY)$ where $\chi$ is identically $1$ near $0$ but supported in the ball of radius $1$, then the oscillatory distribution \emph{will} vanish to infinite order at the left and right faces; moreover, that oscillatory distribution will still equal $K_{\tilde{a}}$ for some $\tilde{a}\in S^{m,l,k,j}$, and we can be sure that $a$ and $\tilde{a}$ differ by a residual difference in $S^{-\infty,-\infty}$. In other words, we can quantize any symbol in $S^{m,l,k,j}$, so long as we first modify it by some residual symbol in $S^{-\infty,-\infty}$. We summarize the results as follows:

\begin{prop}
Let $a\in S^{m,l,k,j}$. Then there exists $\tilde{a}\in S^{m,l,k,j}$ with $\tilde{a}-a\in S^{-\infty,-\infty}$ such that $K_{\tilde{a}}$ vanishes to infinite order at the left and right faces $\{x'=0\}\cap\{x>0\}$ and $\{x=0\}\cap\{x'>0\}$, i.e. that $K_{\tilde{a}}\in\Psi^{m,l,k,j}$. Moreover, if $\tilde{a}'\in S^{m,l,k,j}$ satisfies the same properties as $\tilde{a}$ in the previous sentence, then $K_{\tilde{a}'}-K_{\tilde{a}}\in\Psi^{-\infty,-\infty}$, i.e. the two operators differ by a (scattering) residual operator.
\end{prop}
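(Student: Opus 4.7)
The plan is to set $\tilde{a}$ equal to the left reduction of the product $a(x,y,\xi,\eta)\,\chi(xX,xY)$, where $\chi\in C_c^{\infty}(\mathbb{R}^n)$ is identically $1$ on a neighborhood of the origin and supported in a ball $B(0,\epsilon)$ with $\epsilon<1$. The first task is to verify that $b(x,y,X,Y,\xi,\eta):=a(x,y,\xi,\eta)\chi(xX,xY)$ belongs to $S^{m,l,k,j}_{full}$ in the sense of Definition \ref{fullsymboldef}. Since $a$ already satisfies the required estimates against vector fields with no $(X,Y)$ component, and since the product structure interacts nicely with differentiation, it suffices to check that $\chi(xX,xY)\in S^{0,0,0,0}_{full}$. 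The generators $\tfrac{1}{x}\partial_X$ and $\tfrac{1}{x}\partial_{Y_j}$ applied to $\chi(xX,xY)$ produce the bounded functions $(\partial_j\chi)(xX,xY)$, while vector fields such as $x\partial_x$ produce factors of $xX$ and $xY$ times derivatives of $\chi$, which are bounded on $\operatorname{supp}\chi$. One iterates to get the estimates for all products of generators.

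Second, apply Proposition \ref{left-red} to obtain $\tilde{a}\in S^{m,l,k,j}$ with $K_{\tilde{a}}=K_b$ and the asymptotic expansion
\[\tilde{a}\sim\sum_{\alpha}\frac{i^{|\alpha|}}{\alpha!}\partial_{(\xi,\eta)}^{\alpha}\partial_{(X,Y)}^{\alpha}\bigl(a(x,y,\xi,\eta)\chi(xX,xY)\bigr)\Big|_{(X,Y)=(0,0)}.\]
Because $a$ is independent of $(X,Y)$, only the cutoff is differentiated in $(X,Y)$, and $\partial_{(X,Y)}^{\alpha}\chi(xX,xY)|_{(X,Y)=0}=x^{|\alpha|}(\partial^{\alpha}\chi)(0)$. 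Since $\chi\equiv 1$ near the origin, $(\partial^{\alpha}\chi)(0)=0$ for all $|\alpha|\geq 1$, so every correction term vanishes identically and the $|\alpha|=0$ term is $a$ itself. By the asymptotic statement in Proposition \ref{left-red}, $\tilde{a}-a\in\bigcap_{N}S^{m-N,l-N,k+N,j+N}=S^{-\infty,-\infty}$.

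Third, verify the vanishing of $K_{\tilde{a}}=K_b$ at the left and right faces. On the double space, $xX=(x'-x)/x$. Near the left face $\{x'=0,x>0\}$ this quantity equals $-1$, and near the right face $\{x=0,x'>0\}$ it tends to $+\infty$; either way it exits $\operatorname{supp}\chi$ for $\epsilon<1$, so $\chi(xX,xY)$ vanishes identically in some neighborhood of each face. Therefore $K_b$, and thus $K_{\tilde{a}}$, vanishes identically (in particular to infinite order) near the left and right faces, placing $\tilde{a}$ in $\Psi^{m,l,k,j}$.

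Finally, for uniqueness, if $\tilde{a}'\in S^{m,l,k,j}$ also satisfies $\tilde{a}'-a\in S^{-\infty,-\infty}$ and $K_{\tilde{a}'}$ vanishes to infinite order at the left and right faces, then $\tilde{a}-\tilde{a}'\in S^{-\infty,-\infty}$, so its kernel is a Schwartz kernel of a standard residual scattering operator; combined with vanishing at the left and right faces (which both $K_{\tilde{a}}$ and $K_{\tilde{a}'}$ possess), this gives $K_{\tilde{a}}-K_{\tilde{a}'}\in\Psi^{-\infty,-\infty}$. The main subtlety is the first step, verifying that $\chi(xX,xY)$ belongs to $S^{0,0,0,0}_{full}$; once this symbolic regularity of the cutoff is established, the rest of the argument is driven entirely by the asymptotic formula from Proposition \ref{left-red} and the flatness of $\chi-1$ at the origin.
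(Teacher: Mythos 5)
Your proposal is correct and is essentially the paper's own argument: the paper obtains this proposition as an immediate consequence of Proposition \ref{left-red} by multiplying $K_a$ by the cutoff $\chi(xX,xY)$, observing that the support of $\chi$ forces vanishing near the left and right faces, and using the asymptotic expansion (all of whose correction terms vanish since the derivatives of $\chi$ vanish at the origin) to conclude $\tilde{a}-a\in S^{-\infty,-\infty}$. Your explicit verification that $\chi(xX,xY)\in S^{0,0,0,0}_{full}$ is a detail the paper leaves implicit, but it is carried out correctly.
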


As such, we can define:
\begin{definition}
Let $a\in S^{m,l,k,j}$. We will say that $A\in\Psi^{m,l,k,j}$ is \emph{a quantization} of $a$ if $A = K_{\tilde{a}}$, where $\tilde{a}-a\in S^{-\infty,-\infty}$.
\end{definition}
The above proposition then says that for any $a\in S^{m,l,k,j}$, there exists a quantization of $a$, and such a quantization is uniquely determined up to a residual operator in $\Psi^{-\infty,-\infty}$.

\subsection{Adjoints}

We would like these operators to satisfy the usual boundedness condition: namely, that order $0$ symbols are bounded on $L^2$, so that all other symbols are bounded between appropriately defined Sobolev spaces (which can be defined in terms of this calculus). The volume form we take will be that associated to a ``scattering metric''; explicitly in local coordinates such a volume can be given by $d\text{Vol}_{g_{sc}} = \frac{dx\,dy}{x^{(n+1)}}$. To do so, we consider adjoints with respect to the above volume form, and we show these can be expressed as quantizations of another symbol, with ``principal symbol'' equal to $\overline{a}$. This, combined with the composition result obtained in the next section, will give $L^2$-boundedness, via the usual arguments in microlocal analysis.

For convenience, we will consider quantizations of full symbols supported in $\{(xX,xY)\}<1/2$; this is permissible in light of the above discussion.
\begin{prop}
\label{adj-prop}
Let
\[Au(x,y) = (2\pi)^{-n}\int_{\mathbb{R}^n_{X,Y}\times\mathbb{R}^n_{\xi,\eta}}{e^{i(\xi X+\eta\cdot Y)}a(x,y,X,Y,\xi,\eta)u(x+x^2X,y+xY)\,dX\,dY\,d\xi\,d\eta}\]
with $a\in S^{m,l,k,j}_{full}$ supported in $|(xX,xY)|<1/2$. Then the adjoint with respect to the $L^2$ inner product associated with the measure $\frac{dx}{x^{n+1}}dy$ on $[0,\infty)_x\times\mathbb{R}^{n-1}_y$ is another operator quantized by a full symbol whose principal symbol is $(x,y,\xi,\eta)\mapsto\overline{a}(x,y,0,0,\xi,\eta)$.
\end{prop}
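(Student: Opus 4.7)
The plan is to compute $\langle Au, v\rangle$ directly as an oscillatory integral, then perform a change of variables that swaps the roles of the ``left'' point $(x,y)$ and the ``right'' point $(x',y') = (x+x^2X, y+xY)$, so that the resulting integral is in the quantization form of Definition/setup above, acting on $v$.

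First I would write
\[\langle Au,v\rangle_\mu = (2\pi)^{-n}\int e^{i(\xi X+\eta\cdot Y)}\,a(x,y,X,Y,\xi,\eta)\,u(x',y')\,\overline{v(x,y)}\,\frac{dx\,dy}{x^{n+1}}\,dX\,dY\,d\xi\,d\eta,\]
and then pass to new variables $(\tilde x,\tilde y,\tilde X,\tilde Y)$ defined by $\tilde x=x', \tilde y=y'$ and $\tilde X=(x-\tilde x)/\tilde x^2, \tilde Y=(y-\tilde y)/\tilde x$. Setting $r=\tilde x/x = 1/(1+\tilde x\tilde X)$, a direct computation gives $X=-r^2\tilde X$, $Y=-r\tilde Y$, and
\[\xi X+\eta\cdot Y = -r^2\xi\,\tilde X - r\,\eta\cdot\tilde Y,\qquad \frac{dx\,dy}{x^{n+1}}\,dX\,dY = r^{2n+2}\,\frac{d\tilde x\,d\tilde y}{\tilde x^{n+1}}\,d\tilde X\,d\tilde Y.\]
Substituting $\tilde\xi = -r^2\xi$, $\tilde\eta = -r\eta$ (legal since $r$ is a smooth positive function of $\tilde x\tilde X$ on the support of the cutoff $|(xX,xY)|<1/2$, which transforms to a compact region in $\tilde x\tilde X$), the Jacobian $|d\xi\,d\eta/d\tilde\xi\,d\tilde\eta|=r^{-(n+1)}$ combines with $r^{2n+2}$ to give an overall factor of $r^{n+1}$. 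Matching the result against $\langle u, A^*v\rangle_\mu$ (with a sign flip $\tilde\xi\to-\tilde\xi,\tilde\eta\to-\tilde\eta$ to align the phase with the quantization convention) expresses $A^*$ as the quantization of the full symbol
\[a'(\tilde x,\tilde y,\tilde X,\tilde Y,\tilde\xi,\tilde\eta) = r^{n+1}\,\overline{a\bigl(\tilde x+\tilde x^2\tilde X,\,\tilde y+\tilde x\tilde Y,\,-r^2\tilde X,\,-r\tilde Y,\,r^2\tilde\xi,\,r\tilde\eta\bigr)}.\]

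The next step — and the main obstacle — is to show $a'\in S^{m,l,k,j}_{full}$. Here I would argue that the change of variables is a smooth diffeomorphism on the relevant region preserving the geometric structures underlying the \newname-calculus. Concretely: $\tilde x$ and $x$ are mutually smooth positive multiples of each other (since $r$ is smooth and nonvanishing on the supports considered), so $\tilde x$ is an equally good boundary defining function for the base infinity face; the fiber rescaling $\xi\mapsto r^2\tilde\xi, \eta\mapsto r\tilde\eta$ is linear with $S^{0,0,0,0}$ coefficients, so $\langle(\tilde\xi,\tilde\eta)\rangle$ is equivalent to $\langle(\xi,\eta)\rangle$ and hence $\tilde\rho$ is a nonvanishing $S^{0,0,0,0}$ multiple of $\rho$; moreover the diagonal submanifolds $\Sigma$ and $\Gamma$ are preserved since the rescaling maps $\{\xi=0,\eta_2=0\}$ to $\{\tilde\xi=0,\tilde\eta_2=0\}$ and is a perturbation of the identity near $x=0$. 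From these facts (together with the chain rule applied to the $\mathcal{V}_2$-generators listed in Corollary \ref{x2genvf}) one checks that the pullback of the symbol estimates for $a$ yields symbol estimates for $a'$ in $S^{m,l,k,j}_{full}$; the $x$-derivatives of $a'$ coming from the $(X,Y)$-dependence of the new symbol (via $r$ and the expressions for $x,y,X,Y$) gain the required factor of $1/\tilde x$ since each derivative $\partial_{\tilde x}$ hitting $r=1/(1+\tilde x\tilde X)$ produces a factor of $\tilde X$, precisely matching the condition in Definition \ref{fullsymboldef}.

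Finally I would read off the principal symbol. Since the change of variables is the identity on the ``diagonal'' $\{\tilde X=\tilde Y=0\}$ (there $r=1$, $x=\tilde x$, $y=\tilde y$, $X=Y=0$, and $\tilde\xi=-\xi, \tilde\eta=-\eta$), we obtain
\[a'(\tilde x,\tilde y,0,0,\tilde\xi,\tilde\eta) = \overline{a(\tilde x,\tilde y,0,0,\tilde\xi,\tilde\eta)}.\]
Applying the left reduction Proposition \ref{left-red} to the full symbol $a'$ yields a quantization of some $b\in S^{m,l,k,j}$ with $\sigma(b)=\sigma(a'|_{(\tilde X,\tilde Y)=(0,0)}) = [\overline{a(x,y,0,0,\xi,\eta)}]$, giving the claim. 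Residual terms introduced by the cutoff convention (necessary to ensure the resulting kernel vanishes at the left/right faces) contribute only to $\Psi^{-\infty,-\infty}$, as discussed after Proposition \ref{left-red}.
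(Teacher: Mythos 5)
Your proposal is correct and follows essentially the same route as the paper: the same swap of the left and right base points $(x,y)\leftrightarrow(x',y')$, the same rescaling of the fiber variables to restore the quantization phase, the same (asserted rather than fully computed) verification that the resulting full symbol lies in $S^{m,l,k,j}_{full}$, and the same reading-off of the principal symbol at $\tilde X=\tilde Y=0$; the paper merely phrases the computation through the $(X,Y)$-Schwartz kernel relation rather than directly through the pairing. One small slip: since $\tilde\xi=-r^{2}\xi$ gives $\xi=-r^{-2}\tilde\xi$, the fiber arguments in your formula for $a'$ should be $r^{-2}\tilde\xi=(1+\tilde x\tilde X)^{2}\tilde\xi$ and $r^{-1}\tilde\eta$ rather than $r^{2}\tilde\xi$ and $r\tilde\eta$ --- this is immaterial for both the symbol-class membership and the principal symbol, since $r$ is a smooth, positive, bounded function equal to $1$ on the diagonal.
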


\begin{proof}
Since we can write the ``$(X,Y)$-Schwartz kernel'' $K_A(x,y,X,Y)$ (satisfying \\$Au(x,y) = \int{K_A(x,y,X,Y)u(x+x^2X,y+xY)\,dX\,dY}$) as
\[K_A(x,y,X,Y) = (2\pi)^{-n}\int_{\mathbb{R}^n}{e^{i(\xi X+\eta\cdot Y)}a(x,y,X,Y,\xi,\eta)\,d\xi\,d\eta},\]
it will be helpful to obtain an expression for the ``$(X,Y)$-Schwartz kernel'' $K_{A*}$ of the adjoint $A^*$ in terms of $K_A$. We note that
\begin{align*}
\langle Au,v\rangle &= \int{K_A(x,y,X,Y)u(x+x^2X,y+xY)\overline{v(x,y)}\,dX\,dY\,\frac{dx}{x^{n+1}}\,dy} \\
&=\int{K_A\left(x,y,\frac{x'-x}{x^2},\frac{y'-y}{x}\right)u(x',y')\overline{v(x,y)}\,dx\,dy\,dx'\,dy'} \\
&=\int{\overline{K_{A^*}}\left(x',y',\frac{x-x'}{(x')^2},\frac{y-y'}{(y')^2}\right)\overline{v(x,y)}u(x',y')\,dx'\,dy'\,dx\,dy} \\
&= \overline{\langle A^*v,u\rangle} = \langle u,A^*v\rangle,
\end{align*}
as long as
\[\overline{K_{A^*}}\left(x',y',\frac{x-x'}{(x')^2},\frac{y-y'}{(y')^2}\right) = K_A\left(x,y,\frac{x'-x}{x^2},\frac{y'-y}{x}\right).\]
Changing the roles of $(x,y)$ and $(x',y')$, and noting that
\[\frac{x-x'}{(x')^2} = -\frac{(x'-x)/x^2}{(x'/x)^2} = -\frac{X}{(1+xX)^2}\]
and
\[\frac{y-y'}{x'} = -\frac{(y'-y)/x}{x'/x} = -\frac{Y}{1+xX},\]
it follows that
\begin{align*}
&K_{A^*}(x,y,X,Y) \\
&= \overline{K_A}\left(x+x^2X,y+xY,-\frac{X}{(1+xX)^2},-\frac{Y}{1+xX}\right)\\
&=(2\pi)^{-n}\overline{\int_{\mathbb{R}^n}{e^{i(\xi\cdot\frac{-X}{(1+xX)^2}+\eta\cdot\frac{-Y}{1+xX})}a\left(x+x^2X,y+xY,-\frac{X}{(1+xX)^2},-\frac{Y}{1+xX},\xi,\eta\right)\,d\xi\,d\eta}} \\
&=(2\pi)^{-n}\int_{\mathbb{R}^n}{e^{i(\xi X+\eta\cdot Y)}\tilde{a}(x,y,X,Y,\xi,\eta),d\xi\,d\eta},
\end{align*}
where
\begin{align*}
&\tilde{a}(x,y,X,Y,\xi,\eta) \\
&= \overline{a}\left(x+x^2X,y+xY,-\frac{X}{(1+xX)^2},-\frac{Y}{1+xX},(1+xX)^2\xi,(1+xX)\eta\right)(1+xX)^{n+1}.
\end{align*}
Note that in going from the second-to-last line to the last line we rescaled $\xi$ and $\eta$ to $(1+xX)^2\xi$ and $(1+xX)\eta$, respectively.

We can now check that $\tilde{a}\in S^{m,l,k,j}_{full}$. The idea is that $1+xX$ is bounded from above and below in the support of $a$; moreover we can compute quantities like
\begin{align*}
&x\partial_x\tilde{a} = \\
&\left[(x+2x^2X)\partial_x+xY\cdot\partial_y+\frac{2(xX)^2}{(1+xX)^3}\frac{1}{x}\partial_X+\frac{(xX)(xY)}{(1+xX)^2}\frac{1}{x}\partial_Y+2xX(1+xX)\xi\partial_\xi+xX\eta\cdot\partial_{\eta}\right]\overline{a}
\end{align*}
and
\begin{align*}
&\frac{1}{x}\partial_X\tilde{a} =\\
&\left[x\partial_x+\left(-\frac{1}{(1+xX)^2}+\frac{2xX}{(1+xX)^3}\right)\frac{1}{x}\partial_X-\frac{xY}{(1+xX)^2}\frac{1}{x}\partial_Y+2(1+xX)\xi\partial_{\xi}+\eta\cdot\partial_{\eta}\right]\overline{a}.
\end{align*}
to show that $\tilde{a}$ indeed satisfies the desired estimates. We note that the principal symbol of $\tilde{a}$ is given by
\[\tilde{a}(x,y,0,0,\xi,\eta) = \overline{a}(x,y,0,0,\xi,\eta),\]
so that in particular if $a(x,y,X,Y,\xi,\eta)$ is given by a left-reduced symbol $a(x,y,\xi,\eta)$ times a cutoff in $(xX,xY)$, then the principal symbol of the adjoint is given by $\overline{a}(x,y,\xi,\eta)$, as expected.
\end{proof}

\subsection{Quantization: composition}
We now investigate composition. 
We assume all quantities are supported in a fixed (foliated) chart.

Thus, let $(x,y)$, $(x',y')$, and $(x'',y'')$ denote the necessary coordinates. Note if $X = \frac{x'-x}{x^2}$ and $Y = \frac{y'-y}{x}$, then $x' = x+x^2X$ and $y' = y+xY$. We note that if $X' = \frac{x''-x'}{(x')^2}$ and $Y' = \frac{y''-y'}{x'}$, then we can write
\[X' = \frac{x''-x-x^2X}{x^2(1+xX)^2} = \frac{1}{(1+xX)^2}\tilde{X} - \frac{X}{(1+xX)^2}\]
where $\tilde{X} = \frac{x''-x}{x^2}$. Similarly
\[Y' = \frac{y''-y-xY}{x(1+xX)} = \frac{1}{1+xX}\tilde{Y} - \frac{Y}{1+xX},\quad\tilde{Y} = \frac{y''-y}{x^2}.\]
Thus, suppose $A$ and $B$ are quantized by $a$ and $b$. Let $\tilde{A}$ be the operator whose Schwartz kernel is that of $A$ times $\chi(xX,xY)$ where $\chi$ is identically $1$ in a neighborhood of $0$ and is supported in the ball of radius $1/2$; note then that $(A-\tilde{A})B\in\Psi^{-\infty,-\infty}$. We have
\begin{align*}
\tilde{A}Bu(x,y) &= (2\pi)^{-n}\int{e^{i(\xi X+\eta\cdot Y)}a(x,y,\xi,\eta)\chi(xX,xY)Bu(x+x^2X,y+xY)\,dX\,dY\,d\xi\,d\eta} \\
&=(2\pi)^{-2n}\int{e^{i(\xi X+\eta\cdot Y+\xi'X'+\eta'\cdot Y')}a(x,y,\xi,\eta)b(x+x^2X,y+xY,\xi',\eta')\chi(xX,xY)}\\
&\cdot u(x'',y'')\,dX'\,dY'\,d\xi'\,d\eta'\,dX\,dY\,d\xi\,d\eta
\end{align*}
where $(x'',y'') = (x'+(x')^2X',y'+x'Y')$. Changing coordinates from $(X',Y')$ to $(X'',Y'')$ (in which case $dX'\,dY'$ transforms as $\frac{1}{(1+xX)^{n+1}}\,d\tilde{X}\,d\tilde{Y}$, the integral becomes
\begin{align*}(2\pi)^{-2n}\int &e^{i\left(\xi X+\eta\cdot Y+\frac{\xi'}{(1+xX)^2}(\tilde{X}-X)+\frac{\eta'}{1+xX}\cdot(\tilde{Y}-Y)\right)}a(x,y,\xi,\eta)b(x+x^2X,y+xY,\xi',\eta')\chi(xX,xY)\\
&\cdot u(x+x^2\tilde{X},y+x\tilde{Y})\frac{d\tilde{X}\,d\tilde{Y}}{(1+xX)^{n+1}}d\xi'\,d\eta'\,dX\,dY\,d\xi\,d\eta.
\end{align*}
If we now let $\tilde{\xi} = \frac{\xi'}{(1+xX)^2}$ and $\tilde{\eta} = \frac{\eta'}{1+xX}$, then the integral becomes
\begin{align*}(2\pi)^{-2n}\int &e^{i\left(\xi X+\eta\cdot Y+\tilde{\xi}(\tilde{X}-X)+\tilde{\eta}\cdot (\tilde{Y}-Y)\right)}a(x,y,\xi,\eta)b(x+x^2X,y+xY,(1+xX)^2\tilde\xi,(1+xX)\tilde\eta)\\
&\cdot \chi(xX,xY)u(x+x^2\tilde{X},y+x\tilde{Y})\,d\tilde{X}\,d\tilde{Y}\,d\tilde\xi\,d\tilde\eta\,dX\,dY\,d\xi\,d\eta.
\end{align*}
Replacing $\xi$ and $\eta$ by $\xi+\tilde\xi$ and $\eta+\tilde\eta$, it follows that the integral becomes
\[(2\pi)^{-n}\int{e^{i(\tilde\xi\tilde{X}+\tilde\eta\cdot\tilde{Y})}c(x,y,\tilde\xi,\tilde\eta)u(x+x^2\tilde{X},y+x\tilde{Y})d\tilde{X}\,d\tilde{Y}\,d\tilde\xi\,d\tilde\eta}\]
where
\begin{align*}
c(x,y,\tilde\xi,\tilde\eta) =& (2\pi)^{-n}\int e^{i(\xi X+\eta\cdot Y)}a(x,y,\xi+\tilde\xi,\eta+\tilde\eta)\\
&\cdot b(x+x^2X,y+xY,(1+xX)^2\tilde\xi,(1+xX)\tilde\eta)\chi(xX,xY)\,dX\,dY\,d\xi\,d\eta.
\end{align*}
We now note that
\[\tilde{b}(x,y,X,Y,\xi,\eta) = \chi(xX,xY)b(x+x^2X,y+xY,(1+xX)^2\xi,(1+xX)\eta)\]
is a full symbol, with $\tilde{b}(x,y,0,0,\xi,\eta) = b(x,y,\xi,\eta)$: the main calculations to note are
\begin{align*} x\partial_x\tilde{b} = &(xX\partial_x\chi(xX)+xY\cdot\partial_y\chi(xY))b + \chi(xX)\cdot \\
&\left((1+2xX)x\partial_xb + xY\cdot\partial_yb+2xX(1+xX)\xi\partial_{\xi}b+xX\eta\cdot\partial_{\eta}b\right)
\end{align*}
where $b$ and its derivatives are evaluated at $(x+x^2X,y+xY,(1+xX)^2\xi,(1+xX)\eta)$, and
\[\frac{1}{x}\partial_X\tilde{b} = \partial_x\chi(xX,xY)b + \chi(xX,xY)(x\partial_xb+2(1+xX)\xi\cdot\partial_{\xi}b+\eta\cdot\partial_{\eta}b).\]
As such, for $a\in S^{m,l,k,j}$ and $b\in S^{m',l',k',j'}_{full}$, define
\begin{equation}
\label{compsymbol}
(a\# b)(x,y,\xi,\eta) := (2\pi)^{-n}\int_{\mathbb{R}^{2n}}{e^{i(\xi'\cdot X + \eta'\cdot Y)}a(x,y,\xi'+\xi,\eta'+\eta)b(x,y,X,Y,\xi,\eta)\,dX\,dY\,d\xi'\,d\eta'}.
\end{equation}
Note that by replacing $\xi'$ and $\eta'$ by $\xi'-\xi$ and $\eta'-\eta$, we can also rewrite the above oscillatory integral as
\[(a\# b) = (2\pi)^{-n}\int_{\mathbb{R}^{2n}}{e^{i[(\xi'-\xi)\cdot X + (\eta'-\eta)\cdot Y]}a(x,y,\xi',\eta')b(x,y,X,Y,\xi,\eta)\,dX\,dY\,d\xi'\,d\eta'}.\]
The main result is:
\begin{theorem}
\label{comp-symb}
For $a\in S^{m,l,k,j}$ and $b\in S^{m',l',k',j'}_{full}$ we have $a\# b\in S^{M,L,K,J}$, where $M = m+m'$, $L=l+l'$, $K=k+k'$, and $J=j+j'$. Moreover,
\[(a\# b)(x,y,\xi,\eta) - a(x,y,\xi,\eta)b(x,y,0,0,\xi,\eta)\in S^{M-1,L-1,K+1,J+1},\]
i.e. the principal symbol of $a\# b$ equals\footnote{In the sense that they have the same equivalence class under the principal symbol mapping} $a(x,y,\xi,\eta)b(x,y,0,0,\xi,\eta)$.
\end{theorem}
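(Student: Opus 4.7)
\textbf{Proof Plan for Theorem \ref{comp-symb}.}

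My plan is to adapt the standard oscillatory-integral technique to the \newname-class by performing a Taylor expansion of $a(x,y,\xi+\xi',\eta+\eta')$ in the shift variables $(\xi',\eta')$ around the origin, evaluating the polynomial terms via integration by parts, and estimating the Taylor remainder using dual integrations by parts. Concretely, I would substitute
\[a(x,y,\xi+\xi',\eta+\eta') = \sum_{|\alpha|<N}\frac{(\xi',\eta')^\alpha}{\alpha!}\partial^\alpha_{(\xi,\eta)}a(x,y,\xi,\eta) + R_N\]
into the oscillatory integral defining $a\#b$. For each polynomial term, the $(\xi',\eta')$-integration against $e^{i(\xi'\cdot X+\eta'\cdot Y)}(\xi',\eta')^\alpha$ collapses to $(-iD_{(X,Y)})^\alpha$ applied to $b$ at $(X,Y)=(0,0)$, yielding contributions of the form $\tfrac{(-i)^{|\alpha|}}{\alpha!}(\partial^\alpha_{(\xi,\eta)}a)\cdot(\partial^\alpha_{(X,Y)}b)|_{(X,Y)=(0,0)}$.

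For the order count: since $\partial_{\xi_i},\partial_{\eta_j}$ are $\rho$ times vector fields tangent to $\partial\mathscr{X}$ and to $S$, we get $\partial^\alpha_{(\xi,\eta)}a\in S^{m-|\alpha|,l-|\alpha|,k+|\alpha|,j+|\alpha|}$ by Proposition \ref{scvfest}. On the other hand, $\partial_X = x\cdot(\tfrac{1}{x}\partial_X)$ where $\tfrac{1}{x}\partial_X$ is a generator of $S^{0,0,0,0}_{full}$ by Definition \ref{fullsymboldef}, so $\partial^\alpha_{(X,Y)}b\in S^{m',l'-|\alpha|,k',j'}_{full}$, and evaluating at $(X,Y)=(0,0)$ yields an element of $S^{m',l'-|\alpha|,k',j'}$. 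Their product lies in $S^{M-|\alpha|,L-2|\alpha|,K+|\alpha|,J+|\alpha|}\subset S^{M-|\alpha|,L-|\alpha|,K+|\alpha|,J+|\alpha|}$, which for $|\alpha|\ge 1$ is contained in $S^{M-1,L-1,K+1,J+1}$. The $\alpha=0$ term is $a(x,y,\xi,\eta)\,b(x,y,0,0,\xi,\eta)\in S^{M,L,K,J}$, which will be the declared principal symbol.

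For the remainder term $R_N$, which involves $\partial^\alpha a$ evaluated at the shifted argument $(\xi+t\xi',\eta+t\eta')$, I would use the standard dual integration-by-parts trick: integration by parts in $(\xi',\eta')$ (using $\langle X,Y\rangle^{-2K}(1-\Delta_{(\xi',\eta')})^Ke^{i(\xi'\cdot X+\eta'\cdot Y)}=e^{i(\xi'\cdot X+\eta'\cdot Y)}$) to create $\langle X,Y\rangle^{-2K}$ decay for large $(X,Y)$, and integration by parts in $(X,Y)$ to produce $\langle\xi',\eta'\rangle^{-2K'}$ decay to ensure absolute convergence. To verify that the resulting remainder lies in $S^{M-N,L-N,K+N,J+N}$, I would test against the generators of $\mathcal{V}_{2,c}$ from Lemma \ref{v2cgen} and apply the characterization in Corollary \ref{bdmvftest}, which separates tangent-to-$S$ vector fields (no extra loss) from non-tangent ones (loss of $1/2$ in both $\rho$ and $x$). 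Summing the polynomial contributions and the remainder for sufficiently large $N$ yields $a\#b\in S^{M,L,K,J}$ with the asserted principal symbol.

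The main obstacle I anticipate is the remainder analysis. The delicate point is that the integrand contains $\partial^\alpha a$ evaluated at the \emph{shifted} fiber point $(\xi+t\xi',\eta+t\eta')$, whereas the symbol estimates for $a$ are formulated intrinsically in terms of vector fields tangent to $\partial\mathscr{X}_2$ at $(\xi,\eta)$; translating between these requires splitting the $(\xi',\eta')$-integral into a region near the origin (where Taylor estimates and uniform control of the shift are trivial) and a far region (where one loses control of the blow-up geometry and must rely on the integration-by-parts decay). Keeping track of the four decay parameters $(m,l,k,j)$ simultaneously through this splitting, and verifying that the loss of $d_\Sigma$ and $d_\Gamma$ orders stays controlled, is where the bookkeeping becomes nontrivial; an alternative would be to recognize $a\#b$ as the left reduction (Proposition \ref{left-red}) of an appropriate full symbol closely related to $a(x,y,\xi,\eta)b(x,y,X,Y,\xi,\eta)$, reducing the remainder analysis to the one already carried out in Lemmas \ref{left-red-asymp}--\ref{full-symb-weak-est}.
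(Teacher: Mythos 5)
Your outline follows the classical Kohn--Nirenberg route: Taylor-expand $a(x,y,\xi+\xi',\eta+\eta')$ in the shift $(\xi',\eta')$ and control the remainder by dual integration by parts. The paper expands on the other side: it Taylor-expands $b$ in $Z=(X,Y)$ about $Z=0$ (Lemma \ref{asymp}), so that the remainder stays in the form $\sum_i a_i\# b_i$ with $a_i=\partial^{\beta}_{(\xi,\eta)}a\in S^{m-N,l,k+N,j+N}$ of arbitrarily low $\rho$-order and $b_i=R_{\beta}b$ still a genuine full symbol --- no shifted evaluation of $a$ ever appears. This matters because the obstacle you correctly flag is fatal to your version of the remainder: the \newname-estimates are anisotropic in the fiber (they are phrased relative to the blow-ups at $\Sigma=\{\tau=0\}$ and $\Gamma$), and $d_{\Sigma}$, $d_{\Gamma}$, $\tau$ are not translation-compatible, so even for bounded $(\xi',\eta')$ the quantities $d_{\Sigma}$ at the shifted and unshifted fiber points are not uniformly comparable near $\Sigma$; no Peetre-type inequality is available, and ``near the origin the control of the shift is trivial'' is not true here. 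Your fallback --- recognizing $a\#b$ as the left reduction of $a(x,y,\xi,\eta)b(x,y,X,Y,\xi,\eta)$ --- also does not work as stated: left-reducing that product yields $\int e^{i\xi'\cdot Z}a(\zeta+\xi')b(Z,\zeta+\xi')\,dZ\,d\xi'$, with $b$ evaluated at the \emph{shifted} fiber point, whereas $a\#b$ has $b$ at the unshifted point; the two agree only in the leading term of their asymptotic expansions.

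The second gap is that you give no mechanism for estimating $V^{\alpha}(a\#b)$ for products of vector fields tangent to $\partial\mathscr{X}_2$. The paper's proof rests on two lemmas you would need analogues of: (i) a zeroth-order bound $|a\#b|\le C\rho^{-M}x^{-L}d_{\Sigma}^{-K}d_{\Gamma}^{-J}$ (Lemma \ref{init}), proved by first handling the case where $a$ has very negative $\rho$-order, so that $\mathcal{F}^{-1}a$ is $C^{m_0}$ and one integrates by parts in $Z$ against the product $(\mathcal{F}^{-1}a)\,b$, and then bootstrapping via the asymptotic expansion; and (ii) a Leibniz-type stability lemma (Lemma \ref{vf}) showing that each generator of $\mathcal{V}_2$ applied to $a\#b$ returns a finite sum $\sum a_i\#b_i$ with the four orders of $a_i$ and $b_i$ adding to $(M,L,K,J)$ --- e.g.\ $\eta_i\partial_{\eta_j}(a\#b)=(\partial_{\eta_j}a)\#(\eta_ib)+a\#(\eta_i\partial_{\eta_j}b)$ with $\partial_{\eta_j}a\in S^{m-1,l,k+1,j+1}$ and $\eta_ib\in S^{m'+1,l',k'-1,j'-1}_{full}$. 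Iterating (ii) and then applying (i) to each term is what produces the full symbol estimates; ``testing against the generators and applying Corollary \ref{bdmvftest}'' does not by itself generate these identities. Finally, a small correction: $\partial^{\alpha}_{(\xi,\eta)}a\in S^{m-|\alpha|,l,k+|\alpha|,j+|\alpha|}$, not $S^{m-|\alpha|,l-|\alpha|,k+|\alpha|,j+|\alpha|}$ --- fiber derivatives gain in $\rho$ but not in $x$; the $x$-gain in the expansion terms comes entirely from $\partial^{\alpha}_{(X,Y)}b$. Your final containment in $S^{M-|\alpha|,L-|\alpha|,K+|\alpha|,J+|\alpha|}$ is nonetheless correct.
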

For notational convenience, we will write $a(x,y,\xi,\eta)b(x,y,0,0,\xi,\eta)$, the expected principal symbol, as $ab|_{X=0,Y=0}$.

This will follow from the following lemmas:

\begin{lemma}[Asymptotic series]
\label{asymp}
For any $N$, we have
\begin{equation}
\label{asympeq}
a\# b - \sum_{|\alpha|<N}{\frac{i^{|\alpha|}}{|\alpha|!}\partial^{\alpha}_{\xi,\eta}a\partial^{\alpha}_{X,Y}b|_{X=0,Y=0}} = \sum_{i=1}^{N'}{a_i\# b_i}
\end{equation}
for some choices of $a_i\in S^{m-N,l-N,k+N,j+N}$ and $b_i\in S^{m',l',k',j'}_{full}$.
\end{lemma}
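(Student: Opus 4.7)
The strategy is to Taylor-expand $a(x,y,\xi+\xi',\eta+\eta')$ in the fiber increment $(\xi',\eta')$ about $(\xi,\eta)$, substitute into the defining oscillatory integral \eqref{compsymbol}, and separate the main terms (which produce the asymptotic series after integration by parts and Fourier inversion) from the remainder (which I will exhibit as a finite sum of $\#$-products with appropriately improved orders).

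Begin with the integral form of Taylor's theorem:
\begin{align*}
a(x,y,\xi+\xi',\eta+\eta') &= \sum_{|\alpha|<N}\frac{(\xi',\eta')^\alpha}{\alpha!}\partial_{\xi,\eta}^\alpha a(x,y,\xi,\eta) \\
&\quad + \sum_{|\alpha|=N}\frac{N(\xi',\eta')^\alpha}{\alpha!}\int_0^1 (1-t)^{N-1}\partial_{\xi,\eta}^\alpha a(x,y,\xi+t\xi',\eta+t\eta')\,dt.
\end{align*}
Substituting the polynomial piece into \eqref{compsymbol} and using $(\xi',\eta')^\alpha e^{i(\xi'X+\eta'Y)} = (-i)^{|\alpha|}\partial_{X,Y}^\alpha e^{i(\xi'X+\eta'Y)}$, I integrate by parts in $(X,Y)$ to transfer these derivatives onto $b(x,y,X,Y,\xi,\eta)$ at the cost of a factor $i^{|\alpha|}$. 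Fourier inversion in $(\xi',\eta',X,Y)$ then collapses the integral to evaluation of $\partial_{X,Y}^\alpha b$ at $X=Y=0$, producing exactly $\sum_{|\alpha|<N}\tfrac{i^{|\alpha|}}{\alpha!}\partial_{\xi,\eta}^\alpha a\cdot \partial_{X,Y}^\alpha b|_{X=0,Y=0}$.

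For the remainder, write the $t$-slice contribution, for each $|\alpha|=N$, as
\[T_{t,\alpha}(x,y,\xi,\eta) := (2\pi)^{-n}\int e^{i(\xi'X+\eta'Y)}(\xi',\eta')^\alpha \partial_{\xi,\eta}^\alpha a(x,y,\xi+t\xi',\eta+t\eta')\,b(x,y,X,Y,\xi,\eta)\,dX\,dY\,d\xi'\,d\eta',\]
so that the full remainder equals $\sum_{|\alpha|=N}\tfrac{N}{\alpha!}\int_0^1(1-t)^{N-1}T_{t,\alpha}\,dt$. For $t>0$, I perform the change of variables $\xi' = \tilde\xi/t$, $\eta' = \tilde\eta/t$, $X = t\tilde X$, $Y = t\tilde Y$; the Jacobians cancel, the shift becomes $\xi+\tilde\xi$, and a further integration by parts in $(\tilde X,\tilde Y)$ transfers the monomial $(\tilde\xi,\tilde\eta)^\alpha$ onto $b(x,y,t\tilde X,t\tilde Y,\xi,\eta)$, producing a factor $t^N$ that cancels the $t^{-N}$ from the rescaled monomial. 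The net result is
\[T_{t,\alpha} = i^{|\alpha|}(\partial_{\xi,\eta}^\alpha a)\# \tilde b_{\alpha,t},\qquad \tilde b_{\alpha,t}(x,y,X,Y,\xi,\eta) := (\partial_{X,Y}^\alpha b)(x,y,tX,tY,\xi,\eta),\]
whose right-hand side is continuous in $t\in[0,1]$ and agrees with a direct computation at $t=0$, so the identity holds for all $t\in[0,1]$. Integrating against $N(1-t)^{N-1}/\alpha!$ exhibits the remainder as the finite sum $\sum_{|\alpha|=N}\tfrac{Ni^{|\alpha|}}{\alpha!}(\partial_{\xi,\eta}^\alpha a)\# b_\alpha$, where $b_\alpha := \int_0^1 (1-t)^{N-1}\tilde b_{\alpha,t}\,dt$.

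Finally, I verify the symbol orders. Using Proposition \ref{scvfest} applied to the single degree-$0$ homogeneous vector field $\rho^{-1}\partial_{\xi_i}$ (respectively $\rho^{-1}\partial_{\eta_i}$), which is not required to be tangent to $S$, each fiber derivative yields $\partial_{\xi_i} a = \rho\cdot(\rho^{-1}\partial_{\xi_i})a \in \rho\cdot S^{m,l,k+1,j+1} = S^{m-1,l,k+1,j+1}$; iterating gives $\partial_{\xi,\eta}^\alpha a \in S^{m-N,l,k+N,j+N}$ for $|\alpha|=N$. On the full-symbol side, each $\partial_X$ (or $\partial_{Y_j}$) equals $x$ times the generator $x^{-1}\partial_X$ (or $x^{-1}\partial_{Y_j}$) of Definition \ref{fullsymboldef}, so $\partial_{X,Y}^\alpha b \in x^N\cdot S^{m',l',k',j'}_{full} = S^{m',l'-N,k',j'}_{full}$; the dilation $(X,Y)\mapsto(tX,tY)$ and the subsequent integration in $t$ against $(1-t)^{N-1}$ preserve this class with uniform constants. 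Since factors depending only on the base variables $(x,y)$ commute freely through the $\#$-product, I redistribute an $x^N$ factor from the $b$-side to the $a$-side, producing the final decomposition with $a_i \in S^{m-N,l-N,k+N,j+N}$ and $b_i\in S^{m',l',k',j'}_{full}$, as claimed. The principal technical point will be the rescaling argument in the remainder — in particular, justifying the uniformity of the full-symbol estimates for $\tilde b_{\alpha,t}$ all the way down to $t=0$; once this is in hand, the symbol bookkeeping is routine.
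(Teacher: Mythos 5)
Your proof is correct, but it inverts the paper's decomposition: the paper Taylor-expands $b$ in $(X,Y)$ about the origin, whereas you Taylor-expand $a$ in the fiber increment $(\xi',\eta')$. The leading terms come out identically either way (by the symmetry of the pairing $(\xi',\eta')^\alpha e^{i(\xi'X+\eta'Y)} = (-i)^{|\alpha|}\partial_{X,Y}^\alpha e^{i(\xi'X+\eta'Y)}$), but the remainders are handled quite differently. In the paper's version the Taylor remainder $Z^\beta R_\beta b(Z)$ of $b$ sits naturally in $S^{m',l'-N,k',j'}_{full}$, and the monomial $Z^\beta$ is simply integrated by parts onto $a(\xi',\eta')$ --- no shifted arguments ever appear, so the remainder is manifestly of the form $(i^{|\beta|}\partial^\beta_{\xi,\eta}a)\#(R_\beta b)$ with no further work. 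Your version produces the shifted argument $a(\xi+t\xi',\eta+t\eta')$, which does not fit the template \eqref{compsymbol}, and you must repair this with the rescaling $\xi'=\tilde\xi/t$, $X=t\tilde X$; this works (the Jacobians cancel, the $t^{-N}$ from the monomial is absorbed by the chain rule when the derivatives land on $b(t\tilde X,t\tilde Y,\cdot)$, and the dilation plus $t$-average preserves $S^{m',l'-N,k',j'}_{full}$ uniformly because $\tfrac1x\partial_X$ applied to $c(tX,tY)$ only produces harmless extra factors of $t\le 1$), but it is an extra moving part the paper's choice of which factor to expand avoids entirely. Your order bookkeeping ($\partial_{\xi,\eta}^\alpha a\in S^{m-N,l,k+N,j+N}$ via $\rho\cdot(\rho^{-1}\partial_{\xi_i})$ and Proposition \ref{scvfest}, $\partial_{X,Y}^\alpha b\in S^{m',l'-N,k',j'}_{full}$ via the generators $\tfrac1x\partial_X$, then shuffling $x^N$ across the $\#$) coincides with the paper's and is correct. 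The only caveat is the one you flag yourself: the $t\to 0$ degeneration of the change of variables and the interchange of $\int_0^1 dt$ with the oscillatory integral need the usual regularization of the oscillatory integral, but this is at the same level of rigor as the paper's own manipulations.
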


\begin{lemma}[$0$th order estimate]
\label{init}
For $a\in S^{m,l,k,j}$ and $b\in S^{m',l',k',j'}_{full}$, we have $|(a\# b)|\le C\rho^{-M}x^{-L}d_{\Sigma}^{-K}d_{\Gamma}^{-J}$.
\end{lemma}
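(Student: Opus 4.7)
} The strategy is a standard oscillatory-integral estimate: make the integral absolutely convergent via integration by parts in both frequency and spatial variables, then bound the resulting integrand using symbol estimates and Peetre-type inequalities. By Proposition \ref{inclusions} we may assume $k,k',j,j'\ge 0$, since $S^{m,l,k,j}\subset S^{m,l,\max(k,0),\max(j,0)}$ and the target bound is weakest in those nonnegative cases.

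First, integrate by parts $N$ times in $(X,Y)$ using
\[
e^{i(\xi'X+\eta'Y)} = (1+|\xi'|^2+|\eta'|^2)^{-N}(1-\Delta_{X,Y})^{N}e^{i(\xi'X+\eta'Y)}.
\]
The Laplacian lands on $b$, and since $\partial_{X_i},\partial_{Y_j}$ map $S^{m',l',k',j'}_{full}$ into $S^{m',l'-1,k',j'}_{full}$ (the defining estimate for $S_{full}$ gains one power of $x$ per $(X,Y)$-derivative), $(1-\Delta_{X,Y})^N b$ is bounded by $C_N\,\rho^{-m'}x^{-l'+2N}d_{\Sigma}^{-k'}d_{\Gamma}^{-j'}$. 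The extra $x^{2N}$ is harmless (it is bounded on our domain), while the prefactor $(1+|\xi'|^2+|\eta'|^2)^{-N}$ supplies polynomial decay in $(\xi',\eta')$. Analogously, integrate by parts $N'$ times in $(\xi',\eta')$ via
\[
e^{i(\xi'X+\eta'Y)} = (1+|X|^2+|Y|^2)^{-N'}(1-\Delta_{\xi',\eta'})^{N'}e^{i(\xi'X+\eta'Y)},
\]
transferring derivatives onto $a(x,y,\xi+\xi',\eta+\eta')$. Using Lemma \ref{v2cgen} together with the explicit computation
\[
\partial_\xi = -\tau_1\rho\,(\rho\partial_\rho)+(1-\tau_1^2)\,(\rho\partial_{\tau_1})-\tau_1\tau_2\,(\rho\partial_{\tau_2}),
\]
and analogous expressions for $\partial_{\eta_j}$, each $\partial_\xi,\partial_\eta$ is an $S^{0,0,0,0}$-combination of generators of $\mathcal{V}_{2,c}$, so $\partial^{\alpha}_{(\xi,\eta)}a$ stays in $S^{m,l,k,j}$ for every $\alpha$.

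Next, convert the symbol estimates on $a$ at the shifted argument to estimates at $(\xi,\eta)$. Peetre's inequality gives $\langle(\xi+\xi',\eta+\eta')\rangle^s\le 2^{|s|}\langle(\xi,\eta)\rangle^s\langle(\xi',\eta')\rangle^{|s|}$, so $\rho(\xi+\xi',\eta+\eta')^{-m}\le C\rho(\xi,\eta)^{-m}\langle(\xi',\eta')\rangle^{|m|}$. For the $d_\Sigma,d_\Gamma$ factors, split the $(\xi',\eta')$-integral into the low-shift region $A=\{\langle(\xi',\eta')\rangle\le c\langle(\xi,\eta)\rangle\}$ (for suitable small $c$), on which $\tau$, $\rho$, $d_\Sigma$ and $d_\Gamma$ at $(\xi+\xi',\eta+\eta')$ are all comparable to the corresponding quantities at $(\xi,\eta)$, and the complementary high-shift region $B$, on which the elementary bounds $d_\Sigma\ge\rho^{1/2}$ and $d_\Gamma\ge\max(x^{1/2},\rho^{1/2})$ let us estimate the $d_\Sigma^{-k},d_\Gamma^{-j}$ factors at the shifted point by a polynomial in $\langle(\xi',\eta')\rangle$ times $d_\Sigma(\xi,\eta)^{-k}d_\Gamma(\xi,\eta)^{-j}$. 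In region $B$, choosing $N$ large enough kills all resulting polynomial factors in $\langle(\xi',\eta')\rangle$; in region $A$, the integration by parts in $(\xi',\eta')$ suffices for convergence in $(X,Y)$ (using $b$'s pointwise bound and the $(1+|X|^2+|Y|^2)^{-N'}$ weight with $2N'>n$).

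Combining the two regions and choosing $N,N'$ sufficiently large (depending on $m,m',k,k',j,j'$ and the dimension $n$) to make the resulting integrals in $(\xi',\eta')$ and $(X,Y)$ absolutely convergent, we obtain the pointwise bound
\[
|(a\#b)(x,y,\xi,\eta)|\le C\rho^{-M}x^{-L}d_{\Sigma}^{-K}d_{\Gamma}^{-J}.
\]
The main obstacle is the comparison of $d_\Sigma$ and $d_\Gamma$ at the shifted argument $(\xi+\xi',\eta+\eta')$ with those at $(\xi,\eta)$, since unlike $\rho$ these are not simply homogeneous in the fiber variables; the region split $A\cup B$ is precisely the device that handles this, exploiting the freedom to do arbitrarily many integrations by parts in $(X,Y)$ on $B$ where one cannot get clean Peetre-type control.
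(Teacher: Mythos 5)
Your overall architecture (double integration by parts plus a Peetre-type comparison) is a reasonable alternative to the paper's route, but as written it has two genuine gaps. The first and more serious is the opening reduction to $k,k',j,j'\ge 0$: the inclusion $S^{m,l,k,j}\subset S^{m,l,k_+,j_+}$ weakens the \emph{hypothesis}, and the bound you then prove, $|a\#b|\le C\rho^{-M}x^{-L}d_{\Sigma}^{-(k_++k'_+)}d_{\Gamma}^{-(j_++j'_+)}$, is strictly \emph{weaker} than the asserted bound $\rho^{-M}x^{-L}d_{\Sigma}^{-K}d_{\Gamma}^{-J}$ whenever some order is negative, since then $d_{\Sigma}^{-K}=d_{\Sigma}^{|k|}d_{\Sigma}^{-k'}$ is much smaller than $d_{\Sigma}^{-k'_+}$ near $\Sigma$. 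The negative-order case is exactly the one this calculus exists for: $N_{\digamma}\in\Psi^{-1,0,-2,-2}$, and composing its parametrix $Q\in\Psi^{1,0,2,2}$ with it must land in $S^{0,0,0,0}$, not merely in a space like $S^{0,0,2,2}$ that blows up at $\Sigma$. Nor can you factor the vanishing out of the product, because in \eqref{compsymbol} the weight $d_{\Sigma}^{|k|}d_{\Gamma}^{|j|}$ sits inside $a$ at the shifted argument $(\xi+\xi',\eta+\eta')$.

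The second gap is the claim that on $A=\{\langle(\xi',\eta')\rangle\le c\langle(\xi,\eta)\rangle\}$ the quantities $\tau$, $d_{\Sigma}$, $d_{\Gamma}$ at the shifted point are comparable to those at $(\xi,\eta)$. Take $(\xi,\eta)=(0,R,0)$ and $(\xi',\eta')=(cR/2,0,0)$: this lies in $A$, yet $d_{\Sigma}(\xi,\eta)=R^{-1/2}$ while $d_{\Sigma}(\xi+\xi',\eta+\eta')\sim c/2$, a ratio of order $R^{1/2}$. What is true, and what you would need to prove and use instead, is a Peetre-type comparison with polynomial loss, e.g.\ $C^{-1}\langle(\xi',\eta')\rangle^{-2}\le d_{\Sigma}(\xi+\xi',\eta+\eta')/d_{\Sigma}(\xi,\eta)\le C\langle(\xi',\eta')\rangle^{2}$ (and its analogue for $d_{\Gamma}$), valid globally, with the polynomial losses absorbed by taking $N$ large in the $(X,Y)$ integration by parts; with that in hand the $A\cup B$ splitting becomes unnecessary, and the argument also extends to negative orders, repairing the first gap. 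For contrast, the paper sidesteps the shifted-argument comparison entirely: it first proves a crude bound only for $a\in S^{m,l,0,0}$ with $m<-n-m_0$ (using that $\mathcal{F}^{-1}a$ is then $C^{m_0}$ near the origin and Schwartz away from it), and then bootstraps via the Taylor expansion of Lemma \ref{asymp}, in which the explicit terms $\partial^{\alpha}_{(\xi,\eta)}a\,\partial^{\alpha}_{(X,Y)}b|_{X=Y=0}$ carry the correct $d_{\Sigma},d_{\Gamma}$ weights automatically by the symbol calculus, while the remainders are pushed to orders so negative that the crude bound suffices.
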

(The proof in fact uses the asymptotic series expansion in Lemma \ref{asymp}.)

\begin{lemma}[Vector fields]
\label{vf}
Let $V$ be any vector field tangent to $\partial\mathscr{X}_2$. Then
\[V(a\# b) = \sum_{i=1}^N{a_i\# b_i}\]
for some choices of $a_i\in S^{m_i,l_i,k_i,j_i}$ and $b_i\in S^{m'_i,l'_i,k'_i,j'_i}_{full}$ satisfying $m_i+m'_i = M$, $l_i+l'_i = L$, $k_i+k'_i = K$, and $j_i+j'_i = J$.
\end{lemma}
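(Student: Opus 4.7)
The plan is to reduce to a finite set of generator cases via Lemma \ref{v2cgen} and then, for each generator, pass $V$ through the defining oscillatory integral \eqref{compsymbol} and use integration by parts in the phase variables to re-express the result as a finite sum of compositions. Since $\mathcal{V}_2\subset\mathcal{V}_{2,c}$ and an $S^{0,0,0,0}$ prefactor can be absorbed into an $a_i$ without altering the orders of the summand, it suffices to verify the claim for each of the generators $x\partial_x, \partial_{y_j}, \rho\partial_\rho, \tau_i\partial_{\tau_j}, \rho\partial_{\tau_j}, x^{1/2}\rho^{1/2}\partial_{\tau_j}$ of $\mathcal{V}_{2,c}$.

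The two key identities, obtained by differentiating \eqref{compsymbol} under the integral sign and performing integration by parts in $\xi'$ or $X$, are
\[\partial_{\zeta_k}(a\# b) = (\partial_{\zeta_k}a)\# b + a\#(\partial_{\zeta_k}b),\qquad \zeta_k\cdot(a'\# b') = (\zeta_k a')\# b' - i(a'\#\partial_{Z_k}b') = a'\#(\zeta_k b'),\]
where $\zeta_k$ denotes one of the outer fiber coordinates $\xi,\eta_j$ and $Z_k$ is the corresponding spatial variable $X,Y_j$; the equality of the latter two expressions is itself useful as it allows re-distributing an outer fiber factor. For the spatial generators $V=x\partial_x, \partial_{y_j}$, the first identity directly gives $V(a\# b) = (Va)\# b + a\#(Vb)$. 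For $V=\rho\partial_\rho = -\sum_k\zeta_k\partial_{\zeta_k}$, applying both identities successively and observing that the $\partial_{Z_k}\partial_{\zeta_k}b$ cross terms cancel in pairs yields
\[V(a\# b) = (Va)\# b + a\#(Vb) + i\sum_k (\partial_{\zeta_k}a)\#(\partial_{Z_k}b),\]
with $Va\in S^{m,l,k,j}$, $Vb\in S^{m',l',k',j'}_{full}$, $\partial_{\zeta_k}a\in S^{m,l,k,j}$ (using that $\partial_\xi$ and $\partial_{\eta_j}$ expand via the $(\rho,\tau)\leftrightarrow(\xi,\eta)$ Jacobian as $S^{0,0,0,0}$-combinations of generators of $\mathcal{V}_{2,c}$), and $\partial_{Z_k}b = x\cdot(x^{-1}\partial_{Z_k}b)\in S^{m',l'-1,k',j'}_{full}\subset S^{m',l',k',j'}_{full}$; hence every summand has orders summing to $(M,L,K,J)$.

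The remaining generators $\tau_i\partial_{\tau_j}, \rho\partial_{\tau_j}, x^{1/2}\rho^{1/2}\partial_{\tau_j}$ are handled by the same scheme. For the last (and hardest) generator, the relevant estimate on $\partial_{\tau_j}a$ is obtained via Corollary \ref{homvf}: since $\partial_{\tau_j}$ is tangent to $\partial\mathscr{X}$ but not to $S$, the vector field $d_\Gamma d_\Sigma\partial_{\tau_j}$ lies in $\mathcal{V}_2$, giving $\partial_{\tau_j}a\in S^{m,l,k+1,j+1}$; the resulting decomposition then has order sums $(M-1/2, L-1/2, K+1, J+1)$, which lies in $S^{M,L,K,J}$ by chaining Parts 2 and 3 of Proposition \ref{inclusions} (trading orders in $d_\Sigma,d_\Gamma$ against orders in $\rho,x$).

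The main obstacle is the bookkeeping of symbol orders in the cross terms. The shifts in index produced by differentiating $b$ in a $Z_k$ variable (losing an order of $l$) or $a$ in a $\tau_j$ variable (losing orders in $k$ and $j$) must be offset without changing the total sums $M,L,K,J$; depending on the generator, this is achieved either by the natural pairing of $\partial_{\zeta_k}a$ with $\partial_{Z_k}b$, or by a careful choice between the ``inner'' form $a'\#(\zeta_k b')$ and the IBP form $(\zeta_k a')\# b' - i(a'\#\partial_{Z_k}b')$ of the second identity, and, when necessary, by inflating the class of an individual summand using the inclusion relations of Proposition \ref{inclusions}. Verifying these combinations indeed land in $S^{M,L,K,J}$ for each generator is where the real work lies.
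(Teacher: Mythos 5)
Your proposal is correct and follows essentially the same route as the paper: reduce to a finite generating set of vector fields, apply the Leibniz rule to \eqref{compsymbol}, and redistribute outer fiber factors onto the $b$-slot (or integrate by parts) so that the orders of each summand add to $(M,L,K,J)$ after using the inclusions of Proposition \ref{inclusions}. The only cosmetic differences are that the paper rewrites the generators in the $(\xi,\eta)$ coordinates and always pushes multiplication factors onto $b$, thereby avoiding the (harmless, in fact lower-order) cross terms $i\sum_k(\partial_{\zeta_k}a)\#(\partial_{Z_k}b)$ that your integration-by-parts treatment of the radial field produces; also note $\rho\partial_\rho$ equals $-\sum_k\zeta_k\partial_{\zeta_k}$ only modulo an $S^{0,0,0,0}$-combination of the other generators, which does not affect the argument.
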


\begin{proof}[Proof of Theorem \ref{comp-symb}]
Note that directly from Lemma \ref{init} we have that $a\# b$ satisfies the zeroth order symbol estimate $|(a\# b)|\le C\rho^{-M}x^{-L}d_{\Sigma}^{-K}d_{\Gamma}^{-J}$, so it suffices to show the same estimate holds after applying a product of vector fields $V_1\dots V_N$ which are tangent to $\partial\mathscr{X}_2$. But by applying Lemma \ref{vf} repeatedly, we see that
\[V_1\dots V_N(a\# b) = \sum_{i=1}^{N'}{a_i\# b_i}\]
where $a_i\in S^{m_i,l_i,k_i,j_i}$ and $b_i\in S^{m'_i,l'_i,k'_i,j'_i}_{full}$ with $m_i+m'_i = M$, $l_i+l'_i = L$, $k_i+k'_i = K$, and $j_i+j'_i = J$. Then, applying the zeroth order estimate in Lemma \ref{init} gives
\[|V_1\dots V_N(a\# b)|\le C\rho^{-M}x^{-L}d_{\Sigma}^{-K}d_{\Gamma}^{-J},\]
as desired. Hence, we have shown that $a\# b\in S^{M,L,K,J}$ for $a\in S^{m,l,k,j}$ and $b\in S^{m',l',k',j'}_{full}$. 

To show the principal symbol statement, we observe that applying Lemma \ref{asymp} with $N=1$ gives $a\#b - ab|_{X=0,Y=0} = \sum_{i=1}^{N'}{a_i\# b_i}$ with $a_i\in S^{m-1,l-1,k+1,j+1}$ and $b_i\in S^{m',l',k',j'}_{full}$. Applying the conclusion of the above paragraph gives $a\#b-ab|_{X=0,Y=0}\in S^{M-1,L-1,K+1,J+1}$, as desired.
\end{proof}

\begin{proof}[Proof of Lemma \ref{asymp}]
The terms in the asymptotic series can be derived by applying the method of stationary phase; however in this case we can make a more direct integration by parts argument similar to the left reduction argument: indeed, letting $Z=(X,Y)$ and suppressing the variables $x$, $y$, $\xi$, and $\eta$, we have
\[b(Z) =  \sum_{0\le|\alpha|<N}{Z^{\alpha}\frac{\partial_Z^{\alpha}b(0)}{\alpha!}} + \sum_{|\beta| = N}{Z^{\beta}R_{\beta}b(Z)}\]
where
\[R_{\beta}b(Z) = \frac{|\beta|}{\beta!}\int_0^1{(1-t)^{|\beta|-1}\partial_Z^{\beta}b(tX)\,dt}.\]
Note that $\partial_Z^{\alpha}b(0)\in S^{m',l'-|\alpha|,k',j'}$ and $R_{\beta}b\in S^{m',l'-|\beta|,k',j'}_{full}$. It follows that
\begin{align*}
a\# b &= \sum_{0\le|\alpha|\le n}{(2\pi)^{-n}\int_{\mathbb{R}^n\times\mathbb{R}^n}{e^{i[(\xi'-\xi)X+(\eta'-\eta)\cdot Y]}a(\xi',\eta')(X,Y)^{\alpha}\frac{(\partial_{X,Y}^{\alpha}b)|_{(X,Y)=(0,0)}}{\alpha!}\,d\xi'\,d\eta'\,dX\,dY}} \\
&+ \sum_{|\beta|=N}{(2\pi)^{-n}\int_{\mathbb{R}^n\times\mathbb{R}^n}{e^{i[(\xi'-\xi)X+(\eta'-\eta)\cdot Y]}a(\xi',\eta')(X,Y)^{\beta}R_{\beta}(X,Y)\,d\xi'\,d\eta'\,dX\,dY}}.
\end{align*}
To evaluate the terms in the first sum, note that $(\partial_{X,Y}^{\alpha}b)|_{(X,Y) = (0,0)}$ is independent of the variables of integration, so we can write the term as
\begin{align*}
&(2\pi)^{-n}\frac{(\partial_{X,Y}^{\alpha}b)|_{(X,Y)=(0,0)}}{\alpha!}\int_{\mathbb{R}^n\times\mathbb{R}^n}{e^{i[(\xi'-\xi)X+(\eta'-\eta)\cdot Y]}a(\xi',\eta')(X,Y)^{\alpha}\,d\xi'\,d\eta'\,dX\,dY}\\
&=(2\pi)^{-n}\frac{i^{|\alpha|}(\partial_{X,Y}^{\alpha}b)|_{(X,Y)=(0,0)}}{\alpha!}\int_{\mathbb{R}^n\times\mathbb{R}^n}{(-\partial_{\xi,\alpha})^{\alpha}\left(e^{i[(\xi'-\xi)X+(\eta'-\eta)\cdot Y]}\right)a(\xi',\eta')\,d\xi'\,d\eta'\,dX\,dY} \\
&=(2\pi)^{-n}\frac{i^{|\alpha|}(\partial_{X,Y}^{\alpha}b)|_{(X,Y)=(0,0)}}{\alpha!}\int_{\mathbb{R}^n\times\mathbb{R}^n}{e^{i[(\xi'-\xi)X+(\eta'-\eta)\cdot Y]}\partial_{\xi,\eta}^{\alpha}a(\xi',\eta')\,d\xi'\,d\eta'\,dX\,dY} \\
&=\frac{i^{|\alpha|}(\partial_{X,Y}^{\alpha}b)|_{(X,Y)=(0,0)}}{\alpha!}\int_{\mathbb{R}^n}{\partial_{\xi,\eta}^{\alpha}a(\xi',\eta')\delta(\xi'-\xi,\eta'-\eta)\,d\xi'\,d\eta'} \\
&=\frac{i^{|\alpha|}}{\alpha!}\partial_{\xi,\eta}^{\alpha}a(\xi,\eta)\partial_{X,Y}^{\alpha}b|_{(X,Y) = (0,0)}.
\end{align*}
For each term in the second sum, we apply the same integration by parts argument to rewrite the term as
\[(2\pi)^{-n}i^{|\beta|}\int_{\mathbb{R}^n\times\mathbb{R}^n}{e^{i[(\xi'-\xi)X+(\eta'-\eta)\cdot Y]}\partial_{\xi,\eta}^{\beta}a(\xi',\eta')R_{\beta}(X,Y)\,d\xi'\,d\eta'\,dX\,dY}.\]
By definition, this is $(i^{|\beta|}\partial_{\xi,\eta}^{\beta}a)\#(R_{\beta}b)$. Noting that $\partial_{\xi,\eta}^{\beta}a\in S^{m-|\beta|,l,k+|\beta|,j+|\beta|} = S^{m-N,l,k+N,j+N}$ and $R_{\beta}b\in S^{m',l'-N,k',j'}$, the desired statement follows by noting that we can multiply $\partial_{\xi,\eta}^{\beta}a$ and divide $R_{\beta}b$ by $x^N$ without changing the overall integral.
\end{proof}

\begin{proof}[Proof of Lemma \ref{init}]
We first prove the much weaker statement: for $m_0\in\mathbb{N}$, we have
\[m<-n-m_0, a\in S^{m,l,0,0}, b\in S^{m',l',k',j'}\implies |a\# b|\le C\rho^{m_0-m'}x^{-(l+l')}d_{\Sigma}^{-k'}d_{\Gamma}^{-j'}.\]
By multiplying $a$ by a power of $x$ and $b$ by powers of $\rho$, $x$, $d_{\Sigma}$, and $d_{\Gamma}$, we may assume that $l=m'=l'=k'=j'=0$. We can write the integral as
\[(a\# b) = \int_{\mathbb{R}^n}{e^{-i(\xi X+\eta\cdot Y)}\mathcal{F}_{(\xi',\eta')\to(X,Y)}^{-1}a(x,y,X,Y)b(x,y,X,Y,\xi,\eta)\,dX\,dY}.\]
We note that $\mathcal{F}^{-1}a(x,y,X,Y)$ is Schwartz in $(X,Y)$ away from the origin $(X,Y)=(0,0)$, and moreover it satisfies uniform Schwartz estimates in $(x,y)$, since $a$ is smooth uniformly in $(x,y)$. Near the origin, we have that $\mathcal{F}^{-1}a(x,y,X,Y)$ is in $C^{m_0}$, with uniformly bounded derivatives up to order $m_0$: indeed, note that
\[D_{X,Y}^{\alpha}\mathcal{F}^{-1}a(x,y,X,Y) = \mathcal{F}^{-1}\left((i(\xi,\eta))^{\alpha}a\right)(x,y,X,Y),\]
and for $|\alpha|\le m_0$ we have $|(i(\xi,\eta))^{\alpha}a|\le C\langle(\xi,\eta)\rangle^{|\alpha|+m}$ with $|\alpha|+m\le m_0+m<-n$, and hence the integral defining its inverse Fourier transform is uniformly integrable, i.e. $\mathcal{F}^{-1}\left((i(\xi,\eta))^{\alpha}a\right)$ is uniformly bounded. As such, we note that for $|\alpha|\le m_0$ we have
\begin{align*}
(\xi,\eta)^{\alpha}(a\# b) &= \int_{\mathbb{R}^n}{(-D_{X,Y})^{\alpha}\left(e^{-i(\xi X+\eta\cdot Y)}\right)\mathcal{F}_{(\xi',\eta')\to(X,Y)}^{-1}a(x,y,X,Y)b(x,y,X,Y,\xi,\eta)\,dX\,dY} \\
&= \int_{\mathbb{R}^n}{e^{-i(\xi X+\eta\cdot Y)}D_{X,Y}^{\alpha}\left(\mathcal{F}_{(\xi',\eta')\to(X,Y)}^{-1}a(x,y,X,Y)b(x,y,X,Y,\xi,\eta)\right)\,dX\,dY}
\end{align*}
Noting that $D_{X,Y}^{\alpha}\left(\mathcal{F}_{(\xi',\eta')\to(X,Y)}^{-1}a(x,y,X,Y)b(x,y,X,Y,\xi,\eta)\right)$ is uniformly bounded near $(X,Y)=(0,0)$ since $\mathcal{F}^{-1}a$ is in $C^{m_0}$ with $|\alpha|\le m_0$, and the term is also uniformly Schwartz away from $(X,Y)=(0,0)$, it follows that $(\xi,\eta)^{\alpha}(a\# b)$ is uniformly bounded for all $|\alpha|\le m_0$. This implies that $\langle(\xi,\eta)\rangle^{m_0}(a\# b)$ is uniformly bounded, i.e. $|(a\# b)|\le C\rho^{m_0}$, as desired.

To show the lemma in full generality, we use the asymptotic expansion \eqref{asympeq} to write, for any $N$,
\[a\# b = \sum_{|\alpha|<N}{\frac{i^{|\alpha|}}{|\alpha|!}\partial^{\alpha}_{\xi,\eta}a\partial^{\alpha}_{X,Y}b|_{X=0,Y=0}} + \sum_{i=1}^{N'}{a_i\# b_i},\]
where $a_i\in S^{m-N,l-N,k+N,j+N}$ and $b_i\in S^{m',l',k',j'}_{full}$. Each term in the first sum is in $S^{M-|\alpha|,L-|\alpha|,K+|\alpha|,J+|\alpha|}\subset S^{M-|\alpha|/2,L-|\alpha|/2,K,J}$ and hence bounded by $C\rho^{-M}x^{-L}d_{\Sigma}^{-K}d_{\Gamma}^{-J}$. For the terms $a_i\# b_i$ in the remainder term, we note that if $N$ is sufficiently large so that $k+N$ and $j+N$ are both positive, then we have
\[a_i\in S^{m-N,l-N,k+N,j+N}\subset S^{m+k/2-N/2,l+j/2-N/2,0,0}.\]
If we take $m_0$ sufficiently large so that $m_0+m\ge (k_-)/2$, and we take $N$ sufficiently large so that $m+k/2-N/2<-n-m_0$, then by the weaker estimate proven above, we have
\begin{align*}
\left|\sum_{i=1}^{N'}{a_i\# b_i}\right|&\le C\rho^{m_0-m'}x^{N/2-j/2-(l+l')}d_{\Sigma}^{-k'}d_{\Gamma}^{-j'} \\
&= C\rho^{m_0+m}x^{N/2-j/2}\left(\rho^{-(m+m')}x^{-(l+l')}d_{\Sigma}^{-k'}d_{\Gamma}^{-j'}\right).
\end{align*}
If we also require $N$ to be sufficiently large so that $N/2-j/2\ge (j_-)/2$, then we have
\[\rho^{m_0+m}x^{N/2-j/2}\le \rho^{(k_-)/2}x^{(j_-)/2}\le Cd_{\Sigma}^{-k}d_{\Gamma}^{-j}.\]
Thus, if $N$ is sufficiently large, we have that the remainder term is also bounded by $C\rho^{-M}x^{-L}d_{\Sigma}^{-K}d_{\Gamma}^{-J}$, thus showing the estimate in general.
\end{proof}

\begin{proof}[Proof of Lemma \ref{vf}]
Since the collection of vector fields tangent to $\partial\mathscr{X}_2$ are generated over $S^{0,0,0,0}$ by the vector fields
\[x\partial_x,\partial_y,\xi\partial_{\xi},\eta_i\partial_{\xi},\xi\partial_{\eta_j},\eta_i\partial_{\eta_j},\eta_j\partial_{\eta_1},\partial_{\xi},\partial_{\eta_j},x^{1/2}\rho^{-1/2}\partial_{\xi},x^{1/2}\rho^{-1/2}\partial_{\eta_i},\quad i=2,\dots n-1, j = 1,\dots,n-1,\]
and that furthermore $c(a\# b) = a\#(cb)$ when $c\in S^{0,0,0,0}$, it suffices to show the lemma for the above vector fields.

For the horizontal vector fields $V = x\partial_x,\partial_y$, from \eqref{compsymbol} we have
\[V(a\# b) = (Va)\# b + a\#(Vb).\]
Thus the lemma holds for these vector fields since symbols of a given order are preserved under $V$.

For the vertical vector fields, we rewrite \eqref{compsymbol} as
\begin{equation}
(a\# b)(x,y,\xi,\eta):=(2\pi)^{-n}\int_{\mathbb{R}^n\times\mathbb{R}^n}{e^{i[\xi'X+\eta'\cdot Y]}a(x,y,\xi'+\xi,\eta'+\eta)b(x,y,X,Y,\xi,\eta)\,d\xi'\,d\eta'\,dX\,dY}.
\end{equation}
to show
\[\partial_{\eta_j}(a\# b) = (\partial_{\eta_j}a)\# b + a\#(\partial_{\eta_j} b).\]
For all $j$ we have $\partial_{\eta_j}a\in S^{m-1,l,k+1,j+1}\subset S^{m,l,k,j}$ and $\partial_{\eta_j}b\in S^{m'-1,l,k'+1,j'+1}\subset S^{m',l',k',j'}_{full}$, so the lemma holds for $V = \partial_{\eta_j}$ (the same logic holds for $\partial_{\xi}$ as well). Furthermore, since
\[\eta_i(a\# b) = a\#(\eta_ib),\]
it follows that for $V = \eta_i\partial_{\eta_j}$ we can write
\[\eta_i\partial_{\eta_j}(a\# b) = (\partial_{\eta_j}a)\#(\eta_ib) + a\#(\eta_i\partial_{\eta_j}b) = (\partial_{\eta_j}a)\#(\eta_ib) + a\#(Vb).\]
We again have $\partial_{\eta_j}a\in S^{m-1,l,k+1,j+1}$, while
if $i=2,\dots,n-1$, then
\[\eta_i = \rho^{-1}\tau_i \in S^{1,0,-1,-1}.\]
Thus, $\eta_ib\in S^{m'+1,l',k'-1,j'-1}_{full}$, so the orders of $\partial_{\eta_j}a$ and $\eta_ib$ add to the desired quantities. Hence, the lemma holds for $V = \eta_i\partial_{\eta_j}$ with $i=2,\dots,n-1$. The same argument holds for $V = \xi\partial_{\xi}$,$\eta_i\partial_{\xi}$, and $\xi\partial_{\eta_j}$. For $V = \eta_j\partial_{\eta_1}$, we have
\[V(a\# b) = (\partial_{\eta_1}a)\#(\eta_jb) + a\#(Vb).\]
Noting that $\partial_{\eta_1}a\in S^{m-1,l,k,j}$ since $\partial_{\eta_1}$ is tangent to $\{\tau = 0\}$, and $\eta_jb\in S^{m'+1,l',k',j'}$, we see that the lemma holds here as well. Finally, for $V = x^{1/2}\rho^{-1/2}\partial_{\eta_i}$, we have
\[V(a\# b) = (\partial_{\eta_i}a)\#(x^{1/2}\rho^{-1/2}b) + a\# Vb.\]
In the first term, we have $\partial_{\eta_i}a\in S^{m-1,l,k+1,j+1}\subset S^{m-1/2,l+1/2,k,j}$ and $x^{1/2}\rho^{-1/2}b\in S^{m'+1/2,l'-1/2,k',j'}$, so the sum of the orders add to the desired quantities, while in the second term we have that $Vb$ has the same orders as $b$. Hence the lemma holds here as well; the logic applies similarly to $V=x^{1/2}\rho^{-1/2}\partial_{\xi}$.
\end{proof}

From Theorem \ref{comp-symb}, we obtain the following ellipticity result, following standard arguments in microlocal analysis:
\begin{prop}[Elliptic Parametrix]
\label{ell-param}
Suppose $a\in S^{m,l,k,j}$ satisfies an ellipticity estimate
\[|a|\ge c\rho^{-m}x^{-l}d_{\Sigma}^{-k}d_{\Gamma}^{-j}\quad\text{near }\partial\mathscr{X}_2.\]
If $A$ is the corresponding quantization of $a$, then $A$ admits an elliptic parametrix, i.e. there exists $B\in\Psi^{-m,-l,-k,-j}$ such that
\[BA = \text{Id} + R,\quad R\in\Psi^{-\infty,-\infty}.\]
\end{prop}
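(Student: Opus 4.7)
The plan is to mimic the standard elliptic parametrix construction from the classical scattering calculus. The construction has two stages: first, invert the symbol at the principal level to get an initial approximate inverse; then iterate via a formal Neumann series to shrink the error down to the residual class.

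For the initial step, I would apply the earlier ellipticity result (the proposition showing that if $a \in S^{m,l,k,j}$ satisfies the lower bound $|a| \ge c\rho^{-m}x^{-l}d_\Sigma^{-k}d_\Gamma^{-j}$ near $\partial\mathscr{X}_2$, then any smooth function agreeing with $a^{-1}$ near $\partial\mathscr{X}_2$ lies in $S^{-m,-l,-k,-j}$). Call this inverse $b_0 \in S^{-m,-l,-k,-j}$, and let $B_0 \in \Psi^{-m,-l,-k,-j}$ be a quantization. By Theorem \ref{comp-symb}, the composition $B_0 A$ has principal symbol $b_0 \cdot a \equiv 1$ modulo the lower-order space $S^{-1,-1,1,1}$, so we obtain
\[ B_0 A = \mathrm{Id} - E_1, \quad E_1 \in \Psi^{-1,-1,1,1}. \]

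Next, I would iterate. By repeated application of Theorem \ref{comp-symb}, the powers satisfy $E_1^k \in \Psi^{-k,-k,k,k}$, which is a genuinely improving sequence of orders. The key step is to construct an asymptotic sum $B' \in \Psi^{0,0,0,0}$ with $B' - \sum_{k=0}^{N-1} E_1^k \in \Psi^{-N,-N,N,N}$ for every $N$. Setting $B = B' B_0 \in \Psi^{-m,-l,-k,-j}$, the composition
\[ BA = B'(\mathrm{Id} - E_1) \sim \sum_{k=0}^\infty E_1^k - \sum_{k=0}^\infty E_1^{k+1} = \mathrm{Id} \]
telescopes modulo terms lying in every $\Psi^{-N,-N,N,N}$. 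The intersection property $\bigcap_i S^{m-i,l-i,k+i,j+i} = S^{-\infty,-\infty}$ established earlier then identifies such errors as residual, giving $R \in \Psi^{-\infty,-\infty}$ as required.

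The main obstacle I expect is establishing the asymptotic summation lemma in this calculus: given any sequence $c_k \in S^{-k,-k,k,k}$, one must produce a single $c \in S^{0,0,0,0}$ with $c - \sum_{k<N} c_k \in S^{-N,-N,N,N}$ for each $N$. The usual Borel-type cutoff argument at fiber infinity (multiplying $c_k$ by $\chi(\rho/\epsilon_k)$ for a rapidly decreasing sequence $\epsilon_k$) should still work, but some care is required because the decay we gain at each step is distributed across the four boundary hypersurfaces of $\mathscr{X}_2$ rather than just at fiber infinity of $\mathscr{X}$. Once the cutoff is chosen so that the resulting series converges along with all the vector-field tests on $\mathscr{X}_2$ (using the generators from Corollary \ref{x2genvf}), the construction is routine, and the rest of the argument above is formal.
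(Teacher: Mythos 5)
Your proposal is correct and is exactly the ``standard argument'' the paper invokes without writing out: symbol inversion via the ellipticity proposition, the composition theorem (Theorem \ref{comp-symb}) to get $B_0A=\mathrm{Id}-E_1$ with $E_1\in\Psi^{-1,-1,1,1}$, and a Neumann series summed asymptotically, with the residual class identified via $\bigcap_i S^{m-i,l-i,k+i,j+i}=S^{-\infty,-\infty}$. You also correctly flag the one ingredient the paper leaves implicit, namely asymptotic completeness of the calculus in the grading $S^{-N,-N,N,N}$; since $S^{-N,-N,N,N}\subset S^{-N/2,-N/2,0,0}$, the usual Borel cutoff at the corner $\rho=x=0$ does carry through as you describe.
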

Similarly, using Theorem \ref{comp-symb} and Proposition \ref{adj-prop}, we obtain the $L^2$-boundedness property, again following standard arguments in microlocal analysis:
\begin{prop}[$L^2$-boundedness]
\label{l2-bound}
An operator in $\Psi^{0,0,0,0}$ is bounded as a map from $L^2\left([0,\infty)\times\mathbb{R}^{n-1},\frac{dx\,dy}{x^{(n+1)}}\right)$ to itself.
\end{prop}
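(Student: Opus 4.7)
The plan is to adapt the standard H\"ormander square-root argument to the \newname-calculus, using the symbol algebra (Theorem \ref{comp-symb}), the adjoint computation (Proposition \ref{adj-prop}), and the ellipticity result (Proposition \ref{ell-param}).

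First I would dispose of the residual class. Any $R\in\Psi^{-\infty,-\infty}$ has a Schwartz kernel (with respect to the scattering density $\frac{dx\,dy}{x^{n+1}}$) which is rapidly decaying in all scattering variables, in particular it is the kernel of an ordinary scattering residual operator, and hence is bounded on $L^2\bigl([0,\infty)\times\mathbb{R}^{n-1},\frac{dx\,dy}{x^{n+1}}\bigr)$ by Schur's test (this is the standard fact for the scattering calculus, cf.\ \cite{sslaes}). Thus it suffices to prove the estimate modulo residual errors.

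Next, given $A\in\Psi^{0,0,0,0}$ with full symbol $a$, let $M>\sup|a|^2+1$ and consider the symbol $c = M - \overline{a}a$. By Proposition \ref{adj-prop} the adjoint $A^*$ lies in $\Psi^{0,0,0,0}$ with principal symbol $\overline{a}$, and by Theorem \ref{comp-symb} we have $A^*A\in\Psi^{0,0,0,0}$ with principal symbol equivalent to $\overline{a}a$. Consequently $M\mathrm{Id} - A^*A$ has a full symbol whose principal part agrees with $c$, and $c\ge 1$ on all of ${}^{sc}T^*M$; in particular $c$ satisfies the ellipticity estimate needed to apply the square-root construction. I would then produce $b\in S^{0,0,0,0}$ with $b^2 - c\in S^{-\infty,-\infty}$ by iterating: set $b_0 = c^{1/2}$, which lies in $S^{0,0,0,0}$ by the ellipticity proposition preceding Lemma \ref{new-symbol-ell}, and inductively correct by symbols of successively lower order in $S^{-k,-k,k,k}$ (using Theorem \ref{comp-symb} to read off the error at each step), then take an asymptotic sum. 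The asymptotic summation is possible because $\cap_i S^{m-i,l-i,k+i,j+i} = S^{-\infty,-\infty}$ is the same residual class as the scattering one, so a standard Borel summation argument applies.

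Let $B$ be a quantization of $b$. Then $B^*B\in\Psi^{0,0,0,0}$ has principal symbol $|b|^2\equiv c \pmod{\text{lower order}}$, and by construction $B^*B = M\mathrm{Id} - A^*A + R$ for some $R\in\Psi^{-\infty,-\infty}$. Pairing with $u\in L^2$ then yields
\[
\|Au\|^2 = M\|u\|^2 - \|Bu\|^2 + \langle Ru,u\rangle \le M\|u\|^2 + \|R\|_{L^2\to L^2}\|u\|^2,
\]
which gives the desired boundedness. The main obstacle is verifying that the iterative square-root construction stays inside the \newname-symbol class at every step; this reduces to checking that the inverse of an elliptic element of $S^{0,0,0,0}$ lies in $S^{0,0,0,0}$ (the ellipticity proposition above Lemma \ref{new-symbol-ell}) and that successive correction terms, produced by Theorem \ref{comp-symb} and reinjected into $c^{-1/2}(\cdot)$, drop order exactly as in the scattering calculus, which they do since the composition formula is order-additive in all four indices.
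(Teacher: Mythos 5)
Your proposal is correct and is exactly the argument the paper has in mind: the paper proves Proposition \ref{l2-bound} by simply invoking ``standard arguments in microlocal analysis'' on top of Theorem \ref{comp-symb} and Proposition \ref{adj-prop}, i.e.\ the H\"ormander square-root trick you spell out. Your identification of the needed supporting facts (positivity of $c$, the $r=1/2$ case of the ellipticity-of-powers proposition, order-additivity of $\#$ with lower-order corrections in $S^{-k,-k,k,k}$, and the coincidence of $\cap_i S^{-i,-i,i,i}$ with the scattering residual class so that asymptotic summation and the residual $L^2$-bound go through) matches what the paper relies on.
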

Recalling that the scattering Sobolev spaces $H^{s,r}_{sc}$ are defined by the property that scattering operators in $\Psi^{s,r}$ map it into $L^2$, we obtain the following corollary of the $L^2$-boundedness of \newname-operators:
\begin{cor}
An operator in $\Psi^{m,l,0,0}$ is bounded as a map from $H^{s,r}_{sc}$ to $H^{s-m,r-l}_{sc}$ for any $s,r\in\mathbb{R}$.
\end{cor}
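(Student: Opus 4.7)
The plan is to reduce the claim to the $L^2$-boundedness of Proposition \ref{l2-bound} by sandwiching $A$ between elliptic scattering pseudodifferential operators of appropriate orders. Recall that the scattering Sobolev spaces admit the standard elliptic characterization: for any elliptic $\Lambda_{s,r}\in\Psi^{s,r}_{sc}$, we have $u\in H^{s,r}_{sc}$ if and only if $\Lambda_{s,r}u\in L^2$. After fixing an elliptic $\Lambda_{s-m,r-l}\in\Psi^{s-m,r-l}_{sc}$, it therefore suffices to show that $\Lambda_{s-m,r-l}A$ maps $H^{s,r}_{sc}$ continuously into $L^2$.

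I would next fix a scattering parametrix $\Lambda_{s,r}^{-}\in\Psi^{-s,-r}_{sc}$ for $\Lambda_{s,r}$, so that $\Lambda_{s,r}^{-}\Lambda_{s,r}=\text{Id}+R$ with $R\in\Psi^{-\infty,-\infty}$, and decompose
\[\Lambda_{s-m,r-l}Au \;=\; \bigl(\Lambda_{s-m,r-l}A\Lambda_{s,r}^{-}\bigr)(\Lambda_{s,r}u) \;-\; \bigl(\Lambda_{s-m,r-l}AR\bigr)u.\]
For the main term, Proposition \ref{hormanderclass} lets me view $\Lambda_{s-m,r-l}$ and $\Lambda_{s,r}^{-}$ as \newname-operators in $\Psi^{s-m,r-l,0,0}$ and $\Psi^{-s,-r,0,0}$, respectively. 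Combined with the hypothesis $A\in\Psi^{m,l,0,0}$, Theorem \ref{comp-symb} yields $\Lambda_{s-m,r-l}A\Lambda_{s,r}^{-}\in\Psi^{0,0,0,0}$, which is bounded on $L^2$ by Proposition \ref{l2-bound}. Since $u\in H^{s,r}_{sc}$ gives $\Lambda_{s,r}u\in L^2$, the leading term is controlled.

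For the residual term, the earlier identification of $\Psi^{-\infty,-\infty}$ with the intersection $\bigcap_{i\ge 0}\Psi^{m_0-i,l_0-i,k_0+i,j_0+i}$ taken over any fixed $(m_0,l_0,k_0,j_0)$ lets me regard $R$ as a \newname-operator of arbitrarily low order in every index. Iterating Theorem \ref{comp-symb} then shows that $\Lambda_{s-m,r-l}AR$ lies in every $\Psi^{-N,-N,0,0}$ and is therefore itself residual, so as a residual scattering operator it is globally smoothing and maps every $H^{s,r}_{sc}$ continuously into $L^2$. The only nonroutine point is this last ingredient together with the elliptic parametrix-based characterization of the scattering Sobolev spaces; both are standard facts about the scattering calculus implicit in the paper's setup, and once they are accepted the rest of the argument is orderly bookkeeping combining Propositions \ref{hormanderclass} and \ref{l2-bound} with Theorem \ref{comp-symb}.
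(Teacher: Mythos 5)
Your argument is correct and is exactly the proof the paper intends: the paper derives the corollary in one line from the definition of $H^{s,r}_{sc}$ via elliptic scattering operators, and your sandwiching of $A$ between $\Lambda_{s-m,r-l}$ and a parametrix $\Lambda_{s,r}^{-}$, combined with Proposition \ref{hormanderclass}, Theorem \ref{comp-symb}, and Proposition \ref{l2-bound}, is the standard way to spell that out. The handling of the residual term is also fine, so there is nothing to add.
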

\begin{remark}
In general, we can conclude mapping properties of operators in $\Psi^{m,l,k,j}$ (for $k$, $j$ not necessarily zero) by including it into a space of the form $\Psi^{m',l',0,0}$: for example, we have
\[\Psi^{m,l,k,j}\subset \Psi^{m+k_+/2,l+j_+/2,0,0},\]
so operators in $\Psi^{m,l,k,j}$ map $H^{s,r}_{sc}$ into $H^{s-(m+k_+/2),r-(l+j_+/2)}_{sc}$ for any $s,r\in\mathbb{R}$.
\end{remark}

\section{Ellipticity of Operators from Elastic Inverse Problem}
\label{op-analysis}

We now investigate the composition of the pseudolinearization operator $I$ with a ``localized'' formal adjoint of the form \eqref{scat-formal-adj}. Recall in this case that we are working near some point $p$ on the boundary, with $\tilde{x}$ a function strictly convex with respect to all dynamics considered satisfying $\tilde{x}(p)=0$ and $d\tilde{x}(p)\ne 0$, and that we have introduced an artificial boundary $\{x=0\}$, where $x=\tilde{x}+c$, with $c>0$ a small number of our choice. We show that the composition of $I$ with such localized formal adjoints are, after conjugation with a factor of $e^{\digamma/x}$, scattering pseudodifferential operators, which normally are not elliptic as scattering operators, but are elliptic in a new operator class to be defined. 

\subsection{The localized operators}
\label{local-ops-sec}

We recall that our operators of interest are the pseudolinearization operators
\[I[u](x_0,\xi_0) = \int_{\mathbb{R}}{A(X(t,x_0,\xi_0),\Xi(t,x_0,\xi_0))u(X(t,x_0,\xi_0))\,dt},\]
with $A(x,\xi) = -E^{\nu}(x,\xi)\frac{\partial\tilde\Xi}{\partial\xi}(x,\xi)$, and a formal adjoint of the form 
\[Lv(x,y) = x^{-2}\int_{\mathbb{S}^2}{\chi(x^{\epsilon}\hat\lambda)\exp\left(-\frac{\digamma^2\hat\lambda^2}{2\alpha}\right)v(\gamma_{\lambda,\omega})\,d\mathbb{S}^2(\lambda,\omega)}\]
(recalling that $x$ is as above and $y=(y_1,y_2)$ are tangential coordinates such that $(x,y)$ form valid coordinates near $p$). Here $0<\epsilon<1/2$, $\digamma>0$, $\chi\in C_c^{\infty}(\mathbb{R})$ is identically $1$ in a neighborhood of $0$, $\hat\lambda = \lambda/x$, $\gamma_{\lambda,\omega}$ is the trajectory passing through $(x,y)$ with tangent vector $\lambda\partial_x+\omega\cdot\partial_y$, and $\alpha=\alpha(x,y,\lambda,\omega)$ is an ``acceleration'' factor in the $x$ direction satisfying
\[\gamma_{\lambda,\omega}-x = \lambda t + \alpha t^2 + O(t^3) \quad(\text{equivalently }\frac{\gamma_{\lambda,\omega}-x}{x^2} = \hat\lambda\hat{t}+\alpha\hat{t}^2+O(x\hat{t}^3)\text{ if }\hat\lambda = \frac{\lambda}{x}, \hat{t} = \frac{t}{x}).\]
Note that this differs slightly in structure from previous papers, as the cutoff chosen is not a compact cutoff in $\hat\lambda$, but rather a Gaussian. Such a cutoff works well with finite point symbol computations, but in previous works one uses a compactly supported approximation of a Gaussian using the Gaussian computation as justification; here we will directly use the Gaussian. However, we will still select for $\hat\lambda$ in a controlled manner, namely that $\hat\lambda$ is at most $O(x^{-\epsilon})$ as $x\to 0$, i.e. $\lambda$ is $O(x^{1-\epsilon})$; as we will see below, this is good enough for dynamics purposes.

We remark that for the pseudolinearization operator $I$, we are free to adjust the factor $A$ so that it is supported in $\{x<2c,|t|<T\}$, where $T$ is larger than the maximum travel time of a trajectory in $\overline\Omega\cap\{x\ge 0\}$, since the function of interest, namely the difference of parameters $r_{\nu}$, is supported in $\overline\Omega$. Since $x = \tilde{x}+c$, by making $c$ arbitrarily small we can arrange for $T$ to be sufficiently small as well; this will turn out to be useful for dynamical assumptions below.

For $N=L\circ I$, we then have
\begin{align*}
N[u](x,y) &= x^{-2}\int_{\mathbb{R}\times\mathbb{S}^2}{\chi(x^{\epsilon}\hat\lambda)\exp\left(-\frac{\digamma^2\hat\lambda^2}{2\alpha}\right)A((X,\Xi)(x,y,t,\lambda,\omega))u(X(x,y,t,\lambda,\omega))\,dt\,d\mathbb{S}^2(\lambda,\omega)}.
\end{align*}
Here, we let $(X,\Xi)(x,y,t,\lambda,\omega)$ denote the Hamilton flow at time $t$ (viewed in the cotangent bundle) starting at $(x,y)$ with initial  (spatial) velocity $\lambda\partial_x+\omega\cdot\partial_y$ (equivalently with initial momentum corresponding to an initial velocity of $\lambda\partial_x+\omega\cdot\partial_y$). We write $A(x,y,t,\lambda,\omega) = A((X,\Xi)(x,y,t,\lambda,\omega))$ and $(x',y') = X(x,y,t,\lambda,\omega)$; this now frees up the letter $X$ to be used as a scattering coordinate. We can thus write its Schwartz kernel as
\begin{align*}
&K(x,y,X,Y) \\
&= x^{-2}\int_{\mathbb{R}\times\mathbb{S}^2}{\chi(x^{\epsilon}\hat\lambda)\exp\left(-\frac{\digamma^2\hat\lambda^2}{2\alpha}\right)A(x,y,t,\lambda,\omega)\delta\left(X-\frac{x'-x}{x^2},Y-\frac{y'-y}{x}\right)\,dt\,d\mathbb{S}^2(\lambda,\omega)} \\
&= \int_{\mathbb{R}_{\hat{t}}\times\mathbb{R}_{\hat\lambda}\times\mathbb{S}^1_{\omega}}{\chi(x^{\epsilon}\hat\lambda)\exp\left(-\frac{\digamma^2\hat\lambda^2}{2\alpha}\right)A(x,y,x\hat{t},x\hat\lambda,\omega)\delta\left(X-\frac{x'-x}{x^2},Y-\frac{y'-y}{x}\right)\,d\hat{t}\,d\hat\lambda\,d\mathbb{S}^1(\omega)}.
\end{align*}
Note that we can write
\[d\mathbb{S}^2(\lambda,\omega) = d\lambda\,d\mathbb{S}^1(\omega)\]
since we are in dimension $3$. It turns out that the correct operator to consider is not $N$ itself, but rather its conjugate $N_{\digamma}:= e^{-\digamma/x}Ne^{\digamma/x}$; its Schwartz kernel is just the above Schwartz kernel times $e^{-\digamma/x}e^{\digamma/x'}$. Hence
\begin{align*}
K_{\digamma}(x,y,X,Y) &=\int_{\mathbb{R}_{\hat{t}}\times\mathbb{R}_{\hat\lambda}\times\mathbb{S}^1_{\omega}}\chi(x^{\epsilon}\hat\lambda)\exp\left(-\digamma\left(\frac{\hat\lambda^2}{2\alpha}-\left(\frac{1}{x'}-\frac{1}{x}\right)\right)\right)A(x,y,x\hat{t},x\hat\lambda,\omega)\\
&\cdot\delta\left(X-\frac{x'-x}{x^2},Y-\frac{y'-y}{x}\right)\,d\hat{t}\,d\hat\lambda\,d\mathbb{S}^1(\omega).
\end{align*}
We can find the scattering symbol of $N$ by taking the $(X,Y)$-Fourier transform of its $(X,Y)$-Schwartz kernel. Thus the symbol has the form
\begin{equation}\label{int0}
a = \int_{\mathbb{R}_{\hat{t}}\times\mathbb{R}_{\hat\lambda}\times\mathbb{S}^1_{\omega}}{e^{i\left(\xi\frac{x'-x}{x^2}+\eta\cdot\frac{y'-y}{x}\right)}\chi(x^{\epsilon}\hat\lambda)\exp\left(-\digamma\left(\frac{\hat\lambda^2}{2\alpha}-\left(\frac{1}{x'}-\frac{1}{x}\right)\right)\right)A(x,y,\hat{\lambda},\omega,\hat{t})\,d\hat{t}\,d\hat\lambda\,d\omega}.
\end{equation}
We now comment on the Hamiltonian dynamics near $p$, which determine what assumptions we may put on the function $A$ above.
Since the function $\tilde{x}$ (and hence $x=\tilde{x}+c$) is chosen to be convex with respect to the Hamiltonian dynamics involved, we can make an estimate on the acceleration component $\alpha\ge C_0$ for some constant $C_0>0$. In fact, since we have $x = \tilde{x}+c$ and are also free to adjust $c$ to consider all sufficiently small $c$, with the region of interest (i.e. $\overline{\Omega_c}=\{\tilde{x}>-c\}\cap\overline{\Omega}$) shrinking to a point as $c\to 0^+$, we may choose $c$ sufficiently small so that $\alpha$ also satisfies an upper bound in a neighborhood of $\overline{\Omega_c}$, say $C_0\le \alpha\le 1.01C_0$; note that by Taylor's theorem this would also imply that 
\[C_0x^2\hat{t}^2\le \frac{x'-x}{x^2}-\hat\lambda\hat{t}\le 1.01C_0x^2\hat{t}^2\]
in this region, where $x'$ is the $x$ component of $\gamma_{\lambda,\omega}(t)$. Note that the lower bound implies the existence of $C'$ such that
\[|\lambda|\le\frac{C}{x^{1/2}}\implies x'\ge 0\text{ for all }t\]
(see \cite{convex} for a justification), and hence the cutoff $\chi(x^{\epsilon}\hat\lambda)$ guarantees that all trajectories with parameters supported in the support of this cutoff stay in the region $\{x>0\}$ for all $t$. Moreover, the function $A$ is smooth, grows at most polynomially as $(\hat{t},\hat{\lambda})\to\infty$, and by the remarks regarding the pseudolinearization operator $I$, we can arrange for $A$ to be supported in a set of the form $\{x<2c,x|\hat{t}|<T\}$ where $c,T>0$ can be arranged to be arbitrarily small.

\subsection{Scattering symbol}
We first show such integrals are indeed scattering symbols. If the Gaussian $\exp\left(-\frac{\digamma^2\hat\lambda^2}{2\alpha}\right)$ were replaced by a $C_c^{\infty}$ function approximating it, then this would be a result of the works in \cite{convex} etc.; however we can apply similar methods even after the Gaussian change. We first show, for each $(x,y)$, that the integral admits an asymptotic expansion in $|(\xi,\eta)|$ as $|(\xi,\eta)|\to\infty$. We then show that applying $b$-vector fields like $x\partial_x$ and $\partial_y$ essentially returns the integral to the same class, for which we also have a similar asymptotic expansion, thus showing the symbolicity of the integral.

The idea is that the phase should have stationary points along precisely the submanifold $\{\hat{t} = 0, \xi\hat{\lambda}+\eta\cdot\omega = 0\}$, with an asymptotic expansion following stationary phase on that submanifold, and the integral away from the stationary set should be suitably controlled.

To rigorously justify this, we first show that the integral, while over a non-compact region in $\hat\lambda$ and $\hat{t}$, is well-controlled due to the (real) exponential factor, which will end up decaying at least exponentially. Specifically, we have the following:

\begin{lemma}
On the support of the integrand (i.e. for $|\hat\lambda|\le Cx^{-\epsilon}$ and $|\hat{t}|\le T/x$), there exist constants $c_1,c_2>0$ such that
\[\frac{\hat\lambda^2}{2\alpha}+\left(\frac{1}{x}-\frac{1}{x'}\right)\ge\begin{cases}c_1(\hat{\lambda}^2+\hat{t}^2) & \text{for }|\hat{t}|\le\frac{c_2}{x^{1/2}}, \\ c_1(\hat{\lambda}^2+|\hat{t}|) &\text{for } |\hat{t}|\ge\frac{c_2}{x^{1/2}} \end{cases}.\]
\end{lemma}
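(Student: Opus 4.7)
The plan is to first rewrite $\frac{1}{x}-\frac{1}{x'}$ in a form that makes the relationship to $\hat\lambda$ and $\hat t$ transparent, and then to analyze the two regimes separately. Using the convexity assumption $\alpha\geq C_0>0$ (and its quantitative extension to a two-sided bound $C_0\le\ddot{x'}(t)/2\le C_1$ along the entire trajectory, which holds for $c$ small), Taylor's theorem with integral remainder lets me write
\[
x'-x=\lambda t+\tilde\alpha\,t^2,\qquad \tilde\alpha(x,y,\lambda,\omega,t)\in[C_0,C_1],
\]
where $\tilde\alpha\to\alpha$ as $t\to 0$. It is convenient to introduce rescaled variables $\mu=\lambda/x^{1/2}$ and $u=t/x^{1/2}$, so $\hat\lambda^2=\mu^2/x$ and $|\hat t|=|u|/x^{1/2}$, and to verify the identity
\[
\frac{1}{x}-\frac{1}{x'}=\frac{1}{x}\cdot\frac{\mu u+\tilde\alpha u^2}{1+\mu u+\tilde\alpha u^2}.
\]
The cutoff $\chi(x^\epsilon\hat\lambda)$ with $\epsilon<1/2$ forces $|\mu|\le Cx^{1/2-\epsilon}\to 0$ as $x\to 0$, which will be used repeatedly to suppress cross terms.

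In the regime $|\hat t|\le c_2/x^{1/2}$, equivalently $|u|\le c_2$, the denominator above is $1+O(c_2^2)+O(c_2 x^{1/2-\epsilon})$, hence bounded between $1/2$ and $2$ for $c_2$ and $c$ small. Expanding, this gives
\[
\frac{\hat\lambda^2}{2\alpha}+\frac{1}{x}-\frac{1}{x'}=\frac{\hat\lambda^2}{2\alpha}+\hat\lambda\hat t+\tilde\alpha\hat t^2+O(c_2)\bigl(\hat\lambda\hat t+\hat t^2\bigr).
\]
The main term is the quadratic form in $(\hat\lambda,\hat t)$ with matrix $\bigl(\begin{smallmatrix}1/(2\alpha)&1/2\\1/2&\tilde\alpha\end{smallmatrix}\bigr)$; its determinant $(2\tilde\alpha-\alpha)/(4\alpha)$ is bounded below by a positive constant since $\alpha,\tilde\alpha\in[C_0,1.01C_0]$ forces $2\tilde\alpha-\alpha\ge 0.99C_0$. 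Hence its smallest eigenvalue is bounded below by some $c''>0$, giving the bound $\geq c''(\hat\lambda^2+\hat t^2)$. Choosing $c_2$ small enough that the $O(c_2)$ corrections are absorbed by half of this main estimate yields the first case with $c_1=c''/2$.

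In the regime $|\hat t|\geq c_2/x^{1/2}$, i.e.\ $|u|\ge c_2$, I use the identity above differently. Since $|\mu|\to 0$, for $x$ small we have $|\mu u|\le \tilde\alpha u^2/2$, so
\[
\mu u+\tilde\alpha u^2\geq \tilde\alpha u^2/2\ge C_0u^2/2,\qquad 1+\mu u+\tilde\alpha u^2\le 1+3C_1u^2/2.
\]
Taking $c_2\ge \sqrt{2/C_1}$ (which we are free to do) then gives
\[
\frac{1}{x}-\frac{1}{x'}\ge \frac{C_0}{4C_1 x}.
\]
The $\hat\lambda^2$ part of the target is absorbed by $\hat\lambda^2/(2\alpha)\ge c_1\hat\lambda^2$ for $c_1\le 1/(2.02C_0)$. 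For the $|\hat t|$ part, I use $|\hat t|=|u|/x^{1/2}$ and the support condition $|t|\le T$ to write $c_1|\hat t|=c_1|u|/x^{1/2}\le c_1T/x$, and require $c_1T\le C_0/(4C_1)$. Since $T$ can be made arbitrarily small (by shrinking $c$), or equivalently $c_1$ small, both conditions on $c_1$ are simultaneously satisfiable, completing the second case.

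The main obstacle to watch is ensuring the higher-order remainder $O(x\hat t^3)$ in the original Taylor expansion really does get absorbed uniformly in the small-$|\hat t|$ regime; my plan is to sidestep this by working with the exact form $x'-x=\lambda t+\tilde\alpha t^2$ (with $\tilde\alpha$ the integral-remainder average of $\ddot{x'}/2$) rather than a truncated Taylor expansion, so that there is no cubic remainder to deal with, only the bounds $\tilde\alpha\in[C_0,C_1]$. The second, minor, obstacle is verifying that the constants $C_0,C_1$ can be taken close enough (e.g.\ $C_1\le 1.01C_0$ as stated in the text) for the quadratic form to be positive definite; this is precisely what the flexibility of choosing $c$ small buys us.
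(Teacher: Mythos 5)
Your argument follows the same route as the paper's: split at $|\hat t|\sim x^{-1/2}$, prove positive-definiteness of the quadratic form $\frac{\hat\lambda^2}{2\alpha}+\hat\lambda\hat t+\tilde\alpha\hat t^2$ in the near regime (the paper absorbs the cross term by AM--GM, you compute the $2\times 2$ determinant $\frac{2\tilde\alpha-\alpha}{4\alpha}\ge\frac{0.99}{4.04}$ --- these are the same computation), and in the far regime show $x'-x\gtrsim x$ so that $\frac1x-\frac1{x'}\gtrsim\frac1x\ge|\hat t|/T$. Your exact ratio identity $\frac1x-\frac1{x'}=\frac1x\cdot\frac{\mu u+\tilde\alpha u^2}{1+\mu u+\tilde\alpha u^2}$ with the integral-remainder $\tilde\alpha$ is a clean repackaging of the paper's two-sided bounds $0.99x\le x'\le 1.06x$, and it does dispose of the cubic remainder as you intended.

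There is one internal inconsistency to repair: in the second regime you ``take $c_2\ge\sqrt{2/C_1}$,'' but the same $c_2$ splits both regimes, and your first regime forces $c_2$ small --- you need the $O(c_2)$ corrections (whose size is governed by $\tilde\alpha c_2^2$) to be small compared with the least eigenvalue $c''$, which is incompatible with $C_1c_2^2\ge 2$. The requirement is also unnecessary: for $|u|\ge c_2$ the function $u^2\mapsto\frac{C_0u^2/2}{1+3C_1u^2/2}$ is increasing, so $\frac1x-\frac1{x'}\ge\frac{1}{x}\cdot\frac{C_0c_2^2/2}{1+3C_1c_2^2/2}=:\frac{\kappa}{x}\ge\frac{\kappa}{T}|\hat t|$ for whatever small $c_2$ the first regime demands; just replace $\frac{C_0}{4C_1}$ by $\kappa$ in your conditions on $c_1$. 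With that one-line fix the proof is complete.
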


\begin{proof}
We take $c_2 = \frac{1}{5\sqrt{C_0}}$. In the region of support we have $|\hat\lambda|\le Cx^{-\epsilon} = Cx^{1/2-\epsilon}x^{-1/2}$, so by choosing $c$ sufficiently small, we can arrange $|\hat\lambda|\le (100c_2)^{-1}x^{-1/2}$ on the region of support. Then in the region $|\hat{t}|\le c_2/x^{1/2}$ we have
\[x'-x\ge x^2(C_0\hat{t}^2-|\hat\lambda\hat{t}|) \ge x^2(-0.01c_2^{-1}x^{-1/2}\cdot c_2x^{-1/2}) = -0.01x\]
i.e. $x'\ge 0.99x$, while similarly
\[x'-x\le x^2(|\hat\lambda\hat{t}|+1.01C_0\hat{t}^2)\le x^2\left(0.01x^{-1} + 1.01C_0\cdot\frac{1}{25C_0}x^{-1}\right)\le 0.06x\]
so $x'\le 1.06x$. Note that when $x'\le x$, we have $\frac{1}{x}-\frac{1}{x'} = \frac{x'-x}{xx'}\ge\frac{x'-x}{0.99x^2}$ (since $x'-x\le 0$ there), while in $x'\ge x$ we have $\frac{1}{x}-\frac{1}{x'}\ge \frac{x'-x}{1.06x^2}$. Thus, we have
\[\frac{\hat\lambda^2}{2\alpha}+\left(\frac{1}{x}-\frac{1}{x'}\right)\ge\left\{\begin{matrix}0.99\left(\frac{\hat\lambda^2}{1.98\alpha} + \frac{x'-x}{x^2}\right) & x'\le x \\ 1.06\left(\frac{\hat\lambda^2}{2.12\alpha}+\frac{x'-x}{x^2}\right) & x'\ge x \end{matrix}\right.\]
It thus suffices to obtain a lower bound on $\frac{\hat\lambda^2}{2.12\alpha}+\frac{x'-x}{x^2}$ in this region. Note that we can bound
\begin{align*} \frac{\hat\lambda^2}{2.12\alpha}+\frac{x'-x}{x^2} \ge \frac{\hat\lambda^2}{2.12\alpha} + C_0\hat{t}^2 - |\hat\lambda\hat{t}| &\ge\frac{\hat\lambda^2}{2.12\alpha} + C_0\hat{t}^2 - \frac{1}{2}\left(\frac{1}{2(C_0-\epsilon)}\hat\lambda^2 + 2(C_0-\epsilon)\hat{t}^2 \right) \\
&\ge \frac{1}{2}\left(\frac{1}{1.06\alpha}-\frac{1}{2(C_0-\epsilon)}\right)\hat\lambda^2+\epsilon\hat{t}^2.
\end{align*}
For sufficiently small $\epsilon$, we have $2(C_0-\epsilon)>1.06\alpha$ (since by assumption we will always have $\alpha\le 1.01C_0$), which then gives the desired lower bound.

On the other region $|\hat{t}|\ge c_2x^{-1/2}$, we have
\[x'-x\ge x^2(C_0\hat{t}^2-|\hat\lambda\hat{t}|) = x^2|\hat{t}|(C_0|\hat{t}|-|\hat\lambda|).\]
Note that $|\hat\lambda|\le (100c_2)^{-1}x^{-1/2} = \frac{1}{20}\sqrt{C_0}x^{-1/2}=\frac{C_0c_2}{4}x^{-1/2}\le \frac{C_0}{4}|\hat{t}|$, so we have
\[x^2|\hat{t}|(C_0|\hat{t}|-|\hat\lambda|)\ge x^2|\hat{t}|\frac{3}{4}C_0|\hat{t}|\ge \frac{3}{4}x^2C_0(c_2^2x^{-1}) = 0.03x, \]
i.e. $x'\ge 1.03x$ in the region. Thus $\frac{1}{x}-\frac{1}{x'}\ge \left(1-\frac{1}{1.03}\right)\frac{1}{x}$. Since on the region of support we have $x|\hat{t}|\le T$, we have $\frac{1}{x}\ge|\hat{t}|/T$. Thus, we have
\[\frac{\hat\lambda^2}{2\alpha} + \frac{1}{x}-\frac{1}{x'}\ge \frac{1}{2\alpha}\hat\lambda^2 + \left(1-\frac{1}{1.03}\right)\frac{1}{x} \ge \frac{1}{2\alpha}\hat\lambda^2 + \frac{1-\frac{1}{1.03}}{T}|\hat{t}|,\]
as desired. 
\end{proof}

Thus, the exponential term $\exp\left(-\digamma\left(\frac{\hat\lambda^2}{2\alpha}-\left(\frac{1}{x'}-\frac{1}{x}\right)\right)\right)$ will decay at least exponentially as $(\hat{\lambda},\hat{t})\to\infty$, so any polynomial-order growth in the other terms will not affect the integrability of the integral. Furthermore, derivatives of the exponent will also be at most polynomial growth in $(\hat\lambda,\hat{t})$, so derivatives of the exponential term will also not affect integrability.

We now cut off outside a compact neighborhood of $\hat{t}$. To do so, write $\gamma = \left(\frac{x'-x}{x^2},\frac{y'-y}{x}\right)$, i.e.
\[\gamma = (\hat\lambda\hat{t}+\alpha\hat{t}^2+O(x\hat{t}^3),\omega\hat{t}+O(x\hat{t}^2)).\]
Also write $(\xi,\eta) = |(\xi,\eta)|(\tilde\xi,\tilde\eta)$ so $\tilde\xi^2+|\tilde\eta|^2 = 1$. Then the oscillatory exponent becomes $|(\xi,\eta)|\phi$ where $\phi = (\tilde\xi,\tilde\eta)\cdot\gamma$. 

We now calculate derivatives of $\gamma$. Let $\partial_{\theta}$ be the angular derivative on $\mathbb{S}^1$, so that $\partial_{\theta}\omega = \omega^{\perp} := (-\omega_2,\omega_1)$. We have
\begin{align*}
\partial_{\hat{t}}\gamma &= (\hat{\lambda} + 2\alpha\hat{t} + O(x\hat{t}^2),\omega + O(x\hat{t}))\\
\partial_{\hat{\lambda}}\gamma &= (\hat{t} + \partial_{\hat\lambda}\alpha\hat{t}^2 + O(x\hat{t}^3), O(x\hat{t}^2)) = \hat{t}\left(1+\partial_{\hat\lambda}\alpha\hat{t} + O(x\hat{t}^2), O(x\hat{t})\right)\\
\partial_{\theta}\gamma &= (\partial_{\theta}\alpha\hat{t}^2+O(x\hat{t}^3),\omega^{\perp}\hat{t} + O(x\hat{t}^2)) = \hat{t}\left(\partial_{\theta}\alpha\hat{t}+O(x\hat{t}^2),\omega^{\perp}+O(x\hat{t})\right).
\end{align*}
We can actually be a bit more precise with the $\hat{\lambda}$ derivative: note that we can write
\[x'-x = \lambda t + \alpha(x,y,\lambda,\omega)t^2 + \Gamma(x,y,\lambda,\omega,t)t^3\]
for some smooth $\Gamma$, and thus
\[\frac{x'-x}{x^2} = \hat\lambda\hat{t} + \alpha(x,y,x\hat\lambda,\omega)\hat{t}^2+\Gamma(x,y,x\hat{\lambda},\omega,x\hat{t})\hat{t}^3.\]
Thus we can actually write
\[\partial_{\hat\lambda}\left(\frac{x'-x}{x^2}\right) = \hat{t}\left(1 + x\partial_{\lambda}\alpha\hat{t} + \partial_{\lambda}\Gamma x^2\hat{t}^2\right).\]
Recall we always have $x\hat{t} = O(T)$ where we can arrange $T$ to be arbitrarily small. It follows that if we write the vectors $(\frac{\partial_{\hat{\lambda}}\gamma}{\hat{t}},\frac{\partial_{\theta}\gamma}{\hat{t}},\partial_{\hat{t}}\gamma)$ as a matrix, we have
\[\begin{pmatrix} \frac{\partial_{\hat{\lambda}}\gamma}{\hat{t}} & \frac{\partial_{\theta}\gamma}{\hat{t}} & \partial_{\hat{t}}\gamma\end{pmatrix} = \begin{pmatrix} 1 + O(T) & (\partial_{\theta}\alpha + O(T))\hat{t} & \hat{\lambda} + (2\alpha + O(T))\hat{t}^2 \\ \vec{0}_{2\times 1} & \omega & \omega^{\perp} \end{pmatrix} + O(T).\]
Note that this matrix has determinant $1+O(T)$ and is thus invertible, with an inverse whose entries grow at most polynomially along with all derivatives. It follows that
\begin{align*}\begin{pmatrix} \partial_{\hat{\lambda}}\gamma & \partial_{\theta}\gamma & \partial_{\hat{t}}\gamma\end{pmatrix} &= \begin{pmatrix} \frac{\partial_{\hat{\lambda}}\gamma}{\hat{t}} & \frac{\partial_{\theta}\gamma}{\hat{t}} & \partial_{\hat{t}}\gamma\end{pmatrix}\begin{pmatrix} \hat{t} & 0 & 0 \\ 0 & \hat{t} & 0 \\ 0 & 0 & 1 \end{pmatrix}\\
\implies \begin{pmatrix} \partial_{\hat{\lambda}}\gamma & \partial_{\theta}\gamma & \partial_{\hat{t}}\gamma\end{pmatrix}^{-1} &= \begin{pmatrix} 1/\hat{t} & 0 & 0 \\ 0 & 1/\hat{t} & 0 \\ 0 & 0 & 1 \end{pmatrix}\begin{pmatrix} \frac{\partial_{\hat{\lambda}}\gamma}{\hat{t}} & \frac{\partial_{\theta}\gamma}{\hat{t}} & \partial_{\hat{t}}\gamma\end{pmatrix}^{-1},
\end{align*}
i.e. $\begin{pmatrix} \partial_{\hat{\lambda}}\gamma & \partial_{\theta}\gamma & \partial_{\hat{t}}\gamma\end{pmatrix}$ is invertible away from $\hat{t} = 0$, with inverse whose entries are $\frac{1}{\hat{t}}$ times terms growing at most polynomially along with all derivatives. All of this leads to the following:

\begin{lemma}
For each unit vector $(\tilde\xi,\tilde\eta)\in\mathbb{S}^2$ there exists a smooth vector field $V$ on $\mathbb{R}_{\hat{t}}\times\mathbb{R}_{\hat\lambda}\times\mathbb{S}^1_{\omega}$ such that $V\gamma = \hat{t}(\tilde\xi,\tilde\eta)$. The coefficients of $V$ are bounded polynomially and have derivatives also bounded polynomially, with the bounds uniform in $(\tilde\xi,\tilde\eta)$. Moreover, the coefficient of $\partial_{\hat{t}}$ in $V$ will vanish at $\hat{t} = 0$. In particular, on any region of the form $\{|\hat{t}|>\epsilon\}$, there exists a smooth vector field $\tilde{V}$ such that $\tilde{V}\gamma = (\tilde\xi,\tilde\eta)$, with the coefficients of $\tilde{V}$ satisfying the same properties as above on $\{|\hat{t}|>\epsilon\}$.
\end{lemma}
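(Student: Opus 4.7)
The plan is to leverage the matrix computation immediately preceding the lemma, in which the $2\times 3$ matrix
\[M := \begin{pmatrix} \tfrac{\partial_{\hat\lambda}\gamma}{\hat t} & \tfrac{\partial_{\theta}\gamma}{\hat t} & \partial_{\hat t}\gamma\end{pmatrix}\]
was shown to have determinant $1+O(T)$, hence invertible for $T$ small enough with $|\det M|\ge 1/2$. Writing $V = a\,\partial_{\hat\lambda} + b\,\partial_{\theta} + c\,\partial_{\hat t}$, the identity $V\gamma = \hat t(\tilde\xi,\tilde\eta)$ becomes the linear system
\[\begin{pmatrix}\partial_{\hat\lambda}\gamma & \partial_{\theta}\gamma & \partial_{\hat t}\gamma\end{pmatrix}\!\begin{pmatrix}a\\ b\\ c\end{pmatrix} = \hat t\begin{pmatrix}\tilde\xi\\ \tilde\eta\end{pmatrix}.\]
Using the factorization $\begin{pmatrix}\partial_{\hat\lambda}\gamma & \partial_{\theta}\gamma & \partial_{\hat t}\gamma\end{pmatrix} = M \cdot \mathrm{diag}(\hat t,\hat t,1)$ already derived, this reduces to
\[\mathrm{diag}(\hat t,\hat t,1)\begin{pmatrix}a\\ b\\ c\end{pmatrix} = \hat t\, M^{-1}\begin{pmatrix}\tilde\xi\\ \tilde\eta\end{pmatrix}.\]
If I denote $M^{-1}(\tilde\xi,\tilde\eta)^{T} = (p,q,r)^{T}$, the system determines the coefficients uniquely as $a=p$, $b=q$, $c=\hat t\, r$. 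This defines $V$ globally and smoothly on all of $\mathbb{R}_{\hat t}\times\mathbb{R}_{\hat\lambda}\times\mathbb{S}^1_\omega$, and makes the vanishing $c|_{\hat t=0}=0$ manifest.

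The second step is to verify the polynomial bounds. Every entry of $M$ is built from $\alpha$, $\partial_{\hat\lambda}\alpha$, $\partial_\theta\alpha$, $\omega,\omega^\perp$, and the variables $\hat\lambda,\hat t$; these are smooth with all derivatives (in $\hat t,\hat\lambda,\theta$) growing at most polynomially in $(\hat\lambda,\hat t)$ on the support of the integrand. By Cramer's rule,
\[M^{-1} = \frac{1}{\det M}\,\mathrm{adj}(M),\]
so each entry of $M^{-1}$ is a polynomial in the entries of $M$ divided by $\det M$. Since $|\det M|\ge 1/2$, the Fa\`a di Bruno rule shows that arbitrary derivatives of $1/\det M$ remain uniformly bounded by polynomial expressions in the derivatives of $\det M$, which themselves grow polynomially. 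Hence each entry of $M^{-1}$ and all of its derivatives grow at most polynomially. Multiplying by the unit vector $(\tilde\xi,\tilde\eta)$ is just a bounded linear contraction, so the bounds are uniform over $(\tilde\xi,\tilde\eta)\in\mathbb{S}^2$. The coefficients $a,b,c$ of $V$ therefore satisfy the required polynomial bounds, uniformly in the unit direction.

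For the final statement on $\{|\hat t|>\epsilon\}$, I simply set $\tilde V := \hat t^{-1} V$. Then $\tilde V\gamma = \hat t^{-1}\cdot\hat t(\tilde\xi,\tilde\eta) = (\tilde\xi,\tilde\eta)$, and on this region $\hat t^{-1}$ is smooth and bounded by $\epsilon^{-1}$ along with all derivatives, so the same polynomial bounds carry over (with constants now also depending on $\epsilon$).

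The only genuine obstacle is bookkeeping: one must check that the $O(T)$ corrections in $M$ and its derivatives really do grow polynomially rather than producing, for instance, hidden factors of $\hat t^{-1}$. This is guaranteed because all the $O(T)$ corrections arise from the Taylor remainder $\Gamma(x,y,x\hat\lambda,\omega,x\hat t)\cdot(x\hat t)^k$ terms and their $\hat\lambda,\hat t,\theta$ derivatives, which—since $\Gamma$ is smooth and we are always on the support $|\hat\lambda|\le Cx^{-\epsilon}$, $x|\hat t|\le T$—are in fact uniformly bounded with polynomial growth in $(\hat\lambda,\hat t)$; no division by $\hat t$ is ever introduced.
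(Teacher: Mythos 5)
Your proposal is correct and follows essentially the same route as the paper: it inverts the factorization $\bigl(\partial_{\hat\lambda}\gamma\ \ \partial_\theta\gamma\ \ \partial_{\hat t}\gamma\bigr) = M\cdot\mathrm{diag}(\hat t,\hat t,1)$ already established in the text, reads off the coefficients $(a,b,c)=(p,q,\hat t\,r)$ from $M^{-1}(\tilde\xi,\tilde\eta)^{T}$, and obtains the polynomial bounds from $|\det M|\ge 1/2$ via Cramer's rule. The only slip is that $M$ is a $3\times 3$ matrix (the base is three-dimensional, so $\gamma$ has three components), not $2\times 3$ as written; the rest of your argument already treats it as square, so nothing else changes.
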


Thus, we can split $a = a_1+a_2$, where $a_1$ is the integral in \eqref{int0} with a cutoff $\chi_1(\hat{t})$ inserted where $\chi_1\in C_c^{\infty}(\mathbb{R})$ is identically one in a neighborhood of $0$. We then have $a_2$ is decreasing to order $|(\xi,\eta)|^{-\infty}$, since we can argue as follows: note that for a fixed $(\tilde\xi,\tilde\eta)$, with $V$ chosen accordingly by the lemma above, we have
\[Ve^{i|(\xi,\eta)|(\tilde\xi,\tilde\eta)\gamma} = |(\xi,\eta)|e^{i|(\xi,\eta)|(\tilde\xi,\tilde\eta)\gamma}.\]
Thus repeated integration by parts w.r.t. $|(\xi,\eta)|^{-1}V$ gives the desired decay in $|(\xi,\eta)|$, noting that since the integrand in $a_2$ will be supported uniformly away from $\{\hat{t} = 0\}$, the coefficients of $V$ will be bounded polynomially (along with its derivatives), and any additional polynomial growth will be harmless against the exponential decay in the integral.

We can assume the cutoff $\chi_1$ is supported say in $[-1/(100C_0),1/(100C_0)]$, and thus we focus on 
\begin{equation}\label{int1}
a_1 = \int_{[-1/(100C_0),1/(100C_0)]_{\hat{t}}\times\mathbb{R}_{\hat\lambda}\times\mathbb{S}^1_{\omega}}{e^{i|(\xi,\eta)|(\tilde\xi,\tilde\eta)\cdot\gamma}\chi_1(\hat{t})A'(x,y,\hat\lambda,\omega,\hat{t})\,d\hat{t}\,d\hat\lambda\,d\omega}
\end{equation}
where $A'(x,y,\hat\lambda,\omega,\hat{t}) = \chi(x^{\epsilon}\hat\lambda)\exp\left(-\digamma\left(\frac{\hat\lambda^2}{2\alpha}-\left(\frac{1}{x'}-\frac{1}{x}\right)\right)\right)A(x,y,\hat{\lambda},\omega,\hat{t})$. The stationary points will end up being at $\{\hat{t} = 0, \tilde\xi\hat\lambda+\tilde\eta\cdot\omega=0\}$. Since the support of $\hat\lambda$ is not uniformly compact (in particular the support increases to $\mathbb{R}$ as $x\to 0$), we will divide the domain into two further regions.

We first suppose $|\tilde\xi|$ is small. Let $\omega_{\parallel} = \frac{\tilde\eta}{|\tilde\eta|}\cdot\omega$. We now divide $\mathbb{S}^1_{\omega}$ via smooth cutoffs, say 
\[1 = \chi_{1,1}(\omega_{\parallel})+\chi_{1,2}(\omega_{\parallel}),\]
where $\chi_{1,1}$ is supported where $|\omega_{\parallel}|$ is small (say at most $2/3$), and $\chi_{1,2}$ is supported where $|\omega_{\parallel}|$ is large (say at least $1/2$). Let $a_1 = a_{1,1}+a_{1,2}$ be the corresponding partition of the integral in \eqref{int1}. In the former region we write $\omega = \omega_{\parallel}\frac{\tilde\eta}{|\tilde\eta|} + \sqrt{1-\omega_{\parallel}^2}\omega_{\perp}$, where $\omega_{\perp}$ is one of two vectors in $\mathbb{S}^1$ perpendicular to $\tilde\eta$, so the integral becomes
\[a_{1,1} = \int_{\mathbb{R}\times\mathbb{S}^0}\int_{-\frac{1}{100C_0}}^{\frac{1}{100C_0}}\int_{-2/3}^{2/3}{e^{i(\xi,\eta)\cdot\gamma}\chi_{1,1}(\omega_{\parallel})\chi_1(\hat{t})A'(x,y,\hat\lambda,\omega,\hat{t})(1-\omega_{\parallel}^2)^{-1/2}\,d\omega_{\parallel}\,d\hat{t}\,d\hat\lambda\,d\omega_{\perp}}.\]
We will use $\hat{t}$ and $\omega_{\parallel}$ as the stationary phase variables, and $\hat\lambda$ and $\omega_{\perp}$ as parameters. We also write the phase as
\[|\eta|\left(\frac{\xi}{|\eta|},\frac{\eta}{|\eta|}\right)\cdot\gamma = |\eta|\left(\frac{\xi}{|\eta|}(\hat\lambda\hat{t} + \alpha\hat{t}^2 + O(x\hat{t}^3)) + \omega_{\parallel}\hat{t} + O(x\hat{t}^2)\right)\]
We split the integral in $\hat\lambda$ into two parts: one where $|\frac{\xi}{|\eta|}\hat\lambda|<1$ and one where $|\frac{\xi}{|\eta|}\hat\lambda|\ge 3/4$. In the former part where $|\frac{\xi}{|\eta|}\hat\lambda|<1$, we can proceed with stationary phase as usual: the stationary set is
\[\left\{\hat{t} = 0, \frac{\xi}{|\eta|}\hat\lambda + \omega_{\parallel} = 0\right\}\]
and the Hessian at the stationary set (w.r.t. $\hat{t},\omega_{\parallel}$) is
\[\begin{pmatrix} \frac{\xi}{|\eta|}\alpha & 1 \\ 1 & 0 \end{pmatrix}.\]
Thus stationary phase gives that the integral satisfies the asymptotic expansion\footnote{Note that the lower order terms are not exactly as described here since the phase is not exactly a quadratic form, but the fact that an asymptotic expansion exists remains true.}
\begin{align*}
\label{scat-symb-asymp}
|\eta|^{-1}\sum_{j=0}^{\infty}\frac{i^j}{j!|\eta|^j} \int_{\mathbb{R}\times\mathbb{S}^0}&\left(\partial_{\hat{t}}\partial_{\omega_{\parallel}} - \frac{\xi}{|\eta|}\partial_{\omega_{\parallel}}^2\right)^j\\
&\left[\chi_{1,1}(\omega_{\parallel})\chi_1(\hat{t})A'(x,y,\hat\lambda,\omega,\hat{t})(1-\omega_{\parallel}^2)^{-1/2}\right]|_{\hat{t} = 0, \omega_{\parallel} = -\frac{\xi}{|\eta|}\hat\lambda}\,d\hat{\lambda}\,d\omega_{\perp}.
\end{align*}
In the latter part where $|\frac{\xi}{|\eta|}\hat\lambda|\ge 3/4$, we note that due to support properties we can arrange for the phase to be non-stationary: note that
\[\partial_{\hat{t}}\left(\left(\frac{\xi}{|\eta|},\frac{\eta}{|\eta|}\right)\cdot\gamma\right) = \frac{\xi}{|\eta|}\hat\lambda + \frac{\xi}{|\eta|}(2\alpha+O(x\hat{t}))\hat{t} + \omega_{\parallel} + O(x\hat{t}),\]
so due to all the support properties (i.e. $\frac{\xi}{|\eta|}\hat\lambda\ge 3/4$, $|\omega_{\parallel}|<2/3$, $\hat{t}$ being sufficiently small (this is where the $1/(100C_0)$ comes in), etc.), we have that the above derivative is bounded below, so we can integrate by parts without problem.

For the second integral $a_{1,2}$, we will not reparametrize $\omega$ by $\omega_{\parallel}$, as it will be a singular parametrization in the region of support. Instead, we use $\omega$ as a parameter and use $\hat{t}$, $\hat\lambda$ as the stationary phase variables. We split up the integral similarly as before using a cutoff in $\tilde\xi\hat\lambda$:
\[1 = \tilde\chi_1\left(\tilde\xi\hat\lambda\right)+\tilde\chi_2\left(\tilde\xi\hat\lambda\right),\]
with the support of $\tilde\chi_1$ contained where $|\tilde\xi\hat\lambda|$ is at least $1/3$ and the support of $\tilde\chi_2$ contained where $|\tilde\xi\hat\lambda|$ is strictly less than $1/2$. On the latter region one can integrate by parts as before (noting that now we have $|\omega_{\parallel}|$ is at least $1/2$ on the region of support). On the former region, we have
\[a_{1,2}' = \int_{\mathbb{S}^1}{\chi_{1,2}(\omega_{\parallel})\left(\int\int{e^{i|(\xi,\eta)|(\tilde\xi,\tilde\eta)\cdot\gamma}\tilde\chi_1\left(\tilde\xi\hat\lambda\right)\chi_1(\hat{t})A'(x,y,\hat\lambda,\omega,\hat{t})\,d\hat{t}\,d\hat{\lambda}}\right)\,d\omega}.\]
When we try to perform stationary phase w.r.t. $(\hat{t},\hat{\lambda})$, we obtain that the stationary set is still $\hat{t} = 0, \tilde\xi\hat{\lambda}+\tilde\eta\cdot\omega = 0$, with the Hessian now
\[\begin{pmatrix} 2\tilde\xi\alpha & \tilde\xi \\ \tilde\xi & 0 \end{pmatrix}.\]
One complication with stationary phase is the following: the determinant of the Hessian is now $-\tilde\xi^2$, which may be small; however for this integral to be nonzero at all requires $\tilde\xi$ to be nonzero! Thus the stationary phase expansion would give
\[a_{1,2}'\sim\sum_{j\ge 0}{\frac{1}{|(\xi,\eta)|^{j+1}}\int_{\mathbb{S}^1}{\frac{\chi_{1,2}(\omega_{\parallel})}{\tilde\xi^2}\left(\frac{\partial_{\hat{t}}\partial_{\hat\lambda}-\alpha\partial_{\hat\lambda}^2}{\tilde\xi}\right)^j\left(\tilde\chi_1\left(\tilde\xi\hat\lambda\right)\chi_1(\hat{t})A'(x,y,\hat\lambda,\omega,\hat{t})\right)\Big|_{\hat{t} = 0, \hat\lambda = (-\tilde\eta\cdot\omega)/\tilde\xi}\,d\omega}}.\]
Note that any derivatives on the integrand $\tilde\chi_1\left(\tilde\xi\hat\lambda\right)\chi_1(\hat{t})A'(x,y,\hat\lambda,\omega,\hat{t})$ equals the exponential factor $\exp\left(-\digamma\left(\frac{\hat\lambda^2}{2\alpha}-\left(\frac{1}{x'}-\frac{1}{x}\right)\right)\right)$ times terms which are smooth in $(x,y,\hat\lambda,\omega,\hat{t})$ (aside from possible $x^{\epsilon}$ terms from the $\chi(x^{\epsilon}\hat{\lambda})$, which are nonetheless conormal to $x=0$). The exponential factor evaluated at $\hat{t} = 0, \hat\lambda = (-\tilde\eta\cdot\omega)/\tilde\xi$ gives the term $\exp\left(-\digamma\frac{\omega_{\parallel}^2}{2\alpha(\xi/|\eta|)^2}\right)$, i.e. a term which is bounded by a Gaussian in $1/\tilde\xi$. It follows that the powers of $1/\tilde\xi$ which arise from the degeneracy of the phase are actually completely controlled by the exponential decay, and thus this term, while not necessarily zero away from $\tilde\xi = 0$, will vanish extremely rapidly (in particular smoothly) as one approaches $\tilde\xi = 0$.

Thus, in the region where $\tilde\xi$ is relatively small, we have that $a$ satisfies an asymptotic expansion in $(\xi,\eta)$ for each fixed $(x,y)$. In the region where $\tilde\xi$ is relatively large, we can perform stationary phase in $(\hat{t},\hat{\lambda})$ like the above case (but without needing to split the integral into regions), noting that the determinant $-\tilde\xi^2$ will be nondegenerate in this region.

Thus overall we have that $a$ satisfies an asymptotic expansion
\[a\sim\sum_{j\ge 0}{a_{-1-j}\left(x,y,\frac{(\xi,\eta)}{|(\xi,\eta)|}\right)|(\xi,\eta)|^{-1-j}}\]
for every $(x,y)$. This shows, for every fixed $(x,y)$, that $a$ is a symbol in $(\xi,\eta)$. To show it is a scattering symbol, we can note that, by following the stationary phase arguments above, that the coefficients will be conormal to $x=0$, as the integrand consists of terms which are smooth, except for $\chi(x^{\epsilon}\hat{\lambda})$ whose derivatives may introduce terms like $x^{\epsilon}$, which are still conormal. Alternatively, we can use the characterization that $a$ is a symbol if for all $(\alpha,\beta)$, we have that $(xD_x)^{\alpha}D_y^{\beta}a$ satisfies a similar asymptotic expansion, i.e. taking vector fields tangent to the boundary does not affect its symbol behavior in $(\xi,\eta)$. Indeed, for $A(x,y,\hat{\lambda},\omega,\hat{t})$, if we let
\[I[A](x,y,\xi,\eta) = \int_{\mathbb{R}_{\hat{t}}\times\mathbb{R}_{\hat\lambda}\times\mathbb{S}^1_{\omega}}{e^{i(\xi,\eta)\cdot\gamma}\chi(x^{\epsilon}\hat\lambda)\exp\left(-\digamma\left(\frac{\hat\lambda^2}{2\alpha}-\left(\frac{1}{x'}-\frac{1}{x}\right)\right)\right)A(x,y,\hat{\lambda},\omega,\hat{t})\,d\hat{t}\,d\hat\lambda\,d\omega}\]
then
\[(xD_x)I[A] = I[A']\]
where
\[A' = xD_xA + \left(\frac{\epsilon x^{\epsilon}\hat\lambda\chi'(x^{\epsilon}\hat\lambda)}{\chi(x^{\epsilon}\hat\lambda)} + (\xi,\eta)\cdot x\partial_x\gamma + xD_x\left(-\digamma\left(\frac{\hat\lambda^2}{2\alpha}-\left(\frac{1}{x'}-\frac{1}{x}\right)\right)\right)\right)A.\]
Essentially the only possibly problematic term is the $(\xi,\eta)\cdot xD_x\gamma$, given that a priori it adds an additional order to the symbol. However, we note that
\[x\partial_x\gamma = x\partial_x(\hat\lambda\hat{t} + \alpha\hat{t}^2 + O(x\hat{t}^3),\omega\hat{t} + O(x\hat{t}^2)) = xO(\hat{t}^2)\]
(where $O(\hat{t}^2)$ means $\hat{t}^2$ times smooth), so using the integration by parts Lemma, we can use one factor of $\hat{t}$ to integrate by parts, thus cancelling the additional $|(\xi,\eta)|$ order. (We note that we can actually do so again because the coefficient of the necessary vector field $V$ will vanish at $\hat{t} = 0$, so it applied to $\hat{t}$ times smooth will keep it that way. In other words, the term is actually one order lower, but we will not need this fact here.) The exponential term is also fine, since
\[\frac{1}{x}-\frac{1}{x'} = \frac{X}{1+xX} = \frac{\hat\lambda\hat{t}+\alpha\hat{t}^2+O(x\hat{t}^3)}{1+x\hat\lambda\hat{t}+x\alpha\hat{t}^2+O(x^2\hat{t}^3)}\]
so
\[x\partial_x\left(\frac{1}{x}-\frac{1}{x'}\right) = O(\hat\lambda\hat{t})+O(\hat{t}^2).\]
Thus this contributes a lower-order contribution to the symbol, but more importantly the derivative is well-behaved as $x\to 0$.

\subsection{Symbol computation}
We will assume our symbol is in fact smooth on the unscaled variables, i.e. we write it as $A(x,y,x\hat{\lambda},\omega,x\hat{t})$ where $A$ is smooth in $(x,y,\lambda,\omega,t)$, and suppose that $A$ satisfies the following property: for a fixed $\eta_0\in\mathbb{S}^1$, we have
\[A\ge 0\text{ everywhere},\quad A(x,y,\lambda,\omega,0) = 0 \iff \omega\cdot \eta_0 = 0.\]
Note this automatically implies that all derivatives of $A$ vanish at $(x,y,\lambda,\omega,0)$ where $\omega\cdot\eta_0=0$. Moreover, for $\omega$ near the equatorial sphere $\omega\cdot\eta_0=0$, decompose $\omega$ as $\omega = \omega_{\parallel}\eta_0 + \sqrt{1-\omega_{\parallel}^2}\omega_{\perp}$, and assume that $\partial^2_{\omega_{\parallel}}A\ge C>0$ when $\omega\cdot\eta_0=0$.

Note that for our operators of interest, we take $\eta_0 = (1,0)$, i.e. $\eta_0\cdot dy = dy_1$, as we have chosen coordinates such that $dy_1$ aligns with the axis of isotropy.

Furthermore, write $y'-y = \omega t + \beta(x,y,\lambda,\omega,t)t^2$, so that $\frac{y'-y}{x} = \omega\hat{t} + x\beta(x,y,x\hat\lambda,\omega,x\hat{t})\hat{t}^2$. Assume as well that
\[(\partial_t\partial_{\omega_{\parallel}}-\eta_0\cdot\beta\partial_{\omega_{\parallel}}^2)A(x,y,x\hat{\lambda},\omega,0) \ge C>0\text{ for all }\omega\in\eta_0^{\perp}.\]
Let $\Sigma = \text{span }\eta_0\frac{dy}{x}$. Let $a$ admit the stationary phase expansion 
\[a \sim \sum_{j=0}^{\infty}{a_{-1-j}(x,y,\tilde\xi,\tilde\eta)|(\xi,\eta)|^{-1-j}}.\]
We then have the following:
\begin{lemma}
\label{new-symbol-prop}
For $a$ as above, the first three terms $a_{-1}$, $a_{-2}$, and $a_{-3}$ have the following properties:
\begin{itemize}
\item $a_{-1}\ge 0$ and vanishes only on $\Sigma$, where it vanishes quadratically; moreover the vanishing is non-degenerate. In particular, $a_{-1}$ satisfies a bound of the form $|a_{-1}|\ge C|\tau|^2$, where $\tau = (\tau_1,\tau_2)$ is a boundary-defining function for $\Sigma\cap\mathbb{S}^2$.
\item $a_{-2}$ is of the form $a_{-2} = ix\tilde{a}$, where $\tilde{a}\in S^{0,0}$ and is real-valued and non-vanishing on $\Sigma$. In particular, $|\tilde{a}|\ge C$ in a region of the form $|\tau|<\epsilon$.
\item $a_{-3}$ is positive on $\Sigma\cap\{x=0\}$, i.e. $a_{-3}\ge C$ in a region of the form $|\tau|+x<\epsilon$.
\end{itemize}
Moreover, all of the lower bounds above have constants which can be chosen to vary continuously as $A$ varies in the $C^2$ norm and as the phase varies in the $C^0$ norm\footnote{In the following sense: the phase can always be written as $(\hat{\lambda}\hat{t},\omega\hat{t}) + O(\hat{t}^2)$; the $C^0$ continuity is in the coefficient of $\hat{t}^2$ in the $O(\hat{t}^2)$ term.}.
\end{lemma}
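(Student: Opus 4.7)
All three claims will be read off from a careful application of the stationary phase expansion developed in Section \ref{local-ops-sec}. Near $\Sigma$, take $\tau_1 = \tilde\xi$ and $\tau_2 = \tilde\eta\cdot\eta_0^\perp$ as boundary-defining coordinates for $\Sigma$ inside the unit sphere (so $\tau=0$ precisely on $\Sigma$), and use the piece of the decomposition in which $|\tilde\xi|$ is small so that $\omega$ may be parametrized by $\omega_\parallel = \tilde\eta\cdot\omega/|\tilde\eta|$ and a discrete $\omega_\perp\in\mathbb{S}^0$. The stationary phase variables are $(\hat{t},\omega_\parallel)$ with parameters $(\hat\lambda,\omega_\perp)$, and the stationary set is $\{\hat{t}=0,\ \omega_\parallel = -\tilde\xi\hat\lambda/|\tilde\eta|\}$. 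At this set the Hessian of the phase is
\[
H \;=\; \begin{pmatrix} 2(\tilde\xi\alpha + x\tilde\eta\cdot\beta) & |\tilde\eta| \\ |\tilde\eta| & 0 \end{pmatrix},\qquad
\det H = -|\tilde\eta|^2,
\]
which is nondegenerate on a neighborhood of $\Sigma$, so the expansion $a\sim\sum a_{-1-j}|(\xi,\eta)|^{-1-j}$ with $a_{-1-j}=(i^j/j!)\,L_j[A']$ is genuine, and the $L_j$ are the usual stationary phase operators.

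\textbf{The leading term $a_{-1}$.} The operator $L_0$ just evaluates the amplitude at the stationary point, so $a_{-1}$ is an integral over $(\hat\lambda,\omega_\perp)$ of $A'$ at $\hat{t}=0$ and $\omega_\parallel = -\tilde\xi\hat\lambda/|\tilde\eta|$, weighted by $(2\pi)\,|\det H|^{-1/2}\cdot(1-\omega_\parallel^2)^{-1/2}$. Non-negativity $a_{-1}\ge 0$ is inherited from $A\ge 0$. On $\Sigma$ the stationary value is $\omega_\parallel=0$, at which $A$ vanishes by hypothesis, so $a_{-1}|_\Sigma\equiv 0$. For the quadratic lower bound, Taylor expand $A$ in $\omega_\parallel$ about $\omega_\parallel=0$; the assumed non-degenerate second-derivative bound $\partial_{\omega_\parallel}^2 A\ge C>0$ turns the integrand into $\tfrac12\partial_{\omega_\parallel}^2 A|_{\omega_\parallel=0}\cdot(\omega_\parallel^{\rm stat})^2 + O((\omega_\parallel^{\rm stat})^3)$, which, after integration in $\hat\lambda$ against the Gaussian factor $\exp(-\digamma\hat\lambda^2/(2\alpha))$, produces a positive-definite quadratic form in $(\tau_1,\tau_2)$ (the $\tau_2$ direction enters because $\omega\cdot\tilde\eta=0$ is not $\omega\cdot\eta_0=0$ unless $\tau_2=0$). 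This yields $a_{-1}\ge c|\tau|^2$.

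\textbf{The subprincipal term $a_{-2}$ and the factor of $x$.} The operator $L_1$ on $\Sigma$ is $\tfrac12\langle H^{-1}\partial_z,\partial_z\rangle$ applied to the amplitude, minus the standard cubic-phase correction multiplied by the amplitude; the latter vanishes at $\Sigma$ since $A=0$ there. At $\tau=0$ one computes
\[
\tfrac12\langle H^{-1}\partial_z,\partial_z\rangle \;=\; \partial_{\hat{t}}\partial_{\omega_\parallel} - x(\eta_0\cdot\beta)\partial_{\omega_\parallel}^2,
\]
and the key observation is that $\partial_{\hat{t}} = x\partial_t$, so this operator is exactly $x\bigl(\partial_t\partial_{\omega_\parallel} - (\eta_0\cdot\beta)\partial_{\omega_\parallel}^2\bigr)$. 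The hypothesis on the phase then guarantees $L_1[A']\big|_\Sigma \ge Cx>0$, and together with the prefactor $i$ this gives $a_{-2} = ix\tilde a$ with $\tilde a$ real-valued and bounded below on a neighborhood of $\Sigma$ in $S^{0,0}$. (Non-vanishing of $\tilde a$ off the equator comes from the continuity of the stationary phase coefficients in the amplitude, plus the explicit $C^2$-quantitative lower bound.)

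\textbf{The term $a_{-3}$ and the main obstacle.} The remaining claim, positivity of $a_{-3}$ on $\Sigma\cap\{x=0\}$, is the hardest. At $\Sigma\cap\{x=0\}$ the Hessian $H$ degenerates to an off-diagonal antidiagonal matrix, and $L_2[A']$ is a sum of (i) $\bigl(\tfrac12\langle H^{-1}\partial_z,\partial_z\rangle\bigr)^2 A'$, (ii) cubic-phase corrections acting as first-order operators on $A'$, and (iii) quartic-phase corrections times $A'$. Terms (iii) vanish since $A$ still vanishes at the stationary point; terms (ii) contribute first derivatives of $A$ in $\omega_\parallel$, which also vanish at $\omega_\parallel=0$ by the quadratic-vanishing hypothesis. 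Hence the surviving contribution is driven by $(\partial_{\hat{t}}\partial_{\omega_\parallel})^2 A'$ evaluated at the stationary point, but at $x=0$ the factor $\partial_{\hat{t}}=x\partial_t$ kills this naive contribution, and one must instead carry out the expansion up to the order where the squared quantity $(\partial_{\omega_\parallel}^2 A)^2$ emerges, multiplied against the phase-coming factor $(\eta_0\cdot\beta)^2$; the prefactor $i^2/2!=-\tfrac12$ renders this contribution real, and its sign is positive because $\partial_{\omega_\parallel}^2 A|_{\omega_\parallel=0}>0$. The principal technical obstacle is therefore the bookkeeping of the $L_2$ operator at $x=0$: one must verify that the only surviving term is indeed the positive square mentioned and that all other contributions vanish for the structural reasons above, then integrate against the Gaussian weight $\exp(-\digamma\hat\lambda^2/(2\alpha))$ to obtain a strictly positive constant. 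The continuity of constants in the $C^2$ norm of $A$ and $C^0$ norm of the phase then follows immediately from the explicit integral formulas produced by stationary phase.
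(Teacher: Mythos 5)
Your treatment of $a_{-1}$ and $a_{-2}$ matches the paper's: the paper also reads $a_{-2}$ off the stationary phase operator $\partial_{\hat t}\partial_{\omega_\parallel}-x\,\eta_0\cdot\beta\,\partial_{\omega_\parallel}^2$ applied to the amplitude (evaluated at $(\xi,\eta)=\pm|(\xi,\eta)|(0,\eta_0)$ rather than merely near $\Sigma$, which kills the cross terms since $A$ and $\partial_{\omega_\parallel}A$ vanish on the stationary set), and extracts the factor of $x$ exactly as you do from $\partial_{\hat t}=x\partial_t$ and from the explicit $x$ in the phase coefficient $x\,\eta_0\cdot\beta$.

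The $a_{-3}$ argument, however, has a genuine gap, and the mechanism you propose for positivity is wrong. First, a stationary phase expansion is linear in the amplitude, so no term of the form $(\partial_{\omega_\parallel}^2A)^2$ can ever appear in $L_2[A']$; and in any case the $\eta_0\cdot\beta$ contribution to the phase carries an explicit factor of $x$ (the phase at $\tilde\xi=0$ is $|\eta|(\omega_\parallel\hat t+x\,\eta_0\cdot\beta\,\hat t^2)$), so a ``$(\eta_0\cdot\beta)^2$'' contribution would vanish at $x=0$ rather than produce the leading positivity. The actual source of positivity --- and the reason the paper insists on the exact Gaussian weight in the formal adjoint \eqref{scat-formal-adj} --- is that the amplitude $A'$ contains the real exponential $\exp\bigl(-\digamma(\tfrac{\hat\lambda^2}{2\alpha}-(\tfrac{1}{x'}-\tfrac{1}{x}))\bigr)$, whose $\hat t$-dependence survives at $x=0$, where it becomes $\exp\bigl(-\digamma(\tfrac{\hat\lambda^2}{2\alpha}+\hat\lambda\hat t+\alpha\hat t^2)\bigr)$. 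In $\tfrac{i^2}{2}(\partial_{\hat t}\partial_{\omega_\parallel})^2A'$ the two $\hat t$-derivatives therefore fall on this exponential (not on $A$, whose $\hat t$-dependence is indeed killed by $\partial_{\hat t}=x\partial_t$ at $x=0$), while the two $\omega_\parallel$-derivatives must both fall on $A$; this yields
\[
a_{-3}=\frac{1}{2}\int_{\mathbb{R}\times\mathbb{S}^0}(2\digamma\alpha-\digamma^2\hat\lambda^2)\,e^{-\digamma\hat\lambda^2/(2\alpha)}\,\partial_{\omega_\parallel}^2A(0,y,0,\omega_\perp,0)\,d\hat\lambda\,d\omega_\perp,
\]
and positivity follows only because the $\hat\lambda$-integral of $(2\digamma\alpha-\digamma^2\hat\lambda^2)$ against the matched Gaussian equals $\sqrt{2\pi}\,\alpha^{3/2}\digamma^{1/2}>0$ --- note the integrand changes sign, so this is a nontrivial cancellation that depends on the weight being the exact Gaussian. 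Without identifying this mechanism your proof of the third bullet does not go through.
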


\begin{proof}
The quadratic vanishing of $a_{-1}$ is well-known; the only comment is that the constant in the lower bound follows from the lower bound of the second derivative $\partial^2_{\omega_{\parallel}}A$. For the other quantities, we calculate the symbol at $(\xi,\eta)\sim(0,\eta_0)$. This simplifies calculations, since in the decomposition $a = a_{1,1} + a_{1,2}$ in the previous section, we only need the $a_{1,1}$ term since $\tilde\xi = 0$. Thus, for the other two terms, it suffices to calculate the stationary phase expansions of
\[\int_{\mathbb{R}\times\mathbb{S}^0}\int_{-1/(100C_0)}^{1/(100C_0)}\int_{-2/3}^{2/3}{e^{i(\xi,\eta)\cdot\gamma}\chi_{1,1}(\omega_{\parallel})\chi_1(\hat{t})A'(x,y,x\hat\lambda,\omega,x\hat{t})(1-\omega_{\parallel}^2)^{-1/2}\,d\omega_{\parallel}\,d\hat{t}\,d\hat\lambda\,d\omega_{\perp}}.\]
For $(\xi,\eta) = \pm|(\xi,\eta)|(0,\eta_0)$, we have that the phase is $|\eta|(\omega_{\parallel}\hat{t} + x\eta_0\cdot\beta\hat{t}^2)$. Thus the term $a_{-2}$ equals
\[i\int_{\mathbb{R}\times\mathbb{S}^0}{\chi(x^{\epsilon}\hat\lambda)\left(\partial_{\hat{t}}\partial_{\omega_{\parallel}}-x\eta_0\cdot\beta\partial_{\omega_{\parallel}}^2\right)\left(e^{-\digamma(\hat\lambda^2+x^{-1}-(x')^{-1})}A(x,y,x\hat{\lambda},\omega,x\hat{t})\right)|_{\hat{t} = 0, \omega_{\parallel} = 0}\,d\hat\lambda\,d\omega_{\perp}}.\]
Note that $A$ vanishes on the set $\{\hat{t} = 0, \omega_{\parallel} = 0\}$, and hence derivatives on the exponential factor times $A$ will vanish except for the terms where both derivatives fall on $A$. Thus the term is\footnote{Note that this calculation implicitly assumes that $\beta$ is a constant; however we can approximate $\beta$ by its value at $\{\hat{t} = 0, \omega_{\parallel} = 0\}$; doing so will introduce an error of order $O(x\omega_{\parallel}\hat{t}^2)+O(x\hat{t}^3)$ into the phase, and this error vanishes to sufficiently high degree at the critical set such that it will not affect the subprincipal behavior.}
\[ix\int_{\mathbb{R}\times\mathbb{S}^0}{\chi(x^{\epsilon}\hat\lambda)(\partial_t\partial_{\omega_{\parallel}}-\eta_0\cdot\beta(x,y,x\hat{\lambda},\omega_{\perp},0)\partial_{\omega_{\parallel}}^2)A(x,y,x\hat{\lambda},\omega_{\perp},0)\,d\hat\lambda\,d\omega_{\perp}}.\]
By assumption, the integrand is bounded from below everywhere; moreover such a bound will vary $C^2$ in $A$ and $C^0$ in the phase (due to the $\beta$ term).

For the term $a_{-3}$, we calculate the term at $x=0$ and on $\Sigma$. At $x=0$, the phase is just $\omega_{\parallel}\hat{t}$, and the term $x^{-1}-(x')^{-1}$ in the exponential is $\hat\lambda\hat{t}+\alpha\hat{t}^2$, and the cutoff $\chi(x^{\epsilon}\hat\lambda)$ is $1$. Hence the term $a_{-3}$ is
\begin{align*}&\frac{i^2}{2}\int_{\mathbb{R}\times\mathbb{S}^0}{(\partial_{\hat{t}}\partial_{\omega_{\parallel}})^2\left(e^{-\digamma\left(\frac{\hat\lambda^2}{2\alpha} + \hat\lambda\hat{t} + \alpha\hat{t}^2\right)}A(0,y,0,\omega,0)\right)|_{\hat{t} = 0, \omega_{\parallel} = 0}\,d\hat\lambda\,d\omega_{\perp}}\\
&=-\frac{1}{2}\int_{\mathbb{R}\times\mathbb{S}^0}{\left(\partial_{\hat{t}}^2\left(e^{-\digamma\left(\frac{\hat\lambda^2}{2\alpha} + \hat\lambda\hat{t} + \alpha\hat{t}^2\right)}\right)\partial_{\omega_{\parallel}}^2A(0,y,0,\omega,0)\right)|_{\hat{t} = 0, \omega_{\parallel} = 0}\,d\hat\lambda\,d\omega_{\perp}}\\
&=\frac{1}{2}\int_{\mathbb{R}\times\mathbb{S}^0}{(2\digamma\alpha-\digamma^2\hat\lambda^2)e^{-\digamma\hat\lambda^2/(2\alpha)}\partial^2_{\omega_{\parallel}}A(0,y,0,\omega_{\perp},0)\,d\hat\lambda\,d\omega_{\perp}}.
\end{align*}
(Note that the second line follows since the $\hat{t}$ derivatives can only fall on the exponential term to produce a nonzero term, and the $\omega_{\parallel}$ derivative must fall at least twice on $A$ to produce a nonzero term since $A$ vanishes to second order in $\omega$.) Since
\[\int_{\mathbb{R}}{(2\digamma\alpha-\digamma^2\hat\lambda^2)e^{-\digamma\hat\lambda^2/(2\alpha)}\,d\hat\lambda} = \sqrt{2\pi}\alpha^{3/2}\digamma^{1/2}\]
it follows that
\[a_{-3} = \frac{\sqrt{\pi\digamma}}{2}\sum_{\omega_{\perp}}{\alpha^{3/2}\partial^2_{\omega_{\parallel}}A(0,y,0,\omega_{\perp},0)}>0.\]
Note again that this depends continuously in the $C^2$ norm for $A$ and in the $C^0$ norm for the phase (via $\alpha$).
\end{proof}
Note that while this means the symbol is not elliptic when considered as a standard scattering symbol, by Lemma \ref{new-symbol-ell} the result gives us that the symbol is elliptic as a symbol in $S^{-1,0,-2,-2}$, at least near fiber infinity.

We still need to show that the symbol is elliptic at finite points, so we discuss this now. Recall that at $x=0$ the symbol becomes
\[a(0,y,\xi,\eta) = \int{e^{i(\xi,\eta)\cdot(\hat\lambda\hat{t}+\alpha\hat{t}^2,\omega\hat{t})}e^{-\digamma\left(\frac{\hat\lambda^2}{2\alpha} + \hat\lambda\hat{t} + \alpha\hat{t}^2\right)}A(0,y,0,\omega,0)\,d\hat{t}\,d\hat\lambda\,d\omega}.\]
where $\alpha = \alpha(0,y,0,\omega)$. We can evaluate this integral as follows:

We make the substitution $X = \hat\lambda\hat{t} + \alpha\hat{t}^2$, $Y = \omega\hat{t}$. Then $dX\,dY = \hat{t}^2\,d\hat{t}\,d\hat\lambda\,d\omega\implies d\hat{t}\,d\hat\lambda\,d\omega = |Y|^{-2}\,dX\,dY$. Furthermore, $\hat\lambda = \frac{X-\alpha|Y|^2}{|Y|}$. Thus the integral becomes
\begin{align*} a &= \int_{\mathbb{R}_X\times\mathbb{R}^2_Y}{e^{i(\xi X + \eta\cdot Y)}e^{-\digamma\left(\frac{(X-\alpha|Y|^2)^2}{2\alpha|Y|^2} + X\right)}A(0,y,0,\frac{Y}{|Y|},0)|Y|^{-2}\,dX\,dY}\\
&=\int_{\mathbb{R}_X\times\mathbb{R}^2_Y}{e^{i(\xi X + \eta\cdot Y)}e^{-\digamma\left(\frac{X^2}{2\alpha|Y|^2} + \frac{\alpha|Y|^2}{2}\right)}A(0,y,0,\frac{Y}{|Y|},0)|Y|^{-2}\,dX\,dY}.
\end{align*}
Evaluating the Fourier transform in $X$, we get
\[a = \sqrt{2\pi}\int_{\mathbb{R}^2_Y}{e^{i\eta\cdot Y}e^{-\frac{\alpha\xi^2|Y|^2}{2\digamma}}e^{-\frac{\digamma\alpha|Y|^2}{2}}A(0,y,0,\frac{Y}{|Y|},0)\sqrt{\alpha/\digamma}|Y|^{-1}\,dY}.\]
Note that all terms in the integrand aside fro $e^{i\eta\cdot Y}$ are real. Hence, we can evaluate the real part\footnote{If we assume $A(0,y,0,\omega,0)$ is even in $\omega$, as one often does in practice, then we actually know $a$ must be real. In any case, we can always estimate the real part from below.} of $a$ by polar coordinates:
\begin{align*}
\text{Re }a &= \sqrt{2\pi}\int_{\mathbb{R}^2_Y}{\cos(\eta\cdot Y)e^{-\frac{\alpha\xi^2|Y|^2}{2\digamma}}e^{-\frac{\digamma\alpha|Y|^2}{2}}A(0,y,0,\frac{Y}{|Y|},0)\sqrt{\alpha/\digamma}|Y|^{-1}\,dY} \\
&= \sqrt{2\pi}\int_{\mathbb{S}^1}{\left(\int_0^{\infty}{\cos(r\eta\cdot\omega)e^{-\frac{\left(\frac{\alpha}{\digamma}\xi^2+\digamma\alpha\right)r^2}{2}}\,dr}\right)A(0,y,0,\omega,0)\sqrt{\alpha/\digamma}\,d\omega}.
\end{align*}
Since
\[\int_0^{\infty}{\cos(r\eta\cdot\omega)e^{-\frac{\left(\frac{\alpha}{\digamma}\xi^2+\digamma\alpha\right)r^2}{2}}\,dr} = \frac{1}{2}\int_{\mathbb{R}}{e^{ir\eta\cdot\omega}e^{-\frac{\left(\frac{\alpha}{\digamma}\xi^2+\digamma\alpha\right)r^2}{2}}\,dr} = \sqrt{\frac{\pi}{2}\frac{\digamma}{\alpha(\xi^2+\digamma^2)}}e^{-\frac{\digamma}{\alpha(\xi^2+\digamma^2)}(\eta\cdot\omega)^2}\]
it follows that
\[\text{Re }a = \pi\sqrt{\frac{\digamma}{\xi^2+\digamma^2}}\int_{\mathbb{S}^1}{e^{-\frac{\digamma}{\alpha(\xi^2+\digamma^2)}(\eta\cdot\omega)^2}A(0,y,0,\omega,0)\sqrt{\alpha/\digamma}\,d\omega}.\]
Noting that the terms in the integrand are all strictly positive aside from $A$, which is positive almost everywhere on $\mathbb{S}^1$, it follows that the integral is positive, and in particular $a$ is nonzero, as desired.
\begin{remark}
By using Laplace's method, i.e. the analogue of the method of stationary phase for real exponentials, one can obtain an asymptotic expansion as $|(\xi,\eta)|\to\infty$, as predicted by stationary phase.
\end{remark}

\section{The Local Recovery Problem}
\label{local-recovery-sec}

In this section we make the argument for the local recovery problem. Once the local recovery result is established, a global recovery result can be obtained via a convex foliation argument, which would follow exactly the argument used in previous papers.


Recall we are in the setting where we assume all of the parameters were known except for one, call it $\nu$, and we assume that the travel time data for the two parameters $a_{\nu}$ and $\tilde{a}_{\nu}$ were the same. For simplicity of notation, we let $I = I^{\nu}$ and $\tilde{I} = \tilde{I}^{\nu}$. Note that
\[I[u](x_0,\xi_0) = \int_{\mathbb{R}}{A(X(t,x_0,\xi_0),\Xi(t,x_0,\xi_0))u(X(t,x_0,\xi_0))\,dt}, \quad A(x,\xi) = -E^{\nu}(x,\xi)\frac{\partial\tilde\Xi}{\partial\xi}(x,\xi)\]
and
\[\tilde{I} = \int_{\mathbb{R}}{\tilde{A}(X(t,x_0,\xi_0),\Xi(t,x_0,\xi_0))u(X(t,x_0,\xi_0))\,dt}\]
with
\[\tilde{A}(x,\xi) = -\partial_xE^{\nu}\frac{\partial\tilde\Xi}{\partial\xi}+\partial_{\xi}E^{\nu}\frac{\partial\tilde\Xi}{\partial x}.\]
Recall then that the pseudolinearization formula reads
\[0 = I[\partial_xr_{\nu},\partial_{y_1}r_{\nu},\partial_{y_2}r_{\nu}] + \tilde{I}[r_{\nu}].\]
If we let
\[Lv(x,y) = x^{-2}\int_{\mathbb{S}^2}{\chi(x^{\epsilon}\hat\lambda)\exp\left(-\frac{\digamma^2\hat\lambda^2}{2\alpha}\right)v(\gamma_{\lambda,\omega})\,d\mathbb{S}^2(\lambda,\omega)}\]
then we have
\begin{equation}
\label{n-equation}
0 = N[\partial_xr_{\nu},\partial_{y_1}r_{\nu},\partial_{y_2}r_{\nu}] + \tilde{N}[r_{\nu}]
\end{equation}
where $N = L\circ I$ and $\tilde{N} = L\circ\tilde{I}$.

We now let $\digamma>0$, and we consider the conjugation of $N$ and $\tilde{N}$ by the function $e^{\gamma/x}$. That is, we consider
\[N_{\digamma} = e^{-\digamma/x}Ne^{\digamma/x},\quad \tilde{N}_{\digamma} = e^{-\digamma/x}\tilde{N}e^{\digamma/x}.\]
As shown in Section \ref{local-ops-sec}, we have that $N_{\digamma}$ is a scattering pseudodifferential operator whose symbol is of the form
\[a = \int_{\mathbb{R}_{\hat{t}}\times\mathbb{R}_{\hat\lambda}\times\mathbb{S}^1_{\omega}}{e^{i\left(\xi\frac{x'-x}{x^2}+\eta\cdot\frac{y'-y}{x}\right)}\chi(x^{\epsilon}\hat\lambda)\exp\left(-\digamma\left(\frac{\hat\lambda^2}{2\alpha}-\left(\frac{1}{x'}-\frac{1}{x}\right)\right)\right)A(x,y,\hat{\lambda},\omega,\hat{t})\,d\hat{t}\,d\hat\lambda\,d\omega}.\]
Hence, from the results in Section \ref{op-analysis}, particularly Lemma \ref{new-symbol-prop}, as well as Lemma \ref{new-symbol-ell}, we have the following:
\begin{theorem}
For any $\digamma>0$, we have $N_{\digamma}\in\Psi^{-1,0,-2,-2}$ and $\tilde{N}_{\digamma}\in\Psi^{-1,0,-1,-1}$. Moreover, if
\[(\partial_t\partial_{\omega_{\parallel}}-\eta_0\cdot\beta\partial_{\omega_{\parallel}}^2)A(x,y,x\hat{\lambda},\omega,0) \ge C>0\text{ for all }\omega\in\eta_0^{\perp}\]
where $\eta_0\in\mathbb{S}^1$ is parallel to the axis of isotropy and $\omega = \omega_{\parallel}\eta_0+\omega_{\perp}$, then $N_{\digamma}$ is in fact elliptic.
\end{theorem}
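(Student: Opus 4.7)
The plan is to reduce this theorem to direct applications of Lemma \ref{new-symbol-prop} and Lemma \ref{new-symbol-ell}, combined with the symbol analysis already carried out in Section \ref{op-analysis}. Section \ref{local-ops-sec} has already written the Schwartz kernel of $N_\digamma$ as an oscillatory integral in the double-space coordinates, and Section 5.2 established that this produces a scattering symbol admitting an asymptotic expansion in inverse powers of $|(\xi,\eta)|$. The infinite-order vanishing at the left and right faces required by the definition of $\Psi^{m,l,k,j}$ follows from the support of $A$ together with the Gaussian cutoff: at $\{x'=0,x>0\}$, the constraint $x|\hat t|\le T$ forces $x'$ to stay bounded below once $x$ is fixed; at $\{x=0,x'>0\}$, the Gaussian $\exp(-\digamma^2\hat\lambda^2/(2\alpha))$ together with the constraint $|\hat\lambda|\le C x^{-\epsilon}$ gives decay of the form $\exp(-c x^{-2\epsilon})$, which vanishes to infinite order as $x\to 0$. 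Thus the task reduces to checking that the scattering symbols lie in the claimed \newname-classes and, for the second assertion, that the \newname-symbol of $N_\digamma$ is elliptic on all of $\partial\mathscr{X}_2$.

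For $N_\digamma\in\Psi^{-1,0,-2,-2}$ I verify the hypotheses of Lemma \ref{new-symbol-prop} for the function $A$ arising from $I^\nu$: the structure $A=-E^\nu\,\partial\tilde\Xi/\partial\xi$ gives $A(x,y,\lambda,\omega,0)=0$ precisely when $\omega\cdot\eta_0=0$, with non-vanishing second derivative $\partial^2_{\omega_\parallel}A$ on the equator (a consequence of the transverse isotropy of the wave speed with respect to the isotropy axis $\eta_0$). The extra hypothesis assumed in the theorem, namely $(\partial_t\partial_{\omega_\parallel}-\eta_0\cdot\beta\partial_{\omega_\parallel}^2)A\ge C>0$ on $\{\omega\cdot\eta_0=0\}$, is precisely the remaining input required by Lemma \ref{new-symbol-prop}. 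The conclusion of that lemma then supplies the precise behavior of $a_{-1}$, $a_{-2}$, and $a_{-3}$ needed to invoke Lemma \ref{new-symbol-ell}, which places $a\in S^{-1,0,-2,-2}$ and provides the elliptic bound $|a|\ge c\rho d_\Sigma^2 d_\Gamma^2$ in a neighborhood of fiber infinity.

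To upgrade to ellipticity on all of $\partial\mathscr{X}_2$, I handle the base-infinity face and finite interior points separately. On $\{x=0\}$ away from fiber infinity, the explicit Gaussian-integral reduction at the end of Section \ref{op-analysis} gives
\[
\mathrm{Re}\,a(0,y,\xi,\eta)=\pi\sqrt{\frac{\digamma}{\xi^2+\digamma^2}}\int_{\mathbb{S}^1}e^{-\frac{\digamma(\eta\cdot\omega)^2}{\alpha(\xi^2+\digamma^2)}}A(0,y,0,\omega,0)\sqrt{\alpha/\digamma}\,d\omega,
\]
which is strictly positive because the integrand is nonnegative with $A(0,y,0,\omega,0)>0$ off the equator. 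For finite points with $x>0$, the cutoff-independent principal behavior is the same as in \cite{ti}, where the corresponding operator was shown to be elliptic at all finite interior points; the switch from compactly supported cutoff to Gaussian cutoff leaves the leading stationary-phase computation unchanged. Combining the three regimes (fiber infinity via Lemma \ref{new-symbol-ell}, base infinity via the explicit integral, and finite interior points via \cite{ti}) gives ellipticity on all of $\partial\mathscr{X}_2$.

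For $\tilde N_\digamma\in\Psi^{-1,0,-1,-1}$, I plan to run the analogous stationary-phase analysis with $A$ replaced by $\tilde A=-\partial_x E^\nu\,\partial\tilde\Xi/\partial\xi+\partial_\xi E^\nu\,\partial\tilde\Xi/\partial x$. The essential structural difference is that $\tilde A$ does not inherit the quadratic vanishing of $A$ on the axis $\{\omega\cdot\eta_0=0\}$: it generically vanishes only to first order, which propagates to only linear (rather than quadratic) vanishing of the leading symbol $\tilde a_{-1}$ on $\Sigma$. A direct modification of Lemma \ref{new-symbol-ell} then places $\tilde a\in S^{-1,0,-1,-1}$; the remaining indices drop by one because the subprincipal analysis gains one factor of $d_\Sigma d_\Gamma$ less than in the $N_\digamma$ case. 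I expect the main obstacle to be a careful bookkeeping step: one must verify that each term in $\tilde A$ really does vanish to the stated order on the axis (and not less), since an off-by-one in the vanishing order would shift the $k$ and $j$ indices and disrupt the subsequent ellipticity argument for $N_\digamma$ in the inverse-problem application. Beyond this bookkeeping, no new techniques beyond those of Section \ref{op-analysis} are needed.
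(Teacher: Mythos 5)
Your overall route is the same as the paper's: the theorem is obtained by verifying the hypotheses of Lemma \ref{new-symbol-prop} for the weight $A$ coming from $I^{\nu}$, feeding its conclusions into Lemma \ref{new-symbol-ell} to get membership in $S^{-1,0,-2,-2}$ and ellipticity near fiber infinity, and then using the explicit Gaussian evaluation of $\mathrm{Re}\,a$ at $x=0$ to handle ellipticity at finite points of base infinity. The paper gives no further proof beyond pointing to exactly these ingredients, and your treatment of $\tilde N_{\digamma}$ (first-order rather than second-order vanishing of the weight on the equator, hence one fewer factor of $d_{\Sigma}d_{\Gamma}$) is consistent with what the paper leaves implicit.

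One sub-step as written is wrong, however: your mechanism for infinite-order vanishing at the face $\{x=0,\,x'>0\}$. The constraint $|\hat\lambda|\le Cx^{-\epsilon}$ coming from $\chi(x^{\epsilon}\hat\lambda)$ is an \emph{upper} bound on $|\hat\lambda|$, so the Gaussian $\exp(-\digamma^2\hat\lambda^2/(2\alpha))$ is close to $1$ near $\hat\lambda=0$ and produces no decay of the form $\exp(-cx^{-2\epsilon})$. The decay at that face comes instead from the conjugation weight $e^{-\digamma/x}e^{\digamma/x'}$: to reach $x'\ge\delta>0$ while $x\to 0$ one needs $|\hat t|\gtrsim \delta^{1/2}/x$, and the lemma in Section \ref{op-analysis} bounding $\frac{\hat\lambda^2}{2\alpha}+\frac1x-\frac1{x'}\ge c_1(\hat\lambda^2+|\hat t|)$ in that regime then yields $e^{-c/x}$ decay. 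This matters because the unconjugated kernel of $N=L\circ I$ does \emph{not} vanish at that face; it is precisely the conjugation by $e^{\digamma/x}$ (together with convexity of the level sets of $x$) that makes $N_{\digamma}$ a genuine element of the calculus. Your argument for the other face $\{x'=0,\,x>0\}$ is essentially right, though the clean statement is that convexity forces $x'\ge 0.99\,x$ on the support of the integrand, so that face is never approached at all.
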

Note that the subprincipal condition is satisfied under reasonable geological assumptions for the Earth: in fact, working through the definition of $A$ above, we can see that the term in the subprincipal condition is the same as the term
\[\left(\partial_t\xi_T(x,0,\omega')\partial_{\omega_{\parallel}}\xi_T(x,0,\omega)- \frac{\zeta}{|\zeta|}\cdot\alpha(\omega)(\partial_{\omega_{\parallel}}\xi_T(x,0,\omega))^2\right)\]
appearing in \cite{zou-global} when considering the subprincipal symbol in the global setting. (See the discussion in \cite{zou-global}, particularly following Equation 2.10.)

We now let $r_{\nu} = e^{\digamma/x}f$. It now suffices to show, from \eqref{n-equation}, that $f\equiv 0$, assuming $c$ is chosen sufficiently small. Indeed, from \eqref{n-equation} we have
\begin{align*}
0 &= e^{-\digamma/x}N[\partial_x(e^{\digamma/x}f),\partial_{y_1}(e^{\digamma/x}f),\partial_{y_2}(e^{\digamma/x}f)]+e^{-\digamma/x}\tilde{N}[e^{\digamma/x}u] \\
&= N_{\digamma}[(\partial_x-\digamma/x^2)f,\partial_{y_1}f,\partial_{y_2}f] + \tilde{N}_{\digamma}[f].
\end{align*}
For a fixed $\digamma>0$, the operator $N_{\digamma}$ is elliptic, and hence there exists a parametrix $Q\in\Psi^{1,0,2,2}\otimes\text{Mat}_{3\times 3}$ such that $QN_{\digamma} = I+R$, with $R\in\Psi^{-\infty,-\infty}\otimes\text{Mat}_{3\times 3}$. Applying $Q$ to both sides gives
\begin{equation}
\label{e-eq}
0 = \left(\partial_xf -\frac{\digamma}{x^2}f,\partial_{y_1}f,\partial_{y_2}f\right) + E[f]
\end{equation}
where
\[E[f] = Q\circ\tilde{N}[f] + R[\partial_xu-\frac{\digamma}{x^2}f,\partial_{y_1}f,\partial_{y_2}f].\]
Note\footnote{In fact, it would have sufficed for the parametrix error $R$ to be in $\Psi^{-3,-3,3,3}\otimes\text{Mat}_{3\times 3}\subset\Psi^{-2,-2,1,1}\otimes\text{Mat}_{3\times 3}$ (as opposed to all the way in $\Psi^{-\infty,-\infty}$), since we need to cancel one order in derivative and two orders in decay.} then that $E\in\Psi^{0,0,1,1}\otimes\text{Mat}_{3\times 1}$.

Writing $E = (E_0,E_1,E_2)$, we can rewrite \eqref{e-eq} as the three scalar equations
\begin{align*}
0 &= (x^2\partial_x-\digamma)f + x^2E_0[f], \\
0 &= x\partial_{y_i}f + xE_i[f], \quad i=1,2.
\end{align*}
Note that each equation is of the form $0 = Af + Ef$, where $A\in\text{Diff}_{sc}^{1,0}$, and $E$ is at worst in $\Psi^{0,-1,1,1}$. Moreover, the symbols of the scattering differential operators are $i(\xi+i\digamma)$, $i\eta_1$, and $i\eta_2$, respectively, so one can take $\Psi^{0,0}_{sc}$ combinations\footnote{Specifically, we apply $\Psi$DOs whose symbols are $\frac{\xi-i\digamma}{\langle(\xi,\eta)\rangle}$ and $\frac{\eta_i}{\langle(\xi,\eta)\rangle}$, respectively.} of the scattering differential operators to form an elliptic element $B\in\Psi^{1,0}_{sc}$. Applying the $\Psi^{0,0}_{sc}$ combinations to the equations above, we end up with
\[0 = Bf + Ff,\quad B\in\Psi^{1,0}_{sc}\text{ elliptic}, F\in\Psi^{0,-1,1,1}\]
so applying an elliptic parametrix for $B$ yields
\[0 = f + Gf,\quad G\in\Psi^{-1,-1,1,1}\subset\Psi^{-1/2,-1/2,0,0}.\]
Hence, we have
\[\|f\|_{L^2_{sc}}\le \|G\|_{H^{-1/2,-1/2}_{sc}\to L^2_{sc}}\|f\|_{H^{-1/2,-1/2}_{sc}}.\]
It suffices to show that for any $\epsilon>0$ we have $\|f\|_{H^{-1/2,-1/2}_{sc}}\le\epsilon\|u\|_{L^2_{sc}}$ if $f$ is supported sufficiently close to the boundary. But this just follows by noting that if $f$ is supported in $\{x<c\}$, then $x^{1/2}|f|\le c^{1/2}|f|$, and hence
\[\|f\|_{H^{-1/2,-1/2}_{sc}}\le\|f\|_{H^{0,-1/2}_{sc}} = \|x^{1/2}f\|_{L^2_{sc}}\le c^{1/2}\|f\|_{L^2_{sc}}.\]
Thus, taking $c$ sufficiently small, we can conclude that $f$ (restricted to $\{\tilde{x}>-c\}$) is identically zero. This concludes the proof of Theorem \ref{local-recov-thm}, which, as mentioned in the Introduction, when combined with a convex foliation condition proves Theorem \ref{global-recov-scat-thm} as well.

\bibliographystyle{plain}
\bibliography{ti_local_parametrix_122921}

\end{document}